\tikzset{mynode/.style={inner sep=2pt,fill,outer sep=0,circle}}
\newcommand{\m}{\mathfrak{m} }
\providecommand{\D}{{\mathcal D}}
\newcommand{\M}{\mathcal{M}}
\newcommand{\Ass}{\operatorname{Ass}}
\newcommand{\gdepth}{\operatorname{g-depth}}
\newcommand{\depth}{\operatorname{depth}}
\newcommand{\Spec}{\operatorname{Spec}}
\newcommand{\ann}{\operatorname{ann}}
\newcommand{\bideg}{\operatorname{bideg}}
\newcommand{\height}{\operatorname{height}}
\newcommand{\Bmod}{\operatorname{Bmod}}
\newcommand{\Bproj}{\operatorname{Bproj}}
\newcommand{\Mod}{\operatorname{Mod}}
\newcommand{\injdim}{\operatorname{injdim}}
\newcommand{\Ext}{\operatorname{Ext}}
\newcommand{\Bfg}{\operatorname{B-fg}}
\newcommand{\Bfgt}{\operatorname{B*-fg}}
\newcommand{\Supp}{\operatorname{Supp}}
\theoremstyle{plain}
\newtheorem{theorem}{Theorem}[section]
\newtheorem{corollary}[theorem]{Corollary}
\newtheorem{lemma}[theorem]{Lemma}
\newtheorem{proposition}[theorem]{Proposition}
\newtheorem{maintheorema}{ Theorem A}
\newtheorem{maintheoremb}{ Theorem B}
\newtheorem{maintheoremc}{ Theorem C}
\newtheorem{maintheoremd}{ Theorem D}
\newtheorem{maintheoreme}{ Theorem E}
\newtheorem{maintheoremf}{ Theorem F}
\newtheorem{maintheoremg}{ Theorem G}
\newtheoremstyle{pictoStyle}
  {0pt}
  {0pt} 
  {\normalfont} 
  {0pt} 
  {\bfseries}
  {} 
  {1em}
  {} 
\theoremstyle{pictoStyle}
\newtheorem*{picto}{Pictorial Description :} 
\theoremstyle{definition}
\newtheorem{definition}[theorem]{Definition}
\newtheorem{remark}[theorem]{Remark}
\newtheorem{example}[theorem]{Example}
\theoremstyle{remark}
\begin{document}

\title{Bigraded components of F-finite F-modules }
\address{department of mathematics, indian institute of technology bombay, powai, mumbai 400076, india}
 \author{Sayed Sadiqul Islam }
 \email{22d0786@iitb.ac.in, ssislam1997@gmail.com}
 \address{department of mathematics, indian institute of technology bombay, powai, mumbai 400076, india}
 \author{Tony J. Puthenpurakal}
 \email{tputhen@math.iitb.ac.in}

\date{\today}

\date{\today}

\subjclass{Primary 13D45, 14B15; Secondary 13N10, 32C36 }
\keywords{Rings of differential operators, D-modules, F-modules, local cohomology}
\begin{abstract}
    Let $A$ be a regular ring containing a field of characteristic $p>0$ and let $R=A[x_1,\ldots,x_m,y_1,\ldots,y_n]$ be standard bigraded over $A$, i.e., $\bideg(A)=(0,0)$, $\bideg(x_i)=(1,0)$ and  $\bideg(y_j)=(0,1)$ for all $i$ and $j$. Assume that $M=\bigoplus_{i,j} M_{(i,j)}$ is a bigraded $F_R$-finite, $F_R$-module. We use Lyubeznik's theory of $F$-finite, $F$-modules from \cite{Lyu-Fmod} to study the bigraded components of $M$. The properties we study include vanishing, rigidity, Bass numbers, associated primes, and injective dimension of the components of $M$. As an application we show that if $(A,\mathfrak{m})$ is regular local ring containing a field of characteristic $p>0$, $R/I$ is equidimensional,  $\Bproj(R/I)$ is Cohen-Macaulay and non-empty, then $H^j_I(R)_{(m,n)}=0$ for all $(m,n)\geq (0,0)$ and all $j>\height I$.
\end{abstract}

\maketitle
{\vspace{-3em}
\setlength{\parskip}{0em}
\tableofcontents
\vspace{-3em}}

\section{Introduction}
Local cohomology was discovered in 1960s as a tool to study sheaves and their cohomology in algebraic geometry, but has since found wide application in commutative algebra. Over time, it has become an indispensable tool and a major area of research. Local cohomology modules are not finitely generated in general. For example, if $(R,\mathfrak{m})$ is a local ring of dimension $d\geq 1$, then $H^d_\mathfrak{m}(R)$ is never finitely generated. 

Let $S=\oplus_{n\geq 0}S_n$ be standard Noetherian ring and $S_+$ be irrelevant ideal. The theory of local cohomology  with respect to $S_+$ is particularly well behaved \cite[Theorem 15.1.5]{BS}. It is well-known that if $M$ is finitely generated graded $S$-module then for $i\geq 0$,
\begin{enumerate}
    \item $H^i_{S_+}(M)_n$ is finitely generated $S_0$-module for all $n\in \mathbb{Z}$.
    \item $H^i_{S_+}(M)_n=0$ for all $n\gg 0$.
\end{enumerate}
Motivated by this fact, the second author studied the graded components (with respect to arbitrary homogenenous ideal of $R$) of local cohomology module of $R=A[x_1,\ldots,x_n]$  where $A$ is a regular ring containing a field of characteristic 0 (see \cite{TP-collect}). As a tool, the author uses the notion of generalized Eulerian $\D$-modules (in characteristic zero), see \cite{TP-Nagoya}. He introduced generalized Eulerian $\D$-modules as an extension of Eulerian modules, which was introduced in the case where $A$ is a field $K$ by Ma and Zhang \cite{Ma}. In \cite{TP-viet}, he, together with  co-author, derived some of these results under less stringent assumptions on the base ring $A$. Later (see \cite{TP-comm}) they studied the case when $A=B^G$, the ring of invariants of a regular domain $B$ that contains a field $K$ of characteristic zero and $G$ is a finite subgroup of the automorphism group of $B$. It is natural to inquire whether the results on graded local cohomology obtained in characteristic zero can be extended to the case when $p>0$. To address this, in \cite{TP-koszul}, the second author utilizes the theory of $F$-modules and proved many similar results. The concept of $F$-modules was introduced by Lyubeznik in \cite{Lyu-Fmod}, where he showed that $F$-finite, $F$-modules satisfy strong finiteness conditions. This conditions include finiteness of Bass numbers and the set of associated primes.  

Let $A$ be a regular ring containing a field of characteristic $p>0$. Let $R=A[x_1,\ldots,x_m,y_1,\ldots,y_n]$ be standard bigraded over $A$. In this paper we use the theory of $F$-modules to study the bigraded components of local cohomology modules of $R$. In this paper we generalize most of the results proved in \cite{TP-koszul} to the bigraded setup. Many of the results of our paper holds true when $A$ is a regular ring containing a field of characteristic $0$. These results are proved in \cite{TP-big} which is currently under preparation. The authors in \cite{TP-big} use the theory of $\D$-modules and generalized Eulerian $\D$-modules to prove their result.

We now state the results proved in this paper.
\s\textit{Setup:}\label{Main setup}
Let $A$ be a regular ring containing a field of characteristic $p>0$ and $R=A[x_1,\ldots,x_m,y_1,\ldots,y_n]$ be standard bigraded over $A$, i.e., $\bideg(A)=(0,0)$, $\bideg(x_i)=(1,0)$ and  $\bideg(y_j)=(0,1)$ for all $i$ and $j$. Let $M=\bigoplus_{i,j} M_{(i,j)}$ be a bigraded $F_R$-finite, $F_R$-module.

The first result we prove is the vanishing of almost bigraded components of $M$ implies vanishing of $M$. More precisely we prove the following:
\begin{maintheorema}[Theorem \ref{Vanishing of M-A}]
\label{thm:maintheorema}
  Let $M$ be as in \ref{Main setup}. If $M_{(m,n)}=0$ for all $|m|\gg 0 $ and $|n|\gg 0$, then $M=0$.
\end{maintheorema}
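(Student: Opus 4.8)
The plan is to separate off the parts of $M$ supported on $V(I)$ and on $V(J)$, where $I=(x_1,\dots,x_m)R$ and $J=(y_1,\dots,y_n)R$: what is left of $M$ is killed by a direct degree argument, while the two torsion parts are reduced---via the $F$-module analogue of Kashiwara's equivalence---to the one-parameter graded vanishing theorem over $A[y_1,\dots,y_n]$ and $A[x_1,\dots,x_m]$, which is the graded analogue of the present statement established in \cite{TP-koszul}. Throughout I may assume $m,n\geq 1$, since otherwise the hypothesis is vacuous. It suffices to show that if $M\neq 0$, then $M_{(a,b)}\neq 0$ for arbitrarily large $a$ and $b$ simultaneously, contradicting the hypothesis. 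Since $R$ is Noetherian and $M$ is bigraded, $\Ass_R M$ is nonempty and each $P\in\Ass_R M$ is a bihomogeneous prime that annihilates some bihomogeneous element of $M$; thus $R/P(-\mathbf d)\hookrightarrow M$ as bigraded $R$-modules for a suitable shift $\mathbf d$.

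Suppose first that some $P\in\Ass_R M$ contains neither $I$ nor $J$. Picking $x_s\notin P$ and $y_t\notin P$, primeness of $P$ gives $x_s^a y_t^b\notin P$, hence $(R/P)_{(a,b)}\neq 0$, for all $a,b\geq 0$; by the embedding above this forces $M_{(a,b)}\neq 0$ for all $a,b\gg 0$, contradicting the hypothesis. So we may assume every $P\in\Ass_R M$ contains $I$ or $J$.

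In that case, consider $0\to\Gamma_I(M)\to M\to M'\to 0$ with $M'=M/\Gamma_I(M)$. By \cite{Lyu-Fmod} the category of $F$-finite $F$-modules is abelian and is closed under sub/quotient $F$-modules and under $\Gamma_I(-)=H^0_I(-)$, so $\Gamma_I(M)$ and $M'$ are bigraded $F$-finite $F$-modules, and each inherits the vanishing hypothesis as a subquotient of $M$. Now $\Gamma_I(M)$ is $I$-power-torsion by construction, while $\Ass_R M'=\Ass_R M\setminus V(I)\subseteq V(J)$ (the last inclusion by our assumption on $\Ass_R M$), so $M'$ is $J$-power-torsion. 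Hence it is enough to prove the theorem when $M$ is $I$-power-torsion, the $J$-power-torsion case being symmetric. Then write $R=S[x_1,\dots,x_m]$ with $S=A[y_1,\dots,y_n]$, a standard graded polynomial ring over the regular ring $A$; by the $F$-module Kashiwara equivalence of \cite{Lyu-Fmod}, $M\cong H^m_I(R)\otimes_S\overline M$ for an $F$-finite $F$-module $\overline M$ over $S$, compatibly with the bigradings. Since $H^m_I(R)=H^m_{(x_1,\dots,x_m)}(A[x_1,\dots,x_m])\otimes_A S$ is a graded-free $S$-module with homogeneous basis in bidegrees $(a,0)$, $a\leq -m$, one reads off that $M_{(a,b)}\neq 0$ if and only if $a\leq -m$ and $\overline M_b\neq 0$. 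The hypothesis therefore forces $\overline M_b=0$ for $|b|\gg 0$, and the graded vanishing theorem of \cite{TP-koszul}, applied to the graded $F$-finite $F$-module $\overline M$ over $S=A[y_1,\dots,y_n]$, gives $\overline M=0$; hence $M=0$.

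I expect the main difficulty to lie in the last step: pinning down the bigraded form of the $F$-module Kashiwara equivalence and the associated bidegree bookkeeping, so as to see precisely that far-out vanishing of the $I$-power-torsion $F$-module $M$ is equivalent to far-out vanishing (in the one remaining grading) of $\overline M$ over $A[y_1,\dots,y_n]$, and in checking that the one-parameter result of \cite{TP-koszul} applies verbatim (with the harmless extra hypotheses $m,n\geq 1$). The other ingredients---the behaviour of $\Ass$ and $\Gamma_I$, the abelian-category properties of $F$-finite $F$-modules from \cite{Lyu-Fmod}, and the elementary argument of the first sub-case---should be routine.
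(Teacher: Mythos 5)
Your overall strategy (split off the $I$- and $J$-torsion parts via the associated-prime dichotomy, then reduce each torsion piece, through a Kashiwara-type equivalence, to the single-graded vanishing theorem over $A[y_1,\ldots,y_n]$ resp.\ $A[x_1,\ldots,x_m]$) is genuinely different from the paper's proof, which instead localizes at a minimal prime of a nonzero component, completes, enlarges the residue field (Remark \ref{stan}), takes top Koszul homology in the base variables $t_1,\ldots,t_g$, and then invokes the field case (Theorem \ref{Vanishing of M}), itself proved by the Eulerian $\D$-module/Koszul argument. Your first two steps are sound: associated primes of a bigraded module are bihomogeneous annihilators of bihomogeneous elements, the dichotomy argument with $x_s^a y_t^b\notin P$ is correct, and $\Gamma_I(M)$ and $M'=M/\Gamma_I(M)$ are bigraded $F$-finite $F$-modules inheriting the vanishing hypothesis, with $M'$ being $J$-power torsion.

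The gap is exactly where you flag it, and it is a genuine one rather than bookkeeping. The ``$F$-module Kashiwara equivalence'' is not in \cite{Lyu-Fmod} (versions exist for unit $F$-modules in work of Blickle and of Emerton--Kisin), and, more importantly, the precise statement you need --- that an $I$-power-torsion bigraded $F$-finite module satisfies $M\cong H^m_I(R)\otimes_S\overline M$ \emph{as bigraded modules} with $\overline M$ an $F_S$-finite module contributing only to the $y$-grading, equivalently that $\ann_M(I)\cong H_m(\underline{x};M)$ (up to shift) is concentrated in $x$-degree $-m$ --- is of essentially the same depth as the theorem being proved. Over a field this concentration is exactly Theorem \ref{Koszul vanish for underline x and y}, obtained from the Eulerian structure; over a general regular $A$ containing a field there is no Eulerian/$\D$-module structure to appeal to, and the paper only obtains such degree constraints after the localization--completion--Koszul reduction of Remark \ref{stan}. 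Likewise, the $F_S$-finiteness of $\overline M$ over $S=A[y_1,\ldots,y_n]$ does not follow from Theorem \ref{F-finite}, which is stated only for the bases $K$, $K[[\underline{y}]]$ and $K[[\underline{y}]][\underline{x}]$. So as written the central step is an unproved assertion. If you establish the bigraded Kashiwara equivalence with this degree bookkeeping (for instance by first proving the degree-concentration of $\ann_M(I)$ over general $A$ via the same localization--completion trick the paper uses), then the remainder of your argument, including the final appeal to the single-graded vanishing theorem of \cite{TP-koszul}, does go through and would give an alternative proof.
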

Let us denote $\underline{u}=(u_1, \dots, u_r)\in \mathbb{Z}^r$ and $\underline{v}=(v_1,\ldots,v_r)\in \mathbb{Z}^r$. We say that $\underline{v}\geq \underline{u}$ if $v_i\geq u_i$ for all $i= 1, \ldots, r$. In the next two results we establish the rigidity properties of the bigraded components of $M$.
\begin{maintheoremb}[Theorem \ref{Rigidity for A-1}] 
\label{thm:maintheoremb}With hypothesis as in \ref{Main setup}, we have
 \begin{enumerate}[\rm (1)]
     \item The following statements are equivalent:
     \begin{enumerate}[\rm (a)]
     \item $M_{(a,b)}\neq 0$ for all $(a,b)\geq (0,0)$.
         \item There exists $(a_0,b_0)\geq (0,0)$ such that $M_{(a,b)}\neq 0$ for all $(a,b)\geq (a_0,b_0)$.
         \item $M_{(a_1,b_1)}\neq 0$ for some $(a_1,b_1)\geq (0,0)$.
     \end{enumerate}
     \item The following statements are equivalent:
     \begin{enumerate}[\rm (a)]
         \item $M_{(a,b)}\neq 0$ for all $(a,b)\in\mathbb{Z}^2$ with $a\leq -m$ and $b\geq 0$.
         \item There exists $(a_0,b_0)\in \mathbb{Z}^2$ with $a_0\leq -m$ and $b_0\geq 0$ such that $M_{(a,b)}\neq 0$ for all $(a,b)\in \mathbb{Z}^2$ with $a\leq a_0$ and $b\geq b_0$.
         \item $M_{(a_1,b_1)}\neq 0$ for some $(a_1,b_1)\in \mathbb{Z}^2$ with $a_1\leq -m$ and $b_1\geq 0$.
     \end{enumerate}
     \item The following statements are equivalent:
     \begin{enumerate}[\rm (a)]
         \item $M_{(a,b)}\neq 0$ for all $(a,b)\in\mathbb{Z}^2$ with $(a,b)\leq (-m,-n)$.
         \item There exists $(a_0,b_0)\in \mathbb{Z}^2$ with $(a_0,b_0)\leq (-m,-n)$  such that $M_{(a,b)}\neq 0$ for all $(a,b)\leq (a_0,b_0)$.
         \item $M_{(a_1,b_1)}\neq 0$ for some $(a_1,b_1)\in \mathbb{Z}^2$ with $(a_1,b_1)\leq (-m,-n)$.
     \end{enumerate}
\item The following statements are equivalent:
     \begin{enumerate}[\rm (a)]
         \item $M_{(a,b)}\neq 0$ for all $(a,b)\in\mathbb{Z}^2$ with $a\geq 0$ and $b\leq -n$.
         \item There exists $(a_0,b_0)\in \mathbb{Z}^2$ with $a_0\geq 0$ and $b_0\leq -n$ such that $M_{(a,b)}\neq 0$ for all $(a,b)\in \mathbb{Z}^2$ with $a\geq a_0$ and $b\leq b_0$.
         \item $M_{(a_1,b_1)}\neq 0$ for some $(a_1,b_1)\in \mathbb{Z}^2$ with $a_1\geq 0$ and $b_1\leq -n$.
     \end{enumerate}
 \end{enumerate}
\end{maintheoremb}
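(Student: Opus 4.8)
In each of the four parts the implications $(a)\Rightarrow(b)$ and $(b)\Rightarrow(c)$ are immediate (for $(a)\Rightarrow(b)$ take $(a_{0},b_{0})$ to be $(0,0)$, $(-m,0)$, $(-m,-n)$ and $(0,-n)$ respectively), so the whole content is the implication $(c)\Rightarrow(a)$. Since the formation of bigraded components commutes with localization of $A$ at a prime $P$, since $M_{P}$ is again a bigraded $F_{R_{P}}$-finite $F_{R_{P}}$-module over the regular ring $R_{P}=A_{P}[x_{1},\dots,x_{m},y_{1},\dots,y_{n}]$ with $(M_{P})_{(a,b)}=(M_{(a,b)})_{P}$, and since $M_{(a,b)}\neq 0$ iff $(M_{(a,b)})_{P}\neq 0$ for some $P\in\Spec A$, we may assume $(A,\m)$ is local. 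The four regions are the ``orthants'' attached to $R$, to $H^{m}_{(x_{1},\dots,x_{m})}(R)$, to $H^{m+n}_{(x_{1},\dots,x_{m},y_{1},\dots,y_{n})}(R)$ and to $H^{n}_{(y_{1},\dots,y_{n})}(R)$; I treat part $(1)$, the remaining three being handled by the same method applied to the other three orthants.

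The engine is the structure isomorphism $\theta\colon M\xrightarrow{\ \sim\ }F(M)$, $F=F_{R}$ the Frobenius functor. Because $R$ is regular its Frobenius endomorphism is faithfully flat (flat by Kunz, and surjective --- indeed a homeomorphism --- on $\Spec$), so $z\mapsto 1\otimes z$ embeds $M$ in $F(M)$; composing with $\theta^{-1}$ yields an injective, $p$-linear $\sigma\colon M\to M$ with $\sigma(M_{(c,d)})\subseteq M_{(pc,pd)}$, and as $F(M)$ is generated over $R$ by the image of $M$,
\[
M_{(a,b)}\;=\;\sum_{\,c\le\lfloor a/p\rfloor,\ d\le\lfloor b/p\rfloor}R_{(a-pc,\,b-pd)}\cdot\sigma\!\left(M_{(c,d)}\right)\qquad\text{for all }(a,b).
\]
From this: (i) $M_{(c,d)}\neq 0\Rightarrow M_{(p^{k}c,\,p^{k}d)}\neq 0$ for every $k\ge 0$; (ii) $M_{(a,b)}\neq 0\Rightarrow M_{(c,d)}\neq 0$ for some $(c,d)$ with $c\le\lfloor a/p\rfloor$, $d\le\lfloor b/p\rfloor$. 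Moreover, collapsing the bigrading onto the $x$-degree (resp.\ the $y$-degree) makes $M$ a graded $F_{R}$-finite $F_{R}$-module, $R$ being given the standard grading with the $x_{i}$ in degree $1$ and $A,y_{1},\dots,y_{n}$ in degree $0$ (resp.\ with the $y_{j}$ in degree $1$ and $A,x_{1},\dots,x_{m}$ in degree $0$); so the singly graded rigidity of \cite{TP-koszul} gives that $\{a:M_{(a,\bullet)}\neq 0\}$ meets $\mathbb{Z}_{\ge 0}$ either not at all or in all of it, and likewise in the $y$-direction.

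Now assume $(c)$ of part $(1)$: $M_{(a_{1},b_{1})}\neq 0$ with $a_{1},b_{1}\ge 0$. The singly graded rigidities applied in both directions give $M_{(a,\bullet)}\neq 0$ for all $a\ge 0$ and $M_{(\bullet,b)}\neq 0$ for all $b\ge 0$; in particular $M\neq 0$, and by (i) $M$ is nonzero at the bidegrees $p^{k}(a_{1},b_{1})$. It remains to upgrade this to $M_{(a,b)}\neq 0$ for \emph{every} $(a,b)$ with $a,b\ge 0$, and this is the crux. The plan is a reductio: suppose $M_{(a_{0},b_{0})}=0$ for some $a_{0},b_{0}\ge 0$. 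Fix a finitely generated bigraded generator $M_{0}$ of $M$, with homogeneous generators in bidegrees $(c_{1},d_{1}),\dots,(c_{t},d_{t})$; then the finitely generated graded $R$-submodules $M^{(k)}:=\image\bigl(F^{k}(M_{0})\to M\bigr)$ are generated in bidegrees $p^{k}(c_{i},d_{i})$, so $\Supp M=\bigcup_{k}\Supp M^{(k)}$ is contained in $\bigcup_{k,i}\bigl(p^{k}(c_{i},d_{i})+\mathbb{Z}_{\ge 0}^{2}\bigr)$. One plays the gap $M_{(a_{0},b_{0})}=0$ (which, being preserved by $\theta$ and hence by $\sigma$, propagates downward along $p$-multiples) against these finitely many dilated generator-orthants to conclude that $M$ vanishes on the complement of a ``cross'' $\{|a|\le N_{1}\}\cup\{|b|\le N_{2}\}$; Theorem~A then forces $M=0$, contradicting $M_{(a_{1},b_{1})}\neq 0$.

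I expect this last upgrade to be the main obstacle. The Frobenius relation (ii) only controls bidegrees obtained by rounding coordinates down, so it can leak out of the first orthant through a negative second coordinate, and the individual multiplications $x_{i}\cdot,y_{j}\cdot$ may vanish; hence no single device transports a nonzero component freely inside $\{(a,b)\ge(0,0)\}$. The resolution I propose is to balance $\sigma$ against the finitely many generator bidegrees $p^{k}(c_{i},d_{i})$ of the $M^{(k)}$, as above, so that an interior gap is shown to force vanishing in every one of the four ``corners at infinity'' and Theorem~A applies; carrying this bookkeeping out uniformly for the four parts is the technical heart of the argument.
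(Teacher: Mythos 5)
There is a genuine gap, and you have in fact flagged it yourself: everything after ``It remains to upgrade this'' is a plan, not a proof, and the plan as stated does not work. The Frobenius relations (i)--(ii) and the two single-grading collapses only yield that every \emph{row} $\bigoplus_b M_{(a,b)}$ with $a\geq 0$ and every \emph{column} $\bigoplus_a M_{(a,b)}$ with $b\geq 0$ is nonzero, which is far weaker than nonvanishing at every lattice point of the quadrant -- and that pointwise statement is precisely the content of the theorem. The proposed reductio is not substantiated: from a gap $M_{(a_0,b_0)}=0$ the injectivity of $\sigma$ only propagates vanishing to the finitely many bidegrees $(a_0/p^k,b_0/p^k)$, and the containment of $\Supp M$ in the dilated generator-orthants $p^k(c_i,d_i)+\mathbb{Z}_{\geq 0}^2$ does not force $M$ to vanish off a ``cross'' $\{|a|\le N_1\}\cup\{|b|\le N_2\}$; for instance $M=H^m_{(x_1,\ldots,x_m)}(R)\oplus H^n_{(y_1,\ldots,y_n)}(R)$ has a gap in the first quadrant yet is nonzero arbitrarily far out in the second and fourth quadrants, so no bookkeeping of this kind alone can deliver the hypothesis of Theorem~A. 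Some finer structural input is unavoidable at this point, and none is supplied.

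For comparison, the paper gets the missing step from the bigraded Eulerian $\D$-module structure: Theorem \ref{Koszul vanish for underline x and y} gives $H_0(\underline{x},M)_{(u,v)}=H_m(\underline{x},M)_{(u,v)}=0$ for $u\neq 0$ (and the analogue in $\underline{y}$), and the two Koszul exact sequences then propagate nonvanishing one step at a time, $M_{(a,b)}\neq 0\Rightarrow M_{(a\pm 1,b)}\neq 0$ away from the critical values $a=0,-m$ (similarly in $b$), which sweeps out the whole quadrant from a single nonzero component (Theorem \ref{Rigidity for K-1}). Your reduction to $(A,\mathfrak{m})$ local is also not enough for the general case: the paper must reach the field case, and does so by localizing at a minimal prime of the nonzero component, completing via the Cohen structure theorem, enlarging the residue field, and passing to the top Koszul homology $V=H_g(t_1,\ldots,t_g;L)$, using Theorems \ref{F-finite} and \ref{ordinal} to know that $V$ is $F$-finite over the polynomial ring over the field and still nonzero in the given bidegree (Remark \ref{stan}); this whole mechanism is absent from your write-up. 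So the easy implications and the general reduction framework are fine in spirit, but the crux -- the quadrant-wise rigidity itself -- remains unproved.
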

The following theorem examines the case when a component, excluded in theorem \hyperref[thm:maintheoremb]{Theorem B}, is non-zero.
\begin{maintheoremc}[Theorem \ref{Rigidity for A-2}]\label{thm:maintheoremc}
    With hypothesis as in \ref{Main setup}, we have
		\begin{enumerate}[\rm(1)]
		\item The following statements are equivalent:
		\begin{enumerate}[\rm(a)]
		\item $M_{(a,b)} \neq 0$ for all $(a,b)\in \mathbb{Z}^2$.
		\item There exists some $(-m,-n)<(a_0,b_0)<(0,0)$ such that $M_{(a_0,b_0)} \neq 0$.
	\end{enumerate}
\item The following statements are equivalent:
\begin{enumerate}[\rm (a)]
	\item $M_{(a,b)} \neq 0$ for all $(a,b)\in \mathbb{Z}^2$ with $b \geq 0$.
	\item There exists some $(a_0,b_0)\in \mathbb{Z}^2$ with $-m<a_0<0$ and $b_0 \geq 0$ such that $M_{(a_0,b_0)} \neq 0$.
\end{enumerate}
\item The following statements are equivalent:
\begin{enumerate}[\rm (a)]
	\item $M_{(a,b)} \neq 0$ for all $(a,b)\in \mathbb{Z}^2$ with $a \leq -m$.
	\item There exists some $(a_0,b_0)\in \mathbb{Z}^2$ with $a_0 \leq -m$ and $-n<b_0<0$ such that $M_{(a_0,b_0)} \neq 0$.
\end{enumerate}
\item The following statements are equivalent:
\begin{enumerate}[\rm (a)]
	\item $M_{(a,b)} \neq 0$ for all $(a,b)\in \mathbb{Z}^2$ with $b \leq -n$.
	\item There exists some $(a_0,b_0)\in \mathbb{Z}^2$ with $-m< a_0 <0$ and $b_0 \leq -n$ such that $M_{(a_0,b_0)} \neq 0$.
\end{enumerate}
\item The following statements are equivalent:
\begin{enumerate}[\rm (a)]
	\item $M_{(a,b)} \neq 0$ for all $(a,b)\in \mathbb{Z}^2$ with $a \geq 0$.
	\item There exists some $(a_0,b_0)\in \mathbb{Z}^2$ with $a_0 \geq 0$ and $-n<b_0<0$ such that $M_{(a_0,b_0)} \neq 0$.
\end{enumerate}
\end{enumerate}
\end{maintheoremc}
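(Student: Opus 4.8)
The implications (a) $\Rightarrow$ (b) are trivial in all five parts, since the non-vanishing asserted in (a) exhibits a witness for (b); thus everything lies in the implications (b) $\Rightarrow$ (a). My plan is to reduce each of these to the one-variable rigidity theorems of \cite{TP-koszul}, applied separately in the $x$- and $y$-directions. Put $S=A[x_1,\dots,x_m]$ and $T=A[y_1,\dots,y_n]$; both are regular of characteristic $p$. Forgetting the $y$-grading, $R=T[x_1,\dots,x_m]$ is standard graded over $T$ and $M$ is a $\ZZ$-graded $F_R$-finite $F_R$-module for this grading, with $x$-graded pieces $M'_a:=\bigoplus_b M_{(a,b)}$; symmetrically $R=S[y_1,\dots,y_n]$ is standard graded over $S$ and $M$ is a $\ZZ$-graded $F_R$-finite $F_R$-module with $y$-graded pieces $M''_b:=\bigoplus_a M_{(a,b)}$. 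The key preliminary fact I will use is that each $M'_a$ is itself a $\ZZ$-graded $F_T$-finite $F_T$-module with $(M'_a)_b=M_{(a,b)}$, and dually that each $M''_b$ is a $\ZZ$-graded $F_S$-finite $F_S$-module with $(M''_b)_a=M_{(a,b)}$ --- i.e.\ that graded components of graded $F$-finite $F$-modules over a standard graded polynomial ring over a regular ring are again $F$-finite $F$-modules over the base. This is established in the preliminary sections (it is also what the proof of Theorem~B needs), so the one-variable theorems of \cite{TP-koszul} apply to the $M'_a$ over $T$ (with $n$ variables) and to the $M''_b$ over $S$ (with $m$ variables): namely the half-line rigidity ($L_c\neq 0$ for some $c\geq 0$ forces $L_c\neq 0$ for all $c\geq 0$, and $L_c\neq 0$ for some $c\leq -\ell$ forces $L_c\neq 0$ for all $c\leq -\ell$, $\ell$ the number of variables) and the middle-strip rigidity ($L_c\neq 0$ for some $-\ell<c<0$ forces $L_c\neq 0$ for all $c\in\ZZ$).

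Each part of the theorem is then a two-step slicing argument. For part~(1), given $M_{(a_0,b_0)}\neq 0$ with $-m<a_0<0$ and $-n<b_0<0$: since $(M''_{b_0})_{a_0}=M_{(a_0,b_0)}\neq 0$ and $-m<a_0<0$, middle-strip rigidity for $M''_{b_0}$ over $S$ gives $M_{(a,b_0)}\neq 0$ for all $a\in\ZZ$; then for each fixed $a$, since $(M'_a)_{b_0}=M_{(a,b_0)}\neq 0$ and $-n<b_0<0$, middle-strip rigidity for $M'_a$ over $T$ gives $M_{(a,b)}\neq 0$ for all $b\in\ZZ$; hence $M_{(a,b)}\neq 0$ for all $(a,b)\in\ZZ^2$. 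Part~(2) runs the same way, but in the second step $b_0\geq 0$, so one invokes the half-line ($c\geq 0$) rigidity for $M'_a$ over $T$ and concludes $M_{(a,b)}\neq 0$ for all $(a,b)$ with $b\geq 0$. Part~(3) reverses the order: from $M_{(a_0,b_0)}\neq 0$ with $a_0\leq -m$ and $-n<b_0<0$, middle-strip rigidity for $M'_{a_0}$ over $T$ (using $-n<b_0<0$) gives $M_{(a_0,b)}\neq 0$ for all $b$, and then for each $b$ the half-line ($c\leq -m$) rigidity for $M''_b$ over $S$ (using $a_0\leq -m$) gives $M_{(a,b)}\neq 0$ for all $a\leq -m$. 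Parts~(4) and~(5) are obtained from parts~(3) and~(2) respectively by interchanging the $x_i$'s with the $y_j$'s (equivalently, by repeating these arguments with the roles of $S$ and $T$ swapped), which is a symmetry of the statement.

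The bookkeeping --- which of the two slicing steps uses the middle-strip version and which uses a half-line version, and in which order --- is forced by the location of $(a_0,b_0)$ and is entirely mechanical once the set-up is in place. I expect the one genuine obstacle to be the preliminary fact quoted above: that the graded slices $M'_a$ and $M''_b$ are $F$-finite $F$-modules over $T$ and $S$ admitting graded generating morphisms, so that the results of \cite{TP-koszul} apply verbatim. This should be secured while setting up the theory, e.g.\ by descending a bigraded generating morphism of $M$ to the slices. A secondary, purely expository point: the one-variable results of \cite{TP-koszul} must here be read over the regular base rings $S$ and $T$ (not merely over a field); this is harmless because those arguments are carried out over an arbitrary regular ring of characteristic $p$, but it should be flagged.
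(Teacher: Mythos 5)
There is a genuine gap, and it sits exactly where you predicted: your ``key preliminary fact'' --- that the slices $M'_a=\bigoplus_b M_{(a,b)}$ and $M''_b=\bigoplus_a M_{(a,b)}$ are graded $F$-finite $F$-modules over $T=A[y_1,\ldots,y_n]$ and $S=A[x_1,\ldots,x_m]$ --- is not established in the paper, is not what the proof of Theorem B uses, and does not follow from the definitions. The obstruction is that the Frobenius functor multiplies degrees by $p$: for a bidegree-preserving structure map $\theta\colon M\to F_R(M)$ with $|r'\otimes m|=|r'|+p|m|$, the $x$-degree-$a$ part of $F_R(M)$ is built from all slices $M'_j$ with $pj\le a$ (concretely $F_R(M)_{(a,\ast)}\cong\bigoplus_{pj\le a}x^{a-pj}F_T(M'_j)$-type pieces), so $\theta$ does not restrict to a map $M'_a\to F_T(M'_a)$, and a generating morphism of $M$ does not ``descend to the slices.'' This is precisely the difference from the characteristic-zero picture, where a slice is a module over the degree-zero part of the Weyl algebra; for $F$-modules there is no analogous restriction, and what the paper actually has (Theorem \ref{F-finite}, from \cite{TP-koszul}) is that the Koszul homologies $H_i(x_1,\ldots,x_r;M)$ --- subquotients such as $M/\underline{x}M$ and $(0:_M\underline{x})$, not graded slices --- are $F$-finite over the quotient ring. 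Without the slicing fact, both of your two-step reductions collapse, so the implications (b)$\Rightarrow$(a) are not proved.

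For comparison, the paper's route is different in both steps. The field case (Theorem \ref{Rigidity for K-2}) is proved by combining the Eulerian $\D$-module structure (Theorem \ref{d-mod}) with the vanishing of $H_0(\underline{x},M)$ and $H_m(\underline{x},M)$ in nonzero $x$-degree and of $H_0(\underline{y},M)$, $H_n(\underline{y},M)$ in nonzero $y$-degree (Theorem \ref{Koszul vanish for underline x and y}); the four resulting exact sequences let one walk nonvanishing from $(a_0,b_0)$ to neighbouring bidegrees, first horizontally, then vertically --- this replaces your appeal to one-variable rigidity applied to slices. The general regular base $A$ is then handled not by invoking \cite{TP-koszul} over $S$ and $T$ directly, but by the reduction of Remark \ref{stan}: localize at a minimal prime of a nonzero component, complete, enlarge the residue field to an infinite field, and take the top Koszul homology $V=H_g(t_1,\ldots,t_g;L)$ in the base variables, which is $F$-finite over a polynomial ring over a field and still has the relevant component nonzero; then apply Theorem \ref{Rigidity for K-2} to $V$ and pull the conclusion back through $V\subseteq L=M\otimes_A C$. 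If you want to salvage your outline, you must either prove the slicing claim (which I do not believe is available) or replace it, as the paper does, by the Koszul-homology exact sequences together with the reduction to the field case.
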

 By $\mu_i(P,M_{(a,b)})$, we denote the $i$-th Bass number of $M_{(a,b)}$ for some prime ideal $P$ of $A$. We also investigate  how the finiteness of $\mu_i(P,M_{(a,b)})$ for some pair $(a,b)$ relates to the finiteness of Bass numbers of other components. This is shown next.
 \begin{maintheoremd}[Theorem \ref{Bass th for 1st quad}]\label{thm:maintheoremd}
     Let the hypothesis be as in \ref{Main setup}. Then
   the following statements are true;
    \begin{enumerate}[\rm (1)]
        \item  If  $\mu_j(P,M_{(a,b)})$ is finite for some $(a,b)\in \mathbb{Z}^2$ such that $(a,b)\geq(0,0)$, then $ \mu_j(P,M_{(r,s)})$ is finite for all $(r,s)\geq (0,0)$. 
        \item  If $\mu_j(P,M_{(a,b)})$ is finite for some $(a,b)\in \mathbb{Z}^2$ such that $a<0$ and $b\geq 0$, then $ \mu_j(P,M_{(r,s)})$ is finite for all $(r,s)\in\mathbb{Z}^2$ with $r<0$ and $s\geq 0$.
        \item If  $\mu_j(P,M_{(a,b)})$ is finite for some $(a,b)\in \mathbb{Z}^2$ such that $(a,b)<(0,0)$, then $ \mu_j(P,M_{(r,s)})$ is finite for all $(r,s)<(0,0)$.
        \item  If  $\mu_j(P,M_{(a,b)})$ is finite for some  $(a,b)\in \mathbb{Z}^2$ such that $a\geq 0$ and $b<0$, then $ \mu_j(P,M_{(r,s)})$ is finite for all $(r,s)\in \mathbb{Z}^2$ with $r\geq 0$ and $s<0$.
        \end{enumerate}
\end{maintheoremd}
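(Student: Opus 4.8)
The plan is to deduce Theorem~\ref{Bass th for 1st quad} from the same analysis of the bigraded components of an $F_R$-finite $F_R$-module that underlies Theorems~\ref{Vanishing of M-A}, \ref{Rigidity for A-1} and \ref{Rigidity for A-2}; the only genuinely new input is that the Frobenius functor over the \emph{regular} base $A$ does not change Bass numbers. Since $\mu_j(P,N)=\mu_j(PA_P,N_P)$ for any $A$-module $N$, and since replacing $A$ by $A_P$ preserves the bigrading and $F_R$-finiteness with $(M_P)_{(a,b)}=(M_{(a,b)})_P$, I may assume throughout that $(A,\mathfrak m,k)$ is regular local and $P=\mathfrak m$.

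The first step is a lemma: for every $A$-module $N$, every $j$ and every $e\ge 0$ one has $\mu_j(\mathfrak m,F_A^e(N))=\mu_j(\mathfrak m,N)$, where $F_A$ denotes the Frobenius functor of $A$. Indeed $\phi\colon A\to A$ is flat by Kunz's theorem, and it is a local homomorphism whose closed fibre is $A/\phi(\mathfrak m)A=A/\mathfrak m^{[p]}$. As $\mathfrak m$ is generated by a regular sequence, so is $\mathfrak m^{[p]}$, so $A/\mathfrak m^{[p]}$ is a complete intersection, hence $0$-dimensional Gorenstein, and therefore its Bass numbers over itself are concentrated in degree $0$. The standard formula for Bass numbers along a flat local homomorphism (whose spectral sequence degenerates because the fibre contributes only in degree $0$) then collapses to $\mu_j(\mathfrak m,F_A(N))=\mu_j(\mathfrak m,N)$, and iterating gives the claim for all $e$.

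The second step invokes the structural description of the components of $F_R^e(M)$ that the paper sets up for Theorems~\ref{Vanishing of M-A}--\ref{Rigidity for A-2}. Applied to the isomorphism $M\cong F_R^e(M)$, it should provide, for each $(a,b)$ lying in one of the four regions and each $e$ large in terms of $(a,b)$, split $A$-linear injections
\[
M_{(a,b)}\ \hookrightarrow\ \bigoplus_{(c,d)\in S}F_A^e\big(M_{(c,d)}\big)^{N}\qquad\text{and}\qquad F_A^e\big(M_{(c,d)}\big)\ \hookrightarrow\ M_{(a,b)}^{N'}\ \ \big((c,d)\in S\big),
\]
where $S$ is a \emph{fixed finite family of components attached to that region and independent of $(a,b)$} --- concretely, what the index set $\{(c,d):p^ec\le a,\ p^ed\le b,\ M_{(c,d)}\ne 0\}$ stabilises to as $e\to\infty$. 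Granting this, split injections give $\mu_j(\mathfrak m,M_{(a,b)})\le N\sum_{(c,d)\in S}\mu_j(\mathfrak m,F_A^e(M_{(c,d)}))$ and, for each $(c,d)\in S$, $\mu_j(\mathfrak m,F_A^e(M_{(c,d)}))\le N'\mu_j(\mathfrak m,M_{(a,b)})$; by the lemma the $F_A^e$'s are invisible, so for every $(a,b)$ in the region the finiteness of $\mu_j(\mathfrak m,M_{(a,b)})$ is equivalent to the simultaneous finiteness of $\mu_j(\mathfrak m,M_{(c,d)})$ over all $(c,d)\in S$, a condition not involving $(a,b)$. This yields (1)--(4), the four regions being $\{(a,b)\ge(0,0)\}$, $\{a<0,\ b\ge 0\}$, $\{(a,b)<(0,0)\}$, $\{a\ge 0,\ b<0\}$, which partition $\mathbb Z^2$.

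The hard part is exactly this second step: one must show that inside each region every component is controlled --- up to split direct summands and applications of $F_A$ --- by the \emph{same} finite reference family, and that each member of the family genuinely occurs. In Theorems~\ref{Rigidity for A-1}--\ref{Rigidity for A-2} it is precisely this phenomenon that forces the thresholds $-m$ and $-n$ to appear and makes a nonzero component in a transitional strip drag along the whole region; for Bass numbers the bookkeeping is cleaner because the lemma of the first step erases the Frobenius twists, which is why the four regions here form an honest partition with no transitional strips. A secondary technical point: $M$ need not be bounded below (local cohomology modules are typically unbounded in one of the two directions), so the direct sums above may be infinite, and the argument should therefore be phrased around ``each member of $S$ is a split summand of $M_{(a,b)}$, and conversely'' rather than around a literal finite decomposition of $M_{(a,b)}$.
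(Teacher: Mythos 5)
Your reduction to $A$ local with $P=\mathfrak{m}$ is fine, and the lemma you need in the first step (the Frobenius functor of a regular local ring preserves Bass numbers) is correct, although for the non-finitely generated modules occurring here you should justify it via $F_A(E_A(A/Q))\cong E_A(A/Q)$ and preservation of minimal injective resolutions (as in \cite{Lyu-Fmod}) rather than by the flat base change formula for Bass series, which is usually proved for finitely generated modules. Moreover the structural input you postulate in the second step exists in a form stronger than you ask for: decomposing $^{e}R$ over $R$ by the monomials $x^{\alpha}y^{\beta}$ with $0\le\alpha_i,\beta_j\le p^e-1$ and using $M\cong F_R^e(M)$, one gets an honest finite direct sum of $A$-modules $M_{(a,b)}\cong\bigoplus_{(c,d)}F_A^e\bigl(M_{(c,d)}\bigr)^{n^{(e)}_{c,d}}$, the sum running over the $(c,d)$ with $p^ec\le a\le p^ec+m(p^e-1)$ and $p^ed\le b\le p^ed+n(p^e-1)$ (your description of the index set omits these upper bounds, without which it is infinite), so no splitting or infinite-sum issues arise. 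For region (1) this does work: for $(a,b)\ge(0,0)$ and $e\gg0$ the index set stabilises to $\{-m<c\le 0\}\times\{-n<d\le 0\}$ with all multiplicities positive, independently of $(a,b)$, and your argument proves (1).

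The genuine gap is the uniformity claim for the other three regions, which you yourself flag as the hard part and do not prove; as stated it is false. The stable index set is \emph{not} constant on the region $\{a<0,\,b\ge0\}$: in the first coordinate it is $\{-m+1,\ldots,-1\}$ when $-m<a<0$ but $\{-m,\ldots,-1\}$ when $a\le-m$ (and similarly in the second coordinate for parts (3) and (4)). Hence if the witness $(a,b)$ with $\mu_j(P,M_{(a,b)})<\infty$ lies in the strip $-m<a<0$, your decomposition yields finiteness only at the components with $-m<c\le-1$, and gives no control over the components with $c=-m$, which do occur in the decomposition of $M_{(r,s)}$ as soon as $r\le-m$; so parts (2)--(4) are not established, and the Frobenius decomposition alone cannot supply the missing step (applying it to $M_{(-m,d)}$ itself only reproduces that component on the right-hand side). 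This cross-strip propagation is exactly what the paper obtains by a different mechanism: using Lemma \ref{Lyu result on bass number} together with Proposition \ref{Injective satisfy} to write $\mu_j(P,M_{(a,b)})=\mu_0(P,H^j_P(M_{(a,b)}))$, then the construction of Remark \ref{stan} (localization, completion, passage to an infinite residue field, Koszul homology in $t_1,\ldots,t_g$) to identify $\mu_j(P,M_{(a,b)})$ with $\dim_K V_{(a,b)}$ for an $F$-finite module $V$ over a polynomial ring over a field, and finally Theorem \ref{length as k vs}, proved by induction on the number of variables via the Koszul homology exact sequences in $x_1$ and $y_1$, which move one degree at a time and cross the lines $a=-m$, $b=-n$ without obstruction. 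To repair your approach you would need an analogous one-step mechanism (for instance, combining your decomposition with those Koszul sequences), at which point it essentially reduces to the paper's proof.
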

We also prove several finiteness and stability properties concerning the asymptotic primes of $M$.
 \begin{maintheoreme}[Theorem \ref{Associated prime result}]\label{thm:maintheoreme}
	 With hypothesis as in \ref{Main setup}, the following statements are true; 
	
	\begin{enumerate}[\rm (1)]
		\item
		$\bigcup_{(a,b) \in \mathbb{Z}^2} \Ass_A M_{(a,b)}   $ is a finite set.
		\item
		$\Ass_A M_{(a,b)} = \Ass_A M_{(0,0)}$ for all $(a,b) \geq (0,0)$.
		\item
		$\Ass_A M_{(a,b)} = \Ass_A M_{(-m,0)}$ for all $(a,b) \in \mathbb{Z}^2$ with $a \leq -m$ and $b \geq 0$.
		\item
		$\Ass_A M_{(a,b)} = \Ass_A M_{(-m,-n)}$ for all $(a,b)\leq (-m,-n)$.
		\item
		$\Ass_A M_{(a,b)} = \Ass_A M_{(0,-n)}$ for all $(a,b) \in \mathbb{Z}^2$ with $a \geq 0$ and $b \leq -n$.
	\end{enumerate}
 \end{maintheoreme}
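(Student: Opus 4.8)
The plan is to deduce everything from the rigidity statements of Theorem~B, by means of a single elementary translation of ``$\mathfrak{p}$ is an associated prime of a component'' into ``a component of an auxiliary $F$-finite $F$-module is nonzero''. Fix a prime $\mathfrak{p}$ of $A$. First I would localize the whole situation at $\mathfrak{p}$: the ring $A_{\mathfrak{p}}$ is again regular and contains a field of characteristic $p$, $R_{\mathfrak{p}}=A_{\mathfrak{p}}[x_1,\dots,x_m,y_1,\dots,y_n]$, and since the Frobenius functor commutes with localization, $M_{\mathfrak{p}}$ is again a bigraded $F_{R_{\mathfrak{p}}}$-finite $F_{R_{\mathfrak{p}}}$-module with $(M_{\mathfrak{p}})_{(a,b)}=(M_{(a,b)})_{\mathfrak{p}}$; moreover $\mathfrak{p}\in\Ass_A M_{(a,b)}$ if and only if $\mathfrak{p}A_{\mathfrak{p}}\in\Ass_{A_{\mathfrak{p}}}\big((M_{\mathfrak{p}})_{(a,b)}\big)$. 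Put $N:=H^{0}_{\mathfrak{p}R_{\mathfrak{p}}}(M_{\mathfrak{p}})$. The ideal $\mathfrak{p}R_{\mathfrak{p}}$ is generated by finitely many elements of bidegree $(0,0)$, so by Lyubeznik's theory \cite{Lyu-Fmod} $N$ is again a bigraded $F_{R_{\mathfrak{p}}}$-finite $F_{R_{\mathfrak{p}}}$-module, and $N_{(a,b)}=\Gamma_{\mathfrak{p}A_{\mathfrak{p}}}\big((M_{\mathfrak{p}})_{(a,b)}\big)$. Since a nonzero $\mathfrak{p}A_{\mathfrak{p}}$-power-torsion module has nonzero socle, this gives the key equivalence
\[
\mathfrak{p}\in\Ass_A M_{(a,b)}\ \Longleftrightarrow\ N_{(a,b)}\neq 0,
\]
so that Theorem~B applied to $N$ over $R_{\mathfrak{p}}$ governs how $\Ass_A M_{(a,b)}$ varies with $(a,b)$.

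With this in hand, parts (2)--(5) are immediate. For part (2), take $(a,b)\ge(0,0)$ and apply Theorem~B(1) to $N$: the equivalence of conditions (a) and (c) there (used with $(a_1,b_1)=(0,0)$ and then with $(a_1,b_1)=(a,b)$) yields $N_{(a,b)}\neq 0\iff N_{(0,0)}\neq 0$, hence $\mathfrak{p}\in\Ass_A M_{(a,b)}\iff\mathfrak{p}\in\Ass_A M_{(0,0)}$; as $\mathfrak{p}$ was arbitrary, $\Ass_A M_{(a,b)}=\Ass_A M_{(0,0)}$ for all $(a,b)\ge(0,0)$. Parts (3), (4) and (5) follow in exactly the same way from parts (2), (3) and (4) of Theorem~B, with reference components $M_{(-m,0)}$, $M_{(-m,-n)}$ and $M_{(0,-n)}$ respectively---each of these base points lies in the region in question and so is a legitimate value of the ``some component'' in the corresponding part of Theorem~B.

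For part (1), the same translation (now without fixing $(a,b)$) shows that $\mathfrak{p}\in\bigcup_{(a,b)}\Ass_A M_{(a,b)}$ precisely when $N=H^{0}_{\mathfrak{p}R_{\mathfrak{p}}}(M_{\mathfrak{p}})\neq 0$. From here I would argue with ordinary commutative algebra only: $\Gamma_{\mathfrak{p}R_{\mathfrak{p}}}(M_{\mathfrak{p}})\neq 0$ iff some $\mathfrak{Q}\in\Ass_{R_{\mathfrak{p}}}(M_{\mathfrak{p}})$ contains $\mathfrak{p}R_{\mathfrak{p}}$; by flat base change $\Ass_{R_{\mathfrak{p}}}(M_{\mathfrak{p}})=\{\mathfrak{Q}R_{\mathfrak{p}}:\mathfrak{Q}\in\Ass_R M,\ \mathfrak{Q}\cap A\subseteq\mathfrak{p}\}$; and $\mathfrak{p}R_{\mathfrak{p}}\subseteq\mathfrak{Q}R_{\mathfrak{p}}$, combined with $\mathfrak{Q}\cap A\subseteq\mathfrak{p}$, forces $\mathfrak{Q}\cap A=\mathfrak{p}$. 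Hence
\[
\bigcup_{(a,b)\in\mathbb{Z}^{2}}\Ass_A M_{(a,b)}=\{\,\mathfrak{Q}\cap A:\mathfrak{Q}\in\Ass_R M\,\},
\]
and the right-hand side is finite because $M$ is $F_R$-finite, so $\Ass_R M$ is finite by \cite{Lyu-Fmod}. The inclusion $\supseteq$ is in any case trivial, since an associated prime $\mathfrak{Q}=\ann_R(\xi)$ with $\xi$ bihomogeneous of bidegree $(a,b)$ has $\mathfrak{Q}\cap A=\ann_A(\xi)\in\Ass_A M_{(a,b)}$.

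The one thing that really needs to be verified carefully---and which I regard as the main (modest) obstacle---is the correct invocation of Lyubeznik's machinery in the bigraded, localized setting: that $M_{\mathfrak{p}}$ inherits the structure of a bigraded $F_{R_{\mathfrak{p}}}$-finite $F_{R_{\mathfrak{p}}}$-module with the expected graded pieces, and that $H^{0}$ of such a module with respect to a bigraded ideal generated in bidegree $(0,0)$ is again one. Both follow from the compatibility of the Frobenius functor with localization and with the grading. Everything else is formal: the genuine content, namely the rigidity of the nonvanishing locus of the components, is already isolated in Theorem~B.
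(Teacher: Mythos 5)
Your proposal is correct, but it takes a genuinely different route from the paper's. The paper translates ``$P\in\Ass_A M_{(a,b)}$'' into positivity of the zeroth Bass number $\mu_0(P,M_{(a,b)})$ and reuses the machinery of Section 7: by Remark \ref{stan} and the proof of Theorem \ref{Bass th for 1st quad} (with $j=0$, via Lemma \ref{Lyu result on bass number} and Proposition \ref{Injective satisfy}) one gets $\mu_0(P,M_{(a,b)})=\dim_K V_{(a,b)}$, where $V$ is an $F$-finite $F$-module over a polynomial ring over an infinite field obtained by localizing at $P$, completing, enlarging the residue field and taking Koszul homology in $t_1,\ldots,t_g$; the field-case rigidity Theorem \ref{Rigidity for K-1} applied to $V$ then gives parts (2)--(5), and part (1) is finiteness of $\Ass_R M$ plus contraction via Proposition \ref{ass-contraction}, essentially as in your argument. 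You instead form $N=H^0_{\mathfrak{p}R_{\mathfrak{p}}}(M_{\mathfrak{p}})$, prove $\mathfrak{p}\in\Ass_A M_{(a,b)}\iff N_{(a,b)}\neq 0$ by a socle argument (valid since $\mathfrak{p}A_{\mathfrak{p}}$ is finitely generated and maximal), and apply the general-base rigidity Theorem \ref{Rigidity for A-1} (Theorem B) directly to $N$ over $A_{\mathfrak{p}}[x_1,\ldots,x_m,y_1,\ldots,y_n]$, with the base points $(0,0)$, $(-m,0)$, $(-m,-n)$, $(0,-n)$ chosen inside the respective regions. This is a clean alternative: it uses Theorem B as a black box and avoids the Bass-number lemmas entirely for this theorem; its only extra obligation is the one you flag, namely that $M_{\mathfrak{p}}$ and $\Gamma_{\mathfrak{p}R_{\mathfrak{p}}}(M_{\mathfrak{p}})$ are again bigraded $F$-finite $F$-modules with the expected graded pieces, which follows from Lyubeznik's closure of $F$-finite modules under flat base change and local cohomology together with the fact that $\mathfrak{p}R_{\mathfrak{p}}$ is generated in bidegree $(0,0)$. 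The paper's detour through $\mu_0$ costs nothing extra there because that machinery is needed anyway for Theorems D and F; your route is the more economical one if Theorem E is taken in isolation. Both arguments coincide on part (1).
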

Let $E$ be an $A$-module. By $\injdim_A E$, we denote the injective dimension of $E$ as $A$-module. By $\dim_AE$, we mean the dimension of the support as a subspace of $\Spec(A)$. Our result regarding injective dimension and dimension of bigraded components is the following:
\begin{maintheoremf}[Theroem \ref{injdim-and-dim-gen}]\label{thm:maintheoremf}
With hypotheses as in \ref{Main setup}, the following statements are true; 
	\begin{enumerate}[\rm (1)]
		\item
		$\injdim M_{(a,b)} \leq \dim M_{(a,b)}$ for all $(a,b) \in \mathbb{Z}^2$.
		\item
		$\injdim M_{(a,b)} = \injdim M_{(0,0)}$ for all $(a,b)\geq (0,0)$.
		\item $\injdim M_{(a,b)} = \injdim M_{(-m,0)}$ for all $(a,b) \in \mathbb{Z}^2$ with $a \leq -m$ and $b \geq 0$.
		\item $\injdim M_{(a,b)} = \injdim M_{(-m,-n)}$ for all $(a,b)\leq (-m,-n)$.
		\item $\injdim M_{(a,b)} = \injdim M_{(0,-n)}$ for all $(a,b) \in \mathbb{Z}^2$ with $a \geq 0$ and $b \leq -n$.
		\item
		$\dim M_{(a,b)} = \dim M_{(0,0)}$ for all $(a,b)\geq (0,0)$.
		\item
		$\dim M_{(a,b)} = M_{(-m,0)}$ for all $(a,b) \in \mathbb{Z}^2$ with $a \leq -m$ and $b \geq 0$.
		\item
	    $\dim M_{(a,b)} = M_{(-m,-n)}$ for all $(a,b)\leq (-m,-n)$.
	    \item
	    $\dim M_{(a,b)} = M_{(0,-n)}$ for all $(a,b) \in \mathbb{Z}^2$ with $a \geq 0$ and $b \leq -n$.
	\end{enumerate}
\end{maintheoremf}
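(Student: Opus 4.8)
The plan is to split Theorem~\ref{injdim-and-dim-gen} into the dimension statements $(6)$--$(9)$, the inequality $(1)$, and the injective-dimension equalities $(2)$--$(5)$. The dimension statements reduce to Theorem~\ref{Associated prime result}: for any nonzero $A$-module $N$ one has $\dim_A N=\max\{\dim A/\mathfrak p:\mathfrak p\in\Ass_A N\}$, since the minimal primes of $\Supp_A N$ lie in $\Ass_A N$; by Theorem~\ref{Associated prime result}$(1)$ this set is finite, so the maximum is attained. Thus $\dim_A M_{(a,b)}$ is completely controlled by $\Ass_A M_{(a,b)}$ (with the convention $\dim 0=-\infty$, which is consistent here because, by Theorem~\ref{Rigidity for A-1}, $M_{(a,b)}$ is zero on one of the four regions exactly when the corresponding reference component is zero). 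Hence $(6)$--$(9)$ follow at once from parts $(2)$--$(5)$ of Theorem~\ref{Associated prime result}.

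For $(1)$, there is nothing to prove if $M_{(a,b)}=0$. Otherwise I use the structural description of the components established earlier, according to which each $M_{(a,b)}$ is, as an $A$-module, an $F_A$-finite $F_A$-module (or, at worst, a finite direct sum of modules $F_A(N)$ with $N$ of this kind). Over the regular ring $A$ the Frobenius functor $F_A$ is exact and faithfully flat, preserves injective modules, and preserves supports; consequently $\injdim_A F_A(N)=\injdim_A N$ and $\dim_A\Supp_A F_A(N)=\dim_A\Supp_A N$ for every $A$-module $N$, and injective dimension and support-dimension of a finite direct sum are the maxima over the summands. Combining this with Lyubeznik's inequality $\injdim_A N\le\dim_A\Supp_A N$ for $F_A$-finite $F_A$-modules $N$ over a regular ring of characteristic $p$ \cite{Lyu-Fmod}, and with the defining identification $\dim_A\Supp_A M_{(a,b)}=\dim_A M_{(a,b)}$, gives $(1)$.

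The equalities $(2)$--$(5)$ carry the real content, and the plan is to mirror the proof of Theorem~\ref{Associated prime result}. The mechanism is that the Frobenius functor multiplies bidegrees by $p$, so the structural isomorphism $M\cong F_R(M)$ restricts, in each bidegree, to an isomorphism of $A$-modules
\[
 M_{(a,b)}\ \cong\ \bigoplus F_A\!\bigl(M_{(\lfloor(a-s)/p\rfloor,\ \lfloor(b-t)/p\rfloor)}\bigr),
\]
a \emph{finite} direct sum of copies with $0\le s\le m(p-1)$ and $0\le t\le n(p-1)$, the summands at non-integral bidegrees being zero. Since $\injdim_A F_A(\,\cdot\,)=\injdim_A(\,\cdot\,)$ and injective dimension passes through finite direct sums as a maximum, iterating this isomorphism pins $\injdim_A M_{(a,b)}$ to the value at the reference component of whichever region contains $(a,b)$, once one checks that no summand of strictly larger injective dimension intervenes and that the reference component itself occurs among the summands; Theorems~\ref{Rigidity for A-1}, \ref{Rigidity for A-2} and~\ref{Bass th for 1st quad} are exactly what is needed for those two checks. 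The main obstacle is this last point: for $(a,b)\ge(0,0)$ the ``carry'' summands $M_{(\lfloor(a-s)/p\rfloor,\,*)}$ may have a coordinate that is negative (though bounded below by $-m$, respectively $-n$), so the four regions are not closed under the recursion. I would handle this with the dichotomy of Theorem~\ref{Rigidity for A-2}: either every ``middle'' component $M_{(a_0,b_0)}$ with $(-m,-n)<(a_0,b_0)<(0,0)$ vanishes --- in which case no carry term originating in the middle can affect a corner, and the recursion closes on each corner region --- or some middle component is nonzero, in which case \emph{every} component of $M$ is nonzero and the argument runs uniformly over $\mathbb Z^2$. Bookkeeping the carries in these two cases is precisely what makes the argument no shorter than that of Theorem~\ref{Associated prime result}.
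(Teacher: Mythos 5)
Your reduction of the dimension statements (6)--(9) to Theorem \ref{Associated prime result} is correct (minimal primes of the support are associated, so $\dim_A M_{(a,b)}$ is determined by $\Ass_A M_{(a,b)}$); this is a valid alternative to the paper's argument, which instead localizes at a prime $P$, notes that $M_P$ is again a bigraded $F$-finite module over $A_P[x_1,\ldots,x_m,y_1,\ldots,y_n]$, and applies Theorem \ref{Rigidity for A-1} to see that the supports of the components agree on each region. The genuine problems are in your treatment of (1) and (2)--(5). For (1), the claim that each $M_{(a,b)}$ ``is, as an $A$-module, an $F_A$-finite $F_A$-module (or, at worst, a finite direct sum of modules $F_A(N)$ with $N$ of this kind)'' is not established anywhere in the paper and is not correct as stated: the Frobenius-degree decomposition you write down expresses $M_{(a,b)}$ as a finite direct sum of $F_A(M_{(c,d)})$ for \emph{other components} $M_{(c,d)}$, and these components are not themselves $F_A$-modules (the structure isomorphism of $M$ does not restrict to single bidegrees), so Lyubeznik's inequality $\injdim N\le\dim\Supp N$ for $F$-finite $F$-modules cannot be invoked. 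Even granting $\injdim_A F_A(N)=\injdim_A N$ and $\dim F_A(N)=\dim N$ (facts which themselves need proof --- Kunz plus the behaviour of Bass numbers under flat base change, or $F_A(E_A(A/P))\cong E_A(A/P)$ --- and are nowhere in the paper), the decomposition only reduces the inequality for $M_{(a,b)}$ to the same inequality for the components appearing in its decomposition, and iterating cycles back into the bounded middle region (e.g.\ $M_{(0,0)}$ occurs in its own decomposition), so the argument is circular. The paper's proof of (1) is different and short: by Proposition \ref{Injective satisfy} and Lemma \ref{Lyu result on bass number}, $\mu_j(P,M_{(a,b)})=\mu_0(P,H^j_P(M_{(a,b)}))$, and Grothendieck vanishing kills $H^j_P(M_{(a,b)})$ for $j>\dim M_{(a,b)}$.

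For (2)--(5) your text is a plan rather than a proof, and the decisive step is exactly the one left open: you must show that in the iterated decomposition ``no summand of strictly larger injective dimension intervenes'' and that the reference component effectively occurs, and neither check is actually derived from Theorems \ref{Rigidity for A-1}, \ref{Rigidity for A-2} or \ref{Bass th for 1st quad}; the dichotomy from Theorem \ref{Rigidity for A-2} controls \emph{nonvanishing} of components, not their injective dimensions, so it does not close the carry bookkeeping you acknowledge is missing. The paper's actual mechanism bypasses all of this: from the proof of Theorem \ref{Bass th for 1st quad} one has, for each prime $P$ of $A$ and each $j$, an auxiliary bigraded $F$-finite module $V$ over a polynomial ring over an infinite field with $\mu_j(P,M_{(a,b)})=\dim_K V_{(a,b)}$ for all $(a,b)$; the rigidity theorems then show that, region by region, $\mu_j(P,M_{(a,b)})\neq 0$ if and only if $\mu_j(P,M_{(\mathrm{corner})})\neq 0$, and since injective dimension is the supremum of the $j$ with some nonvanishing $j$-th Bass number, (2)--(5) follow at once. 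You would either need to import that argument or supply the missing Frobenius-decomposition lemma, the $F_A$-invariance of injective dimension, and the full induction on carries; as written, the proposal does not prove (1)--(5).
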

Finally, we highlight an interesting application of our results.
\begin{maintheoremg}[Theorem \ref{Bproj cohen}]
\label{thm:maintheoremg}
Let $(A,\mathfrak{m})$ be regular local containing a field of characteristic $p>0$. Let $R=A[x_1,\ldots,x_m,y_1,\ldots,y_n]$ be standard bigraded polynomial ring over $A$. Consider $S=R/I$ where $I$ is a bi-homogeneous ideal of $R$. Assume that $S$ is equidimensional, $\Bproj(S)\neq \phi$ and Cohen-Macaulay. Then, $H^j_I(R)_{(m,n)}=0$ for all $(m,n)\geq (0,0)$ and all $j>\height I$.
\end{maintheoremg}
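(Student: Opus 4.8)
The plan is to show that, for $j>\height I$, the local cohomology module $H^j_I(R)$ is supported only on the locus on which $\Bproj(S)$ is not defined, and then to convert that support bound into the vanishing of the non-negative bigraded components by means of the structure theory of $F_R$-modules. To set up, recall first that $M:=H^j_I(R)$ is a bigraded $F_R$-finite $F_R$-module: the bigrading is inherited from $R$ because $I$ is bi-homogeneous, and $F_R$-finiteness holds because $R$ is regular \cite{Lyu-Fmod}; hence Theorems~\ref{Vanishing of M-A}--\ref{injdim-and-dim-gen} are available for $M$. Put $c=\height I$, $\mathfrak{a}=(x_1,\dots,x_m)$ and $\mathfrak{b}=(y_1,\dots,y_n)$, so that $\Bproj(S)$ consists of the bi-homogeneous primes of $S$ that contain neither $\mathfrak{a}$ nor $\mathfrak{b}$, and the hypothesis ``$\Bproj(S)$ is Cohen--Macaulay'' says exactly that $S_{\mathfrak{p}}$ is Cohen--Macaulay for every bi-homogeneous prime $\mathfrak{p}\supseteq I$ with $\mathfrak{p}\not\supseteq\mathfrak{a}$ and $\mathfrak{p}\not\supseteq\mathfrak{b}$.

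The first main step is to prove $\Supp_R M\subseteq V(\mathfrak{a})\cup V(\mathfrak{b})$. Let $\mathfrak{p}\in\Spec R$ with $\mathfrak{p}\not\supseteq\mathfrak{a}$ and $\mathfrak{p}\not\supseteq\mathfrak{b}$; since local cohomology commutes with localization it suffices to see $H^j_{I_{\mathfrak{p}}}(R_{\mathfrak{p}})=0$. This is clear if $\mathfrak{p}\not\supseteq I$, so assume $\mathfrak{p}\supseteq I$. The bi-homogeneous core $\mathfrak{p}^{\ast}\subseteq\mathfrak{p}$ is then a prime containing $I$ but neither $\mathfrak{a}$ nor $\mathfrak{b}$ (all three ideals being bi-homogeneous), so $S_{\mathfrak{p}^{\ast}}$, and hence its further localization $S_{\mathfrak{p}}$, is Cohen--Macaulay. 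As $S$ is equidimensional and $R$ is a catenary domain, every minimal prime of $I$ has height $c$, so $\height I_{\mathfrak{p}}=c$; since $R_{\mathfrak{p}}$ is regular, Auslander--Buchsbaum gives $\projdim_{R_{\mathfrak{p}}}S_{\mathfrak{p}}=c$. Flatness of the Frobenius on the regular ring $R_{\mathfrak{p}}$ (Kunz) then forces $\projdim_{R_{\mathfrak{p}}}\bigl(R_{\mathfrak{p}}/I_{\mathfrak{p}}^{[q]}\bigr)=c$ for every $q=p^{e}$, so $\Ext^{j}_{R_{\mathfrak{p}}}(R_{\mathfrak{p}}/I_{\mathfrak{p}}^{[q]},R_{\mathfrak{p}})=0$ for all $q$ when $j>c$, whence $H^{j}_{I_{\mathfrak{p}}}(R_{\mathfrak{p}})=\varinjlim_{e}\Ext^{j}_{R_{\mathfrak{p}}}(R_{\mathfrak{p}}/I_{\mathfrak{p}}^{[p^{e}]},R_{\mathfrak{p}})=0$. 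In particular $M$ is $(\mathfrak{a}\mathfrak{b})$-torsion.

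The second main step uses the short exact sequence $0\to\Gamma_{\mathfrak{a}}(M)\to M\to M/\Gamma_{\mathfrak{a}}(M)\to 0$. The submodule $\Gamma_{\mathfrak{a}}(M)=H^{0}_{\mathfrak{a}}(M)$ is $\mathfrak{a}$-torsion, while the quotient is $\mathfrak{b}$-torsion (an element of $M$ killed by $(\mathfrak{a}\mathfrak{b})^{k}=\mathfrak{a}^{k}\mathfrak{b}^{k}$ is carried into $\Gamma_{\mathfrak{a}}(M)$ by $\mathfrak{b}^{k}$), and both are again bigraded $F_R$-finite $F_R$-modules, since that category is abelian and closed under $H^{i}_{\mathfrak{a}}(-)$. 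One then invokes the structural fact that an $\mathfrak{a}$-torsion bigraded $F_R$-finite $F_R$-module is concentrated in bidegrees $(a,b)$ with $a\le -m$ --- such a module is of the form $H^{m}_{\mathfrak{a}}(R)\otimes_{R/\mathfrak{a}}P$ for a suitable $F_{R/\mathfrak{a}}$-finite $F_{R/\mathfrak{a}}$-module $P$, and $H^{m}_{\mathfrak{a}}(R)$ lives in $x$-degrees $\le -m$ while $P$ contributes no $x$-degree --- together with the symmetric statement that a $\mathfrak{b}$-torsion one lives in bidegrees with $b\le -n$. Substituting these into the short exact sequence gives $M_{(a,b)}=0$ for all $a\ge 0$ and $b\ge 0$; that is, $H^{j}_{I}(R)_{(a,b)}=0$ for all $(a,b)\ge(0,0)$ and all $j>\height I$. (Alternatively, once a single component in the first quadrant is seen to vanish, Theorem~\ref{Rigidity for A-1}(1) supplies the rest.)

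The step I expect to be the main obstacle is the structural input just used: that an $\mathfrak{a}$-torsion (resp.\ $\mathfrak{b}$-torsion) bigraded $F_R$-finite $F_R$-module is concentrated in $x$-degrees $\le -m$ (resp.\ $y$-degrees $\le -n$). This does not follow formally from Theorems~\ref{Vanishing of M-A}--\ref{injdim-and-dim-gen}; it must be extracted from the description of $F_R$-modules supported on the coordinate ideal $\mathfrak{a}$ --- the bigraded, relative analogue of Lyubeznik's analysis of $F$-modules supported at the maximal ideal of a regular local ring --- exactly as in the single-graded treatment of \cite{TP-koszul}. Care is also needed in the support step to justify that ``$\Bproj(S)$ Cohen--Macaulay'' can be read off on the localizations $S_{\mathfrak p}$ and that equidimensionality of $S$ indeed yields $\height I_{\mathfrak p}=c$, but these are routine once the $F$-module structural fact is in hand.
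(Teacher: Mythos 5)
Your strategy is genuinely different from the paper's, and in outline it is viable. The paper never localizes $H^j_I(R)$ at points of $\Bproj(S)$: it introduces $\gdepth(S)$ and the invariants $\Bfg(S)\le\Bfgt(S)$, proves $\gdepth(S)=\min_{P\in\Bproj(S)}\{\depth S_P+\dim S/P\}$ and $\Bfg(S)=\gdepth(S)$ via graded local duality (Theorems \ref{G depth formula}, \ref{Bfg=gdepth}, Corollary \ref{bfg=dims}), and then transfers the vanishing of components of the Frobenius-stable part $H^i_{\mathfrak{n}}(S)^*$ to $H^{\dim R-i}_I(R)$ through a bigraded root and Lemma \ref{app-lemma} (Theorem \ref{app-2}). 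Your Peskine--Szpiro/Kunz argument replaces all of that duality machinery by showing directly that every associated prime of $H^j_I(R)$ contains $R_{++}$ for $j>\height I$; this is exactly condition (4) of Lemma \ref{app-lemma}, whose implications (4)$\Rightarrow$(1)$\Rightarrow$(2) then finish the proof. So your first step is a more direct substitute for the paper's duality argument, while your second step is in effect an attempted reproof of the hardest implication of Lemma \ref{app-lemma}.

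Two points need repair. First, ``$S_{\mathfrak{p}^{*}}$ is Cohen--Macaulay, and hence its further localization $S_{\mathfrak{p}}$'' is backwards: since $\mathfrak{p}^{*}\subseteq\mathfrak{p}$, it is $S_{\mathfrak{p}^{*}}$ that is a localization of $S_{\mathfrak{p}}$, so Cohen--Macaulayness does not pass in the direction you claim. Either invoke a Matijevic--Roberts type theorem (in its $\mathbb{Z}^2$-graded form), or, simpler, avoid non-bigraded primes altogether: the associated primes of the bigraded module $H^j_I(R)$ are bigraded and finitely many \cite{Lyu-Fmod}, and it suffices to show that each of them contains $R_{++}$, so the hypothesis on $\Bproj(S)$ applies verbatim. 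Second, the structural input that an $\mathfrak{a}$-torsion bigraded $F_R$-finite module is concentrated in $x$-degrees $\le -m$ (and the $\mathfrak{b}$-analogue) is left unproved, and the stronger form $H^m_{\mathfrak{a}}(R)\otimes_{R/\mathfrak{a}}P$ is nowhere available in the paper; as written this is a genuine gap, which you yourself flag. The degree bound you actually use is true and provable with the paper's tools: after reducing to a field base as in Remark \ref{stan}, for each fixed $y$-degree $b$ the slice $\bigoplus_{a} M_{(a,b)}$ is an Eulerian graded $\D(K[x_1,\ldots,x_m])$-module supported at the irrelevant maximal ideal, hence a direct sum of copies of ${}^{*}E(m)$ by Ma--Zhang (cf.\ Theorem \ref{Supp}), and so lives in degrees $\le -m$. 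Alternatively you may simply quote the implication (1)$\Rightarrow$(2) of Lemma \ref{app-lemma}, which is precisely what your second step is designed to reprove; note that the paper establishes it by induction on $\dim A$ together with a prime-avoidance argument, not by a Kashiwara-type structure theorem, precisely because no such relative description of $\mathfrak{a}$-torsion $F$-modules is available off the shelf.
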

We now give a brief summary of the contents of this paper. The paper is organized into ten sections. In Section 2, we introduce the necessary definitions and results that will be referenced throughout the paper. Section 3 extends several results by Ma and Zhang from \cite{Ma} to the bigraded setting, largely following their proof techniques. Section 4 is dedicated to proving the vanishing result \hyperref[thm:maintheorema]{Theorem A}, while Section 5 focuses on proving the rigidity results \hyperref[thm:maintheoremb]{Theorem B}  and \hyperref[thm:maintheoremc]{Theorem C}, both when $A$ is an infinite field of characteristic $p>0$. In the next section we prove \hyperref[thm:maintheorema]{Theorem A}, \hyperref[thm:maintheoremb]{Theorem B} and \hyperref[thm:maintheoremc]{Theorem C}. In section 7, we prove \hyperref[thm:maintheoremd]{Theorem D}. In Sections 8 and 9, we prove \hyperref[thm:maintheoreme]{Theorem E} and \hyperref[thm:maintheoremf]{Theorem F}, respectively. Finally, the last section, Section 10, is dedicated to proving \hyperref[thm:maintheoremg]{Theorem G}.

\section{Preliminaries}
In this section, we discuss few preliminary results that we will use throughout this paper. Let $R$ be a bigraded ring. By $\Bmod(R)$, we denote the category of bigraded $R$-modules.
 \s \textbf{Bigraded Lyubeznik functors:}
Let $A$ be a commutative Noetherian ring and $R=A[x_1,\ldots,x_m,y_1,\ldots,y_n]$ be standard bigraded over $A$, i.e., $\bideg(A)=(0,0)$, $\bideg(x_i)=(1,0)$ and  $\bideg(y_j)=(0,1)$ for all $i$ and $j$. We say $Y$ is bi-homogeneous closed subset of $\Spec(R)$ if $Y=V(f_1,\ldots,f_s)$, where each $f_i$ is bi-homogeneous polynomial in $R$.

 We say $Y$ is a bi-homogeneous locally closed subset of $\Spec(R)$ if $Y=Y''-Y'$, where $Y''$ and $Y'$ are bi-homogeneous closed subset of $\Spec(R)$. We have an exact sequence of functors on $\Bmod(R)$ as follows:
 $$H^i_{Y'}(-)\rightarrow H^i_{Y''}(-)\rightarrow H^i_Y(-)\rightarrow H^{i+1}_{Y'}(-)$$
 A bigraded Lyubeznik functor is composite functor of the form $\mathcal{T}=\mathcal{T}_1\circ \mathcal{T}_2\ldots \mathcal{T}_k$ where $\mathcal{T}_j$ is of the form $H^i_{Y_j}$, where $Y_j$ is bi-homogeneous locally closed subset of $\Spec(R)$ or kernal of any row appearing in the above exact sequence with $Y'=Y_j'$ and $Y''=Y_j''$, where $Y_j'\subseteq Y_j''$ are two bi-homogeneous closed subset of $\Spec(R)$.
    \s \textbf{$F$-modules:} For each integer $e\geq 1$, let $^{e}R$ denote the $R$-bimodule that is same as $R$ as a left $R$-module and whose right $R$-module structure is given by $r'r=r^{p^e}r'$
     for all $r'\in\  ^eR$ and $r\in R$. An $F$-module is an $R$-module $M$ equipped with an $R$-module isomorphism $M\xrightarrow{\theta}F_R(M)= \ ^{1}R\otimes_R M$. If $M$ is a bigraded $R$-module then there is a natural grading on $F_R(M)$ defined by $$|r'\otimes m|=|r'|+p\ |m|$$ for homogeneous elements $r'\in R$ and $m\in M$. An $F$-module $(M,\theta)$ is called bigraded $F$-module if $M$ is bigraded and $\theta$ is bidegree preserving.
     
     \s \textbf{$F_R$-finite, $F_R$-module \cite[2.1]{Lyu-Fmod}:} A generating morphism of an $F$-module $M$ is an $R$-module monomorphism $\beta:U\rightarrow F(U)$, where $U$ is some $R$-module, such that $M$ is the limit of the inductive system in the top row of the commutative diagram
     \[\begin{tikzcd}
	U & {F(U)} & {F^2(U)} & \cdots \\
	{F(U)} & {F^2(U)} & {F^3(U)} & \cdots
	\arrow["\beta", from=1-1, to=1-2]
	\arrow["\beta"', from=1-1, to=2-1]
	\arrow["{F(\beta)}", from=1-2, to=1-3]
	\arrow["{F(\beta)}"', from=1-2, to=2-2]
	\arrow["{F^2(\beta)}", from=1-3, to=1-4]
	\arrow["{F^2(\beta)}"', from=1-3, to=2-3]
	\arrow["{F(\beta)}", from=2-1, to=2-2]
	\arrow["{F^2(\beta)}", from=2-2, to=2-3]
	\arrow["{F^3(\beta)}", from=2-3, to=2-4]
\end{tikzcd}\]

     and $\theta:M\rightarrow F(M)$, the structure isomorphism of $M$, is induced by the vertical arrows of this diagram.

     A $F_R$-module $M$ will be called $F_R$-finite, $F_R$-module if $M$ has a generating morphism $\beta:U\rightarrow F(U)$ with $U$ a finitely generated $R$-module. If in addition, $\beta$ is injective, we call $U$ a root of $M$. If $M$ is bigraded and $\beta$ is bidegree preserving, we say that $M$ is a bigraded $F_R$-finite, $F_R$-module.
     \s Let $R$ be a regular ring of characteristic $p>0$. If $\mathcal{T}$ is a Lyubeznik functor on $R$, then $\mathcal{T}(R)$ is $F_R$-finite, $F_R$-module, see \cite[Corollary 2.14]{Lyu-Fmod}. A similar argument proves that if $R$ is bigraded and $\mathcal{T}$ is a bigraded Lyubeznik functor on $\Bmod(R)$ then $\mathcal{T}(R)$ is bigraded $F_R$-finite, $F_R$-module.
     \s\cite[1.3]{Lyu-Fmod} \label{pi take f finite to f finite} Let $R\xrightarrow{\pi}B$ be a ring homomorphism where $B$ is regular. Let $\pi_{*}^{\prime}:\Mod(A)\rightarrow \Mod(B)$ be the functor $B\otimes_R -$. Lyubeznik constructs an isomorphism of functors $\phi:\pi_{*}^{\prime}\circ F_R\rightarrow F_B\circ \pi_{*}^{\prime}$. Then he constructs $\pi^*$ between the categories of $F$-modules over $R$ and $F$-modules over $B$ as follows 
     \begin{align*}
         \pi^*(M,\theta)&=(\pi_{*}^{\prime}(M),\phi\circ \pi_{*}^{\prime}(\theta))\\ 
     \pi^*(f)&=\pi_{*}^{\prime}(f)
     \end{align*}
     In \cite[Proposition 2.9]{Lyu-Fmod}, it is proved that $\pi^*$ takes $F_R$-finite, $F_R$-modules to $F_B$-finite, $F_B$-modules. 
\s \label{Lyu functor exist} If $A\rightarrow B$ is a flat map then we have flat map of bigraded rings $$R=A[x_1,\ldots,x_m,y_1,\ldots,y_n]\rightarrow B[x_1,\ldots,x_m,y_1,\ldots,y_n]$$ and  $\mathcal{T}$ is a bigraded Lyubeznik functor on $\Bmod(R)$ then there is a bigraded Lyubeznik functor $\mathcal{F}$ on $\Bmod(S)$ with $\mathcal{F}(S)=\mathcal{T}(R)\otimes_RS=\mathcal{T}(R)\otimes_AB$.

    We now outline a few results from \cite{TP-koszul}, which will be instrumental in proving various results throughout this paper.
    \begin{theorem}\cite[Theorem 1.1]{TP-koszul}\label{F-finite}
        Let $K$ be an infinite field of characteristic $p$. Let $R$ be one of the following rings 
        \begin{enumerate}[\rm (i)]
            \item $K[y_1,\ldots,y_d]$
            \item  $K[[y_1,\ldots,y_d]]$
            \item  $A[x_1,\ldots,x_m]$ where $A= K[[y_1,\ldots,y_d]]$
        \end{enumerate}
        Let $M$ be a $F_R$-finite, $F_R$-module module. Fix $r\geq 1$. Then the Koszul homology modules $H_i(y_1,\ldots,y_r;M)$ are $F_{\overline{R}}$-finite $F_{\overline{R}}$-modules where $\overline{R}=R/(y_1,\ldots,y_r)$ and for $i=0,\ldots,r$.
    \end{theorem}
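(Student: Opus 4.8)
The plan is to recognise the Koszul homology as a $\Tor$-functor and transport the $F$-module structure of $M$ through it. Write $\mathbf{y}=y_1,\dots,y_r$ and $\overline{R}=R/(\mathbf{y})$. Since $y_1,\dots,y_r$ are among the (power-series) variables, $\overline{R}$ is again a regular ring of one of the three listed forms, so the theory of $F_{\overline{R}}$-finite $F_{\overline{R}}$-modules applies over it; and $\mathbf{y}$ is an $R$-regular sequence, so the Koszul complex $K_\bullet(\mathbf{y};R)$ is a finite free resolution of $\overline{R}$ over $R$ and hence $H_i(\mathbf{y};N)\cong\Tor_i^R(\overline{R},N)$ for every $R$-module $N$. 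The task is thus to equip $\Tor_i^R(\overline{R},M)$, for $i=0,\dots,r$, with a generating morphism over $\overline{R}$ whose source is a finitely generated $\overline{R}$-module.

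First I would fix a generating morphism $\beta\colon U\to F_R(U)$ of $M$ with $U$ finitely generated, so that $M=\varinjlim\bigl(U\xrightarrow{\beta}F_R(U)\xrightarrow{F_R(\beta)}F_R^2(U)\to\cdots\bigr)$, and use that $\Tor_i^R(\overline{R},-)$ commutes with direct limits to get $\Tor_i^R(\overline{R},M)=\varinjlim_e\Tor_i^R(\overline{R},F_R^e(U))$. The core of the argument is the natural isomorphism
\[
\Tor_i^R\bigl(\overline{R},F_R^e(U)\bigr)\;\cong\;F_{\overline{R}}^{\,e}\bigl(\Tor_i^R(\overline{R},U)\bigr)\qquad(e\ge 0),
\]
which I would prove as follows. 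Take a resolution $P_\bullet\to U$ by finitely generated free $R$-modules. Since $R$ is regular, Frobenius is flat, $F_R^e$ is exact, and $F_R^e(P_\bullet)\to F_R^e(U)$ is again a finite free resolution, so $\Tor_i^R(\overline{R},F_R^e(U))=H_i\bigl(\overline{R}\otimes_R F_R^e(P_\bullet)\bigr)=H_i\bigl(\overline{R}\otimes_R {}^{e}R\otimes_R P_\bullet\bigr)$. A direct inspection of bimodules gives $\overline{R}\otimes_R {}^{e}R\cong {}^{e}\overline{R}$ (both equal $\overline{R}$ with its usual left $\overline{R}$-module structure and with $s\in R$ acting on the right as multiplication by the image of $s^{p^e}$), and this right $R$-action factors through $R\to\overline{R}$, so $\overline{R}\otimes_R {}^{e}R\otimes_R P_\bullet = {}^{e}\overline{R}\otimes_{\overline{R}}(\overline{R}\otimes_R P_\bullet)=F_{\overline{R}}^{\,e}(\overline{R}\otimes_R P_\bullet)$. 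The complex $\overline{R}\otimes_R P_\bullet$ consists of finitely generated free $\overline{R}$-modules with homology $\Tor_\bullet^R(\overline{R},U)$; as $\overline{R}$ is regular, $F_{\overline{R}}^{\,e}$ is exact, and passing to homology yields the displayed isomorphism, functorially in the module variable.

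Feeding $\beta$ into this (and iterating $F_R^e$) identifies the direct system $\{\Tor_i^R(\overline{R},F_R^e(U))\}_e$ with $\{F_{\overline{R}}^{\,e}(V)\}_e$, where $V:=\Tor_i^R(\overline{R},U)=H_i(\mathbf{y};U)$ and the transition maps are $F_{\overline{R}}^{\,e}(\gamma)$ for the induced map $\gamma:=\Tor_i^R(\overline{R},\beta)\colon V\to F_{\overline{R}}(V)$. Since $U$ is finitely generated over the Noetherian ring $R$ and $V$ is a subquotient of $U^{\binom{r}{i}}$ killed by $(\mathbf{y})$, $V$ is a finitely generated $\overline{R}$-module. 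Hence $H_i(\mathbf{y};M)=\varinjlim_e F_{\overline{R}}^{\,e}(V)$ with structure maps induced by $\gamma$, i.e.\ $\gamma$ is a generating morphism of the $F_{\overline{R}}$-module $H_i(\mathbf{y};M)$ with finitely generated source; if $\gamma$ is not injective one replaces $V$ by its image in $F_{\overline{R}}^{\,e}(V)$ for $e\gg 0$ — the kernels stabilise since $\overline{R}$ is Noetherian — to obtain an injective generating morphism with the same limit. This proves $H_i(\mathbf{y};M)$ is $F_{\overline{R}}$-finite for $i=0,\dots,r$; for $i=0$ it recovers the fact that $\overline{R}\otimes_R M$ is $F_{\overline{R}}$-finite, which also follows by iterating Lyubeznik's $\pi^*$ as in \ref{pi take f finite to f finite}.

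I expect the main obstacle to be making the isomorphism $\Tor_i^R(\overline{R},F_R^e(-))\cong F_{\overline{R}}^{\,e}(\Tor_i^R(\overline{R},-))$ genuinely natural, so that the transition maps of the direct system come out to be exactly $F_{\overline{R}}^{\,e}(\gamma)$; this hinges on the bimodule identity $\overline{R}\otimes_R {}^{e}R\cong {}^{e}\overline{R}$ being tracked compatibly with morphisms, after which flatness of Frobenius over $R$ and over $\overline{R}$ does the rest. An alternative is to induct on $r$ via the short exact sequences $0\to H_i(\mathbf{y}';M)/y_rH_i(\mathbf{y}';M)\to H_i(\mathbf{y};M)\to (0:_{H_{i-1}(\mathbf{y}';M)}y_r)\to 0$ with $\mathbf{y}'=y_1,\dots,y_{r-1}$, reducing to $r=1$ and treating $M/y_1M$ by $\pi^*$ and $(0:_M y_1)$ separately; but this requires checking that these Koszul sequences are sequences of $F_{\overline{R}}$-module maps, which is comparable work.
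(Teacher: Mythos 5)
This theorem is not proved in the paper you were given: it is imported verbatim from \cite{TP-koszul} (cited as Theorem 1.1 there), so there is no in-paper argument to compare yours against line by line. Judged on its own, your proposal is essentially correct and is a reasonable self-contained route. The two pillars both hold: since $y_1,\dots,y_r$ is a regular sequence, $H_i(\underline{y};-)\cong \Tor_i^R(\overline{R},-)$, and the isomorphism $\Tor_i^R(\overline{R},F_R^e(U))\cong F_{\overline{R}}^{\,e}(\Tor_i^R(\overline{R},U))$ follows exactly as you say from Kunz (flatness/exactness of $F_R^e$ and $F_{\overline{R}}^{\,e}$, both rings being regular in all three cases, with $\overline{R}$ again of the listed type) together with the bimodule identity $\overline{R}\otimes_R{}^{e}R\cong{}^{e}\overline{R}$, which is Lyubeznik's base-change isomorphism $\phi$ of \ref{pi take f finite to f finite} along the surjection $R\to\overline{R}$, iterated. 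The points you flag are indeed the only ones needing care: (i) the naturality of this isomorphism in the module variable and its compatibility with iterating $F$, so that the transition maps of $\{\Tor_i^R(\overline{R},F_R^e(U))\}_e$ really become $F_{\overline{R}}^{\,e}(\gamma)$ — this is a finite diagram chase once the isomorphism is defined by the explicit bimodule map, since both composites are left derived functors of naturally isomorphic right-exact functors (using that $F_R^e$ and $F_{\overline{R}}^{\,e}$ are exact and send finite free modules to finite free modules); and (ii) the passage from a generating morphism $\gamma\colon V\to F_{\overline{R}}(V)$ with $V$ finitely generated to an injective one, which is exactly Lyubeznik's argument (kill the stabilized kernel of $V\to F_{\overline{R}}^{\,e}(V)$; exactness of $F_{\overline{R}}$ then gives an injective generating morphism with the same limit), so it is standard rather than a gap. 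Two further remarks: your argument never uses the hypothesis that $K$ is infinite, so you prove a formally stronger statement than the one quoted (the infiniteness in \cite{TP-koszul} comes from the ambient setup of that paper, not from this Koszul statement); and your Tor description also recovers the natural $F_{\overline{R}}$-module structure on $H_i(\underline{y};M)$ itself via $H_i(\underline{y};M)\to H_i(\underline{y};F_R(M))\cong F_{\overline{R}}(H_i(\underline{y};M))$, which is worth stating explicitly so that the limit you construct is the $F_{\overline{R}}$-module in question and not merely abstractly isomorphic to it.
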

An analogous proof extends to the bigraded setting of Lemma 8.1 from \cite{TP-koszul} as follows:
\begin{theorem}\label{ordinal}
Let $A=K[[z_1,\ldots,z_d]]$ where $K$ is an infinite field of characteristic $p$. Let $M=\bigoplus_{i,j} M_{(i,j)}$ be bigraded $F_R$-finite, $F_R$-module  where $R=A[x_1,\ldots, x_m,y_1,\ldots,y_n]$ is standard bigraded. If $M_{(a,b)}$ is supported only at maximal ideal of $A$, then $M_{(a,b)}\cong E^{\alpha}$ where $E$ is injective hull of $K$ as $A$-module and $\alpha$ is an ordinal which may be infinite.
\end{theorem}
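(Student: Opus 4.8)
The plan is to reduce the assertion to the vanishing of one $\Ext$-group over $A$, settle that by induction on $d=\dim A$ via Theorem~\ref{F-finite}, and isolate a single divisibility statement as the crux.

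\emph{Reduction.} For an $A$-module $N$ with $\Supp_A N\subseteq\{\m\}$ one has $N\cong E^{\alpha}$ for some ordinal $\alpha$ precisely when $N$ is an injective $A$-module: the only indecomposable injective supported at $\m$ is $E=E_A(K)$, and injectives split over a Noetherian ring. Since such an $N$ is $\m$-torsion, $N$ is injective precisely when $\Ext^1_A(K,N)=0$: embedding $N$ in its injective hull $E_A(N)$ (which is $\cong E^{\alpha}$, as $\Ass E_A(N)=\Ass N\subseteq\{\m\}$) and applying $\Hom_A(K,-)$ to $0\to N\to E_A(N)\to C\to0$, the inclusion $N\subseteq E_A(N)$ being essential gives an isomorphism on socles, whence $\operatorname{soc}_A C\cong\Ext^1_A(K,N)$; and $C$, being $\m$-torsion, vanishes exactly when this socle does. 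So it suffices to prove $\Ext^1_A(K,M_{(a,b)})=0$.

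\emph{Induction on $d$.} For $d=0$ we have $A=K$ and $E=K$, and nothing is to be shown. For $d\ge1$, fix a regular system of parameters $z_1,\dots,z_d$ of $A$ and put $\overline A=A/(z_1)\cong K[[z_2,\dots,z_d]]$ and $\overline R=R/(z_1)=\overline A[x_1,\dots,x_m,y_1,\dots,y_n]$. By (the bigraded form of) Theorem~\ref{F-finite}, applied with $r=1$, the Koszul homology modules $H_0(z_1;M)=M/z_1M$ and $H_1(z_1;M)=(0:_Mz_1)$ are bigraded $F_{\overline R}$-finite $F_{\overline R}$-modules, and their $(a,b)$-components $M_{(a,b)}/z_1M_{(a,b)}$ and $(0:_{M_{(a,b)}}z_1)$ are $\m$-torsion over $A$, hence $\overline{\m}$-torsion over $\overline A$. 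By the induction hypothesis each of them is a direct sum of copies of $E_{\overline A}(K)$, so in particular $(0:_{M_{(a,b)}}z_1)$ is $\overline A$-injective. Feeding this into the change-of-rings spectral sequence
$$\Ext^p_{\overline A}\!\big(K,\ \Ext^q_A(\overline A,M_{(a,b)})\big)\ \Longrightarrow\ \Ext^{p+q}_A(K,M_{(a,b)}),$$
which has only the rows $q=0$, with entry $\Ext^0_A(\overline A,M_{(a,b)})=(0:_{M_{(a,b)}}z_1)$, and $q=1$, with entry $\Ext^1_A(\overline A,M_{(a,b)})=M_{(a,b)}/z_1M_{(a,b)}$, and using that the $\overline A$-injective $q=0$ row is acyclic for $\Hom_{\overline A}(K,-)$, the edge sequence collapses to
$$\Ext^1_A(K,M_{(a,b)})\ \cong\ \Hom_{\overline A}\!\big(K,\ M_{(a,b)}/z_1M_{(a,b)}\big).$$

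\emph{The crux.} By the induction hypothesis the right-hand side is a direct sum of copies of $K$, so the whole statement now reduces to
$$z_1\,M_{(a,b)}=M_{(a,b)},\qquad\text{equivalently}\qquad(M/z_1M)_{(a,b)}=0.$$
This divisibility is exactly where the $F$-module hypothesis is indispensable, since it fails for a general $\m$-torsion module such as $A/\m$. To establish it I would take a (bigraded, bidegree-preserving) generating monomorphism $\beta\colon U\hookrightarrow F(U)$ with $U$ a finitely generated $R$-module, so that $M_{(a,b)}=\varinjlim_e F^e(U)_{(a,b)}$ along injective maps and
$$F^e(U)_{(a,b)}=\bigoplus_{p^e(c,d)\le(a,b)}R_{(a-p^ec,\,b-p^ed)}\otimes_A U_{(c,d)}$$
is a finitely generated submodule of the $\m$-torsion module $M_{(a,b)}$, hence of finite length over $A$. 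The relation $z_1^{p^e}(r'\otimes u)=r'\otimes z_1u$ in ${}^{e}R\otimes_R U$ shows that multiplication by $z_1^{p^e}$ on $F^e(U)_{(a,b)}$ is the direct sum of the maps induced by multiplication by $z_1$ on the summands $U_{(c,d)}$; combined with the explicit form of the transition maps $F^e(\beta)$, which introduce $p^e$-th powers of $R$-elements, this should force $z_1$ to act surjectively on the colimit $M_{(a,b)}$. Making this colimit argument precise — especially for those $(a,b)$ where the displayed decomposition does not strictly lower the indices — is the step I expect to be the main obstacle; it is the bigraded counterpart of the corresponding point in the proof of Lemma~8.1 of \cite{TP-koszul}, where the characteristic-$p$ machinery is what makes the divisibility go through.
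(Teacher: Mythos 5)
Your preparatory steps are sound: the reduction of the statement to $\Ext^1_A(K,M_{(a,b)})=0$, the two-row change-of-rings spectral sequence, and the use of the bigraded form of Theorem~\ref{F-finite} to make $H_0(z_1;M)$ and $H_1(z_1;M)$ available for the induction are all correct. But the proof has a genuine gap exactly where you flag it, and that gap is the entire content of the theorem in your framework: the divisibility $z_1M_{(a,b)}=M_{(a,b)}$ is never established. It cannot follow from general torsion considerations (it fails for $A/\m$), and the induction hypothesis is of no help at the bottom of the induction: for $d=1$ the module $M_{(a,b)}/z_1M_{(a,b)}$ is merely a $K$-vector space, so knowing its $\overline{A}$-structure says nothing about its vanishing. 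The sketch you offer for the crux is not a proof. The displayed decomposition of $F^e(U)_{(a,b)}$ is incorrect as written: in ${}^{e}R\otimes_RU$ one has $a^{p^e}\otimes u=1\otimes au$ for $a\in A$ and $x_i^{p^e}\otimes u=1\otimes x_iu$, so a direct-sum presentation must restrict to monomials in the $x_i,y_j$ with exponents $<p^e$ and the $A$-module occurring in each summand is the Frobenius twist ${}^{e}A\otimes_AU_{(c,d)}$, not $R_{(a-p^ec,\,b-p^ed)}\otimes_AU_{(c,d)}$ with its ordinary structure. More importantly, each stage $F^e(U)_{(a,b)}$ is a finite-length $A$-module, so multiplication by $z_1$ is never surjective on any single stage; all divisibility must be extracted from the transition maps $F^e(\beta)$, and that is precisely the analysis you leave open. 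As it stands the argument is circular at its core: the unproved crux is essentially equivalent to the statement being proved.

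For comparison, the paper does not prove Theorem~\ref{ordinal} by this route (nor, in fact, does it write out a proof at all): it asserts that the argument of Lemma~8.1 of \cite{TP-koszul} adapts to the bigraded setting. That argument sidesteps the divisibility question by using the differential-operator structure: an $F_R$-module carries a natural $\D(R,K)$-module structure compatible with the bigrading (this is the mechanism behind Theorem~\ref{d-mod}), the operators of bidegree $(0,0)$ contain $\D(A,K)$ acting coefficientwise, so $M_{(a,b)}$ is a $\D(A,K)$-module supported at $\m$, and one then invokes the classification of such modules as (possibly infinite) direct sums of copies of $E$ — the analogue over $A=K[[z_1,\ldots,z_d]]$ of Theorem~\ref{Supp}, going back to Lyubeznik. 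To complete your homological approach you would have to prove the surjectivity of $z_1$ on $M_{(a,b)}$ directly from the generating morphism, and I see no way to do that which does not smuggle in the same $\D(A,K)$-action (or an equivalent structural input); so either supply that argument in full or switch to the structural one.
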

We state the following two well-known results concerning associated primes:
\begin{lemma}\label{Ass of M/I}
    Let $A$ be a Noetherian ring, I an ideal of $A$ and let $M$ be an $A$-module. Then,
    \begin{enumerate}[\rm (1)]
        \item $\Ass \Gamma_I(M)=\{P\in \Ass_A M |\  P\supseteq I\}$.
        \item $\Ass M/{\Gamma_I(M)}=\{P\in \Ass_A M |\  P\not\supseteq I\}$. 
    \end{enumerate}
\end{lemma}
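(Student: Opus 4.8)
The plan is to prove both parts directly from the description $\Gamma_I(M)=\bigcup_{n\geq 1}(0:_M I^n)$, using two standard facts about associated primes over a Noetherian ring (see, e.g., \cite{BS}): for a submodule $N\subseteq M$ one has $\Ass_A N\subseteq\Ass_A M\subseteq\Ass_A N\cup\Ass_A(M/N)$; and $\Ass$ localizes, i.e. $\Ass_{A_P}(L_P)=\{QA_P\mid Q\in\Ass_A L,\ Q\subseteq P\}$, so in particular $P\in\Ass_A L$ if and only if $PA_P\in\Ass_{A_P}(L_P)$. I will also use that $\Gamma_I$ commutes with localization: $\Gamma_I(M)_P=\Gamma_{IA_P}(M_P)$.

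For part (1): the inclusion $\Ass_A\Gamma_I(M)\subseteq\Ass_A M$ is automatic since $\Gamma_I(M)\subseteq M$. If $P=\ann_A x$ with $0\neq x\in\Gamma_I(M)$, then $I^n x=0$ for some $n$, so $I^n\subseteq P$ and hence $I\subseteq P$. Conversely, if $P=\ann_A x\in\Ass_A M$ with $I\subseteq P$, then $Ix=0$, so $x\in\Gamma_I(M)$; since the annihilator of $x$ does not depend on the ambient module, $P=\ann_A x$ witnesses $P\in\Ass_A\Gamma_I(M)$.

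For part (2), set $N=\Gamma_I(M)$ and $\overline M=M/N$. First I record that $\Gamma_I(\overline M)=0$: if $I^k\overline x=0$ for a lift $x\in M$, then, $A$ being Noetherian, $I^k$ is finitely generated, so each of finitely many generators times $x$ lies in $N$ and is killed by some power of $I$; taking the maximum power $I^m$ gives $I^{m+k}x=0$, i.e. $x\in N$, so $\overline x=0$. Applying part (1) to $\overline M$ shows that no prime of $\Ass_A\overline M$ contains $I$. Next, the exact sequence $0\to N\to M\to\overline M\to 0$ gives $\Ass_A M\subseteq\Ass_A N\cup\Ass_A\overline M$, and combining this with part (1) yields $\{P\in\Ass_A M\mid P\not\supseteq I\}\subseteq\Ass_A\overline M$. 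For the reverse inclusion, take $P\in\Ass_A\overline M$; we already know $I\not\subseteq P$, so $IA_P=A_P$ and therefore $N_P=\Gamma_{IA_P}(M_P)=0$, giving $\overline M_P=M_P$; hence $PA_P\in\Ass_{A_P}(\overline M_P)=\Ass_{A_P}(M_P)$, so $P\in\Ass_A M$. Together with $I\not\subseteq P$ this finishes part (2).

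The one step that is not a formal consequence of the short exact sequence is the inclusion $\Ass_A\overline M\subseteq\Ass_A M$, and this is where the special nature of $N=\Gamma_I(M)$ — that it vanishes after inverting any single element of $I$ — is essential; it is the step I would expect to require the most care. The localization argument above handles it cleanly; alternatively one can argue by hand, writing $P=\ann_A\overline x$ with $\overline x\neq 0$, choosing $b\in I\setminus P$, noting that $Ax\cap N$ is finitely generated (as $Ax$ is a cyclic module over the Noetherian ring $A$) hence annihilated by some $b^k$, and then checking directly that $\ann_A(b^k x)=P$ with $b^k x\neq 0$, so $P\in\Ass_A M$.
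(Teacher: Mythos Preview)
Your proof is correct. The paper does not actually prove this lemma: it is stated there as one of ``two well-known results concerning associated primes'' and is left without proof, so there is no argument to compare against. Your direct approach via the description $\Gamma_I(M)=\bigcup_n(0:_M I^n)$, the standard inclusion $\Ass_A M\subseteq\Ass_A N\cup\Ass_A(M/N)$, and the localization argument for the delicate inclusion $\Ass_A\overline M\subseteq\Ass_A M$ is exactly the expected one; the alternative hand argument you sketch at the end also works.
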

\begin{proposition}\label{ass-contraction}
    If $f: A\rightarrow B$ is a homomorphism of Noetherian rings and $M$ is a $B$-module, then the associated primes of $M$ as an $A$-module are contractions of the associated primes as a $B$-module to $A$. More precisely, $\Ass_AM=A\cap \Ass_BM$.
\end{proposition}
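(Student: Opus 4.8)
The final statement as worded is Proposition \ref{ass-contraction}: for a homomorphism $f\colon A\to B$ of Noetherian rings and a $B$-module $M$, one has $\Ass_A M = A\cap \Ass_B M$ (meaning the set of contractions $\mathfrak{q}\cap A$ with $\mathfrak{q}\in\Ass_B M$). This is a standard fact, but let me lay out how I would prove it from scratch, since the paper cites it as ``well-known'' without proof.

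First I would prove the inclusion $A\cap \Ass_B M \subseteq \Ass_A M$. Take $\mathfrak{q}\in\Ass_B M$, so $\mathfrak{q} = \ann_B(x)$ for some $x\in M$, and set $\mathfrak{p} = \mathfrak{q}\cap A = \ann_A(x)$ (the latter equality is immediate: $a\in A$ kills $x$ in $M$ iff $f(a)\in\mathfrak{q}$). Since $\mathfrak{q}$ is prime in $B$ and $\mathfrak{p}$ is its contraction along a ring map, $\mathfrak{p}$ is prime in $A$; and it is the annihilator of $x$ viewed as an element of the $A$-module $M$ (via restriction of scalars along $f$). Hence $\mathfrak{p}\in\Ass_A M$. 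This direction needs no Noetherian hypothesis.

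The reverse inclusion $\Ass_A M \subseteq A\cap\Ass_B M$ is the substantive part and is where the Noetherian assumption enters. Let $\mathfrak{p}\in\Ass_A M$, say $\mathfrak{p} = \ann_A(x)$ for some $x\in M$. Consider the $B$-submodule $N = Bx \subseteq M$; its annihilator ideal $I = \ann_B(x)$ satisfies $I\cap A = \ann_A(x) = \mathfrak{p}$, so $I$ is a proper ideal of $B$ whose contraction to $A$ is the prime $\mathfrak{p}$. The plan is to produce an associated prime $\mathfrak{q}\in\Ass_B(N)\subseteq\Ass_B(M)$ with $\mathfrak{q}\cap A = \mathfrak{p}$. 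To do this, I would localize: by the first (easy) inclusion applied to the localization, associated primes behave well under localization at $\mathfrak{p}$, so it suffices to find $\mathfrak{q}\in\Ass_{B_{\mathfrak{p}}}(N_{\mathfrak{p}})$ lying over $\mathfrak{p}A_{\mathfrak{p}}$. Now $\mathfrak{p}A_{\mathfrak{p}}\in\Ass_{A_{\mathfrak{p}}}(N_{\mathfrak{p}})$ (localization preserves associated primes over a Noetherian ring), so replacing $A$ by $A_{\mathfrak{p}}$ we may assume $(A,\mathfrak{p})$ is local and $\mathfrak{p}\in\Ass_A N$. Then $\mathfrak{p}$ is a maximal element of $\Supp_A N$ among primes that are associated — more usefully, since $\mathfrak{p}\in\Ass_A N$ there is $y\in N$ with $\ann_A(y)=\mathfrak{p}$, so $Ay\cong A/\mathfrak{p}$. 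The image of $y$ in $N$ generates a $B$-submodule $B y$ which is a finitely generated module over $B/\ann_B(y)$; since $\ann_B(y)\cap A = \mathfrak{p}$, the ring $B/\ann_B(y)$ is a nonzero Noetherian ring with an $A$-algebra structure through $A/\mathfrak{p}$, hence has a prime ideal $\mathfrak{q}/\ann_B(y)$ in $\Ass_{B}(By)$; any such $\mathfrak{q}$ contains $\ann_B(y)$ so $\mathfrak{q}\cap A \supseteq \mathfrak{p}$, and $\mathfrak{q}\cap A$ is a prime of the local ring $(A,\mathfrak{p})$ lying in $\Supp_A(By)\subseteq\Supp_A(A/\mathfrak{p}\text{-stuff})$... here I need $\mathfrak{q}\cap A = \mathfrak{p}$, which follows because $By$ is a module over $B\otimes_A (A/\mathfrak{p})$, so every element of $\Supp_A(By)$ contains $\mathfrak{p}$, forcing equality by maximality of $\mathfrak{q}\cap A$ being a prime containing $\mathfrak{p}$ that also lies in... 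Actually the cleanest route: $\ann_B(y)\supseteq \mathfrak{p}B$, so $By$ is a module over $B/\mathfrak{p}B = B\otimes_A A/\mathfrak{p}$, and $\Ass_B(By) = \Ass_{B/\mathfrak{p}B}(By)$, each of which contracts into $A$ to a prime containing $\mathfrak{p}$; since we also need it to be exactly $\mathfrak{p}$, observe $\mathfrak{q}\cap A \in \Ass_A(By)$ by the easy inclusion, and $\Ass_A(By)\subseteq \Supp_A(By) \subseteq \Supp_A(Ay)\cup(\text{primes}\supseteq\mathfrak{p})$; but $y\in N=Bx$ means in the original module $\ann_A(x)=\mathfrak{p}\subseteq \ann_A(y)$...

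I expect the \textbf{main obstacle} to be organizing this reverse inclusion cleanly: the subtlety is that while every associated prime of $M$ over $A$ clearly sits inside $\Supp_A M$, one must exhibit a genuine $B$-associated prime whose contraction hits $\mathfrak{p}$ on the nose, and the natural candidate $\ann_B(x)$ need not be prime. The resolution is the localization-and-pass-to-the-quotient argument above: after localizing at $\mathfrak{p}$ and killing $\mathfrak{p}B$, we land in the situation of a nonzero finitely generated module over a Noetherian ring fibered over the residue field $\kappa(\mathfrak{p})$, where associated primes exist and automatically contract to $\mathfrak{p}$. Once that bookkeeping is set up correctly, both inclusions are routine, and I would present the argument in the two-inclusion format above, being careful to invoke the Noetherian hypothesis only where needed (the reverse inclusion and the localization/quotient steps). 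Alternatively — and this might be the shortest write-up — I would simply cite \cite[Chapter IV, \S1]{Bourbaki} or Matsumura, since the paper itself treats this as standard; but a self-contained two-paragraph proof along the lines above is entirely feasible and avoids an extra reference.
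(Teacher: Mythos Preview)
The paper offers no proof of this proposition: it is listed among ``two well-known results concerning associated primes'' and left unproved. Your closing remark—that one could simply cite Bourbaki or Matsumura—is thus precisely what the paper does.

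Your self-contained argument is correct. The easy inclusion is clean; the reverse inclusion has the right ideas but the write-up meanders unnecessarily through $y$, $By$, $B/\mathfrak{p}B$, and support comparisons. Once you have $\mathfrak{p}=\ann_A(x)$ and set $I=\ann_B(x)$, the finish is short: $Bx\cong B/I$ is a nonzero finitely generated module over the Noetherian ring $B$, and localizing at $S=A\setminus\mathfrak{p}$ keeps it nonzero (since $x/1\neq 0$ in $M_{\mathfrak{p}}$), so by compatibility of associated primes with localization over a Noetherian ring there is $\mathfrak{q}\in\Ass_B(Bx)\subseteq\Ass_B(M)$ disjoint from $S$, i.e.\ with $\mathfrak{q}\cap A\subseteq\mathfrak{p}$; combined with $\mathfrak{q}\cap A\supseteq I\cap A=\mathfrak{p}$ this gives equality. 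All of these ingredients appear in your draft, but the core reason the contraction hits $\mathfrak{p}$ exactly—that localization forces $\mathfrak{q}\cap A\subseteq\mathfrak{p}$, while $\mathfrak{q}\supseteq I$ forces the reverse—is buried under the extra layers.
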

\section{Bigraded Eulerian \texorpdfstring{$\D$}{D}-module}
Let $A=K[x_1,\ldots,x_m]$ be a polynomial ring over a field $K$ of characteristic $p >0$. Consider $R=A[y_1,\ldots,y_n]$ with $\bideg(x_i)=(1,0)$ and $\bideg(y_j)=(0,1)$ for all $i,j$. Let $\D$ be the corresponding ring of differential operators on $R$, i.e., $\D=R\left\langle\partial_i^{[j]},\delta_r^{[s]} \mid 1 \leq i \leq m, 1 \leq r \leq n, 1 \leq j, 1\leq s\right\rangle$ where $\partial_i^{[j]}=\frac{1}{j!}\frac{\partial^j}{\partial x_i^j}$ and $\delta_r^{[s]}=\frac{1}{s!}\frac{\delta^s}{\delta y_r^s}$. Clearly $\D$ is bigraded with $\bideg(x_i)$ and $\bideg(y_i)$ same as above and $\bideg(\partial_i^{[j]})=(-j,0)$, $\bideg(\delta_r^{[s]})=(0,-s)$. We define  
$$E^X_r:=\sum_{i_1+i_2+\cdots+i_m=r, i_1 \geq 0, \ldots, i_m \geq 0} x_1^{i_1} \cdots x_m^{i_m} \partial_1^{\left[i_1\right]} \cdots \partial_m^{\left[i_m\right]}$$ $$E^Y_r:=\sum_{r_1+r_2+\cdots+r_n=r, r_1 \geq 0, \ldots, r_n \geq 0} y_1^{r_1} \cdots y_n^{r_n} \delta_1^{\left[r_1\right]} \cdots \delta_n^{\left[r_n\right]}.$$
\begin{definition}
    A bigraded $\mathcal{D}$-module $M$ is called Eulerian if $m\in M$ with $\bideg(m)=(m_1,m_2)$ then we have the followings:

$$E^X_r m=\binom{m_1}{r}m\  \text{and}\   E^Y_r m=\binom{m_2}{r}m$$ for all $r\geq 1$.
\end{definition}
This section is motivated by the work of Ma and Zhang \cite{Ma}. We extend many of their results to the bigraded setting, primarily adapting their proof techniques.
\begin{proposition}
\label{R}
    The polynomial ring $R$ is bigraded Eulerian $\D$-module.
\end{proposition}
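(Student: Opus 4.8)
The plan is to verify directly from the definition that $R$ satisfies the two Eulerian identities on a spanning set of bi-homogeneous elements, namely the monomials $x^{\underline{a}} y^{\underline{b}} = x_1^{a_1}\cdots x_m^{a_m} y_1^{b_1}\cdots y_n^{b_n}$, which have bidegree $(|\underline{a}|, |\underline{b}|)$ where $|\underline{a}| = a_1 + \cdots + a_m$ and $|\underline{b}| = b_1 + \cdots + b_n$. Since $E^X_r$ involves only the $x_i$ and $\partial_i^{[j]}$, and these act trivially (as the identity, up to scalars) on the $y$-variables, and symmetrically for $E^Y_r$, the two identities decouple: it suffices to prove $E^X_r(x^{\underline{a}}) = \binom{|\underline{a}|}{r} x^{\underline{a}}$ in $K[x_1,\ldots,x_m]$ and, by the identical argument, $E^Y_r(y^{\underline{b}}) = \binom{|\underline{b}|}{r} y^{\underline{b}}$ in $K[y_1,\ldots,y_n]$.

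For the $x$-computation, I would first record the one-variable fact that $x_i^{c}\,\partial_i^{[k]}(x_i^{a_i}) = \binom{a_i}{k} x_i^{a_i}$ whenever $c = k$ (and in our sum indeed the exponent of $x_i$ matches the order of the divided-power derivative), using $\partial_i^{[k]}(x_i^{a_i}) = \binom{a_i}{k} x_i^{a_i-k}$. Hence each term $x_1^{i_1}\cdots x_m^{i_m}\partial_1^{[i_1]}\cdots\partial_m^{[i_m]}$ with $i_1 + \cdots + i_m = r$ sends $x^{\underline{a}}$ to $\binom{a_1}{i_1}\cdots\binom{a_m}{i_m}\, x^{\underline{a}}$. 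Summing over all such $(i_1,\ldots,i_m)$ and applying the Vandermonde/Chu convolution identity
\[
\sum_{i_1 + \cdots + i_m = r} \binom{a_1}{i_1}\cdots\binom{a_m}{i_m} = \binom{a_1 + \cdots + a_m}{r}
\]
gives $E^X_r(x^{\underline{a}}) = \binom{|\underline{a}|}{r} x^{\underline{a}}$, which is exactly the required identity since $\bideg(x^{\underline{a}}) = (|\underline{a}|, |\underline{b}|)$ has first coordinate $|\underline{a}|$. The $y$-computation is word-for-word the same with $\delta_r^{[s]}$ in place of $\partial_i^{[j]}$.

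Finally I would note that $R$ is visibly a bigraded $\D$-module (it is the defining representation), and that the Eulerian conditions $E^X_r m = \binom{m_1}{r} m$, $E^Y_r m = \binom{m_2}{r} m$ are $K$-linear in $m$ and both sides respect the bigrading, so checking them on the monomial $K$-basis suffices. The only real content is the Vandermonde identity and the elementary action of divided-power derivatives on powers of a variable; there is no genuine obstacle here, the proposition being essentially the base case that seeds the theory of the section, exactly as in Ma--Zhang \cite{Ma}. One small point to be careful about is that in positive characteristic the binomial coefficients $\binom{a_i}{k}$ are interpreted in $K$ (reduced mod $p$), but the Vandermonde identity is a polynomial identity in the $a_i$ and hence holds over $\mathbb{Z}$ and therefore after reduction mod $p$, so the argument goes through unchanged.
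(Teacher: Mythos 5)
Your proposal is correct and follows essentially the same route as the paper: verify the two Eulerian identities on the monomial basis, let the $y$-factor pass through $E^X_r$ (and symmetrically), and reduce to the divided-power computation on the $x$-part. The only difference is cosmetic — you make the Vandermonde convolution $\sum_{i_1+\cdots+i_m=r}\binom{a_1}{i_1}\cdots\binom{a_m}{i_m}=\binom{a_1+\cdots+a_m}{r}$ and the characteristic-$p$ reduction explicit, which the paper leaves implicit (its displayed computation even reuses the monomial exponents as summation indices), so your write-up is if anything slightly more careful.
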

\begin{proof}
    It is enough to prove for monomials of the form $x_1^{i_1}x_2^{i_2}\ldots x_m^{i_m}y_1^{r_1}y_2^{r_2}\ldots y_n^{r_n} $. Let this monomial be $f$. Then $\bideg(f)=(i_1+i_2+\ldots+i_m, r_1+r_2+\ldots+r_n)$. Now
    \begin{align*}    
    E^X_r f\ &=\sum_{i_1+i_2+\cdots+i_m=r, i_1 \geq 0, \ldots, i_m \geq 0} x_1^{i_1} \cdots x_m^{i_m} \partial_1^{\left[i_1\right]} \cdots \partial_m^{\left[i_m\right]}(x_1^{i_1}\cdots x_m^{i_m}y_1^{r_1}y_2^{r_2}\ldots y_n^{r_n})\\ &= (y_1^{r_1}\cdots y_n^{r_n}) \sum_{i_1+i_2+\ldots+i_m=r, i_1 \geq 0, \ldots, i_m \geq 0} x_1^{i_1} \cdots x_m^{i_m} \partial_1^{\left[i_1\right]} \cdots \partial_m^{\left[i_m\right]}(x_1^{i_1}\cdots x_m^{i_m})\\&= (y_1^{r_1}y_2^{r_2}\ldots y_n^{r_n})\binom{i_1+i_2+\ldots+i_m}{r} x_1^{i_1}x_2^{i_2}\cdots x_m^{i_m}\\&= \binom{i_1+i_2+\ldots+i_m}{r} f.
    \end{align*}
    Similarly we can show that    
    $
    E^Y_r f= \binom{r_1+r_2+\ldots+r_n}{r} f
   $. Hence $R$ is bigraded Eulerian $\D$-module.
\end{proof}
The following proposition shows that if a module is bigraded Eulerian, this property is preserved when we localize the module at a homogeneous element.
\begin{proposition}
    If $M$ is bigraded Eulerian $\D$-module, so is $S^{-1}M$ for each homogeneous multiplicative system $S\subset R$. In particular $M_f$ is bigraded Eulerian $\D$-module for each homogeneous polynomial $f\in R$.
\end{proposition}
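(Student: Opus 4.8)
The plan is to verify the Eulerian identities $E^X_r z=\binom{d_1}{r}z$ and $E^Y_r z=\binom{d_2}{r}z$ directly on a bi-homogeneous element $z=m/s\in S^{-1}M$ of bidegree $(d_1,d_2)$, where we may take $m\in M$ and $s\in S$ bi-homogeneous. First I would record the standard facts that $S^{-1}M$ is again a bigraded $\D$-module with $\bideg(m/s)=\bideg(m)-\bideg(s)$, that $S^{-1}R$ is a bigraded $\D$-algebra, and that $S^{-1}M$ is a module over $S^{-1}R$ on which the (divided-power) differential operators act through the Leibniz rule inherited from $\D$; consequently the canonical maps $R\to S^{-1}R$ and $M\to S^{-1}M$ are bidegree-preserving morphisms of $\D$-modules. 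In particular these maps carry elements satisfying the Eulerian identities to elements satisfying the same identities, so Proposition \ref{R} and the hypothesis on $M$ apply to the images in the localization.

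The key computational input is a convolution (Leibniz-type) formula for the operators $E^X_r$ and $E^Y_r$: for bi-homogeneous $g\in S^{-1}R$ and bi-homogeneous $z\in S^{-1}M$,
\[
E^X_r(gz)=\sum_{a+b=r}(E^X_a g)(E^X_b z),\qquad E^Y_r(gz)=\sum_{a+b=r}(E^Y_a g)(E^Y_b z),
\]
with the convention $E^X_0=E^Y_0=\mathrm{id}$. This follows by expanding $E^X_r=\sum_{i_1+\cdots+i_m=r}x_1^{i_1}\cdots x_m^{i_m}\,\partial_1^{[i_1]}\cdots\partial_m^{[i_m]}$, applying the multi-index divided-power Leibniz rule $\partial^{[i]}(gz)=\sum_{j\le i}\partial^{[j]}(g)\,\partial^{[i-j]}(z)$, and reindexing by $a=|j|$, $b=|i-j|$, noting the monomials split as $x^{(i)}=x^{(j)}x^{(i-j)}$ and pass freely across the scalar $\partial^{[j]}(g)$. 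I expect establishing this identity — and, with it, justifying that the $E_r$'s and the Leibniz rule extend across the localization — to be the main technical point; once it is in place the rest is bookkeeping with Vandermonde's identity $\sum_{a+b=r}\binom{p}{a}\binom{q}{b}=\binom{p+q}{r}$.

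With the formula available I would proceed in two steps. First, for bi-homogeneous $s\in S$ with $\bideg s=(e_1,e_2)$, apply $E^X_r$ to the relation $s\cdot s^{-1}=1$ in $S^{-1}R$; using $E^X_a s=\binom{e_1}{a}s$ (Proposition \ref{R}) and $E^X_r(1)=\binom{0}{r}\cdot 1=0$ for $r\ge 1$, and cancelling the unit $s$, one gets the lower-triangular recursion $\sum_{b=0}^{r}\binom{e_1}{r-b}\,E^X_b(s^{-1})=\delta_{r,0}\,s^{-1}$, whose unique solution is $E^X_b(s^{-1})=\binom{-e_1}{b}s^{-1}$ by Vandermonde; symmetrically $E^Y_b(s^{-1})=\binom{-e_2}{b}s^{-1}$. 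Second, writing a general bi-homogeneous $z=m/s=s^{-1}\cdot(m/1)$ with $\bideg m=(d_1,d_2)$, the convolution formula together with these two computations and $E^X_b(m/1)=\binom{d_1}{b}(m/1)$ (since $M$ is Eulerian) give
\[
E^X_r(z)=\sum_{a+b=r}\binom{-e_1}{a}\binom{d_1}{b}z=\binom{d_1-e_1}{r}z,
\]
and $d_1-e_1=\deg_X z$; likewise $E^Y_r(z)=\binom{d_2-e_2}{r}z=\binom{\deg_Y z}{r}z$. Hence $S^{-1}M$ is bigraded Eulerian, and the special case $S=\{1,f,f^2,\dots\}$ yields the stated assertion for $M_f$.
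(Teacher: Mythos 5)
Your argument is correct, but it is genuinely different from the one in the paper. The paper's proof is characteristic-$p$ specific: it uses that $\partial_i^{[j]}$ is $R^{p^e}$-linear once $p^e\geq j+1$, writes $\frac{m}{f^t}=\frac{f^{p^e-t}m}{f^{p^e}}$ with $p^e\geq\max\{r+1,t\}$, pulls $f^{-p^e}$ out of $E^X_r$, and then applies the Eulerian identity in $M$ itself to $f^{p^e-t}m$, reading off $\binom{m_1-tr_1}{r}$ after a reduction of the binomial coefficient mod $p$. You instead establish the convolution identity $E^X_r(gz)=\sum_{a+b=r}(E^X_a g)(E^X_b z)$ (which does follow from the divided-power Leibniz rule exactly as you indicate), compute $E^X_b(s^{-1})=\binom{-e_1}{b}s^{-1}$ from the relation $s\cdot s^{-1}=1$ via the lower-triangular recursion, and finish with Vandermonde; all the binomial identities you invoke are polynomial identities, hence valid after reduction mod $p$, so the computation is sound. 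What each approach buys: yours is characteristic-free and yields the general formulas for $E_r$ on products and on inverses of homogeneous elements, but it leans on the standard fact that $S^{-1}M$ carries a $\D$-module structure making $M\to S^{-1}M$ a bidegree-preserving $\D$-linear map — and in characteristic $p$ the cleanest justification of precisely that point is the $R^{p^e}$-linearity trick, which is the engine of the paper's shorter, more self-contained proof. One cosmetic remark: you first declare $(d_1,d_2)$ to be the bidegree of $z=m/s$ and later use it for the bidegree of $m$; the final identity $E^X_r(z)=\binom{d_1-e_1}{r}z$ with $d_1-e_1=\deg_X z$ is what is needed, so this is only a notational slip worth fixing.
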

\begin{proof}
 We note that $\partial_i^{[j]}$ is $R^{p^e}$-linear if $p^e \geq j+1$. So we have
$$
\partial_i^{[j]}(m)=\partial_i^{[j]}\left(f^{p^e} \cdot \frac{m}{f^{p^e}}\right)=f^{p^e} \cdot \partial_i^{[j]}\left(\frac{m}{f^{p^e}}\right).
$$

So if $p^e \geq r+1$ and $f \in S$, then $\partial_i^{[j]}\left(\frac{m}{f^{p^e}}\right)=\frac{1}{f^{p^e}} \partial_i^{[j]}(m)$ for every $j \leq r$ in $S^{-1} M$. Therefore,
$$
E^X_r \cdot \frac{m}{f^{p^e}}=\frac{1}{f^{p^e}} E^X_r \cdot m.
$$
By similar argument by replacing $\partial$ by $\delta$ we have 
$$
E^Y_r \cdot \frac{m}{f^{p^e}}=\frac{1}{f^{p^e}} E^Y_r \cdot m.
$$
Take $\frac{m}{f^t} \in S^{-1} M$ where $\bideg(m)=(m_1,m_2)$ and $\bideg(f)=(r_1,r_2)$. Then $\bideg(\frac{m}{f^t})=(m_1-tr_1,m_2-tr_2)$. Multiplying both the numerator and denominator by a large power of $f$, we write $\frac{m}{f^t}=\frac{f^{p^e-t} m}{f^{p^e}}$ for some $p^e \geq \max \{r+1, t\}$. Now
\begin{align*}
    E_r^X \left( \frac{m}{f^t} \right) & = E_r^X \left( \frac{f^{p^e - t} m}{f^{p^e}} \right) \\
    & = \frac{1}{f^{p^e}} E_r^X ( f^{p^e - t} m ).
\end{align*}
 We note that \begin{align*}
     \bideg( f^{p^e - t} m) &=\bideg( f^{p^e - t})+\bideg(m)\\&=p^e\bideg(f)-\bideg(f^t)+\bideg(m)\\&= p^e(r_1,r_2)-t(r_1,r_2)+(m_1,m_2)\\&=(p^er_1-tr_1+m_1,p^er_2-tr_2+m_2).
 \end{align*} 
 Hence
 \begin{align*}
     E_r^X \left( \frac{m}{f^t} \right) &=\frac{1}{f^{p^e}}\binom{p^er_1+m_1-tr_1}{r}f^{p^e - t} m\\&= \binom{m_1-tr_1}{r} \frac{m}{f^t}.
  \end{align*}
   Similarly 
   \begin{align*}
    E_r^Y \left( \frac{m}{f^t} \right) & = E_r^Y \left( \frac{f^{p^e - t} m}{f^{p^e}} \right) \\
    & = \frac{1}{f^{p^e}} E_r^Y ( f^{p^e - t} m )\\&=\frac{1}{f^{p^e}}\binom{p^er_2+m_2-tr_2}{r}f^{p^e - t} m\\&= \binom{m_2-tr_2}{r} \frac{m}{f^t}.
\end{align*}
 \end{proof}
\begin{remark}
If $(M,\theta)$ is an $F$-module then the map $$\alpha_e:M\xrightarrow{\theta}F(M)\xrightarrow{F(\theta)}F^2(M)\xrightarrow{F^2(\theta)}...\rightarrow F^e(M)$$
 induced by $\theta$ is also an isomorphism. This induces a $\D$-module structure on $M$. We specify the action of $\partial_1^{[i_1]}\ldots\partial_m^{[i_m]} $ on $M$. Choose $e$ such that $p^e\geq (i_1+\ldots+i_m)+1$. Given an element $m$, we have $\alpha_e(m)=\sum_i y_i\otimes z_i $ where $y_i\in$ $^eR$ and $z_i\in M$ and we define $$\partial_1^{[i_1]}\ldots\partial_m^{[i_m]} m:=\alpha_e^{-1}\left(\sum \partial_1^{[i_1]}\ldots\partial_m^{[i_m]}y_i\otimes z_i\right).$$
 The action of $\delta_1^{[i_1]}\ldots\delta_m^{[i_m]} $ is defined similarly.
 \end{remark} 

The next result gives a relation between bigraded $F$-module and Eulerian $\D$-module.
\begin{theorem}
\label{d-mod}
    If $M$ is bigraded $F$-module, then $M$ is bigraded Eulerian as $\D$-module.
\end{theorem}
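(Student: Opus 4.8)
The plan is to compute the action of the operators $E^X_r$ and $E^Y_r$ on a bi-homogeneous element of $M$ directly from the recipe for the $\D$-module structure on an $F$-module given in the Remark above, reducing everything to the fact that $R$ itself is bigraded Eulerian (Proposition~\ref{R}). Since the Eulerian condition is checked bidegree by bidegree, I would fix a bi-homogeneous $m\in M$ with $\bideg(m)=(m_1,m_2)$ and an integer $r\ge 1$, and aim to prove $E^X_r m=\binom{m_1}{r}m$ and $E^Y_r m=\binom{m_2}{r}m$.

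First I would choose $e$ with $p^e\ge r+1$, so that each monomial $\partial^{[I]}:=\partial_1^{[i_1]}\cdots\partial_m^{[i_m]}$ appearing in $E^X_r$ (with $I=(i_1,\dots,i_m)$, $|I|=r$) is $R^{p^e}$-linear, and work inside $F^e(M)={}^eR\otimes_R M$ through the bidegree-preserving isomorphism $\alpha_e\colon M\to F^e(M)$. Writing $\alpha_e(m)=\sum_i y_i\otimes z_i$ with $y_i\in{}^eR$, $z_i\in M$ bi-homogeneous, the grading rule $|r'\otimes z|=|r'|+p^e|z|$ forces $\bideg(y_i)+p^e\,\bideg(z_i)=(m_1,m_2)$; write $\bideg(y_i)=(c_i,c_i')$, so $c_i\equiv m_1\pmod{p^e}$. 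Unwinding the definitions — $\alpha_e$ is $R$-linear, $x^I=x_1^{i_1}\cdots x_m^{i_m}\in R$ acts on ${}^eR\otimes_R M$ by left multiplication on ${}^eR$, and $\partial^{[I]}m=\alpha_e^{-1}\bigl(\sum_i\partial^{[I]}(y_i)\otimes z_i\bigr)$ — gives $x^I\partial^{[I]}m=\alpha_e^{-1}\bigl(\sum_i x^I\partial^{[I]}(y_i)\otimes z_i\bigr)$, hence, summing over $|I|=r$,
\[
E^X_r m=\alpha_e^{-1}\Bigl(\textstyle\sum_i (E^X_r y_i)\otimes z_i\Bigr),
\]
where $E^X_r$ now acts on ${}^eR$, which as a left $R$-module is $R$ with its standard bigrading. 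Proposition~\ref{R} then yields $E^X_r y_i=\binom{c_i}{r}y_i$.

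The only step requiring a genuine argument — and the one I expect to be the main (though mild) obstacle — is the congruence $\binom{c_i}{r}=\binom{m_1}{r}$ in $\FF_p$. Since $c_i\equiv m_1\pmod{p^e}$ and $0\le r<p^e$, it suffices to show that $a\mapsto\binom{a}{r}\bmod p$ is periodic of period $p^e$ on $\ZZ$ when $r<p^e$; this follows from Vandermonde's identity $\binom{a+p^e}{r}=\sum_{j=0}^r\binom{p^e}{j}\binom{a}{r-j}$ together with $\binom{p^e}{j}\equiv 0\pmod p$ for $1\le j\le r<p^e$, so that only the $j=0$ summand survives modulo $p$. Granting this and using $\charp R=p$, the displayed sum collapses to $E^X_r m=\binom{m_1}{r}\alpha_e^{-1}\bigl(\sum_i y_i\otimes z_i\bigr)=\binom{m_1}{r}\,\alpha_e^{-1}(\alpha_e(m))=\binom{m_1}{r}m$.

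Finally I would repeat the argument verbatim with the $\delta_j^{[s]}$ and the second bidegree coordinate $m_2$ in place of the $\partial_i^{[j]}$ and $m_1$ to get $E^Y_r m=\binom{m_2}{r}m$; since $m$ and $r$ were arbitrary, $M$ is bigraded Eulerian as a $\D$-module. (Two minor points worth a remark: the expansion $\alpha_e(m)=\sum_i y_i\otimes z_i$ can be taken with bi-homogeneous $y_i,z_i$ exactly because $\alpha_e$ preserves bidegree, and the $\D$-action is independent of the permissible choice of $e$, as already built into the Remark.)
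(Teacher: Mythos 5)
Your proposal is correct and follows essentially the same route as the paper's proof: pass to $F^e(M)$ via the bidegree-preserving isomorphism $\alpha_e$ with $p^e\ge r+1$, let $E^X_r$ (resp.\ $E^Y_r$) act on the $R$-factors $y_i$ using Proposition~\ref{R}, and use $\bideg(y_i)+p^e\bideg(z_i)=\bideg(m)$ to identify the resulting binomial coefficients with $\binom{m_1}{r}$, $\binom{m_2}{r}$. The only difference is that you explicitly justify the congruence $\binom{c_i}{r}\equiv\binom{m_1}{r}\pmod p$ via Vandermonde, a step the paper treats as immediate.
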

\begin{proof}
    Let $m\in M$ be any homogeneous element of bidegree $(m_1,m_2)$. Pick $e$ such that $p^e\geq r+1$. Since $M$ is a graded $F$-module, we have a bidegree preserving isomorphism $\alpha_e: M\rightarrow F^e_R(M)$. Let $\alpha_e(m)=\sum_i y_i\otimes z_i $ where $y_i\in R$ and $z_i\in M$ are homogeneous. Now $(m_1,m_2)=\bideg(m)=\bideg(\alpha_e(m))=\bideg(y_i\otimes z_i)=p^e\bideg(z_i)+\bideg(y_i)=p^e(a_i,b_i)+(r_i,p_i)=(p^ea_i+r_i,p^eb_i+p_i)$, where $\bideg(z_i)=(a_i,b_i)$ and $\bideg(y_i)=(r_i,p_i)$. Therefore $m_1=p^ea_i+r_i$ and $m_2=p^eb_i+p_i$ for all $i$. Now 
   \begin{align*}
       E_r^Xm &=\alpha_e^{-1}\left(\sum_i(E_r^X y_i)\otimes z_i\right)\\&=\alpha_e^{-1}\left(\sum_i\binom{r_i}{r} y_i\otimes z_i\right)\\&=\alpha_e^{-1}\left(\binom{r_i}{r}\sum_i y_i\otimes z_i\right)\\&=\binom{r_i}{r} m\\&=\binom{p^ea_i+r_i}{r}m\\&=\binom{m_1}{r} m.
   \end{align*} 
   Similarly we have $E_r^Ym=\binom{m_2}{r}m$, and this proves the result.
\end{proof}
Denote by ${ }^* E$, the bigraded injective hull of $R/\mathfrak{m}$. The following result is well known.
\begin{proposition}
\label{iso}
    Let $R=K[x_1,x_2,\ldots,x_m,y_1,y_2,\ldots,y_n]$ as defined above and let $\mathfrak m$ be homogeneous maximal ideal of $R$. Then there exist a graded isomorphism $$\frac{D}{D\mathfrak m}\rightarrow { }^* E \text {. }$$
    \end{proposition}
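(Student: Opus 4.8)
The plan is to realize ${}^{*}E$ concretely as a bidegree shift of the top local cohomology module and then exhibit the isomorphism as ``apply a differential operator to the socle generator''. First I would identify ${}^{*}E$: since $R$ is Gorenstein with canonical module $R(-m,-n)$, it is standard (graded local duality) that ${}^{*}E\cong H^{m+n}_{\mathfrak{m}}(R)(-m,-n)$ as bigraded $R$-modules. In particular ${}^{*}E$ carries a structure of bigraded $D$-module, since $H^{m+n}_{\mathfrak{m}}(R)$ is a $D$-module (a quotient of a localization of $R$, both of which are $D$-modules) and a bidegree shift does not disturb this, as $\partial_i^{[j]}$ has bidegree $(-j,0)$ and $\delta_r^{[s]}$ has bidegree $(0,-s)$. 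Recall that $H^{m+n}_{\mathfrak{m}}(R)$ has the $K$-basis of inverse monomials $x_1^{-a_1}\cdots x_m^{-a_m}y_1^{-b_1}\cdots y_n^{-b_n}$ with all $a_i,b_j\geq 1$, whose socle is spanned by $\eta:=x_1^{-1}\cdots x_m^{-1}y_1^{-1}\cdots y_n^{-1}$; under the shift $\eta$ lies in ${}^{*}E_{(0,0)}$, spans the socle $R/\mathfrak{m}$ of ${}^{*}E$, and
\[
\dim_K {}^{*}E_{(-a,-b)}=\dim_K R_{(a,b)}=\binom{a+m-1}{m-1}\binom{b+n-1}{n-1}\qquad(a,b\geq 0),
\]
with ${}^{*}E_{(p,q)}=0$ otherwise. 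Then I would set $\Phi\colon D\to{}^{*}E$, $\Phi(\theta)=\theta\cdot\eta$; since $\eta$ has bidegree $(0,0)$ this is a bidegree-preserving homomorphism of left $D$-modules, and $\mathfrak{m}\cdot\eta=0$ forces $D\mathfrak{m}\subseteq\ker\Phi$, so $\Phi$ descends to a bidegree-preserving map $\bar\Phi\colon D/D\mathfrak{m}\to{}^{*}E$, which I claim is bijective.

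For surjectivity, write $\theta_\alpha:=\partial_1^{[a_1]}\cdots\partial_m^{[a_m]}\delta_1^{[b_1]}\cdots\delta_n^{[b_n]}$ for $\alpha=(a_1,\dots,a_m,b_1,\dots,b_n)\in\mathbb{Z}_{\geq 0}^{m+n}$; it is bihomogeneous of bidegree $\bigl(-\sum_i a_i,\,-\sum_j b_j\bigr)$. Using $\partial_i^{[k]}(x_i^{-1})=\binom{-1}{k}x_i^{-1-k}=(-1)^k x_i^{-1-k}$ in $H^{m+n}_{\mathfrak{m}}(R)$ (and the analogue for the $\delta$'s), a short computation yields $\theta_\alpha\cdot\eta=(-1)^{|\alpha|}x_1^{-1-a_1}\cdots x_m^{-1-a_m}y_1^{-1-b_1}\cdots y_n^{-1-b_n}$. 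As $\alpha$ ranges over $\mathbb{Z}_{\geq 0}^{m+n}$, these exhaust, up to sign, all inverse monomials, hence $D\eta=H^{m+n}_{\mathfrak{m}}(R)={}^{*}E$ and $\bar\Phi$ is onto.

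For injectivity I would compare bigraded Hilbert functions. By the standard structure theory of rings of differential operators in characteristic $p$, $D$ is free as a left $R$-module on $\{\theta_\alpha:\alpha\in\mathbb{Z}_{\geq 0}^{m+n}\}$. The divided-power Leibniz rule gives $\theta_\alpha x_i=x_i\theta_\alpha+\theta_{\alpha-e_i}$ and $\theta_\alpha y_j=y_j\theta_\alpha+\theta_{\alpha-e_{m+j}}$, where $e_1,\dots,e_{m+n}$ is the standard basis of $\mathbb{Z}^{m+n}$ and $\theta_\beta:=0$ if $\beta$ has a negative entry. Since $\theta_\alpha x_i,\theta_\alpha y_j\in D\mathfrak{m}$, reducing modulo $D\mathfrak{m}$ gives $x_i\theta_\alpha\equiv-\theta_{\alpha-e_i}$ and $y_j\theta_\alpha\equiv-\theta_{\alpha-e_{m+j}}$, and iterating, $x^cy^d\theta_\alpha\equiv\pm\theta_{\alpha-(c,d)}$ when $(c,d)\leq\alpha$ and $\equiv 0$ otherwise. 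As every element of $D$ is an $R$-combination of the $\theta_\alpha$, the residues $\bar\theta_\alpha$ span $D/D\mathfrak{m}$; and they are $K$-linearly independent, since $\sum_\alpha c_\alpha\theta_\alpha\in D\mathfrak{m}$ implies $0=\Phi\bigl(\sum_\alpha c_\alpha\theta_\alpha\bigr)=\sum_\alpha(-1)^{|\alpha|}c_\alpha\,x_1^{-1-a_1}\cdots y_n^{-1-b_n}$ in $H^{m+n}_{\mathfrak{m}}(R)$, whence every $c_\alpha=0$ because distinct inverse monomials are independent. Thus $\{\bar\theta_\alpha\}$ is a $K$-basis of $D/D\mathfrak{m}$, and a stars-and-bars count gives $\dim_K (D/D\mathfrak{m})_{(-a,-b)}=\binom{a+m-1}{m-1}\binom{b+n-1}{n-1}$ for $a,b\geq 0$ and $0$ otherwise --- precisely the Hilbert function of ${}^{*}E$. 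A bidegree-preserving surjection between bigraded $K$-vector spaces that are finite-dimensional and of equal dimension in each bidegree is an isomorphism, so $\bar\Phi\colon D/D\mathfrak{m}\xrightarrow{\ \sim\ }{}^{*}E$, as desired.

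The step I expect to require the most care is the structural input used for injectivity: that $D$ is left $R$-free on the monomial operators $\theta_\alpha$ and that its commutation with the $x_i$ and $y_j$ takes the ``triangular'' form above; once that is available, the remainder is routine manipulation of binomial coefficients and inverse monomials. One could instead sidestep the Hilbert-function comparison by invoking the simplicity of $H^{m+n}_{\mathfrak{m}}(R)$ as a $D$-module, but the counting argument is more elementary and self-contained.
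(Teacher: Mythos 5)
Your proof is correct, and it is worth noting that the paper itself offers no argument here: Proposition \ref{iso} is stated as ``well known'' (it is the bigraded analogue of a lemma of Ma--Zhang), so there is no in-paper proof to compare against. Your route is the natural one and all the steps check out: the identification of ${}^*E$ with $H^{m+n}_{\mathfrak m}(R)(-m,-n)$ matches the paper's normalization (socle in bidegree $(0,0)$; compare the proof of Proposition \ref{big}, where ${}^*E(m,n)$ is the span of the inverse monomials); the formula $\partial_i^{[k]}(x_i^{-1})=\binom{-1}{k}x_i^{-1-k}$ is valid in the localization and descends to the top local cohomology since the \v{C}ech maps are $D$-linear; the commutation $\theta_\alpha x_i=x_i\theta_\alpha+\theta_{\alpha-e_i}$ is the divided-power Leibniz rule; and the one genuine external input, that $D$ is free as a left $R$-module on the divided-power monomials $\theta_\alpha$, is standard in every characteristic, so your flagged ``step requiring care'' is indeed available. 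Two small remarks. First, the Hilbert-function comparison at the end is redundant: once you know the $\bar\theta_\alpha$ span $D/D\mathfrak m$ and that $\bar\Phi$ sends them (up to sign) to the pairwise distinct inverse monomials, which are linearly independent in ${}^*E$, injectivity of $\bar\Phi$ follows immediately, with no counting needed. Second, your map is $D$-linear and bidegree-preserving, which is stronger than the bare ``graded isomorphism'' asserted and is exactly what is needed later (Theorem \ref{Supp} and its corollary treat $D/D\mathfrak m$ and ${}^*E$ interchangeably as bigraded $D$-modules), so this stronger form is the right thing to prove.
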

\begin{remark}
\label{a=b}
    $\binom{a}{r}=\binom{b}{r}$ for all $r\in \mathbb{N}$ if and only if $a=b$ in any characteristic.
\end{remark}
\begin{proposition}
    If $M$ is bigraded Eulerian $\D$-module, then $M(r,s)$ is bigraded Eulerian $\D$-module if and only if $(r,s)=(0,0)$.
\end{proposition}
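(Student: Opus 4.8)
The plan is as follows. The direction $(r,s)=(0,0)\Rightarrow M(r,s)=M$ Eulerian is immediate, so the content is the converse. Assuming $M\neq 0$ (for $M=0$ every shift is vacuously Eulerian, so the statement is to be read with the standing nonvanishing hypothesis), I would suppose that $M(r,s)$ is bigraded Eulerian and deduce $(r,s)=(0,0)$ by evaluating the Euler operators on a single nonzero homogeneous element in two ways.

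First I would record that the grading shift does not alter the underlying $\D$-module: we have $M(r,s)_{(a,b)}=M_{(a+r,b+s)}$, and since $E^X_t,E^Y_t\in\D$ their action on elements is the same whether we regard the element as living in $M$ or in $M(r,s)$. Next, fix a nonzero homogeneous $m\in M$, say $\bideg_M(m)=(m_1,m_2)$; then, viewed in $M(r,s)$, it is homogeneous of bidegree $(m_1-r,m_2-s)$. Applying the Eulerian hypothesis for $M$ yields $E^X_t m=\binom{m_1}{t}m$ and $E^Y_t m=\binom{m_2}{t}m$ for all $t\geq 1$, while applying it for $M(r,s)$ yields $E^X_t m=\binom{m_1-r}{t}m$ and $E^Y_t m=\binom{m_2-s}{t}m$ for all $t\geq 1$. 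Comparing the two expressions and cancelling $m$ (legitimate since $m\neq 0$) gives $\binom{m_1}{t}=\binom{m_1-r}{t}$ and $\binom{m_2}{t}=\binom{m_2-s}{t}$ in $K$ for every $t\geq 1$ (and trivially at $t=0$), whence $m_1=m_1-r$ and $m_2=m_2-s$ by Remark~\ref{a=b}; that is, $r=s=0$.

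I do not expect a genuine obstacle here. The only points needing a sentence of care are the degenerate case $M=0$ and the observation that shifting the bigrading leaves the $\D$-action — hence the operators $E^X_t,E^Y_t$ and both Eulerian conditions — defined on literally the same element, so that the two formulas for $E^X_t m$ (and $E^Y_t m$) may legitimately be equated. Everything past that is a one-line invocation of Remark~\ref{a=b}.
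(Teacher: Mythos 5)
Your proof is correct and follows essentially the same route as the paper: evaluate $E^X_t$ and $E^Y_t$ on a nonzero homogeneous element viewed both in $M$ and in $M(r,s)$, equate the resulting binomial coefficients, and conclude $r=s=0$ via Remark \ref{a=b}. The only differences are bookkeeping (you index by the degree in $M$ rather than in $M(r,s)$, avoiding the paper's clash of the letter $r$) and your explicit remarks on the trivial direction and the degenerate case $M=0$, which the paper leaves implicit.
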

\begin{proof}
    Let $m\in M(r,s)$ be homogeneous of bidegree $(u,v)$. Since $M$ is bigraded Eulerian $\D$-module and $m\in M(r,s)_{(u,v)}=M_{(u+r,v+s)}$, we get for each $r\geq 1$
    
$$E^X_r m=\binom{u+r}{r}m\  \text{and}\   E^Y_r m=\binom{v+s}{r}m.$$
$M(r,s)$ is bigraded Eulerian $\D$-module  if and only if
$$E^X_r m=\binom{u}{r}m\  \text{and}\   E^Y_r m=\binom{v}{r}m.$$ 
Therefore, $\binom{u+r}{r}=\binom{u}{r}$ and $\binom{v+s}{r}=\binom{v}{r}$ for all $r\geq 1$. So by Remark \ref{a=b}, $r=0$ and $s=0$.
\end{proof}
\begin{proposition}
\label{big}
    The bigraded Eulerian $\D$-module ${ }^* E(a,b)$ is Eulerian bigraded if and only if $(a,b)=(m,n)$.
\end{proposition}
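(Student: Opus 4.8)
The strategy is to produce a single bidegree shift of ${ }^{*}E$ that is bigraded Eulerian and then let the preceding proposition handle all other shifts. The natural candidate is ${ }^{*}E(m,n)$, and the idea is to realize it as a top local cohomology module. Let $\mathfrak m=(x_1,\dots ,x_m,y_1,\dots ,y_n)$ be the bihomogeneous maximal ideal of $R$ and put $H:=H^{m+n}_{\mathfrak m}(R)$. Since $V(\mathfrak m)$ is a bi-homogeneous closed subset of $\Spec(R)$, the functor $H^{m+n}_{V(\mathfrak m)}(-)$ is a bigraded Lyubeznik functor, so by the fact recorded in the preliminaries (that $\mathcal T(R)$ is a bigraded $F_R$-finite, $F_R$-module for every bigraded Lyubeznik functor $\mathcal T$ when $R$ is regular), $H$ is a bigraded $F_R$-finite, $F_R$-module. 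By Theorem \ref{d-mod}, $H$ is therefore bigraded Eulerian as a $\D$-module.

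Next I would identify $H$ with ${ }^{*}E(m,n)$ as bigraded $\D$-modules. Computing $H$ from the \v{C}ech complex on $x_1,\dots ,x_m,y_1,\dots ,y_n$, one sees that $H$ is the injective hull of $R/\mathfrak m$, that its socle is one-dimensional over $K$ and spanned by the class of $1/(x_1\cdots x_m y_1\cdots y_n)$, which lies in bidegree $(-m,-n)$, and that this socle element generates $H$ over $\D$ and is annihilated by $\mathfrak m$. Hence sending $\overline 1$ to a socle generator defines a surjection of $\D$-modules $D/D\mathfrak m \to H$ shifting bidegree by $(-m,-n)$; by Proposition \ref{iso} this surjection is an isomorphism, and after the shift it reads as a bidegree-preserving isomorphism ${ }^{*}E(m,n)\cong H$ of bigraded $\D$-modules. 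Consequently ${ }^{*}E(m,n)$ is bigraded Eulerian. The one step that genuinely needs care here is the compatibility of the $\D$-structures: the $\D$-action on $H$ coming from its $F$-module structure (as in the Remark preceding Theorem \ref{d-mod}) must be checked to agree with the standard $\D$-action on local cohomology, for which the displayed map is $\D$-linear. This compatibility is the only non-formal point in the argument, and it is where one invokes Lyubeznik's comparison of the two structures.

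Finally I would apply the preceding proposition to $M={ }^{*}E(m,n)$: for $(r,s)\in\mathbb{Z}^2$, the module ${ }^{*}E(m,n)(r,s)={ }^{*}E(m+r,n+s)$ is bigraded Eulerian if and only if $(r,s)=(0,0)$. Writing $(a,b)=(m+r,n+s)$, this says precisely that ${ }^{*}E(a,b)$ is bigraded Eulerian if and only if $(a,b)=(m,n)$, which is the assertion. (In particular the "only if" direction is nothing but the converse half of the preceding proposition, transported along the isomorphism ${ }^{*}E(m,n)\cong H$.)
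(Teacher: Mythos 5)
Your argument is correct, but it takes a genuinely different route from the paper for the main ("if") half. Both you and the paper reduce to showing that ${}^*E(m,n)$ is bigraded Eulerian, using the preceding proposition on shifts (and that proposition also gives the "only if" direction in both treatments). The paper then finishes in two lines by a direct computation: ${}^*E(m,n)$ has the inverse-monomial basis $x_1^{i_1}\cdots x_m^{i_m}y_1^{r_1}\cdots y_n^{r_n}$ with all exponents $\leq -1$, sitting in bidegree $(i_1+\cdots+i_m,\,r_1+\cdots+r_n)$, and the computation of Proposition \ref{R} applies verbatim to these monomials. You instead realize ${}^*E(m,n)$ as $H^{m+n}_{\mathfrak m}(R)$, note this is a bigraded $F_R$-finite $F_R$-module via the bigraded Lyubeznik functor $H^{m+n}_{V(\mathfrak m)}$, and invoke Theorem \ref{d-mod}. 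This is conceptually attractive, but it buys its brevity at the cost of two points you leave at the level of assertion: (i) the $\D$-structure on $H^{m+n}_{\mathfrak m}(R)$ induced by its $F$-module structure must agree with the standard $\D$-action on local cohomology (equivalently, with the natural action on inverse monomials); you correctly flag this, but it is not proved in the paper or in your write-up, and verifying it amounts to essentially the same divided-power computation on $R_f$ and on inverse monomials that the paper's direct proof performs anyway; and (ii) the step "by Proposition \ref{iso} this surjection is an isomorphism" should be justified, e.g.\ by comparing the finite-dimensional bigraded components of $D/D\mathfrak m\cong{}^*E$ with those of $H^{m+n}_{\mathfrak m}(R)$ (or by using that ${}^*E$ is a simple $\D$-module). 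With those two points filled in, your proof is complete; the paper's computation is the more self-contained and shorter path, while yours explains \emph{why} the shift $(m,n)$ is the Eulerian one (it is exactly the shift realized by top local cohomology).
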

\begin{proof}
    It is enough to show that ${ }^* E(m,n)$ is bigraded Eulerian $\D$-module since ${ }^* E(a,b)={ }^* E(m,n){(a-m,b-n)}$. As a $K$-vector space it is known that ${ }^* E(m,n)$ is spanned by $x_1^{i_1}x_2^{i_2}\ldots x_m^{i_m}y_1^{r_1}y_2^{r_2}\ldots y_n^{r_n}$ with each powers less than or equal to $-1$. By computation of Proposition \ref{R}, it is clear that ${ }^* E(m,n)$ is Eulerian bigraded because in ${ }^* E(m,n)$ it has bidegree $(i_1+\ldots+i_m,r_1+\ldots+r_n)$.
\end{proof}
The following theorem is due to L. Ma and W. Zhang.
\begin{theorem}
    Let $M$ be a Eulerian graded $\D$-module. If $\Supp_R(M)=\{\mathfrak m\}$, then as a graded $\D$-module $M$ is isomorphic to direct sum of copies of $^*E(n)$. 
\end{theorem}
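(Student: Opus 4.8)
The strategy would be to exploit that $\Supp_R(M)=\{\m\}$ forces $M$ to be $\m$--torsion, so $M=\bigcup_{k\ge 1}(0:_M\m^k)$ and, being $\m$--torsion, $M$ vanishes as soon as its graded socle $\operatorname{soc}(M):=(0:_M\m)=\Hom_R(R/\m,M)$ does; note $\operatorname{soc}(M)$ is a graded $R/\m$--vector space, the variables being homogeneous. I would then reduce the theorem to three claims: (i) $\operatorname{soc}(M)$ lives in the single degree $-n$; (ii) for each nonzero homogeneous $m\in\operatorname{soc}(M)$ the cyclic $\D$--submodule $\D m\subseteq M$ is isomorphic to ${}^*E(n)$; and (iii) if $\{m_\lambda\}_{\lambda\in\Lambda}$ is a $K$--basis of $\operatorname{soc}(M)$, then $M=\bigoplus_{\lambda\in\Lambda}\D m_\lambda$. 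Together these give $M\cong\bigoplus_{\lambda\in\Lambda}{}^*E(n)$.

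Claims (i) and (ii) I would prove together. Fix a homogeneous $m\in\operatorname{soc}(M)$ of degree $d$. Since $y_i m=0$ for all $i$, the left ideal $\D\m$ annihilates $m$, so the $R$--linear surjection $\D\twoheadrightarrow\D m$, $g\mapsto g\cdot m$, which raises degrees by $d$, induces a degree--zero surjection $\D/\D\m\twoheadrightarrow(\D m)(d)$. By Proposition~\ref{iso}, $\D/\D\m\cong{}^*E$, and this module is a simple graded $\D$--module: it is generated by its one--dimensional socle, and every nonzero graded $\D$--submodule of it contains that socle, as one checks in the standard monomial model (cf.\ \cite{Lyu-Fmod}). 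Hence the surjection above is an isomorphism, so $(\D m)(d)\cong{}^*E$, i.e.\ $\D m\cong{}^*E(-d)$. But $\D m$, as a $\D$--submodule of the Eulerian module $M$, is again Eulerian, so ${}^*E(-d)$ is Eulerian; by Proposition~\ref{big} (which identifies ${}^*E(n)$ as the only Eulerian shift of ${}^*E$) this forces $-d=n$. Thus $d=-n$, proving (i), and $\D m\cong{}^*E(n)$, proving (ii). Alternatively, (i) can be obtained without simplicity of ${}^*E$: the divided--power Leibniz rule gives $\delta_i^{[s]}(y_i^s m)=\sum_{b=0}^{s}\binom{s}{b}y_i^b\delta_i^{[b]}m$, which is $0$ since $y_i^s m=0$; binomial inversion then yields $y_i^b\delta_i^{[b]}m=(-1)^b m$, and combining these over $i$ shows $E^Y_r m=\binom{-n}{r}m$ for all $r$; comparing with the Eulerian identity $E^Y_r m=\binom{d}{r}m$ and invoking Remark~\ref{a=b} gives $d=-n$.

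For (iii) I would first build a maximal direct subsum. By Zorn's Lemma choose $S\subseteq\Lambda$ maximal with the property that $M':=\sum_{\lambda\in S}\D m_\lambda$ is direct; this is legitimate since directness is a condition on finite subsums, hence inductive. If some $\mu\in\Lambda\setminus S$ existed, maximality would give $\D m_\mu\cap M'\ne0$, whence $\D m_\mu\subseteq M'$ by simplicity of $\D m_\mu$, so $m_\mu\in\operatorname{soc}(M')=\bigoplus_{\lambda\in S}Km_\lambda$, contradicting the $K$--linear independence of the $m_\lambda$. Therefore $S=\Lambda$ and $M'=\bigoplus_{\lambda\in\Lambda}\D m_\lambda\cong\bigoplus_{\lambda\in\Lambda}{}^*E(n)$; in particular $\operatorname{soc}(M')=\operatorname{soc}(M)$. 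Applying $\Hom_R(R/\m,-)$ to $0\to M'\to M\to M/M'\to 0$ and using that $\operatorname{soc}(M')\to\operatorname{soc}(M)$ is an isomorphism, one obtains an injection $\operatorname{soc}(M/M')\hookrightarrow\Ext^1_R(R/\m,M')$. Since $R/\m$ is finitely presented, $\Ext^1_R(R/\m,-)$ commutes with direct sums, and $\Ext^1_R(R/\m,{}^*E(n))=0$ because ${}^*E(n)$ is injective; hence $\operatorname{soc}(M/M')=0$, and since $M/M'$ is $\m$--torsion this forces $M/M'=0$. Thus $M=M'\cong\bigoplus_{\lambda\in\Lambda}{}^*E(n)$, as claimed.

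The essential difficulty is concentrated in (i)--(ii): showing the socle sits in the single degree $-n$ and recognizing each cyclic piece as precisely ${}^*E(n)$, not some other shift ${}^*E(a)$. This is exactly where the Eulerian hypothesis is indispensable — entering through Proposition~\ref{big}, or through Remark~\ref{a=b} in the alternative argument — and the conclusion genuinely fails for arbitrary graded $\D$--modules supported at $\m$. By comparison, step (iii) is formal once ${}^*E(n)$ is known to be injective over $R$ and to have a one--dimensional socle.
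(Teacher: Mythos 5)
Your proof is correct, but note that the paper itself does not prove this statement: it is quoted from Ma--Zhang \cite{Ma}, the neighbouring bigraded structure theorem (Theorem \ref{Supp}) is likewise stated without proof, and the only argument given in the paper is the corollary, obtained by combining Theorem \ref{Supp} with Proposition \ref{big} to pin down the shifts. You instead reconstruct the entire argument: each homogeneous socle element generates a copy of ${}^*E$ because $\D/\D\m\cong{}^*E$ is a simple graded $\D$-module (the graded version of Proposition \ref{iso}); the Eulerian hypothesis forces the socle into degree $-n$, either via the graded analogue of Proposition \ref{big} or via your binomial-inversion computation of $E^Y_r m$ combined with Remark \ref{a=b}; and the direct sum is assembled by Zorn's lemma together with injectivity of ${}^*E(n)$. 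This is essentially Ma--Zhang's own route, merged into a single self-contained argument, and your alternative computation of the socle degree is a nice way to bypass the classification of Eulerian shifts of ${}^*E$. Two small points to polish: the ring in the statement is $K[x_1,\ldots,x_n]$, so your $y_i$, $\delta_i^{[s]}$ should read $x_i$, $\partial_i^{[s]}$ (and Propositions \ref{iso} and \ref{big} are stated in the paper in the bigraded setting, so you should say you are using their singly graded analogues from \cite{Ma}); and where you use $\Ext^1_R(R/\m,{}^*E(n))=0$ you should justify that ${}^*E$ is injective as an ungraded $R$-module --- this is true here because $\m$ is a maximal ideal, so the graded injective hull of $R/\m$ coincides with the ordinary injective hull $E_{R_\m}(K)$ (an $\m$-torsion module, injective over $R$ by flatness of $R\rightarrow R_\m$), but graded-injective does not imply injective in general, so it deserves a sentence.
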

In the above theorem, $R=K[x_1,\ldots,x_n]$, where $K$ is a field of arbitrary characteristic. $\D$ is corresponding ring of differential operators together with $\deg(x_i)$=1 for all $i$. In fact, a similar result holds true in the bigraded context, which we state below and can be proven in a manner analogous to the main proof.

\begin{theorem}
\label{Supp}
 Let $M$ be a bigraded $\D$-module. If Supp$(M)=\{\mathfrak{ m}\}$, then as a bigraded $\D$-module $M \cong\bigoplus_{i,j} \frac{D}{D \mathfrak{m}}(p_i,q_j) \cong \bigoplus_{i,j}{ }^* E(p_i,q_j)$.
 \end{theorem}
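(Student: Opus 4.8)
The plan is to run the bigraded analogue of the structural core of Ma--Zhang's proof of the quoted graded theorem; the only non-formal ingredient is the bigraded isomorphism $\D/\D\m \cong {}^{*}E$ of Proposition \ref{iso}, and everything else is a Zorn's-lemma-plus-injectivity argument inside $\Bmod(R)$. First I would observe that $\Supp(M) = \{\m\}$ forces $M$ to be $\m$-torsion: for bihomogeneous $0 \neq x \in M$ the set $\Supp(Rx) = V(\ann(x))$ is a nonempty closed subset of $\{\m\}$, hence is $\{\m\}$, so $\sqrt{\ann(x)} = \m$ and $\m^{k}x = 0$ for some $k$. In particular, for $M \neq 0$ the socle $\operatorname{soc}(M) := (0 :_{M} \m)$ is a nonzero bigraded $R/\m$-vector space, and it is an essential bigraded submodule of $M$: if $0 \neq x \in M$ is bihomogeneous and $k$ is least with $\m^{k}x = 0$, then any nonzero element of $\m^{k-1}x$ lies in $\operatorname{soc}(M) \cap Rx$.

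Next I would record that ${}^{*}E \cong \D/\D\m$ is a \emph{simple} bigraded $\D$-module: it is cyclic, generated by the class $\bar 1$ of $1$, which lies in its socle (one-dimensional, in bidegree $(0,0)$, since $\operatorname{soc}({}^{*}E) \cong R/\m$); a nonzero bigraded $\D$-submodule $L$ is again $\m$-torsion, so has nonzero socle, which must meet $K\bar 1$, whence $\bar 1 \in L$ and $L = \D\bar 1 = \D/\D\m$. It follows that for bihomogeneous $0 \neq v \in \operatorname{soc}(M)$ the map $\xi \mapsto \xi v$ kills $\D\m$ and, keeping track of the bidegree shift by $\bideg(v)$, realizes $\D v$ as a nonzero bidegree-preserving quotient of $(\D/\D\m)(-\bideg(v)) \cong {}^{*}E(-\bideg(v))$; by simplicity $\D v \cong {}^{*}E(-\bideg(v))$. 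In particular each $\D v$ is a simple $\D$-module and an injective object of $\Bmod(R)$.

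I would then fix a bihomogeneous $K$-basis $\{v_{\lambda}\}_{\lambda \in \Lambda}$ of $\operatorname{soc}(M)$ and use Zorn's lemma to choose $\Lambda' \subseteq \Lambda$ maximal with the property that $N := \sum_{\lambda \in \Lambda'} \D v_{\lambda}$ is a direct sum. Since $R$ is ${}^{*}$Noetherian, the arbitrary direct sum $N \cong \bigoplus_{\lambda \in \Lambda'} {}^{*}E(-\bideg(v_{\lambda}))$ of injectives of $\Bmod(R)$ is again injective in $\Bmod(R)$. For $\mu \in \Lambda \setminus \Lambda'$, maximality forces $\D v_{\mu} \cap N \neq 0$, and then simplicity of $\D v_{\mu}$ gives $\D v_{\mu} \subseteq N$; hence every $v_{\lambda}$ lies in $N$, so $\operatorname{soc}(M) \subseteq N$ and $N$ is essential in $M$. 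An injective bigraded submodule is a bigraded direct summand, so $M = N \oplus C$, and essentiality forces $C = 0$. Setting $(p_{\lambda},q_{\lambda}) := -\bideg(v_{\lambda})$, we get $M = \bigoplus_{\lambda \in \Lambda'} \D v_{\lambda} \cong \bigoplus_{\lambda \in \Lambda'} \frac{\D}{\D\m}(p_{\lambda},q_{\lambda}) \cong \bigoplus_{\lambda \in \Lambda'} {}^{*}E(p_{\lambda},q_{\lambda})$, as claimed.

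I do not expect a real obstacle: given Proposition \ref{iso} the proof is formal, and the only care needed is to invoke in their graded forms the standard facts used above --- that the socle is essential in an $\m$-torsion module over the ${}^{*}$local ring $R$, that an arbitrary direct sum of injectives over a ${}^{*}$Noetherian ring is injective, and that an injective submodule splits off. (Should one want the Eulerian sharpening in the spirit of Ma--Zhang, that all shifts equal $(m,n)$, one would further apply Proposition \ref{big} to each Eulerian summand ${}^{*}E(p_{\lambda},q_{\lambda})$; this is not needed here.)
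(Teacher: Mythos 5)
Your proof is correct, and it is essentially the argument the paper has in mind: the paper states Theorem \ref{Supp} without proof, deferring to Ma--Zhang's method, and what you wrote is exactly that method --- $\m$-torsionness, essentiality of the socle, $\D/\D\m \cong {}^{*}E$ simple and ${}^{*}$injective, graded Bass--Papp for the direct sum, and a Zorn's lemma splitting. The only thing I would flag is cosmetic: the bidegree in which $\operatorname{soc}({}^{*}E)$ sits depends on the chosen normalization of ${}^{*}E$ (compare Proposition \ref{big}, where the socle generator $x_1^{-1}\cdots x_m^{-1}y_1^{-1}\cdots y_n^{-1}$ appears), but since you track the shift by $\bideg(v_\lambda)$ explicitly this does not affect the argument.
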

Now, we observe an easy corollary of the previously stated theorem.
\begin{corollary}
 Let $M$ be a bigraded Eulerian $\D$-module. If Supp$(M)=\{\mathfrak{ m}\}$, then as a bigraded $\D$-module $M$ is isomorphic to direct sum of copies of ${ }^* \mathrm{E}(m,n)$.
 \end{corollary}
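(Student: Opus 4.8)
The plan is to combine the two theorems immediately preceding the corollary. By Theorem \ref{Supp}, a bigraded $\D$-module $M$ with $\Supp(M)=\{\mathfrak m\}$ decomposes as $M\cong\bigoplus_{i,j}{}^*E(p_i,q_j)$ as a bigraded $\D$-module, for some shifts $(p_i,q_j)$. Now I would bring in the Eulerian hypothesis: $M$ is assumed to be a bigraded \emph{Eulerian} $\D$-module, and each summand ${}^*E(p_i,q_j)$ is a $\D$-submodule (it sits inside $M$ as a direct summand in the category of bigraded $\D$-modules), hence inherits the Eulerian property — a submodule of an Eulerian module is Eulerian, since the defining equations $E^X_r m=\binom{m_1}{r}m$, $E^Y_r m=\binom{m_2}{r}m$ are checked element by element.

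So each ${}^*E(p_i,q_j)$ is a bigraded Eulerian $\D$-module. By Proposition \ref{big}, the shift ${}^*E(a,b)$ is Eulerian if and only if $(a,b)=(m,n)$. Therefore $(p_i,q_j)=(m,n)$ for every index, and consequently $M\cong\bigoplus{}^*E(m,n)$, i.e.\ $M$ is a direct sum of copies of ${}^*E(m,n)$. That is exactly the assertion.

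I do not expect any serious obstacle here; this is a genuine corollary. The one point that needs a word of care is the claim that each ${}^*E(p_i,q_j)$ is itself Eulerian: one must make sure the decomposition from Theorem \ref{Supp} is as bigraded $\D$-modules (which it is, as stated), so that each summand is a bigraded $\D$-submodule of the Eulerian module $M$, and then invoke the (trivial) fact that bigraded $\D$-submodules of bigraded Eulerian $\D$-modules are Eulerian. After that, Proposition \ref{big} finishes the argument immediately.
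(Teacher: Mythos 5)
Your argument is correct and coincides with the paper's own proof: both apply Theorem \ref{Supp} to decompose $M$ as $\bigoplus_{i,j}{}^*E(p_i,q_j)$, transfer the Eulerian property to the summands, and conclude $(p_i,q_j)=(m,n)$ via Proposition \ref{big}. Your extra remark that bigraded $\D$-submodules of an Eulerian module are Eulerian is exactly the (implicit) justification the paper uses, so there is nothing to add.
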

\begin{proof}
    By Theorem \ref{Supp}, $M$ is isomorphic to $\bigoplus_{i,j}{ }^* E(p_i,q_j)$ as graded $\D$-module. But since $M$ is Eulerian so is  $\bigoplus_{i,j}{ }^* E(p_i,q_j)$. By Proposition \ref{big}, $p_i=m$ and $q_j=n$ for all $i,j$. This proves the result. 
\end{proof}
\section{Vanishing properties }
This section focuses on proving an important result regarding the vanishing of a bigraded $F$-finite, $F$-module over $R=K[x_1,\ldots, x_m,y_1,\ldots,y_n]$ under certain condition (See Theorem \ref{Vanishing of M}).
\begin{theorem}\cite[Lucas Theorem]{Lucas} \label{Lucas th}
    Let $p$ be a prime number and $n$ and $m$ are positive integers. Let $n=\sum a_ip^i$ and $m=\sum b_jp^j$ be $p$-adic representations of $n$ and $m$ respectively. Then $\binom{n}{m}\equiv \prod_i \binom{a_i}{b_i}(\bmod p)$.
\end{theorem}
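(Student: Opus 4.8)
The plan is to prove this by the classical polynomial-identity argument in $\mathbb{F}_p[X]$ (equivalently, in $\mathbb{Z}[X]$ with coefficients read modulo $p$). The key input is the ``freshman's dream'': since $p$ divides $\binom{p}{k}$ for every $1\le k\le p-1$, one has $(1+X)^p\equiv 1+X^p\pmod p$, and iterating this (applying the Frobenius on coefficients repeatedly) yields $(1+X)^{p^i}\equiv 1+X^{p^i}\pmod p$ for every $i\ge 0$.

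Next I would exploit the base-$p$ expansion $n=\sum_i a_ip^i$, with $0\le a_i\le p-1$, to write
\[
(1+X)^n=\prod_i\bigl((1+X)^{p^i}\bigr)^{a_i}\equiv\prod_i\bigl(1+X^{p^i}\bigr)^{a_i}\pmod p,
\]
and then compare the coefficient of $X^m$ on the two sides. On the left it is $\binom{n}{m}$. On the right, expanding $\bigl(1+X^{p^i}\bigr)^{a_i}=\sum_{c_i=0}^{a_i}\binom{a_i}{c_i}X^{c_ip^i}$ and multiplying over $i$, the monomial $X^m$ is produced exactly by the tuples $(c_i)$ with $0\le c_i\le a_i\le p-1$ and $\sum_i c_ip^i=m$; by the uniqueness of the base-$p$ representation of $m$ this forces $c_i=b_i$ for every $i$, so the coefficient of $X^m$ on the right-hand side equals $\prod_i\binom{a_i}{b_i}$. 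Equating the two coefficients gives the asserted congruence.

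The argument is entirely self-contained and there is no genuine obstacle; the only point deserving a word is the boundary case in which some digit $b_i$ of $m$ exceeds the corresponding digit $a_i$ of $n$. Then $m$ admits no representation $\sum_i c_ip^i$ with $0\le c_i\le a_i$, so the coefficient of $X^m$ on the right is $0$, consistently with the formula since in that case the factor $\binom{a_i}{b_i}$, and hence the whole product $\prod_i\binom{a_i}{b_i}$, vanishes, recovering $\binom{n}{m}\equiv 0\pmod p$. We record this classical theorem here only because the resulting digit-wise description of binomial coefficients modulo $p$ is precisely what drives the vanishing computations carried out later in this section.
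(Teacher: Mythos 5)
Your argument is correct and complete: it is the standard generating-function proof of Lucas' theorem, using $(1+X)^{p^i}\equiv 1+X^{p^i}\pmod p$, the factorization $(1+X)^n\equiv\prod_i(1+X^{p^i})^{a_i}$, and uniqueness of the base-$p$ expansion of $m$ to identify the coefficient of $X^m$, with the degenerate case $b_i>a_i$ handled properly. Note, however, that the paper does not prove this statement at all — it is a classical result quoted from Lucas' 1878 paper and used as a black box for the digit-wise computations in this section — so there is no proof in the paper to compare against; your write-up simply supplies the standard argument for the cited result.
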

\begin{lemma}\label{p-adic lemma}
    Assume that $t\geq 1$ and $p\nmid t$. Then coefficient of $p^e$ in the $p$-adic representation of $p^et-1$ can not be $p-1$.
\end{lemma}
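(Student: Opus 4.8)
The plan is to read off the $p$-adic digits of $p^et-1$ directly rather than to invoke Lucas' theorem. Since $p\nmid t$, write the $p$-adic expansion $t=c_0+c_1p+\cdots+c_kp^k$ with $0\le c_i\le p-1$ and $c_0\ne 0$. First I would use the identity
$$p^et-1=p^e(t-1)+(p^e-1).$$
Here the summand $p^e-1=(p-1)(1+p+\cdots+p^{e-1})$ contributes the digit $p-1$ in each of the positions $0,1,\ldots,e-1$ and the digit $0$ in every position $\ge e$, while the summand $p^e(t-1)$ contributes the digit $0$ in positions $0,\ldots,e-1$ and, in position $e+i$, the $i$-th $p$-adic digit of $t-1$.

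Next I would observe that the two contributions occupy disjoint blocks of digit positions (positions $\le e-1$ for the first, positions $\ge e$ for the second), so adding them produces no carries; this is just the statement that a number of the form $p^eN+(p^e-1)$ has $p$-adic expansion obtained by concatenating $e$ copies of the digit $p-1$ with the digits of $N$, which holds because $0\le p^e-1<p^e$. Consequently the coefficient of $p^e$ in the $p$-adic representation of $p^et-1$ is exactly the constant ($p^0$) digit of $t-1$.

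Finally I would compute that digit. Because $c_0\in\{1,\ldots,p-1\}$, subtracting $1$ from $t$ needs no borrow and only changes the units digit, from $c_0$ to $c_0-1$; hence the coefficient of $p^e$ in $p^et-1$ equals $c_0-1\le p-2<p-1$, which is the assertion. (The edge case $t=1$, where $p^et-1=p^e-1$ has digit $0$ in position $e$, is included since then $c_0-1=0$.) I do not expect a genuine obstacle here: the only point that deserves an explicit sentence is the no-carrying claim used to pass from the decomposition $p^e(t-1)+(p^e-1)$ to the digit expansion of $p^et-1$.
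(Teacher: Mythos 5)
Your proof is correct and takes essentially the same route as the paper: both arguments identify the coefficient of $p^e$ in $p^et-1$ with the residue of $t-1$ modulo $p$ (you via the one-shot, carry-free decomposition $p^e(t-1)+(p^e-1)$, the paper via the recursion $p^et-1=p(p^{e-1}t-1)+(p-1)$) and then conclude from $p\nmid t$ that this digit is at most $p-2$. Your explicit digit computation ($c_0-1$ with no borrow) in fact streamlines the paper's case analysis on the remainder of $t-1$.
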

\begin{proof}
    We note that $p^et-1=p(p^{e-1}t-1)+(p-1)$. Therefore the coefficient of $p^e$ in the $p$-adic representation of $(p^et-1)$ is the remainder upon dividing $t-1$ by $p$. If $p\mid (t-1)$, then the remainder is $0$. Now suppose $p\nmid (t-1)$. So either $p> (t-1)$ or $p<(t-1)$. In the first case we again get the remainder $(t-1)$ which is not equal to $(p-1)$ as $p\nmid t$. Hence assume that $p<(t-1)$ and the remainder is $(p-1)$. Therefore, $t-1=pk+(p-1)$ and this implies that $p\mid t$, a contradiction. 
\end{proof}
This lemma allows us to prove the following result.
\begin{lemma}
\label{p does not divide}
    If $p$ is a prime number and $t\in\mathbb{Z}$ with $p\nmid t$, then $p\nmid \left[\binom{P^et-1}{p^e}+1\right]$.
\end{lemma}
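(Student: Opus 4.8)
The plan is to reduce the claim to the previous lemma (Lemma~\ref{p-adic lemma}) by analyzing the $p$-adic digit of $\binom{p^et-1}{p^e}$ in position $e$ via Lucas' theorem. First I would write out the $p$-adic representations: for $p^e$ this is trivially the single digit $1$ in position $e$ and $0$ elsewhere, so by Theorem~\ref{Lucas th} the only factor of $\prod_i \binom{a_i}{b_i}$ that is not $\binom{a_i}{0}=1$ is the one in position $e$, namely $\binom{a_e}{1}=a_e$, where $a_e$ is the coefficient of $p^e$ in the $p$-adic expansion of $p^et-1$. Hence $\binom{p^et-1}{p^e}\equiv a_e \pmod p$, and therefore $\binom{p^et-1}{p^e}+1 \equiv a_e+1 \pmod p$.

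Next I would invoke Lemma~\ref{p-adic lemma}, which (under the hypothesis $p\nmid t$, and noting $t\ge 1$ may be arranged or handled by the symmetry/sign considerations in the surrounding text) tells us that $a_e \neq p-1$. Since $a_e$ is a digit, $0 \le a_e \le p-1$, so $a_e + 1$ ranges over $\{1,\dots,p-1\}$ and in particular $a_e+1 \not\equiv 0 \pmod p$. Combining with the congruence from the previous paragraph gives $p \nmid \left[\binom{p^et-1}{p^e}+1\right]$, which is exactly the assertion.

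The one point that requires a little care is the hypothesis mismatch: Lemma~\ref{p-adic lemma} is stated for $t\ge 1$ while Lemma~\ref{p does not divide} allows $t\in\mathbb{Z}$. I expect the main (minor) obstacle to be handling $t \le 0$: one should either observe that the binomial coefficient $\binom{p^et-1}{p^e}$ with a negative top entry is interpreted via the polynomial/formal convention used elsewhere in the paper and reduce to the positive case through the standard identity $\binom{-a}{b} = (-1)^b\binom{a+b-1}{b}$, or else note that in the eventual applications only $t\ge 1$ (resp.\ the relevant sign regime) actually occurs. Either way the nonvanishing mod $p$ is governed entirely by the single digit $a_e$, so Lemma~\ref{p-adic lemma} does all the real work and no further computation is needed.
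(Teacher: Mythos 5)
Your argument for $t\ge 1$ is correct and is exactly the paper's: Lucas (Theorem~\ref{Lucas th}) gives $\binom{p^et-1}{p^e}\equiv a_e \pmod p$, and Lemma~\ref{p-adic lemma} rules out $a_e=p-1$, so $a_e+1\not\equiv 0\pmod p$. The gap is the case $t<0$, which you defer rather than prove, and both of your suggested escape routes fail. First, you cannot dismiss $t\le 0$ as irrelevant to the applications: in Proposition~\ref{Koszul Vanish for m=1,n=1} the degree $u=p^et$ ranges over \emph{all} nonzero integers, so negative $t$ genuinely occurs and the lemma is stated (and used) for $t\in\mathbb{Z}$ with $p\nmid t$.

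Second, the upper-negation identity does not ``reduce to the positive case.'' Writing $\binom{p^et-1}{p^e}=(-1)^{p^e}\binom{p^ek}{p^e}$ with $k=1-t\ge 1$, the new top entry is $p^ek$, which is not of the form $p^e t'-1$, so Lemma~\ref{p-adic lemma} says nothing about it; moreover the sign $(-1)^{p^e}$ changes the quantity to be controlled (for odd $p$ one must show $\binom{p^ek}{p^e}\not\equiv 1$, for $p=2$ that $\binom{2^ek}{2^e}\not\equiv 1$ as well, but for different digit reasons). What is actually needed is a fresh Lucas computation: the digit of $p^ek$ in position $e$ is the lowest $p$-adic digit $r_0$ of $k$, so $\binom{p^ek}{p^e}\equiv r_0\pmod p$, and one must exclude the bad value of $r_0$ using $p\nmid t$ (e.g.\ $r_0=1$ would force $p\mid 1-k=t$), with a separate check when $p=2$. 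This is precisely the second half of the paper's proof, so your closing claim that ``Lemma~\ref{p-adic lemma} does all the real work and no further computation is needed'' is where the proposal breaks down.
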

\begin{proof}
  Assume that $t\geq 1$ and $p\mid \left[\binom{P^et-1}{p^e}+1\right]$. Let $p^et-1=\sum a_ip^i$ be the $p$-adic representation of $p^et-1$. By Theorem \ref{Lucas th}, we have \begin{align*}
      \binom{p^et-1}{p^e}\equiv a_e(\bmod p)
      \implies 1+\binom{p^et-1}{p^e}\equiv 1+a_e(\bmod p).
  \end{align*} 
  Since $p\mid \left[\binom{P^et-1}{p^e}+1\right]$, we have that $1+a_e\equiv 0(\bmod p)$, a contradiction by Lemma \ref{p-adic lemma}.

Now assume that $t$ is negative integer and $p\mid \left[\binom{p^et-1}{p^t}+1\right]$. Note that $\binom{p^et-1}{p^t}=(-1)^{p^i}\binom{p^e-p^et}{p^e}=(-1)^{p^i}\binom{p^ek}{p^e}$ where $k=1-t$. Suppose $k\leq (p-1)$ and let $p$ is odd. Then again by Theorem \ref{Lucas th}, $\binom{p^ek}{p^k}\equiv k(\bmod p)\implies p\mid (1-k)=t$, contradiction. If $p=2$, then we get $2\mid 1+k=2-t\implies 2\mid t$, a contradiction.

Finally assume that $k\geq p$ and $p$ is odd. Let $k=\sum r_ip^i $ be $p$-adic decomposition of $k$. Then $p^ek=r_op^e+r_1p^{e+1}+\ldots$ is $p$-adic decomposition of $p^ek$. By Theorem \ref{Lucas th}, $\binom{p^ek}{p^e}\equiv r_0(\bmod p)\implies p\mid (1-r_0) $, a contradiction when $r_0\neq 1$ since $0\leq r_0\leq p-1$. If $r_0=1$, then $p\mid (1-k)=t$ which is again not true. Now let $p=2$ , then $2\mid (1+r_0)$. Here $r_0$ is coefficient of $2^0$ in the $2$-adic representation of $k$, which can not be $1$. Suppose it is $1$. Then $k=1-t=2s+ 1$ for some $s\in \mathbb{Z}$ $\implies 2\mid t$, a contradiction. So only possibility is $r_0=0$. This is again not possible as $2\mid (1+r_0)$.
\end{proof}
As a consequence of Lemma \ref{p does not divide}, we have the following result, which will be used to prove several other results.
\begin{proposition}
\label{Koszul Vanish for m=1,n=1}
    Let $R=K[x,y]$ be standard bigraded over a field of characteristic $p>0$. Assume that $M=\bigoplus M_{(u,v)}$ is a bigraded $F_R$-finite, $F_R$-module. Then, $H_i(x,M)_{(u,v)}=0$ for $u\neq 0$ and for $i=0,1$. Also $H_i(y,M)_{(u,v)}=0$ for $v\neq 0$ and for $i=0,1$.
\end{proposition}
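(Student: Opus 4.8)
The plan is to reformulate the two assertions about $H_i(x;M)$ as: (i) the multiplication map $x\colon M_{(u-1,v)}\to M_{(u,v)}$ is surjective for all $u\neq 0$ and all $v$ (this is $H_0(x;M)_{(u,v)}=0$); and (ii) $(0:_M x)$ is concentrated in $x$-degree $-1$, so that in the graded Koszul complex $0\to M(-1,0)\xrightarrow{\,x\,}M\to 0$ one has $H_1(x;M)_{(u,v)}=(0:_M x)_{(u-1,v)}=0$ for $u\neq 0$. The statements for $H_i(y;M)$ then follow by interchanging the two families of variables. My main instrument is the chain of bidegree-preserving Frobenius isomorphisms $\alpha_e\colon M\xrightarrow{\ \sim\ }F_R^{e}(M)={}^{e}R\otimes_R M$ obtained by iterating $\theta$ ($e\ge 1$; here $|r'\otimes m|=|r'|+p^{e}|m|$), together with the fact that ${}^{e}R$ is free as a right $R$-module on the monomials $x^{i}y^{j}$ with $0\le i,j\le p^{e}-1$ (tensored with a $K^{p^{e}}$-basis of $K$ if $K$ is imperfect). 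This makes $F_R^{e}(M)$ a direct sum of copies of $M$ indexed by these basis elements, with the reduction relations $x^{p^{e}+k}\otimes m=x^{k}\otimes xm$ and $y^{p^{e}+k}\otimes m=y^{k}\otimes ym$; since $\alpha_e$ preserves bidegree, a homogeneous $w\in M_{(u,v)}$ satisfies $\alpha_e(w)=\sum_\alpha(\nu_\alpha x^{a}y^{b})\otimes m_\alpha$ with $a=u\bmod p^{e}$, $b=v\bmod p^{e}$, the $\nu_\alpha$ part of a $K^{p^{e}}$-basis of $K$, and $m_\alpha\in M$ homogeneous; moreover such a sum is zero if and only if every $m_\alpha$ is zero, as long as $0\le a,b\le p^{e}-1$.

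For (i) I would take $0\neq w\in M_{(u,v)}$ with $u\neq 0$ and choose $e$ with $p^{e}>|u|$, so that $a:=u\bmod p^{e}\ge 1$; then in $\alpha_e(w)=\sum_\alpha(\nu_\alpha x^{a}y^{b})\otimes m_\alpha$ one may extract one factor of $x$ and apply the $R$-linear map $\alpha_e^{-1}$, obtaining $w\in xM_{(u-1,v)}$. For (ii) I would take $0\neq m\in M_{(w,v)}$ with $xm=0$, fix an arbitrary $e\ge 1$, write $\alpha_e(m)=\sum_\alpha(\nu_\alpha x^{a}y^{b})\otimes m_\alpha$ with not every $m_\alpha$ zero (as $\alpha_e$ is injective), and apply $x$: then $0=\alpha_e(xm)=\sum_\alpha(\nu_\alpha x^{a+1}y^{b})\otimes m_\alpha$, and if $a+1\le p^{e}-1$ this would force every $m_\alpha=0$, a contradiction. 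Hence $a+1=p^{e}$, i.e.\ $w\equiv -1\pmod{p^{e}}$; letting $e$ vary forces $w=-1$, which proves (ii).

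I do not expect a serious obstacle here: the real care lies in fixing the internal grading of the Koszul complex so that the exceptional $x$-degree $-1$ of $(0:_M x)$ becomes the excluded value $u=0$ in the statement, and in carrying the (harmless) $K^{p^{e}}$-basis of $K$ through the argument when $K$ is imperfect; note also that $F_R$-finiteness of $M$ is never used. A more combinatorial variant, presumably closer to the lemmas just proved, is also available: for (i) one can instead use the Eulerian $\D$-module structure of Theorem \ref{d-mod} together with Lucas' theorem (Theorem \ref{Lucas th}) to find $r\ge 1$ with $p\nmid\binom{u}{r}$ and then write $w=\binom{u}{r}^{-1}x^{r}\partial^{[r]}w\in xM$; and for (ii) the operator identity $\partial^{[p^{e}]}(x^{p^{e}}n)=(E^{X}_{p^{e}}+\mathrm{id})\,n$ combined with Lemma \ref{p does not divide} forces $(0:_M x)_{(w,v)}=0$ as soon as some $p$-adic digit $w_e$ with $e\ge 1$ differs from $p-1$, after which only the residues $w\in\{-p,\dots,-2\}$ remain, to be removed by the Frobenius-descent argument of the previous paragraph. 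I would lead with the Frobenius-descent proof, as it is uniform and self-contained.
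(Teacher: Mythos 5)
Your proof is correct, but it proceeds along a genuinely different route from the paper. The paper first converts the bigraded $F$-module structure into a bigraded Eulerian $\D$-module structure (Theorem \ref{d-mod}) and then argues with the operators $E^X_r=x^r\partial^{[r]}$: for $H_0$ it picks $r=p^e$ with $u=p^et$, $p\nmid t$, and uses Lucas' theorem (Theorem \ref{Lucas th}) to see $p\nmid\binom{p^et}{p^e}$, hence $\xi\in xM$; for $H_1$ it splits into the cases $p\nmid u$ (using $x\partial$ and $\partial x=x\partial+1$) and $p\mid u$ (using $\partial^{[p^e]}x^{p^e}=x^{p^e}\partial^{[p^e]}+1$ together with the arithmetic Lemma \ref{p does not divide}). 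You instead work directly with the iterated structure isomorphisms $\alpha_e\colon M\to F_R^e(M)$ and the freeness of ${}^eR$ as a right $R$-module on $\{\nu x^iy^j\}$ with $0\le i,j\le p^e-1$ and $\nu$ running through a $K^{p^e}$-basis of $K$; the forced congruence of the exponent $a\equiv u\pmod{p^e}$ then yields surjectivity of $x$ in $x$-degrees $u\neq 0$ (extract a factor of $x$ when $a\ge 1$) and, by letting $e$ vary, pins $(0:_Mx)$ to $x$-degree $-1$, which is exactly $H_1(x,M)_{(u,v)}=0$ for $u\neq 0$ under the same Koszul grading convention as the paper. Your argument buys self-containedness: it bypasses Section 3, Lucas' theorem and Lemma \ref{p does not divide} entirely, handles imperfect $K$ explicitly, and makes transparent that only the bigraded $F$-module structure (not $F$-finiteness) is used — a fact equally true of, but less visible in, the paper's proof. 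What the paper's approach buys is the Eulerian/differential-operator formalism itself, which is reused elsewhere (e.g.\ in Theorem \ref{length for (1,1)} and in the arguments over $K[x_1]$, $K[y_1]$ in Section 4), so the binomial-coefficient computations are not wasted overhead there. Your sketched ``combinatorial variant'' is essentially the paper's proof; if you formalize it, note that reading the digit condition off $\binom{w}{p^e}$ for negative $w$ needs Lucas extended to negative (i.e.\ $p$-adic) upper indices, or the paper's case split by $p\mid u$ versus $p\nmid u$, rather than Theorem \ref{Lucas th} as stated — but since you lead with the Frobenius-descent proof, nothing in your main argument depends on this.
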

\begin{proof}
    Let $u\neq 0$ and  $\xi\in M_{(u,v)}$. Assume $u=p^et$ where $p$ does not divide $t$. Since $M$ is bigraded Eulerian $$E^X_{p^e}\xi=x^{p^e}\partial^{[p^e]}\xi=\binom{p^et}{p^e}\xi.$$
    It can be easily verified that $\binom{p^et}{p^e}$ is not divisible by $p$. It follows that $\xi\in xM$. Thus $H_0(x,M)_{(u,v)}=0$.
    
    Now let $\xi\in H_1(x,M)_{(u,v)}\subseteq M_{(u-1,v)}$. Since $M$ is Eulerian so
    \begin{align*}
        x\partial \ \xi=(u-1)\xi &\implies (\partial x-1)\xi=(u-1)\xi\\& \implies u\xi=0\\&\implies \xi=0 \ \text{if}\  p\nmid u.
    \end{align*}
     If $p\mid u$, then let $u=p^et$ where $p\nmid t$. It is clear that $p\mid \binom{p^e}{i}$ for $i\neq 0$ and $i\neq p^e$. We have the following formula;
     \begin{align*}  
     \partial^{[p^e]}x^{p^e}&=\sum_{i=0}^{p^e}\binom{p^e}{i}x^{p^e-i}\partial^{[p^e-i]}\\&=x^{p^e}\partial^{[p^e]}+1.
     \end{align*}
Therefore,
$$0=\partial^{[p^e]}x^{p^e}\xi=x^{p^e}\partial^{[p^e]}\xi+\xi.$$
Since $M$ is Eulerian and $\xi\in M_{(p^et-1,v)}$ we get
$$\left[\binom{p^et-1}{p^e}+1\right]\xi=0\implies \xi=0.$$ since $p\nmid \left[\binom{p^et-1}{p^e}+1\right]$ by Lemma \ref{p does not divide}.

Similarly we can prove $H_i(y,M)_{(u,v)}=0$ for $v\neq 0$ and for $i=0,1$.
\end{proof}
Similar proof of Proposition \ref{Koszul Vanish for m=1,n=1} gives us the following.
\begin{proposition}\label{Koszul Vanish for (m,1),(1,n)}
 Let $K$ be a field of characteristic $p>0$ and $R=K[x,y_1,\ldots,y_n]$ be standard bigraded, i.e., $\bideg(K)=(0,0)$, $\bideg(x_i)=(1,0)$ and  $\bideg(y_j)=(0,1)$ for all $i$ and $j$. Assume that $M=\bigoplus M_{(u,v)}$ is a bigraded $F_R$-finite, $F_R$-module. Then, $H_i(x,M)_{(u,v)}=0$ for $u\neq 0$ and for $i=0,1$.

  Similarly, if $R=K[x_1,\ldots,x_m,y]$ and  $M=\bigoplus M_{(u,v)}$ is a bigraded $F_R$-finite, $F_R$-module then $H_i(y,M)_{(u,v)}=0$ for $v\neq 0$ and for $i=0,1$.
\end{proposition}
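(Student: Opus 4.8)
The plan is to transcribe, almost word for word, the proof of Proposition~\ref{Koszul Vanish for m=1,n=1}, the point being that in computing $H_i(x,M)$ only the single variable $x$ and the Euler operator $E^X_r$ in the $x$-direction intervene, so the extra variables $y_1,\dots,y_n$ are simply along for the ride (and symmetrically for $H_i(y,M)$ when there are several $x$'s and one $y$).

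First I would observe that $M$, being a bigraded $F_R$-finite, $F_R$-module, acquires a bigraded $\D$-module structure and, by Theorem~\ref{d-mod}, is bigraded Eulerian; here $\D$ is the ring of differential operators on $R=K[x,y_1,\dots,y_n]$ with $\bideg(x)=(1,0)$ and $\bideg(y_j)=(0,1)$. Since there is exactly one variable of the first type, $E^X_r=x^r\partial^{[r]}$ with $\partial^{[r]}=\tfrac{1}{r!}\tfrac{\partial^r}{\partial x^r}$, and the Eulerian condition says $E^X_r\xi=\binom{u}{r}\xi$ for every homogeneous $\xi\in M_{(u,v)}$ and every $r\ge 1$. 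Likewise the identities $\partial x=x\partial+1$ and $\partial^{[p^e]}x^{p^e}=x^{p^e}\partial^{[p^e]}+1$ hold in $\D$ exactly as in the two-variable case.

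Next I would run the two cases. For $H_0(x,M)_{(u,v)}$ with $u\ne 0$: writing $u=p^e t$ with $p\nmid t$ and applying $E^X_{p^e}$ to $\xi\in M_{(u,v)}$ gives $x^{p^e}\partial^{[p^e]}\xi=\binom{p^e t}{p^e}\xi$, and $\binom{p^e t}{p^e}$ is a unit of $K$ by Lucas' Theorem~\ref{Lucas th} (it is $\equiv t\not\equiv 0\pmod p$); hence $\xi\in xM$ and $H_0(x,M)_{(u,v)}=0$. For $H_1(x,M)_{(u,v)}\subseteq M_{(u-1,v)}$, a class is represented by $\xi$ with $x\xi=0$; applying $\partial$ and using $\partial x\,\xi=(x\partial+1)\xi=u\xi$ forces $u\xi=0$, so $\xi=0$ when $p\nmid u$; if $p\mid u$ write $u=p^e t$ with $p\nmid t$, and from $0=\partial^{[p^e]}x^{p^e}\xi=x^{p^e}\partial^{[p^e]}\xi+\xi=\bigl(\binom{p^e t-1}{p^e}+1\bigr)\xi$ together with Lemma~\ref{p does not divide} conclude $\xi=0$. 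The second statement follows by interchanging the two grading directions.

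I do not expect a genuine obstacle here; the only thing to check carefully is that, with several $y_j$'s in the ring, the first-direction Euler operator $E^X_r$ still equals $x^r\partial^{[r]}$ and still acts on $M_{(u,v)}$ by the scalar $\binom{u}{r}$ --- which is immediate from the definition of $E^X_r$ and the definition of a bigraded Eulerian $\D$-module --- after which the rest is the verbatim computation above.
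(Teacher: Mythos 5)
Your proposal is correct and is exactly what the paper intends: its "proof" of this proposition is just the remark that the argument of Proposition \ref{Koszul Vanish for m=1,n=1} carries over, and you have spelled out that same argument, correctly noting that with a single $x$-variable one has $E^X_r=x^r\partial^{[r]}$ acting on $M_{(u,v)}$ by $\binom{u}{r}$ (via Theorem \ref{d-mod}), after which the $H_0$ and $H_1$ computations with Lucas' theorem and Lemma \ref{p does not divide} go through verbatim, and symmetrically for $H_i(y,M)$.
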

\begin{remark}
\label{M=0}
  Let $M=\bigoplus_{n\in \mathbb{Z}} M_n$ be a graded $D(K[x])$-module and $M_n=0$ for $|n|\gg 0$. If $x$ is $M$-regular then we assert that $M=0$. Since $x$ is $M$-regular so $H_1(x,M)=0$. Let $M=\bigoplus_{n=r}^sM_n$ with $M_s\neq 0$. Since $(x)M_s=0$, we get $M_s\subseteq H_1(x,M)$. The later module is zero so $M_s=0$, a contradiction. Thus $M=0$.
 \end{remark}
  The following observation will be useful.
\begin{lemma}
\label{M=01}
     Let $K$ be a field of characteristic $p>0$ and $M=\bigoplus_{i,j} M_{(i,j)}$ be bigraded $F_R$-finite, $F_R$-module where $R=K[x_1,\ldots, x_m,y_1]$ with $\bideg(x_i)=(1,0)$ and $\bideg(y_1)=(0,1)$ for all $i$. Let $M_{(m,n)}=0$ for all $|m|\gg 0 $ and $|n|\gg 0$. If some $x_i$ is $M$-regular, then $M=0$.
     \end{lemma}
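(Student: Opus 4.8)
The plan is to exploit the $M$-regular variable to force $M$ into a single $y$-degree, and then to annihilate that using the $\D$-module structure. After relabelling the $x_i$ we may assume $x_1$ is $M$-regular; thus multiplication by $x_1$ is injective on $M$, hence induces injections $M_{(a,b)}\hookrightarrow M_{(a+1,b)}$ for all $(a,b)\in\mathbb{Z}^2$. Choose $N$ with $M_{(a,b)}=0$ whenever $|a|\ge N$ and $|b|\ge N$. Fix $b$ with $|b|\ge N$. Then $M_{(N,b)}=0$, so the injections $M_{(a,b)}\hookrightarrow M_{(a+1,b)}\hookrightarrow\cdots\hookrightarrow M_{(N,b)}=0$ force $M_{(a,b)}=0$ for every $a\le N$, while $M_{(a,b)}=0$ for $a\ge N$ already. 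Hence $M_{(a,b)}=0$ for all $a$ as soon as $|b|\ge N$; that is, $M$ is bounded in the $y$-direction.

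Next I would apply Proposition \ref{Koszul Vanish for (m,1),(1,n)} to the single $y$-variable $y_1$: it gives $H_0(y_1;M)_{(a,b)}=(M/y_1M)_{(a,b)}=0$ and $H_1(y_1;M)_{(a,b)}=(0:_{M}y_1)_{(a,b)}=0$ for every $b\ne 0$. Equivalently, in each $y$-degree $b\ne 0$ multiplication by $y_1$ is injective, and the map $\bigoplus_a M_{(a,b-1)}\to\bigoplus_a M_{(a,b)}$ it induces is surjective. Assume $M\ne 0$ and let $c$ and $c'$ be, respectively, the largest and the smallest integer occurring as the $y$-degree of a nonzero homogeneous element of $M$; these exist because $M$ is bounded in the $y$-direction. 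If $c\ne 0$, then $y_1$ maps the $y$-degree-$c$ part of $M$ injectively into its $y$-degree-$(c+1)$ part, which vanishes by maximality of $c$ — a contradiction; so $c=0$. Symmetrically, if $c'\ne 0$, surjectivity of $y_1$ onto $y$-degree $c'$ makes the $y$-degree-$c'$ part of $M$ equal to $y_1$ times the $y$-degree-$(c'-1)$ part, which vanishes by minimality of $c'$ — again a contradiction; so $c'=0$. Therefore $M=\bigoplus_a M_{(a,0)}$ is concentrated in $y$-degree $0$.

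It remains to see that such an $M$ vanishes. By Theorem \ref{d-mod} (in the present bigraded setting with $A=K[x_1,\dots,x_m]$ and $n=1$), $M$ is a bigraded module over the ring $\D$ of differential operators on $R$. Write $\delta:=\delta_1^{[1]}\in\D$, with $\bideg(\delta)=(0,-1)$, and recall the relation $\delta y_1-y_1\delta=1$ in $\D$. Let $m\in M$ be homogeneous; then $m$ has bidegree $(a,0)$ for some $a$, so $y_1 m\in M_{(a,1)}=0$ and $\delta m\in M_{(a,-1)}=0$. Consequently
\[
 m=(\delta y_1-y_1\delta)m=\delta(y_1m)-y_1(\delta m)=\delta(0)-y_1(0)=0.
\]
Since $M$ is spanned over $K$ by its homogeneous elements, $M=0$.

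No step presents a genuine difficulty. What I would emphasise is that all three ingredients — the injectivity coming from $x_1$-regularity, the Koszul vanishing of Proposition \ref{Koszul Vanish for (m,1),(1,n)}, and the passage to a $\D$-module in Theorem \ref{d-mod} — are available over an arbitrary field $K$ of characteristic $p>0$, so no reduction to an infinite coefficient field (and in particular no appeal to Theorem \ref{F-finite}) is needed. The two mildly delicate points are the implication ``$x_1$ regular $\Rightarrow M$ bounded in $y$-degree'' and the subsequent collapse to a single $y$-degree; together they constitute essentially the whole argument.
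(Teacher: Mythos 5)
Your proof is correct, and its first half (using the $M$-regular variable to force $M_{(a,b)}=0$ for all $a$ once $|b|\gg 0$) is essentially the paper's first step, done with a chain of single multiplications instead of one large power $x_i^{s_j}$. Where you genuinely diverge is in the second half. The paper fixes an $x$-degree, views the column $N'=\bigoplus_j M_{(i,j)}$ as a graded Eulerian $\D(K[y_1])$-module, invokes the Ma--Zhang-type structure theorem (a nonzero $\Gamma_{(y_1)}(N')$ would be a direct sum of copies of ${}^*E(1)$, hence nonzero in degrees $j\ll 0$, contradicting the boundedness just established) to conclude that $y_1$ is $N'$-regular, and then kills the top degree via Remark \ref{M=0}. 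You instead quote Proposition \ref{Koszul Vanish for (m,1),(1,n)} for the single variable $y_1$, which gives injectivity and surjectivity of $y_1$ in all nonzero $y$-degrees, collapse $M$ to a single $y$-degree, and finish with the relation $\delta_1^{[1]}y_1-y_1\delta_1^{[1]}=1$ in $\D$, available through the bigraded $\D$-module structure of Theorem \ref{d-mod}. This is a legitimate and somewhat more self-contained route: it bypasses the classification of Eulerian modules supported at the maximal ideal (Theorem \ref{Supp} and its corollary), and it makes explicit that no reduction to an infinite field and no appeal to Theorem \ref{F-finite} is needed, which is consistent with the statement (and is also true of the paper's argument). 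One bookkeeping remark: with the paper's convention $H_1(y_1,M)\subseteq M(0,-1)$ (see the proof of Proposition \ref{Koszul Vanish for m=1,n=1}), the vanishing $H_1(y_1,M)_{(u,v)}=0$ for $v\neq 0$ gives injectivity of $y_1$ on $M_{(u,w)}$ for $w\neq -1$, so the maximal $y$-degree comes out as $c=-1$ rather than $c=0$; this is harmless, since either this contradicts $c'=0$ outright, or one observes that your commutator argument annihilates a module concentrated in any single $y$-degree, so the conclusion $M=0$ stands under either convention.
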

     \begin{proof}
Let $M_{(u,v)}=0$ for $(u,v)\notin S$ where $S$ is the shaded region of the figure 1. Consider $N=\bigoplus_{j\in \mathbb{Z}}M_{(j,r)}$ where $|r|>a$. For a fixed $j$ choose a sufficiently large positive integer $s_j$ such that $j+s_j>b$. Consider the following exact sequence $$0\rightarrow M_{(j,r)}\xrightarrow{x_i^{s_j}} M_{(j+s_j,r)}\rightarrow \overline{M}_{(j+s_j,r)}\rightarrow 0.$$
Therefore $M_{(j,r)}=0$ as $M_{(j+s_j,r)}=0$. Hence $N=0$. 
\begin{center}
\begin{tikzpicture}[scale=0.2]
\draw[->, ultra thick] (-7.5,0)--(8.5,0) node[right]{$x$};
\draw[->, ultra thick] (0,-7.5)--(0,8.5) node[above]{$y$};
\draw[line width=0.25mm,red](-1.5,-7.25)-- (-1.5,7.25)node[left]{$x=-b$};
\draw[line width=0.25mm,red](2,-7.25)-- (2,7.25) node[right]{$x=b$};
\draw[line width=0.25mm,red](-7.5,-1.5)-- (7.5,-1.5) node[below]{$y=-a$};
\draw[line width=0.25mm,red]  (-7.5,2)-- (7.5,2) node[above]{$y=a$};
\draw[fill=brown,fill opacity=0.35,draw=none] (-1.5,7.25)--(-1.5,-7.5)-- (2,-7.5) --(2,7.25);
\draw[fill=brown,fill opacity=0.35,draw=none] (7.5,-1.5)--(-7.5,-1.5)--(-7.5,2)-- (7.5,2);
\node[draw=none] at (-3,1) {{\small $S$}};
\node at (0,-11) {\textit{Figure $1$}};
\end{tikzpicture}	
\hspace{1cm}
\begin{tikzpicture}[scale=0.2]
\draw[->, ultra thick] (-7.5,0)--(8.5,0) node[right]{$x$};
\draw[->, ultra thick] (0,-7.5)--(0,8.5) node[above]{$y$};
\draw[line width=0.25mm,red](-7.5,-1.5)-- (7.5,-1.5) node[below]{$y=-a$};
\draw[line width=0.25mm,red]  (-7.5,2)-- (7.5,2) node[above]{$y=a$};
\draw[fill=brown,fill opacity=0.5,draw=none] (7.5,-1.5)--(-7.5,-1.5)--(-7.5,2)-- (7.5,2);
\node[draw=none] at (-3,1) {{\small $S'$}};
\node at (0,-11) {\textit{Figure $2$}};
\end{tikzpicture}
\end{center}

Now we are in the situation of figure 2. Now consider $N'=\bigoplus_{j\in \mathbb{Z}}M_{(i,j)}$ where $i$ is fixed. It is graded Eulerian $\D(K[y_1])$-module.  If $H^0_{(y_1)}(N')\neq 0$, then it is direct sum of $^*E(1)$ where $^*E$ is injective hull of $K$ as $K[y_1]$-module. But this would imply  $H^0_{(y_1)}(N')_j\neq 0$ for $j\ll 0$ which is a contradiction since $N_j=0$ for $|j|>a$. Hence  $H^0_{(y_1)}(N')= 0$ and this implies $y_1$ is $N'$-regular. By Remark \ref{M=0}, we conclude $N'=0$. So $M=0$.
\end{proof}
Having established the necessary preparatory results, we now state and prove the main result of this section.
\begin{theorem}
\label{Vanishing of M}
   Let $K$ be an infinite field of characteristic $p>0$. Let  $R=K[x_1,\ldots,x_m,y_1,\ldots,y_n]$ be standard bigraded polynomial ring over $K$, that is, $\bideg(K)=(0,0)$, $\bideg(x_i)=(1,0)$ and  $\bideg(y_j)=(0,1)$ for all $i$ and $j$. Let $M=\bigoplus_{i,j} M_{(i,j)}$ be a bigraded $F_R$-finite, $F_R$-module. If $M_{(m,n)}=0$ for all $|m|\gg 0 $ and $|n|\gg 0$, then $M=0$.
\end{theorem}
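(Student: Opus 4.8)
The plan is to argue by induction on $m+n$, peeling off one variable at a time via Koszul homology (Theorem~\ref{F-finite}, in its bigraded form), and to settle the smallest case $m=n=1$ directly from Lemma~\ref{M=01}. For the base case $R=K[x_1,y_1]$ I would first check that $x_1$ is a nonzerodivisor on $M$: if $\xi\in M$ were a nonzero bi-homogeneous element with $x_1\xi=0$, then $\xi\in(0:_M x_1)=H_1(x_1;M)$, which is concentrated in $x$-degree $0$ by Proposition~\ref{Koszul Vanish for m=1,n=1}, so $\bideg\xi=(0,v)$. Since $M$ is a bigraded $F$-module it carries a compatible $\D$-module structure and is Eulerian (Theorem~\ref{d-mod}), so $E^X_1\xi=x_1\partial_1^{[1]}\xi=\binom{0}{1}\xi=0$; using $\partial_1^{[1]}x_1=x_1\partial_1^{[1]}+1$ in $\D$ then gives $\xi=\partial_1^{[1]}(x_1\xi)-x_1\partial_1^{[1]}\xi=0$, a contradiction. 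Hence $x_1$ is $M$-regular, and Lemma~\ref{M=01} (with $m=1$) yields $M=0$.

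For the inductive step, assume $m+n\ge 3$; by the symmetry interchanging the $x_i$'s with the $y_j$'s we may assume $m\ge 2$ (if $m=1$ then $n\ge 2$ and the symmetric argument applies). Consider $H_i(x_m;M)$ over $\bar R:=R/(x_m)=K[x_1,\dots,x_{m-1},y_1,\dots,y_n]$. By the bigraded form of Theorem~\ref{F-finite} these are bigraded $F_{\bar R}$-finite $F_{\bar R}$-modules, and being subquotients of $M$ they vanish in all bidegrees $(u,v)$ with $|u|\gg0$ and $|v|\gg0$; since $\bar R$ has $m-1\ge1$ variables of $x$-type and $n\ge1$ of $y$-type, the induction hypothesis forces $H_0(x_m;M)=H_1(x_m;M)=0$. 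Thus $M=x_mM$ and $x_m$ is a nonzerodivisor on $M$, so $M=x_m^kM$ for all $k$ and the map $M\to M_{x_m}$ is an isomorphism; multiplication by $x_m$ is therefore bijective on $M$, and since $\bideg x_m=(1,0)$ this gives $M_{(u,v)}\cong M_{(0,v)}$ for every $(u,v)$. Feeding this back into the hypothesis, for $|v|\gg0$ we get $M_{(0,v)}\cong M_{(u,v)}=0$ with $|u|\gg0$, hence $M_{(u,v)}=0$ for all $u$; so $M_{(u,v)}=0$ whenever $|v|>a$, for some $a$.

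It remains to see that a bigraded $F_R$-finite $F_R$-module $M$ with $M_{(u,v)}=0$ for $|v|\gg0$ must vanish. If $M\neq 0$, let $s$ be the largest $y$-degree with $M_{(\cdot,s)}\neq 0$; then $(y_1,\dots,y_n)\,M_{(\cdot,s)}\subseteq M_{(\cdot,s+1)}=0$, so $M_{(\cdot,s)}\subseteq H_n(y_1,\dots,y_n;M)$, which by the bigraded Theorem~\ref{F-finite} is a bigraded $F_{K[x_1,\dots,x_m]}$-finite $F$-module; as $K[x_1,\dots,x_m]$ sits in $y$-degree $0$ and the structure isomorphism preserves bidegree, such a module is concentrated in $y$-degree $0$, forcing $s=0$, and symmetrically the smallest $y$-degree of $M$ is $0$. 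Hence $(y_1,\dots,y_n)M=0$; but then for any $0\neq \mu\in M_{(0,0)}$ the element $y_1\otimes\mu$ is a nonzero member of $F_R(M)_{(0,1)}$ (since ${}^1R$ is free over $R$ with $y_1$ among its basis monomials), contradicting the bidegree-preserving isomorphism $M_{(0,1)}\xrightarrow{\sim}F_R(M)_{(0,1)}$ with $M_{(0,1)}=0$. Therefore $M=0$, completing the induction.

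The step I expect to be the main obstacle is making the two ``bigraded upgrades'' precise: that Koszul homology of a bigraded $F$-finite $F$-module is again a bigraded $F$-finite $F$-module over the quotient ring, compatibly with the bigrading (the bigraded version of Theorem~\ref{F-finite}, which the paper's methods should supply just as they do for Theorem~\ref{ordinal}), and the implication ``$M$ bounded in one degree direction $\Rightarrow$ $M$ concentrated in degree $0$ $\Rightarrow$ $M=0$'', which rests on the structure isomorphism being bidegree-preserving together with the behaviour of Frobenius on the grading. Everything else is routine manipulation with localization and Koszul complexes.
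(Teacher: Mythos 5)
There is a genuine gap in your base case $m=n=1$. You claim that $(0:_M x_1)=H_1(x_1;M)$ is concentrated in $x$-degree $0$, citing Proposition \ref{Koszul Vanish for m=1,n=1}, and from this deduce that $x_1$ is regular on \emph{every} bigraded $F$-finite $F$-module over $K[x_1,y_1]$. But with the grading used in that proposition one has $H_1(x_1,M)_{(u,v)}\subseteq M_{(u-1,v)}$, so the vanishing for $u\neq 0$ says that the kernel of $x_1$ sits in $x$-degree $-1$ of $M$, not $0$. For $\xi$ of $x$-degree $-1$ the Eulerian identity gives $x_1\partial_1^{[1]}\xi=\binom{-1}{1}\xi=-\xi$, and then $\partial_1^{[1]}x_1=x_1\partial_1^{[1]}+1$ applied to $\xi$ (with $x_1\xi=0$) reads $0=x_1\partial_1^{[1]}\xi+\xi=-\xi+\xi$, which is no contradiction. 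Indeed the statement you are trying to prove there is false: $M=H^1_{(x_1)}(K[x_1,y_1])\cong x_1^{-1}K[x_1^{-1},y_1]$ is a bigraded $F$-finite $F$-module in which the bidegree $(-1,0)$ element $x_1^{-1}$ is killed by $x_1$. Regularity of $x_1$ cannot be extracted from the $F$/Eulerian structure alone; it must use the vanishing hypothesis, which is exactly how the paper argues (for the strips $N=\oplus_j M_{(j,r)}$, $H^0_{(x_1)}(N)$ is a direct sum of copies of $^*E(1)$, hence nonzero in arbitrarily negative degrees, contradicting boundedness; then Remark \ref{M=0}). As written, your base case does not establish the regularity hypothesis of Lemma \ref{M=01}, so the induction has no foundation.

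The same twist bookkeeping slips into your last step: with the conventions of Theorem \ref{Koszul vanish for underline x and y}, concentration of $H_n(\underline{y},M)$ in $y$-degree $0$ means the elements of $M$ annihilated by all the $y_j$ live in $M$-degree $-n$, so your maximal degree is $s=-n$, not $0$; combined with the minimal $y$-degree being $0$ (via $H_0(\underline{y},M)$) this actually gives an immediate contradiction, so that step is repairable, and your Frobenius-divisibility argument for concentration in degree $0$ is a nice alternative to the paper's Eulerian/Koszul computations, provided the bigraded form of Theorem \ref{F-finite} (with the correct twists) is in place. Likewise your inductive step — killing $H_0(x_m;M)$ and $H_1(x_m;M)$ by induction so that $x_m$ acts bijectively and $M$ becomes bounded in $y$-degree — is sound and somewhat slicker than the paper's strip-by-strip arguments. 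But until the $m=n=1$ case is proved correctly (for instance as in the paper, or by a correct use of the boundedness hypothesis), the proof is incomplete.
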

\begin{proof}
First we prove it for $m=1$ and $n=1$.
\begin{center}
\begin{tikzpicture}[scale=0.2]
\draw[->, ultra thick] (-7.5,0)--(8.5,0) node[right]{$x$};
\draw[->, ultra thick] (0,-7.5)--(0,8.5) node[above]{$y$};
\draw[line width=0.25mm,red](-1.5,-7.25)-- (-1.5,7.25)node[left]{$x=-b$};
\draw[line width=0.25mm,red](2,-7.25)-- (2,7.25) node[right]{$x=b$};
\draw[line width=0.25mm,red](-7.5,-1.5)-- (7.5,-1.5) node[below]{$y=-a$};
\draw[line width=0.25mm,red]  (-7.5,2)-- (7.5,2) node[above]{$y=a$};
\draw[fill=brown,fill opacity=0.35,draw=none] (-1.5,7.25)--(-1.5,-7.5)-- (2,-7.5) --(2,7.25);
\draw[fill=brown,fill opacity=0.35,draw=none] (7.5,-1.5)--(-7.5,-1.5)--(-7.5,2)-- (7.5,2);
\node[draw=none] at (-3,1) {{\small $S$}};
\node at (0,-11) {\textit{Figure $1$}};
\end{tikzpicture}	
\hspace{1cm}

\end{center}

Let $M_{(u,v)}=0$ for $(u,v)\notin S$ where $S$ is the shaded region of the figure 1. Since $M$ is bigraded $F$-module so it is bigraded Eulerian $\D(K[x_1,y_1])$-module by Theorem \ref{d-mod}. Consider $N=\bigoplus_{j\in \mathbb{Z}}M_{(j,r)}$ where $|r|>a$. Since $M$ is bigraded Eulerian as $\D(K[x_1,y_1])$-module, $N$ is $\mathbb{Z}-$graded Eulerian $\D(K[x_1])$-module. If $H^0_{(x_1)}(N)\neq 0$, then it is direct sum of $^*E(1)$ where $^*E$ is injective hull of $K$ as $K[x_1]$-module. But this would imply  $H^0_{(x_1)}(N)_j\neq 0$ for $j\ll 0$ which is a contradiction since $N_j=0$ for $|j|>b$. Hence  $H^0_{(x_1)}(N)= 0$ and this implies $x_1$ is $N$-regular. By Remark \ref{M=0}, we conclude $N=0$.
\begin{center}
\begin{tikzpicture}[scale=0.2]\label{fig2}
\draw[->, ultra thick] (-7.5,0)--(8.5,0) node[right]{$x$};
\draw[->, ultra thick] (0,-7.5)--(0,8.5) node[above]{$y$};
\draw[line width=0.25mm,red](-7.5,-1.5)-- (7.5,-1.5) node[below]{$y=-a$};
\draw[line width=0.25mm,red]  (-7.5,2)-- (7.5,2) node[above]{$y=a$};
\draw[fill=brown,fill opacity=0.5,draw=none] (7.5,-1.5)--(-7.5,-1.5)--(-7.5,2)-- (7.5,2);
\node[draw=none] at (-3,1) {{\small $S'$}};
\node at (0,-11) {\textit{Figure $2$}};
\end{tikzpicture}
\end{center}
Let $S'$ be the new shaded region as in figure 2. We have proved that $M_{(u,v)}=0$ if $(u,v)\notin S'$. Now consider $N'=\bigoplus_{j\in \mathbb{Z}}M_{(i,j)}$ where $i$ is fixed. Considering $N'$ as $\mathbb{Z}$-graded Eulerian $\D(K[y_1])$-module, similar argument as above proves that $N'=0$. Therefore $M=0$.

Now we consider the case $m\geq 2$ and $n=1$. Let $R=K[x_1,\ldots,x_m,y_1]$ with $m\geq 2$. We claim that $H_1(x_1,M)=0$. If $m=2$, then $H_1(x_1,M)\subseteq M(-1,0)$ is $F$-finite, $F$-module over $K[x_2,y_1]$ by Theorem \ref{F-finite}. Since $M_{(m,n)}=0$ for all $|m|\gg 0 $ and $|n|\gg 0$ so $H_1{(x_1,M)}_{(u,v)}=0$ for all $|u|\gg 0 $ and $|v|\gg 0$. Therefore by the base case $H_1(x_1,M)=0$. Now assume it for $m-1$ and we prove this for $m$ where $m\geq 2$. We note that $H_1(x_2,H_1(x_1,M))\subseteq H_1(x_1,M)(-1,0)\subseteq M{(-2,0)}$ and \ref{F-finite} implies that $H_1(x_1,M)$ is $F$-finite, $F$-module over $K[x_2,\ldots,x_m,y_2]$. Again since $M_{(m,n)}=0$ for all $|m|\gg 0 $ and $|n|\gg 0$ so $H_1(x_2,H_1(x_1,M))_{(u,v)}=0$ for all $|u|\gg 0 $ and $|v|\gg 0$. By the induction hypothesis $H_1(x_2,H_1(x_1,M))=0$, so $x_2$ is $H_1(x_1,M)$-regular and therefore $H_1(x_1,M)=0$ by Lemma \ref{M=01}. Again since $x_1$ is $M$-regular, $M=0$ by Lemma \ref{M=01}.

A similar argument proves the result for $K[x_1,y_1,\ldots,y_n]$.

Now consider  $R=K[x_1,\ldots,x_m,y_1,\ldots,y_n]$ and assume that the result holds for $(m,n-1)$ and $(m-1,n)$. Again by Theorem \ref{F-finite}, we get that $H_1(x_1,M)$ is $F_{\overline{R}}$-finite and $H_1(y_1,M)$ is $F_{\overline{S}}$-finite where $\overline{R}=K[x_2,\ldots,x_m,y_1,\ldots,y_n]$  and  $\overline{S}=K[x_1,\ldots,x_n,y_2,\ldots,y_n]$ . By induction $H_1(x_1,M)=0$ and  $H_1(y_1,M)=0$. So by same argument as in Lemma \ref{M=01}, $M_{(i,j)}=0$ for all $i,j$ and hence $M=0$.
\end{proof}
\section{Rigidity properties when \texorpdfstring{$R$}{R} is polynomial ring over \texorpdfstring{$K$}{K}}
Let us denote $\underline{u}=(u_1, \dots, u_r)\in \mathbb{Z}^r$ and $\underline{v}=(v_1,\ldots,v_r)\in \mathbb{Z}^r$. We say that $\underline{v}\geq \underline{u}$ if $v_i\geq u_i$ for all $i= 1, \ldots, r$.

In this section, we study the rigidity properties of bigraded components of $F$-finite, $F$-modules over $R=K[x_1,\ldots, x_m,y_1,\ldots,y_n]$ where $K$ is an infinite field of characteristic $p>0$. The following result is helpful for proving  rigidity results in this section.
\begin{theorem}\label{Koszul vanish for underline x and y}
Let $K$ be an infinite field of characteristic $p>0$. Assume that $M$ is a bigraded $F_R$-finite, $F_R$-module where  $R=K[x_1,\ldots,x_m,y_1,\ldots,y_n]$ is standard bigraded, i.e., $\bideg(K)=(0,0)$, $\bideg(x_i)=(1,0)$ and  $\bideg(y_j)=(0,1)$ for all $i$ and $j$. Let $\underline{x}=x_1,\ldots,x_m$ and $\underline{y}=y_1,\ldots,y_n$. Then $H_i(\underline{x},M)_{(u,v)}=0$ for $u\neq 0$ and for $i=m$ and $i=0$. Also, $H_i(\underline{y},M)_{(u,v)}=0$ for $v\neq 0$ and for $i=n$ and $i=0$.
\begin{proof}
    First we prove that $H_m(\underline{x},M)_{(u,v)}=0$ for $u\neq 0$. We prove this by induction on $m$. If $m=1$, then the result follows from Proposition \ref{Koszul Vanish for (m,1),(1,n)}. Assume this for all values less than equal to $m-1$ and prove this for $m$. We note that $$H_m(\underline{x},M)_{(u,v)}=H_1(x_m,H_{m-1}(x_1,\ldots,x_{m-1},M))_{(u,v)}.$$
    Since by Theorem \ref{F-finite} we get that $H_{m-1}(x_1,\ldots,x_{m-1},M)$ is $F$-finite, $F$-module over $K[x_m,y_1,\ldots,y_n]$, the result follows by induction.

    Now we prove that $H_0(\underline{x},M)_{(u,v)}=0$ for $u\neq 0$. $m=1$ case follows from  \ref{Koszul Vanish for (m,1),(1,n)}.  Assume this for all the values less than equal to $m-1$ and prove this for $m$. Note that $$H_0(\underline{x},M)=H_0(x_m,H_0(x_1,\ldots,x_{m-1},M)).$$
    Again since Theorem \ref{F-finite} implies that $H_{0}(x_1,\ldots,x_{m-1},M)$ is $F$-finite, $F$-module over $K[x_m,y_1,\ldots,y_n]$, the result follows by induction.

     Similarly we can prove that $H_i(\underline{y},M)_{(u,v)}=0$ for $v\neq 0$ and for $i=n$ and $i=0$.
\end{proof}
\end{theorem}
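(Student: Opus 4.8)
The plan is to peel off one variable at a time, reducing the top and bottom Koszul homology of the full sequence $\underline{x} = x_1,\ldots,x_m$ to the one-variable case already settled in Proposition \ref{Koszul Vanish for (m,1),(1,n)}, while staying inside the class of bigraded $F$-finite $F$-modules by means of Theorem \ref{F-finite}.

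First I would set $\underline{x}' = x_1,\ldots,x_{m-1}$ and write down the standard iterated Koszul short exact sequence arising from $K_\bullet(\underline{x};M) \cong K_\bullet(x_m) \otimes_R K_\bullet(\underline{x}';M)$: for each $i$ there is a bidegree-preserving exact sequence
\[
0 \longrightarrow H_0\bigl(x_m;\, H_i(\underline{x}';M)\bigr) \longrightarrow H_i(\underline{x};M) \longrightarrow H_1\bigl(x_m;\, H_{i-1}(\underline{x}';M)\bigr) \longrightarrow 0 .
\]
Taking $i = m$, the left-hand term vanishes because $H_m(\underline{x}';M) = 0$, whence $H_m(\underline{x};M) \cong H_1\bigl(x_m;\, H_{m-1}(\underline{x}';M)\bigr)$; taking $i = 0$, the right-hand term vanishes because $H_{-1}(\underline{x}';M) = 0$, whence $H_0(\underline{x};M) \cong H_0\bigl(x_m;\, H_0(\underline{x}';M)\bigr)$. (Equivalently, one runs an induction on $m$ whose base case $m = 1$ is exactly Proposition \ref{Koszul Vanish for (m,1),(1,n)}; that is the write-up I would choose.)

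Next, since $R = K[x_1,\ldots,x_m,y_1,\ldots,y_n]$ is a polynomial ring over $K$, Theorem \ref{F-finite} applied to the $m-1$ elements $x_1,\ldots,x_{m-1}$ shows that every $H_j(\underline{x}';M)$ is an $F_{\overline{R}}$-finite $F_{\overline{R}}$-module over $\overline{R} = R/(\underline{x}') = K[x_m,y_1,\ldots,y_n]$; being built from bi-homogeneous data, it is in fact a bigraded such module, in particular for $j = m-1$ and $j = 0$. Since $\overline{R}$ is of the form $K[x,y_1,\ldots,y_n]$ with $x = x_m$, Proposition \ref{Koszul Vanish for (m,1),(1,n)} yields $H_1\bigl(x_m;\, H_{m-1}(\underline{x}';M)\bigr)_{(u,v)} = 0$ and $H_0\bigl(x_m;\, H_0(\underline{x}';M)\bigr)_{(u,v)} = 0$ for all $u \neq 0$. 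Transporting this through the two isomorphisms of the previous step (which alter the first grading only by a fixed constant, so the condition $u \neq 0$ is preserved) gives $H_m(\underline{x};M)_{(u,v)} = 0 = H_0(\underline{x};M)_{(u,v)}$ for $u \neq 0$. Interchanging the $x$- and $y$-variables yields the corresponding vanishing for $\underline{y}$ with $i \in \{0,n\}$.

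The homological part is routine; the one point demanding care is that Theorem \ref{F-finite} must be used in its bigraded form — that is, one has to know that the Koszul homology $H_j(\underline{x}';M)$, manifestly a bigraded $\overline{R}$-module, is genuinely a \emph{bigraded} $F_{\overline{R}}$-finite $F_{\overline{R}}$-module (so that Proposition \ref{Koszul Vanish for (m,1),(1,n)} applies), not merely an $F_{\overline{R}}$-finite one that happens to carry a compatible grading. Concretely this means checking that, starting from a bi-homogeneous root $\beta : U \to F(U)$ of $M$, the root for $H_j(\underline{x}';M)$ produced in the proof of Theorem \ref{F-finite} can be taken bi-homogeneous. I expect this bookkeeping about gradings to be the only real obstacle; once it is in place, the rest of the argument is purely formal.
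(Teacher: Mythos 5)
Your proposal is correct and takes essentially the same route as the paper: peel off $x_m$, identify $H_m(\underline{x};M)\cong H_1\bigl(x_m;H_{m-1}(\underline{x}';M)\bigr)$ and $H_0(\underline{x};M)\cong H_0\bigl(x_m;H_0(\underline{x}';M)\bigr)$, use Theorem \ref{F-finite} to view $H_j(\underline{x}';M)$ as a bigraded $F$-finite $F$-module over $K[x_m,y_1,\ldots,y_n]$, and finish with Proposition \ref{Koszul Vanish for (m,1),(1,n)} — the paper merely packages this as an induction on $m$. One small caveat: your parenthetical that the identifications ``alter the first grading only by a fixed constant, so the condition $u\neq 0$ is preserved'' is not quite the right justification (a nonzero shift would move the excluded degree away from $0$); what saves the argument is that, with the internal grading conventions used throughout the paper (where $H_1(x_m;-)$ carries its twist), these isomorphisms are in fact bidegree-preserving with zero shift.
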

The first result we present is as follows:
\begin{theorem}\label{Rigidity for K-1}
Let $K$ be an infinite field of characteristic $p>0$. Assume that $M=\bigoplus_{i,j} M_{(i,j)}$  is bigraded $F_R$-finite, $F_R$-module where  $R=K[x_1,\ldots,x_m,y_1,\ldots,y_n]$ is standard bigraded, i.e., $\bideg(K)=(0,0)$, $\bideg(x_i)=(1,0)$ and  $\bideg(y_j)=(0,1)$ for all $i$ and $j$. Then,
 \begin{enumerate}[\rm (1)]
     \item The following statements are equivalent:
     \begin{enumerate}[\rm (a)]
     \item $M_{(a,b)}\neq 0$ for all $(a,b)\geq (0,0)$.
         \item There exists $(a_0,b_0)\geq (0,0)$ such that $M_{(a,b)}\neq 0$ for all $(a,b)\geq (a_0,b_0)$.
         \item $M_{(a_1,b_1)}\neq 0$ for some $(a_1,b_1)\geq (0,0)$.
     \end{enumerate}
     \item The following statements are equivalent:
     \begin{enumerate}[\rm (a)]
         \item $M_{(a,b)}\neq 0$ for all $(a,b)\in\mathbb{Z}^2$ with $a\leq -m$ and $b\geq 0$.
         \item There exists $(a_0,b_0)\in \mathbb{Z}^2$ with $a_0\leq -m$ and $b_0\geq 0$ such that $M_{(a,b)}\neq 0$ for all $(a,b)\in \mathbb{Z}^2$ with $a\leq a_0$ and $b\geq b_0$.
         \item $M_{(a_1,b_1)}\neq 0$ for some $(a_1,b_1)\in \mathbb{Z}^2$ with $a_1\leq -m$ and $b_1\geq 0$.
     \end{enumerate}
     \item The following statements are equivalent:
     \begin{enumerate}[\rm (a)]
         \item $M_{(a,b)}\neq 0$ for all $(a,b)\in\mathbb{Z}^2$ with $(a,b)\leq (-m,-n)$.
         \item There exists $(a_0,b_0)\in \mathbb{Z}^2$ with $(a_0,b_0)\leq (-m,-n)$  such that $M_{(a,b)}\neq 0$ for all $(a,b)\leq (a_0,b_0)$.
         \item $M_{(a_1,b_1)}\neq 0$ for some $(a_1,b_1)\in \mathbb{Z}^2$ with $(a_1,b_1)\leq (-m,-n)$.
     \end{enumerate}
\item The following statements are equivalent:
     \begin{enumerate}[\rm (a)]
         \item $M_{(a,b)}\neq 0$ for all $(a,b)\in\mathbb{Z}^2$ with $a\geq 0$ and $b\leq -n$.
         \item There exists $(a_0,b_0)\in \mathbb{Z}^2$ with $a_0\geq 0$ and $b_0\leq -n$ such that $M_{(a,b)}\neq 0$ for all $(a,b)\in \mathbb{Z}^2$ with $a\geq a_0$ and $b\leq b_0$.
         \item $M_{(a_1,b_1)}\neq 0$ for some $(a_1,b_1)\in \mathbb{Z}^2$ with $a_1\geq 0$ and $b_1\leq -n$.
     \end{enumerate}
 \end{enumerate}
\end{theorem}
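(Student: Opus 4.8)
The plan is to distill from Theorem~\ref{Koszul vanish for underline x and y} four ``propagation rules'' for non-vanishing of bigraded components, and then to obtain each of the four equivalences by a bookkeeping argument across the relevant region of $\mathbb{Z}^2$. In each of the four parts the implications (a)$\Rightarrow$(b)$\Rightarrow$(c) are immediate, so all the work lies in proving (c)$\Rightarrow$(a).

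First I would unwind Theorem~\ref{Koszul vanish for underline x and y}. Since $H_0(\underline{x},M)=M/\underline{x}M$, the vanishing of $H_0(\underline{x},M)_{(u,v)}$ for $u\neq 0$ says $M_{(u,v)}=\sum_i x_iM_{(u-1,v)}$ whenever $u\neq 0$, and dually for $\underline{y}$. For the top Koszul homology, the standard identification $K_m(\underline{x};M)\cong M(-m,0)$ gives $H_m(\underline{x},M)_{(u,v)}\cong (0:_M\underline{x})_{(u-m,v)}$, so its vanishing for $u\neq 0$ says $(0:_M\underline{x})_{(w,v)}=0$ for every $w\neq -m$, and dually $(0:_M\underline{y})_{(u,w)}=0$ for every $w\neq -n$. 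These translate into: (P1) if $M_{(a,b)}\neq 0$ and $a\neq 0$ then $M_{(a-1,b)}\neq 0$; (P2) if $M_{(a,b)}\neq 0$ and $a\neq -m$ then $M_{(a+1,b)}\neq 0$; (P3) if $M_{(a,b)}\neq 0$ and $b\neq 0$ then $M_{(a,b-1)}\neq 0$; (P4) if $M_{(a,b)}\neq 0$ and $b\neq -n$ then $M_{(a,b+1)}\neq 0$. (For (P1) and (P3) the displayed sum is nonzero, hence one of its summands is; for (P2) and (P4) a nonzero element of $M_{(a,b)}$ cannot be annihilated by all the $x_i$, resp.\ all the $y_j$.)

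Then each part of (c)$\Rightarrow$(a) follows the same scheme: starting from the single nonzero component $M_{(a_1,b_1)}$ furnished by (c), first use (P1) and (P2) to fill in the whole horizontal range of the region in the row $b=b_1$, and then, column by column, use (P3) and (P4) to fill in the whole vertical range. The only point to monitor is that the forbidden column $a=-m$ (for moving right) and $a=0$ (for moving left), and the forbidden rows $b=-n$ and $b=0$, are never crossed; this is automatic because in each of the four regions the $x$-range is contained in $[0,\infty)$ or in $(-\infty,-m]$ and the $y$-range in $[0,\infty)$ or in $(-\infty,-n]$, so a forbidden line is at worst an endpoint of the range and every propagation step is applied strictly inside. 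For instance, in part~(1), from $(a_1,b_1)\geq(0,0)$: (P1) applied at columns $a_1,a_1-1,\dots,1$ together with (P2) applied at columns $\geq 0$ gives $M_{(a,b_1)}\neq 0$ for all $a\geq 0$; then for each such $a$, (P3) applied at rows $b_1,\dots,1$ together with (P4) applied at rows $\geq 0$ gives $M_{(a,b)}\neq 0$ for all $b\geq 0$. Parts~(2)--(4) are identical, with the horizontal and/or vertical range replaced by $(-\infty,-m]$ and/or $(-\infty,-n]$ as appropriate.

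I do not anticipate a genuine obstacle here: the substantive input is Theorem~\ref{Koszul vanish for underline x and y} itself (and behind it the Lucas-theorem computations of Section~4 together with the $F$-finiteness of Koszul homology from \cite{TP-koszul}). Within the present argument the one delicate point is the degree shift in $H_m(\underline{x},M)_{(u,v)}\cong(0:_M\underline{x})_{(u-m,v)}$, which is exactly what makes $a=-m$ (rather than $a=0$) the obstruction to moving in the positive $x$-direction, and correspondingly $b=-n$ the obstruction in the positive $y$-direction; getting this shift right is what pins down the precise shapes of the four regions in the statement.
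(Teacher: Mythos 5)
Your proposal is correct and follows essentially the same route as the paper: the propagation rules (P1)--(P4) are exactly what the paper extracts from Theorem \ref{Koszul vanish for underline x and y} via the two exact sequences $0\to H_m(\underline{x},M)_{(a+m,b)}\to M_{(a,b)}\to M^m_{(a+1,b)}$ and $M^m_{(a-1,b)}\to M_{(a,b)}\to H_0(\underline{x},M)_{(a,b)}\to 0$ (and their $\underline{y}$-analogues), and your row-then-column filling of each region, with the same care about the forbidden lines $a=0,-m$ and $b=0,-n$, matches the paper's argument for (c)$\Rightarrow$(a) in all four parts.
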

\begin{proof}
The numbering of the figures corresponds to the parts of the theorem stated. 
\begin{center}
	\begin{tikzpicture}[scale=0.15]
	\draw[->, ultra thick] (-7.5,0)--(8.5,0) node[right]{$x$};
	\draw[->, ultra thick] (0,-7.5)--(0,8.5) node[above]{$y$};
    \draw[dashdotted, red] (-1,8.5)--(-1,-8.5) node[left]{$x=-m$};
		\draw[dashdotted, red] (-8.5,-1.5)--(8.5, -1.5) node[below]{$y=-n$};
	\draw[fill=brown,fill opacity=0.35,draw=none] (8,8)--(8,0)--(0,0)--(0,8);
     \node[blue,draw=none] at (5,3.5) {\small $(a_1,b_1)$};
     \node at (1,1.5){$\bullet$};
	\node at (0,-11) {\textit{Figure $1$}};
	\end{tikzpicture}
    \hspace{-.5cm} 
\begin{tikzpicture}[scale=0.15]
	\draw[->, ultra thick] (-8.5,0)--(8.5,0) node[right]{$x$};
	\draw[->, ultra thick] (0,-8.5)--(0,8.5) node[above]{$y$};
	\draw[-,blue](-1,8.5)--(-1,0)-- (-8.5,0);
	\draw[fill=brown,fill opacity=0.35,draw=none] (-1,8.5)--(-1,0)--(-8.5,0)--(-8.5,8.5)--(-1,8.5);
	\node[blue,draw=none] at (-5,5) {\small $(a_1,b_1)$};
	\node at (-4.5,3){$\bullet$};
	\node at (0,-11) {\textit{Figure $2$}};
	\end{tikzpicture}
     \hspace{-.3cm}
     \begin{tikzpicture}[scale=0.15]
	\draw[->, ultra thick] (-8.5,0)--(8.5,0) node[right]{$x$};
	\draw[->, ultra thick] (0,-8.5)--(0,8.5) node[above]{$y$};
	\draw[-,blue](-1,-8.5)--(-1,-1.5)-- (-8.5,-1.5);
	\draw[fill=brown,fill opacity=0.35,draw=none] (-1,-8.5)--(-1,-1.5)--(-8.5,-1.5)--(-8.5,-8.5)--(-1,-8.5);
	\node[blue,draw=none] at (-5,-5) {\small $(a_1,b_1)$};
	\node at (-4.5,-3.5){$\bullet$};
	\node at (0,-11) {\textit{Figure $3$}};
	\end{tikzpicture}
    \hspace{-.3cm}
    \begin{tikzpicture}[scale=0.15]
	\draw[->, ultra thick] (-8.5,0)--(8.5,0) node[right]{$x$};
	\draw[->, ultra thick] (0,-8.5)--(0,8.5) node[above]{$y$};
	\draw[-,blue](0,-8.5)--(0,-1.5)-- (8.5,-1.5);
	\draw[fill=brown,fill opacity=0.35,draw=none] (0,-8.5)--(0,-1.5)--(8.5,-1.5)--(8.5,-8.5)--(0,-8.5);
	\node[blue,draw=none] at (5,-5) {\small $(a_1,b_1)$};
	\node at (4.5,-3.5){$\bullet$};
	\node at (0,-11) {\textit{Figure $4$}};
	\end{tikzpicture}	
 \end{center}

(1) We only need to prove that $(c)\implies (a)$ since $(a)\implies (b)\implies (c)$. 
Consider the following exact sequence 
$$0\rightarrow H_m(\underline{x},M)_{(a+m,b)}\rightarrow M_{(a,b)}\rightarrow M^m_{(a+1,b)}.$$
 By Theorem \ref{Koszul vanish for underline x and y}, $H_m(\underline{x},M)_{(a+m,b)}=0$ whenever $a+m\neq 0$. Hence $M_{(a,b)}\neq 0$  implies  $M_{(a+1,b)}\neq 0$ if $a\neq -m$. Therefore if $a_1=0$, then $M_{(i,b_1)}\neq 0$ for $i\geq 0$ as $a_1+t\neq -m$ for all $t\geq 0$. 
  If $a_1\neq 0$, then $M_{(i,b_1)}\neq 0$ for $i\geq a_1$ as $a_1+t\neq -m$ for all $t\geq 0$.
  
  Now consider the following exact sequence $$M^m_{(a-1,b)}\rightarrow M_{(a,b)}\rightarrow H_0(\underline{x},M)_{(a,b)}\rightarrow 0.$$
 By Theorem \ref{Koszul vanish for underline x and y}, $H_0(\underline{x},M)_{(a,b)}=0$ whenever $a\neq 0$. Hence $M_{(a,b)}\neq 0$ implies  $M_{(a-1,b)}\neq 0$ if $a\neq 0$. This implies that $M_{(i,b_1)}\neq 0$ for $0\leq i<a_1$ as $a_1-t\neq 0$ when $0\leq t\leq a_1-1$. Therefore $M_{(i,b_1)}\neq 0$ for all $i\geq 0$.

 Now fix $r\geq 0$, then $M_{(r,b_1)}\neq 0$. Consider the following exact sequence 
$$0\rightarrow H_n(\underline{y},M)_{(a,b+n)}\rightarrow M_{(a,b)}\rightarrow M^n_{(a,b+1)}.$$
 By Theorem \ref{Koszul vanish for underline x and y}, $H_n(\underline{y},M)_{(a, b+n)}=0$ whenever $b+n\neq 0$. Hence $M_{(a,b)}\neq 0$ implies $M_{(a,b+1)}\neq 0$ if $b\neq -n$. Therefore if $b_1=0$, then $M_{(r,j)}\neq 0$ for $j\geq 0$ as $b_1+t\neq -n$ for all $t\geq 0$. 
 If $b_1\neq 0$, then $M_{(r,j)}\neq 0$ for $j\geq b_1$ as $b_1+t\neq -n$ for all $t\geq 0$. 
 
 Now consider the following exact sequence $$M^n_{(a,b-1)}\rightarrow M_{(a,b)}\rightarrow H_0(\underline{y},M)_{(a,b)}\rightarrow 0.$$
 By Theorem \ref{Koszul vanish for underline x and y}, $H_0(\underline{y},M)_{(a,b)}=0$ whenever $b\neq 0$. Hence $M_{(a,b)}\neq 0$ implies $M_{(a,b-1)}\neq 0$ if $b\neq 0$. This implies that $M_{(r,j)}\neq 0$ for $0\leq j<b_1$ as $b_1-t\neq 0$ when $0\leq t\leq b_1-1$. Therefore $M_{(r,j)}\neq 0$ for all $j\geq 0$. Now varying $r$ from $0$ to $\infty$, we get the result.

(2) We only need to prove that $(c)\implies (a)$ since $(a)\implies (b)\implies (c)$. Let $M_{(a_1,b_1)}\neq 0$ for some $(a_1,b_1)\in \mathbb{Z}^2$ with $a_1\leq -m$ and $b_1\geq 0$. Now consider the following exact sequence $$M^m_{(a-1,b)}\rightarrow M_{(a,b)}\rightarrow H_0(\underline{x},M)_{(a,b)}\rightarrow 0.$$
 By Theorem \ref{Koszul vanish for underline x and y}, $H_0(\underline{x},M)_{(a,b)}=0$ whenever $a\neq 0$. Hence $M_{(a,b)}\neq 0$ implies  $M_{(a-1,b)}\neq 0$ if $a\neq 0$. If $a_1=-m$, then $M_{(i,b_1)}\neq 0$ for all $i\leq -m$ as $a_1-t\neq 0$ for $t\geq 0$. Suppose that $a_1\neq -m$. Then $M_{(i,b_1)}\neq 0 $ for $i\leq a_1$ as $a_1-t\neq 0$ for $t\geq 0$. 
 
 Now consider the exact sequence 
$$0\rightarrow H_m(\underline{x},M)_{(a+m,b)}\rightarrow M_{(a,b)}\rightarrow M^m_{(a+1,b)}.$$
 By Theorem \ref{Koszul vanish for underline x and y}, $H_m(\underline{x},M)_{(a+m,b)}=0$ whenever $a+m\neq 0$. Hence $M_{(a,b)}\neq 0$  implies  $M_{(a+1,b)}\neq 0$ if $a\neq -m$.  This implies that $M_{(i,b_1)}\neq 0$ for $a_1< i\leq -m$ as $a_1+t\neq -m$ when $0\leq t\leq -a_1-m-1$. Therefore $M_{(i,b_1)}\neq 0$ for all $i\leq -m$.

 Now fix $r\leq -m$, then $M_{(r,b_1)}\neq 0$. Consider the following exact sequence 
$$0\rightarrow H_n(\underline{y},M)_{(a,b+n)}\rightarrow M_{(a,b)}\rightarrow M^n_{(a,b+1)}.$$
 By Theorem \ref{Koszul vanish for underline x and y}, $H_n(\underline{y},M)_{(a, b+n)}=0$ whenever $b+n\neq 0$. Hence $M_{(a,b)}\neq 0$ implies $M_{(a,b+1)}\neq 0$ if $b\neq -n$. Therefore if $b_1=0$, then $M_{(r,j)}\neq 0$ for $j\geq 0$ as $b_1+t\neq -n$ for all $t\geq 0$.  If $b_1\neq 0$, then $M_{(r,j)}\neq 0$ for $j\geq b_1$ as $b_1+t\neq -n$ for all $t\geq 0$.
 
 Now consider the following exact sequence $$M^n_{(a,b-1)}\rightarrow M_{(a,b)}\rightarrow H_0(\underline{y},M)_{(a,b)}\rightarrow 0.$$
 By Theorem \ref{Koszul vanish for underline x and y}, $H_0(\underline{y},M)_{(a,b)}=0$ whenever $b\neq 0$. Hence $M_{(a,b)}\neq 0$ implies $M_{(a,b-1)}\neq 0$ if $b\neq 0$. This implies that $M_{(r,j)}\neq 0$ for $0\leq j<b_1$ as $b_1-t\neq 0$ when $0\leq t\leq b_1-1$. Therefore $M_{(r,j)}\neq 0$ for all $j\geq 0$. Now varying $r$ from $-m$ to $-\infty$, we get the result.

(3) We only need to prove that $(c)\implies (a)$ since $(a)\implies (b)\implies (c)$. Let $M_{(a_1,b_1)}\neq 0$ for some $(a_1,b_1)\in \mathbb{Z}^2$ with $a_1\leq -m$ and $b_1\leq -n$. Now consider the following exact sequence $$M^m_{(a-1,b)}\rightarrow M_{(a,b)}\rightarrow H_0(\underline{x},M)_{(a,b)}\rightarrow 0.$$
 By Theorem \ref{Koszul vanish for underline x and y}, $H_0(\underline{x},M)_{(a,b)}=0$ whenever $a\neq 0$. Hence $M_{(a,b)}\neq 0$ implies  $M_{(a-1,b)}\neq 0$ if $a\neq 0$. If $a_1=-m$, then $M_{(i,b_1)}\neq 0$ for all $i\leq -m$ as $a_1-t\neq 0$ for $t\geq 0$. Suppose that $a_1\neq -m$. Then $M_{(i,b_1)}\neq 0 $ for $i\leq a_1$ as $a_1-t\neq 0$ for $t\geq 0$. 
 
 Now consider the exact sequence 
$$0\rightarrow H_m(\underline{x},M)_{(a+m,b)}\rightarrow M_{(a,b)}\rightarrow M^m_{(a+1,b)}.$$
 By Theorem \ref{Koszul vanish for underline x and y}, $H_m(\underline{x},M)_{(a+m,b)}=0$ whenever $a+m\neq 0$. Hence $M_{(a,b)}\neq 0$  implies  $M_{(a+1,b)}\neq 0$ if $a\neq -m$.  This implies that $M_{(i,b_1)}\neq 0$ for $a_1< i\leq -m$ as $a_1+t\neq -m$ when $0\leq t\leq -a_1-m-1$. Therefore $M_{(i,b_1)}\neq 0$ for all $i\leq -m$.

Fix $r\leq -m$. Now consider the following exact sequence $$M^n_{(a,b-1)}\rightarrow M_{(a,b)}\rightarrow H_0(\underline{y},M)_{(a,b)}\rightarrow 0.$$
 By Theorem \ref{Koszul vanish for underline x and y}, $H_0(\underline{y},M)_{(a,b)}=0$ whenever $b\neq 0$. Hence $M_{(a,b)}\neq 0$ implies $M_{(a,b-1)}\neq 0$ if $b\neq 0$. If $b_1=-n$, then $M_{(r,j)}\neq 0$ for all $j\leq -n$ as $b_1-t\neq 0$ for $t\geq 0$. Suppose that $b_1\neq -n$. Then $M_{(r,j)}\neq 0 $ for $j\leq b_1$ as $b_1-t\neq 0$ for $t\geq 0$.
 
 Now consider the exact sequence 
$$0\rightarrow H_n(\underline{y},M)_{(a,b+n)}\rightarrow M_{(a,b)}\rightarrow M^n_{(a,b+1)}.$$
 By Theorem \ref{Koszul vanish for underline x and y}, $H_n(\underline{y},M)_{(a,b+n)}=0$ whenever $b+n\neq 0$. Hence $M_{(a,b)}\neq 0$ implies $M_{(a,b+1)}\neq 0$ if $b\neq -n$.  This implies that $M_{(r,j)}\neq 0$ for $b_1< j\leq -n$ as $b_1+t\neq -n$ when $0\leq t\leq -b_1-n-1$. Therefore $M_{(r,j)}\neq 0$ for all $j\leq -n$. Now varying $r$ from $-m$ to $-\infty$, we get the result.

(4) Proof of (4) is similar to (2).
\end{proof}
Now, we examine the case where a component, not included in the previous theorem, i.e., in \ref{Rigidity for K-1}, is non-zero. This is shown next.
\begin{theorem}[with hypothesis as in Theorem \ref{Rigidity for K-1}] \label{Rigidity for K-2}We have,
		\begin{enumerate}[\rm(1)]
		\item The following statements are equivalent:
		\begin{enumerate}[\rm(a)]
		\item $M_{(a,b)} \neq 0$ for all $(a,b)\in \mathbb{Z}^2$.
		\item There exists some $(-m,-n)<(a_0,b_0)<(0,0)$ such that $M_{(a_0,b_0)} \neq 0$.
	\end{enumerate}
\item The following statements are equivalent:
\begin{enumerate}[\rm (a)]
	\item $M_{(a,b)} \neq 0$ for all $(a,b)\in \mathbb{Z}^2$ with $b \geq 0$.
	\item There exists some $(a_0,b_0)\in \mathbb{Z}^2$ with $-m<a_0<0$ and $b_0 \geq 0$ such that $M_{(a_0,b_0)} \neq 0$.
\end{enumerate}
\item The following statements are equivalent:
\begin{enumerate}[\rm (a)]
	\item $M_{(a,b)} \neq 0$ for all $(a,b)\in \mathbb{Z}^2$ with $a \leq -m$.
	\item There exists some $(a_0,b_0)\in \mathbb{Z}^2$ with $a_0 \leq -m$ and $-n<b_0<0$ such that $M_{(a_0,b_0)} \neq 0$.
\end{enumerate}
\item The following statements are equivalent:
\begin{enumerate}[\rm (a)]
	\item $M_{(a,b)} \neq 0$ for all $(a,b)\in \mathbb{Z}^2$ with $b \leq -n$.
	\item There exists some $(a_0,b_0)\in \mathbb{Z}^2$ with $-m< a_0 <0$ and $b_0 \leq -n$ such that $M_{(a_0,b_0)} \neq 0$.
\end{enumerate}
\item The following statements are equivalent:
\begin{enumerate}[\rm (a)]
	\item $M_{(a,b)} \neq 0$ for all $(a,b)\in \mathbb{Z}^2$ with $a \geq 0$.
	\item There exists some $(a_0,b_0)\in \mathbb{Z}^2$ with $a_0 \geq 0$ and $-n<b_0<0$ such that $M_{(a_0,b_0)} \neq 0$.
\end{enumerate}
\end{enumerate}
\end{theorem}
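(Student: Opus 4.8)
The plan is to reuse the propagation mechanism already set up in the proof of Theorem \ref{Rigidity for K-1}. First I would record the four one-step implications coming from the Koszul short exact sequences $0 \to H_m(\underline{x},M)_{(a+m,b)} \to M_{(a,b)} \to M^m_{(a+1,b)}$ and $M^m_{(a-1,b)} \to M_{(a,b)} \to H_0(\underline{x},M)_{(a,b)} \to 0$, their $\underline{y}$-analogues, together with the vanishing supplied by Theorem \ref{Koszul vanish for underline x and y}: namely $M_{(a,b)}\neq 0 \implies M_{(a+1,b)}\neq 0$ when $a\neq -m$; $M_{(a,b)}\neq 0 \implies M_{(a-1,b)}\neq 0$ when $a\neq 0$; $M_{(a,b)}\neq 0 \implies M_{(a,b+1)}\neq 0$ when $b\neq -n$; and $M_{(a,b)}\neq 0 \implies M_{(a,b-1)}\neq 0$ when $b\neq 0$. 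The upshot is that horizontal propagation can be blocked only at the column $a=-m$ (moving right) or $a=0$ (moving left), and vertical propagation only at the row $b=-n$ (moving up) or $b=0$ (moving down). In every part the implication (a) $\implies$ (b) is immediate, so only the reverse implication needs an argument.

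For part (1), starting from $M_{(a_0,b_0)}\neq 0$ with $-m<a_0<0$ and $-n<b_0<0$, I would propagate horizontally first: since $a_0\geq -m+1$, moving right never meets the column $a=-m$, and since $a_0\leq -1$, moving left never meets the column $a=0$; hence $M_{(a,b_0)}\neq 0$ for every $a\in\mathbb{Z}$. Fixing an arbitrary $a$, the inequalities $-n<b_0<0$ let me rerun the same argument in the $y$-direction to get $M_{(a,b)}\neq 0$ for every $b\in\mathbb{Z}$, hence $M_{(a,b)}\neq 0$ for all $(a,b)\in\mathbb{Z}^2$.

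Parts (2)--(5) are the "one interior coordinate'' variants, and I would handle them uniformly: use the coordinate that lies strictly inside its critical strip to fill an entire line, then propagate along the half-line permitted by the other (boundary) coordinate. For (2), with $-m<a_0<0$ and $b_0\geq 0$: horizontal propagation gives $M_{(a,b_0)}\neq 0$ for all $a$; then for fixed $a$, moving up is unobstructed since $b\geq b_0\geq 0>-n$, while moving down is permitted as far as the step $b=1$ to $b=0$, so $M_{(a,b)}\neq 0$ for all $b\geq 0$. Part (4) is the reflection (start from $b_0\leq -n$ and propagate down without obstruction, up only until $b=-n$). Parts (3) and (5) are obtained from (2) and (4) by interchanging $\underline{x}$ and $\underline{y}$: the interior coordinate $-n<b_0<0$ fills the vertical line through $(a_0,b_0)$, and the $x$-propagation then fills the half-plane $a\leq -m$ for (3) (moving left never blocked, moving right stops at $a=-m$) and $a\geq 0$ for (5).

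I do not anticipate a genuine obstacle here: Theorem \ref{Koszul vanish for underline x and y} (which itself rests on the Eulerian $\D$-module structure and the Koszul-homology finiteness of Theorem \ref{F-finite}) already carries the weight, and what is left is bookkeeping. The one point that demands care is checking, in each of the five configurations, that the given nonzero component is positioned so that propagation in the intended directions is never cut off before the target region is exhausted — which is exactly where the strict inequalities $-m<a_0<0$ and $-n<b_0<0$ are used.
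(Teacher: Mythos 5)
Your proposal is correct and follows essentially the same route as the paper: the four one-step nonvanishing propagations extracted from the Koszul exact sequences together with the vanishing in Theorem \ref{Koszul vanish for underline x and y}, with the strict inequalities $-m<a_0<0$, $-n<b_0<0$ guaranteeing propagation is never blocked before the target region is filled. The only difference is the (immaterial) order in which you propagate in the $x$- and $y$-directions in parts (3) and (5).
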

\begin{proof}
The numbering of the figures corresponds to the parts of the theorem stated.

\begin{tikzpicture}[scale=0.15]
		\draw[->, ultra thick] (-7.5,0)--(8.5,0) node[right]{$x$};
	\draw[->, ultra thick] (0,-7.5)--(0,8.5) node[above]{$y$};
		\draw[dashdotted, red] (-7.5,-3)--(8,-3)node[below]{$y=-n$};
		\draw[dashdotted, red] (-2,-7.5)--(-2,8) node[left]{$x=-m$};
		\node at (-1,-1.5){$\bullet$};
		\draw[fill=brown,fill opacity=0.35,draw=none] (-7.5,8)--(8,8)--(8,-7.5)--(-7.5,-7.5);
		\node at (0,-11) {\textit Figure $1$};
		\end{tikzpicture}	
        \hspace{.3cm}
\begin{tikzpicture}[scale=0.15]
	\draw[->, ultra thick] (-7.5,0)--(8.5,0) node[right]{$x$};
	\draw[->, ultra thick] (0,-7.5)--(0,8.5) node[above]{$y$};
	\coordinate (1) at (-1,3);
	\foreach \x in {1}{
		\node[mynode] at (\x) {};}
	\draw[dashdotted, red] (8,-3)--(-7.5,-3)node[left]{$y=-n$};
	\draw[dashdotted, red] (-2,-7.5)--(-2,8) node[left]{$x=-m$};
	\draw[fill=brown,fill opacity=0.35,draw=none] (-8,8)--(8,8)--(8,0)--(-8,0);
	\node at (0,-11) {\textit{Figure $2$}};
	\end{tikzpicture}
    \hspace{0.3cm}
    \begin{tikzpicture}[scale=0.15]
	\draw[->, ultra thick] (-7.5,0)--(8.5,0) node[right]{$x$};
	\draw[->, ultra thick] (0,-7.5)--(0,8.5) node[above]{$y$};
	\coordinate (1) at (-5,-1.5);
	\foreach \x in {1}{
		\node[mynode] at (\x) {};}
	\draw[dashdotted, red] (8,-3)--(-7.5,-3)node[left]{$y=-n$};
	\draw[dashdotted, red] (-2,-7.5)--(-2,8) node[left]{$x=-m$};
	\draw[fill=brown,fill opacity=0.35,draw=none] (-8,8)--(-2,8)--(-2,-8)--(-8,-8);
	\node at (0,-11) {\textit{Figure $3$}};
	\end{tikzpicture}
    \begin{tikzpicture}[scale=0.15]
	\draw[->, ultra thick] (-7.5,0)--(8.5,0) node[right]{$x$};
	\draw[->, ultra thick] (0,-7.5)--(0,8.5) node[above]{$y$};
	\coordinate (1) at (-1,-5);
	\foreach \x in {1}{
		\node[mynode] at (\x) {};}
	\draw[dashdotted, red] (8,-3)--(-7.5,-3)node[left]{$y=-n$};
	\draw[dashdotted, red] (-2,-7.5)--(-2,8) node[left]{$x=-m$};
	\draw[fill=brown,fill opacity=0.35,draw=none] (-8,-3)--(8,-3)--(8,-8)--(-8,-8);
	\node at (0,-11) {\textit{Figure $4$}};
	\end{tikzpicture}
    \hspace{.5cm}
	\begin{tikzpicture}[scale=0.15]
	\draw[->, ultra thick] (-7.5,0)--(8.5,0) node[right]{$x$};
	\draw[->, ultra thick] (0,-7.5)--(0,8.5) node[above]{$y$};
	\coordinate (1) at (5,-1.5);
	\foreach \x in {1}{
		\node[mynode] at (\x) {};}
	\draw[dashdotted, red] (8,-3)--(-7.5,-3)node[left]{$y=-n$};
	\draw[dashdotted, red] (-2,-7.5)--(-2,8) node[left]{$x=-m$};
	\draw[fill=brown,fill opacity=0.35,draw=none] (0,8)--(8,8)--(8,-7.5)--(0,-7.5);
	\node at (0,-11) {\textit{Figure $5$}};
	\end{tikzpicture}
    
(1) We only need to prove $(b) \implies (a)$. Consider the following exact sequence 
$$0\rightarrow H_m(\underline{x},M)_{(a+m,b)}\rightarrow M_{(a,b)}\rightarrow M^m_{(a+1,b)}.$$
 By Theorem \ref{Koszul vanish for underline x and y}, $H_m(\underline{x},M)_{(a+m,b)}=0$ whenever $a+m\neq 0$. Hence $M_{(a,b)}\neq 0$  implies  $M_{(a+1,b)}\neq 0$ if $a\neq -m$. If $a_0+t=-m$ for some $t\geq 0$, then  $ a_0=-m-t\leq -m$, a contradiction. Since $M_{(a_0,b_0)}\neq 0$, this implies $M_{(i,b_0)}\neq 0$ for $i\geq a_0$.
 
 Now consider the following exact sequence $$M^m_{(a-1,b)}\rightarrow M_{(a,b)}\rightarrow H_0(\underline{x},M)_{(a,b)}\rightarrow 0.$$
 By Theorem \ref{Koszul vanish for underline x and y}, $H_0(\underline{x},M)_{(a,b)}=0$ whenever $a\neq 0$. Hence $M_{(a,b)}\neq 0$ implies  $M_{(a-1,b)}\neq 0$ if $a\neq 0$. If $a_0-t=0$ for some $t\geq 0$, then $a_0=t\geq 0$, a contradiction. Since $M_{(a_0,b_0)}\neq 0$, we get that $M_{(i,b_0)}\neq 0$ for all $i< a_0$. Hence $M_{(i,b_0)}\neq 0$ for all $i\in \mathbb{Z}$.

 Now fix $r\in \mathbb{Z}$.  Consider the following exact sequence 
$$0\rightarrow H_n(\underline{y},M)_{(a,b+n)}\rightarrow M_{(a,b)}\rightarrow M^n_{(a,b+1)}.$$
 By Theorem \ref{Koszul vanish for underline x and y}, $H_n(\underline{y},M)_{(a,b+n)}=0$ whenever $b+n\neq 0$. Hence $M_{(a,b)}\neq 0$ implies $M_{(a,b+1)}\neq 0$ if $b\neq -n$. If $b_0+t=-n$ for some $t\geq 0$, then  $ b_0=-n-t\leq -n$, a contradiction. Since $M_{(r,b_0)}\neq 0$, this implies $M_{(r,j)}\neq 0$ for $j\geq b_0$.

 Now consider the following exact sequence $$M^n_{(a,b-1)}\rightarrow M_{(a,b)}\rightarrow H_0(\underline{y},M)_{(a,b)}\rightarrow 0.$$
 By Theorem \ref{Koszul vanish for underline x and y}, $H_0(\underline{y},M)_{(a,b)}=0$ whenever $b\neq 0$. Hence $M_{(a,b)}\neq 0$ implies $M_{(a,b-1)}\neq 0$ if $b\neq 0$. If $b_0-t=0$ for some $t\geq 0$, then $b_0=t\geq 0$, a contradiction. Since $M_{(r,b_0)}\neq 0$, we get that $M_{(r,j)}\neq 0$ for all $j\leq b_0$. Hence $M_{(r,j)}\neq 0$ for all $j\in \mathbb{Z}$. Now varying $r$ from $-\infty$ to $+\infty$, we get our result.

(2)  We only need to prove $(b) \implies (a)$. Consider the following exact sequence 
$$0\rightarrow H_m(\underline{x},M)_{(a+m,b)}\rightarrow M_{(a,b)}\rightarrow M^m_{(a+1,b)}.$$
 By Theorem \ref{Koszul vanish for underline x and y}, $H_m(\underline{x},M)_{(a+m,b)}=0$ whenever $a+m\neq 0$. Hence $M_{(a,b)}\neq 0$  implies  $M_{(a+1,b)}\neq 0$ if $a\neq -m$. If $a_0+t=-m$ for some $t\geq 0$, then  $ a_0=-m-t\leq -m$, a contradiction. Since $M_{(a_0,b_0)}\neq 0$, this implies $M_{(i,b_0)}\neq 0$ for $i\geq a_0$.
 
 Now consider the following exact sequence $$M^m_{(a-1,b)}\rightarrow M_{(a,b)}\rightarrow H_0(\underline{x},M)_{(a,b)}\rightarrow 0.$$
 By Theorem \ref{Koszul vanish for underline x and y}, $H_0(\underline{x},M)_{(a,b)}=0$ whenever $a\neq 0$. Hence $M_{(a,b)}\neq 0$ implies  $M_{(a-1,b)}\neq 0$ if $a\neq 0$. If $a_0-t=0$ for some $t\geq 0$, then $a_0=t\geq 0$, a contradiction. Since $M_{(a_0,b_0)}\neq 0$, we get that $M_{(i,b_0)}\neq 0$ for all $i< a_0$. Hence $M_{(i,b_0)}\neq 0$ for all $i\in \mathbb{Z}$.

  Now fix $r\in \mathbb{Z}$, then $M_{(r,b_0)}\neq 0$. Consider the following exact sequence 
$$0\rightarrow H_n(\underline{y},M)_{(a,b+n)}\rightarrow M_{(a,b)}\rightarrow M^n_{(a,b+1)}.$$
 By Theorem \ref{Koszul vanish for underline x and y}, $H_n(\underline{y},M)_{(a, b+n)}=0$ whenever $b+n\neq 0$. Hence $M_{(a,b)}\neq 0$ implies $M_{(a,b+1)}\neq 0$ if $b\neq -n$. Therefore if $b_0=0$, then $M_{(r,j)}\neq 0$ for $j\geq 0$ as $b_0+t\neq -n$ for all $t\geq 0$. 

 If $b_0\neq 0$, then $M_{(r,j)}\neq 0$ for $i\geq b_0$ as $b_0+t\neq -n$ for all $t\geq 0$. Now consider the following exact sequence $$M^n_{(a,b-1)}\rightarrow M_{(a,b)}\rightarrow H_0(\underline{y},M)_{(a,b)}\rightarrow 0.$$
 By Theorem \ref{Koszul vanish for underline x and y}, $H_0(\underline{y},M)_{(a,b)}=0$ whenever $b\neq 0$. Hence $M_{(a,b)}\neq 0$ implies $M_{(a,b-1)}\neq 0$ if $b\neq 0$. This implies that $M_{(r,j)}\neq 0$ for $0\leq j<b_0$ as $b_0-t\neq 0$ when $0\leq t\leq b_0-1$. Therefore $M_{(r,j)}\neq 0$ for all $j\geq 0$. Now varying $r$ from $-\infty$ to $+\infty$, we get the result.

(3) We only need to prove $(b) \implies (a)$. Now consider the following exact sequence $$M^m_{(a-1,b)}\rightarrow M_{(a,b)}\rightarrow H_0(\underline{x},M)_{(a,b)}\rightarrow 0.$$
 By Theorem \ref{Koszul vanish for underline x and y}, $H_0(\underline{x},M)_{(a,b)}=0$ whenever $a\neq 0$. Hence $M_{(a,b)}\neq 0$ implies  $M_{(a-1,b)}\neq 0$ if $a\neq 0$. If $a_0=-m$, then $M_{(i,b_0)}\neq 0$ for all $i\leq -m$ as $a_0-t\neq 0$ for $t\geq 0$. 
 
 Suppose that $a_0\neq -m$. Then $M_{(i,b_0)}\neq 0 $ for $i\leq a_0$ as $a_0-t\neq 0$ for $t\geq 0$. Now consider the exact sequence 
$$0\rightarrow H_m(\underline{x},M)_{(a+m,b)}\rightarrow M_{(a,b)}\rightarrow M^m_{(a+1,b)}.$$
 By Theorem \ref{Koszul vanish for underline x and y}, $H_m(\underline{x},M)_{(a+m,b)}=0$ whenever $a+m\neq 0$. Hence $M_{(a,b)}\neq 0$  implies  $M_{(a+1,b)}\neq 0$ if $a\neq -m$.  This implies that $M_{(i,b_0)}\neq 0$ for $a_0< i\leq -m$ as $a_0+t\neq -m$ when $0\leq t\leq -a_0-m-1$. Therefore $M_{(i,b_0)}\neq 0$ for all $i\leq -m$.
 
 Now fix $r\leq -m$. Consider the following exact sequence 
$$0\rightarrow H_n(\underline{y},M)_{(a,b+n)}\rightarrow M_{(a,b)}\rightarrow M^n_{(a,b+1)}.$$
 By Theorem \ref{Koszul vanish for underline x and y}, $H_n(\underline{y},M)_{(a,b+n)}=0$ whenever $b+n\neq 0$. Hence $M_{(a,b)}\neq 0$ implies $M_{(a,b+1)}\neq 0$ if $b\neq -n$. If $b_0+t=-n$ for some $t\geq 0$, then  $ b_0=-n-t\leq -n$, a contradiction. Since $M_{(r,b_0)}\neq 0$, this implies $M_{(r,j)}\neq 0$ for $j\geq b_0$.

 Now consider the following exact sequence $$M^n_{(a,b-1)}\rightarrow M_{(a,b)}\rightarrow H_0(\underline{y},M)_{(a,b)}\rightarrow 0.$$
 By Theorem \ref{Koszul vanish for underline x and y}, $H_0(\underline{y},M)_{(a,b)}=0$ whenever $b\neq 0$. Hence $M_{(a,b)}\neq 0$ implies $M_{(a,b-1)}\neq 0$ if $b\neq 0$. If $b_0-t=0$ for some $t\geq 0$, then $b_0=t\geq 0$, a contradiction. Since $M_{(r,b_0)}\neq 0$, we get that $M_{(r,j)}\neq 0$ for all $j\leq b_0$. Hence $M_{(r,j)}\neq 0$ for all $j\in \mathbb{Z}$. Now varying $r$ from $-m$ to $-\infty$, we get our result.

(4) Proof of (4) is similar to (3).

(5)  Proof of (5) is similar to (2).
\end{proof}
Now we present examples corresponding to the regions described in Theorem \ref{Rigidity for K-1} and Theorem \ref{Rigidity for K-2}. These examples are taken from \cite{TP-big}. Although the authors in \cite{TP-big}  work in characteristic zero, the same examples hold in positive characteristic as well.
\begin{example}
Let $R=K[x_1,\ldots,x_m,y_1,\ldots,y_n]$ be standard bigraded over an infinite field $K$ of characteristic $p>0$. Let $\m=(x_1,\ldots,x_m,y_1,\ldots,y_n)$, $I_1=(x_1,\ldots,x_m)$, $I_2=(y_1,\ldots,y_n)$, $I_3=(x_1,\ldots,x_m,y_1)$, $I_4=(x_m,y_1,\ldots,y_n)$. Then the followings can be easily verified.
\begin{enumerate}
    \item $H^1_{(x_1)}(R)\cong x_1^{-1}K[x_1^{-1},x_2,\ldots,x_m,y_1,\ldots,y_n]$,
    \item $H^1_{(y_1)}(R)\cong y_1^{-1}K[x_1,\ldots,x_m,y_1^{-1},y_2,\ldots,y_n]$,
    \item $H^2_{(x_1,y_1)}(R)=x_1^{-1}y_1^{-1}K[x_1^{-1},x_2,\ldots,x_m,y_1^{-1},y_2,\ldots,y_n]$,
    \item $H^m_{I_1}(R)=x_1^{-1}\cdots x_m^{-1}K[x_1^{-1},\ldots,x_m^{-1},y_1,\ldots,y_n]$,
    \item $H^n_{I_2}(R)=y_1^{-1}\cdots y_n^{-1}K[x_1,\ldots,x_m,y_1^{-1},\ldots,y_n^{-1}]$,
    \item $H^{m+1}_{I_3}(R)=x_1^{-1}\cdots x_m^{-1}y_1^{-1}K[x_1^{-1},\ldots,x_m^{-1},y_1^{-1},y_2,\ldots,y_n]$,
    \item  $H^{n+1}_{I_4}(R)=x_m^{-1}y_1^{-1}\cdots y_n^{-1}K[x_1,\ldots,x_{m-1},x_m^{-1},y_1^{-1},\ldots,y_n^{-1}]$,
    \item $H^{m+n}_{\m}=x_1^{-1}\cdots x_m^{-1}y_1^{-1}\cdots y_n^{-1}K[x_1^{-1},\ldots,x_m^{-1},y_1^{-1},\ldots,y_n^{-1}]$,
    \item $H^0_{(0)}(R)=R$.
\end{enumerate}
Examples 9, 4, 8 and 5 correspond to Figures 1, 2, 3 and 4 respectively in Theorem \ref{Rigidity for K-1}, while examples 3, 1, 6, 7 and 2 correspond to Figures 1, 2, 3, 4 and 5 respectively in Theorem \ref{Rigidity for K-2}.
\end{example}
\section{Rigidity properties when \texorpdfstring{$R$}{R} is polynomial ring over \texorpdfstring{$A$}{A}}
Our setup for this section is as follows:
\s \textit{Setup:}
  Let $A$ be a regular ring containing a field of characteristic $p>0$. Let $R=A[x_1,\ldots,x_m,y_1,\ldots,y_n]$ be standard bigraded over $A$, i.e., $\bideg(A)=(0,0)$, $\bideg(x_i)=(1,0)$ and  $\bideg(y_j)=(0,1)$ for all $i$ and $j$ and  $M=\bigoplus_{i,j} M_{(i,j)}$, a bigraded $F_R$-finite, $F_R$-module.
  
In this section, we study the rigidity properties of bigraded components $M$. Now, let us make the following remark, which we will frequently use to prove several results.
\begin{remark}
\label{stan}
    Suppose $M_{(a,b)}\neq 0$ for some $(a,b)\in \mathbb{Z}^2$. Let $P$ be a minimal prime ideal of $M_{(a,b)}$ and let $B=\widehat{A_P}$. Set $S=B[x_1,\ldots,x_m,y_1,\ldots,y_n]$. By \ref{pi take f finite to f finite}, $N=S\otimes_RM=B\otimes_A M$ is $F_S$-finite, $F_S$-module. By Cohen structure theorem $B=K[[t_1,\ldots,t_g]]$ where $K=\kappa(P)$ is the residue field of $B$. We note that $N_{(a,b)}={(M_{(a,b)})_P}\neq 0$. As $P$ is minimal prime ideal of $M_{(a,b)}$ so $N_{(a,b)}$ is supported only at the maximal ideal of $B$. Therefore by Theorem \ref{ordinal}, $N_{(a,b)}=E_B(K)^{\alpha}$ for some $\alpha$. 

    If $K$ is finite we consider an infinite field $K'$ containing $K$ and consider the flat extension $B\rightarrow C=K'[[t_1,\ldots,t_g]]$. Set $T=C[x_1,\ldots,x_m,y_1,\ldots,y_n]$ a flat extension of $S$. By \ref{pi take f finite to f finite}, $L=T\otimes_S N=C\otimes_B N$ is $F_T$-finite, $F_T$-module. Therefore $L_{(a,b)}\neq 0$ and supported only at the maximal ideal of $C$. We note that $L_{(a,b)}=E_C(K')^{\alpha}$.

    Let $V=H_g(t_1,\ldots,t_g;L)$. Then we have that $V$ is  $F_D$-finite, $F_D$-module where $D=K'[x_1,\ldots,x_m,y_1,\ldots,y_n]$. Also $V\subseteq L$ and $L=M\otimes_A C$.
\end{remark}
Our first result in the sequel is as follows:
\begin{theorem}\label{Vanishing of M-A}
   Let $A$ be a regular ring containing a field of characteristic $p>0$. Let $R=A[x_1,\ldots,x_m,y_1,\ldots,y_n]$ be standard bigraded over $A$, i.e., $\bideg(A)=(0,0)$, $\bideg(x_i)=(1,0)$ and  $\bideg(y_j)=(0,1)$ for all $i$ and $j$ and  $M=\bigoplus_{i,j} M_{(i,j)}$, a bigraded $F_R$-finite, $F_R$-module. If $M_{(m,n)}=0$ for all $|m|\gg 0 $ and $|n|\gg 0$, then $M=0$.
\end{theorem}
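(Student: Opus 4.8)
The plan is to reduce to the case where the base is an infinite field, which is Theorem \ref{Vanishing of M}, using precisely the descent procedure assembled in Remark \ref{stan}. I would argue by contradiction. Suppose $M \neq 0$; then $M_{(a,b)} \neq 0$ for some $(a,b) \in \mathbb{Z}^2$. Applying Remark \ref{stan} at this bidegree --- choose a minimal prime $P$ of $M_{(a,b)}$, complete $A_P$, and, if the residue field $\kappa(P)$ is finite, pass to an infinite field extension $K'$ --- one obtains a bigraded $F_D$-finite, $F_D$-module $V = H_g(t_1,\ldots,t_g;L)$ over $D = K'[x_1,\ldots,x_m,y_1,\ldots,y_n]$, with $K'$ infinite of characteristic $p$, where $L = M \otimes_A C$, $C = K'[[t_1,\ldots,t_g]]$, $V \subseteq L$, and $L_{(a,b)} \cong E_C(K')^{\alpha}$ is a nonzero module supported only at the maximal ideal of $C$. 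The $F_D$-finiteness here is exactly Theorem \ref{F-finite}, and $V$ is bigraded because the $t_i$ have bidegree $(0,0)$.

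It then remains to verify two facts. First, $V \neq 0$: since the $t_i$ lie in bidegree $(0,0)$, the Koszul complex $K_\bullet(t_1,\ldots,t_g;L)$ splits as a direct sum over bidegrees, so $V_{(a,b)} = H_g(t_1,\ldots,t_g;L_{(a,b)})$, which is the submodule of $L_{(a,b)} \cong E_C(K')^{\alpha}$ annihilated by $(t_1,\ldots,t_g)$, i.e. its socle; as the socle of $E_C(K')$ is one-dimensional over $K'$ and $\alpha \geq 1$, we get $V_{(a,b)} \neq 0$. Second, $V$ inherits the vanishing hypothesis: because $V \subseteq L$ and $L_{(u,v)} = M_{(u,v)} \otimes_A C$, we have $V_{(u,v)} = 0$ whenever $M_{(u,v)} = 0$, hence for all $|u| \gg 0$ and $|v| \gg 0$ with the same bounds that work for $M$. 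Then Theorem \ref{Vanishing of M} forces $V = 0$, contradicting $V_{(a,b)} \neq 0$; hence $M = 0$.

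I do not anticipate a serious obstacle. The real content lies in Theorem \ref{Vanishing of M} (the infinite-field case) and in the stability of $F$-finiteness under completion, residue-field extension, and Koszul homology packaged in Remark \ref{stan} and Theorem \ref{F-finite}. The only steps needing care are the two verifications above --- chiefly the claim that forming $H_g(t_1,\ldots,t_g;-)$ does not annihilate the component in bidegree $(a,b)$, which comes down to the one-dimensionality of the socle of the injective hull of a field, together with the observation that the descent does not enlarge the set of ``bad'' bidegrees.
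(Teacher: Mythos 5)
Your proposal is correct and follows essentially the same route as the paper: contradiction via Remark \ref{stan}, passing to $V=H_g(t_1,\ldots,t_g;L)$ over $D=K'[x_1,\ldots,x_m,y_1,\ldots,y_n]$, and invoking Theorem \ref{Vanishing of M}. The only difference is that you spell out the socle computation showing $V_{(a,b)}\neq 0$, which the paper leaves implicit in Remark \ref{stan}.
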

\begin{proof}
Suppose if possible $M_{(a,b)}\neq 0$ for some $(a,b)\in \mathbb{Z}^2$. Let $L$ be as in the Remark \ref{stan}. Now $V_{(m,n)}=0$ for $|m|\gg 0$ and $|n|\gg 0$ since $V\subseteq L$. So by \ref{Vanishing of M}, $V=0$ since $V$ is $F_D$-finite, $F_D$-module where $D=K'[x_1,\ldots,x_m,y_1,\ldots,y_n]$. But this is a  contradiction to the fact that $V_{(a,b)}\neq 0$.
\end{proof}
The next two results are generalization of Theorem \ref{Rigidity for K-1} and \ref{Rigidity for K-2}, which we previously considered in the case where the ring $A$ was a field. 
\begin{theorem}\label{Rigidity for A-1}
 Let $A$ be a regular ring containing a field of characteristic $p>0$. Let $R=A[x_1,\ldots,x_m,y_1,\ldots,y_n]$ be standard bigraded over $A$, i.e., $\bideg(A)=(0,0)$, $\bideg(x_i)=(1,0)$ and  $\bideg(y_j)=(0,1)$ for all $i$ and $j$ and  $M=\bigoplus_{i,j} M_{(i,j)}$, a bigraded $F_R$-finite, $F_R$-module. Then,
  \begin{enumerate}[\rm (1)]
     \item The following statements are equivalent:
     \begin{enumerate}[\rm (a)]
     \item $M_{(a,b)}\neq 0$ for all $(a,b)\geq (0,0)$.
         \item There exists $(a_0,b_0)\geq (0,0)$ such that $M_{(a,b)}\neq 0$ for all $(a,b)\geq (a_0,b_0)$.
         \item $M_{(a_1,b_1)}\neq 0$ for some $(a_1,b_1)\geq (0,0)$.
     \end{enumerate}
     \item The following statements are equivalent:
     \begin{enumerate}[\rm (a)]
         \item $M_{(a,b)}\neq 0$ for all $(a,b)\in\mathbb{Z}^2$ with $a\leq -m$ and $b\geq 0$.
         \item There exists $(a_0,b_0)\in \mathbb{Z}^2$ with $a_0\leq -m$ and $b_0\geq 0$ such that $M_{(a,b)}\neq 0$ for all $(a,b)\in \mathbb{Z}^2$ with $a\leq a_0$ and $b\geq b_0$.
         \item $M_{(a_1,b_1)}\neq 0$ for some $(a_1,b_1)\in \mathbb{Z}^2$ with $a_1\leq -m$ and $b_1\geq 0$.
     \end{enumerate}
     \item The following statements are equivalent:
     \begin{enumerate}[\rm (a)]
         \item $M_{(a,b)}\neq 0$ for all $(a,b)\in\mathbb{Z}^2$ with $(a,b)\leq (-m,-n)$.
         \item There exists $(a_0,b_0)\in \mathbb{Z}^2$ with $(a_0,b_0)\leq (-m,-n)$  such that $M_{(a,b)}\neq 0$ for all $(a,b)\leq (a_0,b_0)$.
         \item $M_{(a_1,b_1)}\neq 0$ for some $(a_1,b_1)\in \mathbb{Z}^2$ with $(a_1,b_1)\leq (-m,-n)$.
     \end{enumerate}
\item The following statements are equivalent:
     \begin{enumerate}[\rm (a)]
         \item $M_{(a,b)}\neq 0$ for all $(a,b)\in\mathbb{Z}^2$ with $a\geq 0$ and $b\leq -n$.
         \item There exists $(a_0,b_0)\in \mathbb{Z}^2$ with $a_0\geq 0$ and $b_0\leq -n$ such that $M_{(a,b)}\neq 0$ for all $(a,b)\in \mathbb{Z}^2$ with $a\geq a_0$ and $b\leq b_0$.
         \item $M_{(a_1,b_1)}\neq 0$ for some $(a_1,b_1)\in \mathbb{Z}^2$ with $a_1\geq 0$ and $b_1\leq -n$.
     \end{enumerate}
 \end{enumerate}
\end{theorem}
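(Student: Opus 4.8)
The plan is to deduce Theorem \ref{Rigidity for A-1} from its field version, Theorem \ref{Rigidity for K-1}, by the descent-and-reduction device of Remark \ref{stan}. In each of the four parts the implications $(a)\Rightarrow(b)\Rightarrow(c)$ are immediate, so everything reduces to proving $(c)\Rightarrow(a)$. Fix a part, say (1), and assume $M_{(a_1,b_1)}\neq 0$ for some $(a_1,b_1)\geq(0,0)$.

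First I would apply the construction of Remark \ref{stan} to the pair $(a_1,b_1)$: choose a minimal prime $P$ of $M_{(a_1,b_1)}$, pass to $B=\widehat{A_P}=K[[t_1,\dots,t_g]]$ with $K=\kappa(P)$, enlarge $K$ to an infinite field $K'$ and set $C=K'[[t_1,\dots,t_g]]$, $D=K'[x_1,\dots,x_m,y_1,\dots,y_n]$, $L=M\otimes_A C$ and $V=H_g(t_1,\dots,t_g;L)$. By Remark \ref{stan}, $L$ is a bigraded $F$-module over $C[\underline x,\underline y]$ and $V$ is a bigraded $F_D$-finite, $F_D$-module over the \emph{infinite} field $K'$, with $V\subseteq L$. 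Since each $t_i$ has bidegree $(0,0)$, taking Koszul homology commutes with taking bigraded components, so $V_{(a,b)}=H_g(t_1,\dots,t_g;L_{(a,b)})$ for all $(a,b)$; and by flat base change $L_{(a,b)}=M_{(a,b)}\otimes_A C$.

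Next I would verify that $V_{(a_1,b_1)}\neq 0$. Because $P$ is minimal in the support of $M_{(a_1,b_1)}$, the component $N_{(a_1,b_1)}=(M_{(a_1,b_1)})_P\otimes_{A_P}B$ is nonzero (faithful flatness of completion) and is supported only at the maximal ideal of $B$; Theorem \ref{ordinal} then gives $N_{(a_1,b_1)}\cong E_B(K)^{\alpha}$ with $\alpha$ a nonzero ordinal, whence $L_{(a_1,b_1)}\cong E_C(K')^{\alpha}$, and passing to top Koszul homology (the socle with respect to $(t_1,\dots,t_g)=\mathfrak{m}_C$) yields $V_{(a_1,b_1)}\cong (K')^{\alpha}\neq 0$. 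Now apply Theorem \ref{Rigidity for K-1}(1) to $V$: from $V_{(a_1,b_1)}\neq 0$ with $(a_1,b_1)\geq(0,0)$ we get $V_{(a,b)}\neq 0$ for all $(a,b)\geq(0,0)$. For each such $(a,b)$ the inclusion $0\neq V_{(a,b)}\subseteq L_{(a,b)}=M_{(a,b)}\otimes_A C$ forces $M_{(a,b)}\neq 0$, which is $(1)(a)$. Parts (2)--(4) are identical, invoking the corresponding item of Theorem \ref{Rigidity for K-1} at the last step; the sign restrictions on $(a_1,b_1)$ (namely $a_1\leq -m$ and $b_1\geq 0$; or $(a_1,b_1)\leq(-m,-n)$; or $a_1\geq 0$ and $b_1\leq -n$) are precisely the hypotheses of the matching field-case statement and are unaffected by the reduction.

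The bulk of the argument is bookkeeping: that $-\otimes_A C$ and $H_g(t_1,\dots,t_g;-)$ commute with the bigrading and with passage to bigraded components, and that nonvanishing travels from $V$ up to $M$. The one point needing genuine care is that $V_{(a_1,b_1)}$ is nonzero, i.e.\ that the ordinal $\alpha$ furnished by Theorem \ref{ordinal} is nonzero and survives the passage to Koszul homology; this is exactly the support-and-faithful-flatness computation indicated above. No new ``propagation of nonvanishing'' along the lattice $\mathbb{Z}^2$ is needed, since all of that combinatorics already sits inside Theorem \ref{Rigidity for K-1}.
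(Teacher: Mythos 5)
Your argument is correct and is essentially the paper's own proof: reduce $(c)\Rightarrow(a)$ to the field case via Remark \ref{stan}, apply Theorem \ref{Rigidity for K-1} to the module $V$ over $K'[x_1,\ldots,x_m,y_1,\ldots,y_n]$, and pull nonvanishing back through $V_{(a,b)}\subseteq L_{(a,b)}=M_{(a,b)}\otimes_A C$. The only difference is that you spell out details the paper leaves implicit, namely that the bigraded components commute with $-\otimes_A C$ and with the Koszul homology in the degree-$(0,0)$ variables $t_i$, and that $V_{(a_1,b_1)}\neq 0$ because it is the socle $(K')^{\alpha}$ of $E_C(K')^{\alpha}$ with $\alpha\neq 0$.
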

\begin{proof}
   (1) We only need to prove that $(c)\implies (a)$ since $(a)\implies (b)\implies (c)$. Suppose $M_{(a_1,b_1)}\neq 0$ for some $(a_1,b_1)\geq (0,0)$. Let $P$ be a minimal prime of $M_{(a_1,b_1)}$. Again by Remark \ref{stan}, $L_{(a_1,b_1)}\neq 0$. Now $V$ is $F_D$-finite, $F_D$-module where $D=K'[x_1,\ldots,x_m,y_1,\ldots,y_n]$ with $V_{(a_1,b_1)}\neq 0$. By Theorem \ref{Rigidity for K-1}, $V_{(a,b)}\neq 0$ for all $(a,b)\geq(0,0)$. Since $V_{(a,b)}\subseteq L_{(a,b)}$ so $L_{(a,b)}\neq 0$ for all $(a,b)\geq (0,0)$. Therefore $M_{(a,b)}\neq 0$ for all $(a,b)\geq (0,0)$ as $L_{(a,b)}=M_{(a,b)}\otimes_A C$. 
   
   The proofs of parts 2, 3, and 4 are identical to that of part 1; we simply need to apply Remark \ref{stan} and use Theorem \ref{Rigidity for K-1}.
\end{proof}
Similarly using Remark \ref{stan} and Theorem \ref{Rigidity for K-2} as in Theorem \ref{Rigidity for A-1}, we obtain the following.
\begin{theorem}\label{Rigidity for A-2}
Let $A$ be a regular ring containing a field of characteristic $p>0$. Let $R=A[x_1,\ldots,x_m,y_1,\ldots,y_n]$ be standard bigraded over $A$, i.e., $\bideg(A)=(0,0)$, $\bideg(x_i)=(1,0)$ and  $\bideg(y_j)=(0,1)$ for all $i$ and $j$ and  $M=\bigoplus_{i,j} M_{(i,j)}$, a bigraded $F_R$-finite, $F_R$-module. Then,
\begin{enumerate}[\rm(1)]
		\item The following statements are equivalent:
		\begin{enumerate}[\rm(a)]
		\item $M_{(a,b)} \neq 0$ for all $(a,b)\in \mathbb{Z}^2$.
		\item There exists some $(-m,-n)<(a_0,b_0)<(0,0)$ such that $M_{(a_0,b_0)} \neq 0$.
	\end{enumerate}
\item The following statements are equivalent:
\begin{enumerate}[\rm (a)]
	\item $M_{(a,b)} \neq 0$ for all $(a,b)\in \mathbb{Z}^2$ with $b \geq 0$.
	\item There exists some $(a_0,b_0)\in \mathbb{Z}^2$ with $-m<a_0<0$ and $b_0 \geq 0$ such that $M_{(a_0,b_0)} \neq 0$.
\end{enumerate}
\item The following statements are equivalent:
\begin{enumerate}[\rm (a)]
	\item $M_{(a,b)} \neq 0$ for all $(a,b)\in \mathbb{Z}^2$ with $a \leq -m$.
	\item There exists some $(a_0,b_0)\in \mathbb{Z}^2$ with $a_0 \leq -m$ and $-n<b_0<0$ such that $M_{(a_0,b_0)} \neq 0$.
\end{enumerate}
\item The following statements are equivalent:
\begin{enumerate}[\rm (a)]
	\item $M_{(a,b)} \neq 0$ for all $(a,b)\in \mathbb{Z}^2$ with $b \leq -n$.
	\item There exists some $(a_0,b_0)\in \mathbb{Z}^2$ with $-m< a_0 <0$ and $b_0 \leq -n$ such that $M_{(a_0,b_0)} \neq 0$.
\end{enumerate}
\item The following statements are equivalent:
\begin{enumerate}[\rm (a)]
	\item $M_{(a,b)} \neq 0$ for all $(a,b)\in \mathbb{Z}^2$ with $a \geq 0$.
	\item There exists some $(a_0,b_0)\in \mathbb{Z}^2$ with $a_0 \geq 0$ and $-n<b_0<0$ such that $M_{(a_0,b_0)} \neq 0$.
\end{enumerate}
\end{enumerate}
\end{theorem}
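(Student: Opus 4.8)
The plan is to imitate the proof of Theorem~\ref{Rigidity for A-1} essentially verbatim, reducing each of the five statements to its field-theoretic counterpart Theorem~\ref{Rigidity for K-2} by means of the descent construction recorded in Remark~\ref{stan}. In every item only the implication $(b)\implies(a)$ requires an argument, since $(a)\implies(b)$ is immediate whenever the region appearing in $(b)$ is nonempty (one just evaluates $M$ at a point of that region).

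So fix one item and assume $M_{(a_0,b_0)}\neq 0$ for a pair $(a_0,b_0)$ of the type prescribed in $(b)$. Choose a minimal prime $P$ of the $A$-module $M_{(a_0,b_0)}$ and apply Remark~\ref{stan}. This produces an infinite field $K'$, a power series ring $C=K'[[t_1,\ldots,t_g]]$, the bigraded $F$-finite $F$-module $L=M\otimes_A C$ over $C[x_1,\ldots,x_m,y_1,\ldots,y_n]$ with $L_{(a,b)}=M_{(a,b)}\otimes_A C$ for all $(a,b)$ and $L_{(a_0,b_0)}\cong E_C(K')^{\alpha}$ (by Theorem~\ref{ordinal}) for some nonzero ordinal $\alpha$, together with the bigraded module $V=H_g(t_1,\ldots,t_g;L)$, which by Theorem~\ref{F-finite} is a bigraded $F_D$-finite $F_D$-module over the polynomial ring $D=K'[x_1,\ldots,x_m,y_1,\ldots,y_n]$ over the infinite field $K'$, and with $V\subseteq L$.

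Next, since the $t_i$ have bidegree $(0,0)$, Koszul homology commutes with taking bigraded components, so
\[
V_{(a_0,b_0)}=H_g(t_1,\ldots,t_g;L_{(a_0,b_0)})=\bigl(0:_{L_{(a_0,b_0)}}(t_1,\ldots,t_g)\bigr)\cong\bigl(0:_{E_C(K')^{\alpha}}(t_1,\ldots,t_g)\bigr),
\]
which is the socle of $E_C(K')^{\alpha}$ and hence nonzero. Now I would apply to $V$ the part of Theorem~\ref{Rigidity for K-2} matching the item under consideration: the pair $(a_0,b_0)$ lies in the small region that is hypothesis $(b)$ there, so the theorem yields $V_{(a,b)}\neq 0$ for all $(a,b)$ in the large region that is conclusion $(a)$. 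Since $V\subseteq L$ this forces $L_{(a,b)}\neq 0$, and because $L_{(a,b)}=M_{(a,b)}\otimes_A C$ we conclude $M_{(a,b)}\neq 0$ (a zero module tensors to zero). Running this through items $(1)$--$(5)$ with the corresponding parts of Theorem~\ref{Rigidity for K-2} completes the proof.

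The only step needing genuine care --- and the nearest thing to an obstacle --- is the nonvanishing of $V_{(a_0,b_0)}$: one must know that passing from $L$ to its top Koszul homology over $C$ does not annihilate the chosen component. This is precisely the fact that the socle of a nonzero direct sum of copies of $E_C(K')$ is nonzero, and it is here that the structural identification $L_{(a_0,b_0)}\cong E_C(K')^{\alpha}$ supplied by Theorem~\ref{ordinal} (inside Remark~\ref{stan}) is indispensable; for a general bigraded $F$-finite module one could not a priori exclude $H_g(t_1,\ldots,t_g;L)_{(a_0,b_0)}=0$. All remaining ingredients --- flatness of the extensions used to build $L$, the fact that $K'$ being infinite licenses Theorem~\ref{F-finite}, and bookkeeping of which region maps to which --- are routine.
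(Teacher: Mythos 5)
Your proposal is correct and follows essentially the same route as the paper, which proves Theorem \ref{Rigidity for A-2} by exactly this reduction: apply Remark \ref{stan} to a minimal prime of a nonzero component, pass to $L=M\otimes_A C$ and $V=H_g(t_1,\ldots,t_g;L)$, and invoke the field case Theorem \ref{Rigidity for K-2} over $D=K'[x_1,\ldots,x_m,y_1,\ldots,y_n]$, just as in the proof of Theorem \ref{Rigidity for A-1}. Your explicit justification that $V_{(a_0,b_0)}\neq 0$ (the socle of $E_C(K')^{\alpha}$, computed componentwise since the $t_i$ have bidegree $(0,0)$) is a point the paper leaves implicit, and is correctly handled.
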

\section{Bass numbers}
Our setup for this section is as follows:

\s \textit{Setup:} \label{bass setup} Let $A$ be a regular ring containing a field of characteristic $p>0$. Let $R=A[x_1,\ldots,x_m,y_1,\ldots,y_n]$ be standard bigraded over $A$ and  $M=\bigoplus_{i,j} M_{(i,j)}$ , a bigraded $F_R$-finite, $F_R$-module.

We have the following interesting fact when $m=1$ and $n=1$.
\begin{theorem} \label{length for (1,1)}
    Let $K$ be an infinite field of characteristic $p>0$. Assume that $M=\bigoplus_{i,j} M_{(i,j)}$ is bigraded $F_R$-finite, $F_R$-module where $R=K[x_1,y_1]$ is standard bigraded. Then $\dim_K(M_{(i,j)})<\infty$ for all $(i,j)\in\mathbb{Z}^2$.
\end{theorem}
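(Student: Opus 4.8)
The plan is to realise $M$ as a colimit of Frobenius iterates of a finitely generated bigraded module and then to track the bigraded Hilbert function under the Frobenius functor $F_R$. Since $M$ is a bigraded $F_R$-finite $F_R$-module, it has a bidegree-preserving generating morphism (root) $\beta\colon U\to F_R(U)$ with $U$ a finitely generated bigraded $R$-module, and $M=\varinjlim\bigl(U\xrightarrow{\beta}F_R(U)\xrightarrow{F_R(\beta)}F_R^2(U)\to\cdots\bigr)$ in $\Bmod(R)$. Colimits are exact and commute with passage to a fixed bigraded component, so $M_{(i,j)}=\varinjlim_e F_R^e(U)_{(i,j)}$. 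Each $F_R^e(U)$ is a finitely generated bigraded $R$-module, hence each $F_R^e(U)_{(i,j)}$ is a finite-dimensional $K$-vector space; the whole point is to see that these dimensions do not grow with $e$.

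The key lemma is a Hilbert-function computation: for every finitely generated bigraded $R$-module $W$ and every $(i,j)\in\mathbb{Z}^2$ one has $\dim_K F_R(W)_{(i,j)}=\dim_K W_{(\lfloor i/p\rfloor,\lfloor j/p\rfloor)}$. For a bigraded free module this is immediate from the grading law $|r'\otimes u|=|r'|+p\,|u|$, which yields $F_R(R(-a,-b))=R(-pa,-pb)$ and hence $\dim_K F_R(G)_{(i,j)}=\dim_K G_{(\lfloor i/p\rfloor,\lfloor j/p\rfloor)}$ for $G$ free of finite rank. For arbitrary $W$ one takes a finite bigraded free resolution (it exists because $R=K[x_1,y_1]$ is a polynomial ring over a field), applies $F_R$, which is exact since $R$ is regular (Kunz), and takes alternating sums of $K$-dimensions in each bidegree over the resulting exact complex. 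Iterating gives $\dim_K F_R^e(U)_{(i,j)}=\dim_K U_{(\lfloor i/p^e\rfloor,\lfloor j/p^e\rfloor)}$; and once $p^e>\max\{|i|,|j|\}$ the pair $(\lfloor i/p^e\rfloor,\lfloor j/p^e\rfloor)$ is the constant pair $(\varepsilon(i),\varepsilon(j))$, where $\varepsilon(t)=0$ for $t\ge 0$ and $\varepsilon(t)=-1$ for $t<0$. So $\dim_K F_R^e(U)_{(i,j)}=\dim_K U_{(\varepsilon(i),\varepsilon(j))}=:d_{(i,j)}<\infty$ for all $e\gg0$.

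To conclude, observe that $\beta$ is injective and $F_R$ is exact, so every $F_R^e(\beta)$ is injective; therefore for $e\gg 0$ the transition maps $F_R^e(U)_{(i,j)}\hookrightarrow F_R^{e+1}(U)_{(i,j)}$ are injections of $K$-vector spaces of the same finite dimension $d_{(i,j)}$, hence isomorphisms. Thus the colimit stabilises and $M_{(i,j)}\cong F_R^e(U)_{(i,j)}$ for $e\gg 0$, which is a finite-dimensional $K$-vector space (of dimension $d_{(i,j)}=\dim_K U_{(\varepsilon(i),\varepsilon(j))}$). I expect the main obstacle to be the Hilbert-function lemma: one must get the grading shift under Frobenius exactly right and check that the argument uses only exactness of $F_R$ (valid since $R$ is regular) together with the free-module computation, rather than perfectness of $K$. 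As a consistency check, the outcome fits Proposition \ref{Koszul Vanish for m=1,n=1}: since $x_1$ and $y_1$ act invertibly on $M$ in all bidegrees away from the coordinate lines $i=0$ and $j=0$, the component $M_{(i,j)}$ depends, up to isomorphism, only on the signs of $i$ and $j$.
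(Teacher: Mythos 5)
Your proof is correct, but it takes a genuinely different route from the paper. The paper deduces the statement from $\D$-module theory: by Theorem \ref{d-mod} the module $M$ is a bigraded Eulerian $\D$-module, by Lyubeznik's result it has finite length (hence is Noetherian) over $\D$, so each component $M_{(i,j)}$ is a finitely generated $\D_{(0,0)}$-module, and since for $R=K[x_1,y_1]$ the ring $\D_{(0,0)}$ is generated by the Euler operators $x_1^i\partial_{[i]},\,y_1^j\delta_{[j]}$, which act as scalars on an Eulerian module, each component is a finite-dimensional $K$-vector space. You instead work directly with a generating morphism $\beta\colon U\to F_R(U)$ and a bigraded Hilbert-function computation, $\dim_K F_R(W)_{(i,j)}=\dim_K W_{(\lfloor i/p\rfloor,\lfloor j/p\rfloor)}$, which is indeed valid here because every bigraded piece of a free module over $K[x_1,y_1]$ has dimension at most one (this identity fails as soon as there are two variables of the same bidegree, so your argument is genuinely tied to the $(1,1)$ case --- which is exactly the case of the theorem); combined with exactness of $F_R$ (Kunz) and the iteration $\lfloor\lfloor i/p\rfloor/p\rfloor=\lfloor i/p^2\rfloor$ this stabilizes the dimensions along the colimit. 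Your approach buys an explicit value $\dim_K M_{(i,j)}=\dim_K U_{(\varepsilon(i),\varepsilon(j))}$, avoids the $\D$-module and Eulerian machinery, and does not use that $K$ is infinite; the paper's approach is the one that scales to the arguments used elsewhere in the article (Eulerian structure, Koszul vanishing). One small point: you assume $\beta$ can be taken injective (a bigraded root); this is standard (Lyubeznik's root construction respects the bigrading since kernels of bidegree-preserving maps are bigraded), but note you do not actually need it --- even with non-injective transition maps, any finite linearly independent subset of $M_{(i,j)}=\varinjlim_e F_R^e(U)_{(i,j)}$ lifts to some stage, whose dimension is eventually the constant $d_{(i,j)}$, so $\dim_K M_{(i,j)}\leq d_{(i,j)}<\infty$ already follows.
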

\begin{proof}
    We may assume that $K$ is algebraically closed. Let $\D$ be the ring of $K$-linear differential operators on $R$, i.e., $\D=R\left\langle\partial_{[i]},\delta_{[j]} \mid 1 \leq i,j \right\rangle$ where $\partial_{[i]}=\frac{1}{i!}\frac{\partial^i}{\partial x_1^i}$ and $\delta_{[j]}=\frac{1}{j!}\frac{\delta^j}{\delta y_1^j}$. Clearly $\D$ is bigraded with $\bideg(x_1)=(1,0)$ and $\bideg(y_1)=(0,1)$ and $\bideg(\partial_{[i]})=(-i,0)$, $\bideg(\delta_{[j]})=(0,-j)$. We note that $\D_{(0,0)}=K\left\langle x_1^i\partial_{[i]}, y_1^j\delta_{[j]}\mid i,j\geq 1\right\rangle$. By \ref{d-mod}, $M$ is bigraded Eulerian $\D$-module. Since $M$ is $F$-finite, $F$-module so by \cite[5.7]{Lyu-Fmod}, $M$ has finite length as $\D$-module. Therefore it can easily be checked that $M_{(i,j)}$ is Noetherian as $D_{(0,0)}$-module for any $(i,j)\in\mathbb{Z}^2$. Indeed if $U$ is a $D_{(0,0)}$-sumodule of $M_{(i,j)}$ then $\D U\cap M_{(i,j)}=U$. If $$U_1\subseteq U_2\subseteq\ldots\subseteq U_r\subseteq U_{(r+1)}\subseteq\ldots$$ is an ascending chain of $D_{(0,0)}$-submodule of $M_{(i,j)}$, then we have ascending chain of $D$-submodules of $M$ 
$$\D U_1\subseteq \D U_2\subseteq\ldots\subseteq \D U_r\subseteq \D U_{r+1}\subseteq\ldots.$$ Since $M$ is Noetherian there exists $t$ such that $\D U_r=\D U_t$ for all $r\geq t$. Intersecting with $M_{(i,j)}$, we get $U_r=U_t$ for all $r\geq t$. Hence $M_{(i,j)}$ is Noetherian as $D_{(0,0)}$-module.
In particular, $M_{(i,j)}$ is finitely generated $D_{(0,0)}$-module. Assume that $m_1,\ldots,m_s$ be the finite generators of $M_{(i,j)}$. Let $\bideg(m_i)=(m_1^i,m_2^i)$ for all $1\leq i\leq s$. Since $M$ is Eulerian 
$$E^X_r m_i=\binom{m^i_1}{r}m_i\  \text{and}\   E^Y_r m_i=\binom{m^i_2}{r}m_i$$ for all $r\geq 1$. This implies $D_{(0,0)}M_{(i,j)}\subseteq K m_1+\ldots+Km_s$. Hence $\dim_K(M_{(i,j)})<\infty$ for all $(i,j)\in\mathbb{Z}^2$.
\end{proof}
Let us prove the following result which is crucial for proving the main result on Bass numbers of this section.
\begin{theorem}\label{length as k vs}
    Let $K$ be an infinite field of characteristic $p>0$. Assume that $M=\bigoplus_{i,j} M_{(i,j)}$ is bigraded $F_R$-finite, $F_R$-module where $R=K[x_1,\ldots, x_m,y_1,\ldots,y_n]$ is standard bigraded. Then,
    \begin{enumerate}[\rm (1)]
        \item  If  $l(M_{(a,b)})$ is finite for some $(a,b)\in \mathbb{Z}^2$ with $(a,b)\geq(0,0)$, then $ l(M_{(r,s)})$ is finite for all $(r,s)\geq (0,0)$. 
        \item If $l(M_{(a,b)})$ is finite for some $(a,b)\in \mathbb{Z}^2$ with $a<0$ and $b\geq 0$, then $ l(M_{(r,s)})$ is finite for all $(r,s)\in\mathbb{Z}^2$ with $r<0$ and $s\geq 0$.
        \item If $l(M_{(a,b)})$ is finite for some $(a,b)\in \mathbb{Z}^2$ with $(a,b)<(0,0)$, then $ l(M_{(r,s)})$ is finite for all $(r,s)<(0,0)$.
        \item  If $l(M_{(a,b)})$ is finite for some  $(a,b)\in \mathbb{Z}^2$ with $a\geq 0$ and $b<0$, then $ l(M_{(r,s)})$ is finite for all $(r,s)\in \mathbb{Z}^2$ with $r\geq 0$ and $s<0$.
        \end{enumerate}
\end{theorem}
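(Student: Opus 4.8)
The plan is to imitate the propagation arguments of Theorems~\ref{Rigidity for K-1} and~\ref{Rigidity for K-2}, replacing the property ``$(-)\neq 0$'' by ``$\dim_K(-)<\infty$''. From the two exact sequences
$$0 \to H_m(\underline{x},M)_{(a+m,b)} \to M_{(a,b)} \to M^m_{(a+1,b)}, \qquad M^m_{(a-1,b)} \to M_{(a,b)} \to H_0(\underline{x},M)_{(a,b)} \to 0$$
and the vanishing $H_m(\underline{x},M)_{(u,v)}=H_0(\underline{x},M)_{(u,v)}=0$ for $u\neq 0$ (Theorem~\ref{Koszul vanish for underline x and y}), one reads off $\dim_K M_{(a,b)}\leq m\,\dim_K M_{(a+1,b)}$ when $a\neq -m$ and $\dim_K M_{(a,b)}\leq m\,\dim_K M_{(a-1,b)}$ when $a\neq 0$, together with the mirror inequalities in the $y$-direction (with $m,-m$ replaced by $n,-n$). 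Hence finiteness of $\dim_K M_{(a,b)}$ at one point spreads along both coordinate directions to the entire quadrant containing it, the sole obstruction being that one cannot pass from $\{a>-m\}$ to $\{a\leq -m\}$ across the line $a=-m$ (and similarly for $b=-n$); the lines $a=0$, $b=0$ lie on quadrant boundaries and cause no trouble. In particular part~(1), whose quadrant $\{r\geq0,s\geq0\}$ meets neither bad line, is immediate, and all four parts are immediate once we know every component is finite-dimensional.

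I would prove the statement by induction on $m+n$, the base case $m=n=1$ being Theorem~\ref{length for (1,1)} (all components finite-dimensional). For the inductive step consider, say, part~(2) with $m\geq2$: after the propagation above we may assume $\dim_K M_{(r,s)}<\infty$ on the strip $1-m\leq r\leq-1$, $s\geq0$, and we must reach $r\leq-m$. I would pass to $H_1(x_1,M)=(0:_M x_1)$, which by Theorem~\ref{F-finite} is an $F$-finite, $F$-module over $K[x_2,\dots,x_m,y_1,\dots,y_n]$ --- a ring with strictly fewer variables. The single-element Koszul sequence
$$0 \to H_1(x_1,M)_{(c,d)} \to M_{(c-1,d)} \xrightarrow{\ x_1\ } M_{(c,d)} \to H_0(x_1,M)_{(c,d)} \to 0$$
gives, on the one hand, $H_1(x_1,M)_{(c,d)}\subseteq M_{(c-1,d)}$, so that $H_1(x_1,M)$ has a finite-dimensional component sitting over a strip point; applying the induction hypothesis to $H_1(x_1,M)$ then makes $\dim_K H_1(x_1,M)_{(c,d)}<\infty$ for all $(c,d)$ in the relevant quadrant, and, on the other hand, it gives $\dim_K M_{(c-1,d)}\leq\dim_K H_1(x_1,M)_{(c,d)}+\dim_K M_{(c,d)}$, which lets us step from $M_{(c,d)}$ to $M_{(c-1,d)}$ and thereby cross $r=-m$. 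Iterating covers the quadrant; parts~(3) and~(4) are analogous, crossing $a=-m$ and/or $b=-n$ and using the relevant one-variable Koszul homology of $M$.

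The step I expect to be the genuine obstacle is arranging the base point for the induction on the Koszul homology, i.e.\ ensuring that the finite-dimensional component $H_1(x_1,M)_{(c,d)}\subseteq M_{(c-1,d)}$ (with $M_{(c-1,d)}$ already known finite) lies in the \emph{same} quadrant for the smaller ring, and not merely on its boundary. When the strip is wide (roughly, $m\geq3$ in part~(2)) one may take $c=-1$; the thin cases, such as $m=2$ where the strip reduces to the single line $r=-1$, require a more delicate argument --- iterating the single-element Koszul sequences in the $x$- and $y$-directions in tandem, or arguing directly from the finite $\D$-module length of $M$ and the Eulerian structure as in the proof of Theorem~\ref{length for (1,1)}. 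Apart from this, the proof is the same diagram chase that underlies Theorems~\ref{Rigidity for K-1}--\ref{Rigidity for K-2}.
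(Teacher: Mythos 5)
Your treatment of part (1) is correct and is in fact a different, cleaner route than the paper's: the paper proves (1) by induction on the number of variables, reducing via $H_0(x_1,M)$ and $H_1(x_1,M)$ (which are $F$-finite over the smaller ring by Theorem \ref{F-finite}) and the four-term exact sequence, whereas you obtain (1) purely from the propagation inequalities coming from Theorem \ref{Koszul vanish for underline x and y}, which works because the closed first quadrant never meets the obstructing lines $a=-m$, $b=-n$ and is stable under the moves in both directions. Your plan for (2)--(4) is then essentially the paper's inductive scheme, applied to the other regions.

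However, the obstacle you yourself flag is a genuine gap, and your sketch does not close it. Take part (2) with $m=2$, $n\geq 2$ and base point $(-1,b)$: propagation gives finiteness only on the line $r=-1$, $s\geq 0$, and the only base point you can produce for $N=H_1(x_1,M)$ is $N_{(0,s)}\subseteq M_{(-1,s)}$, which lies in the region of part (1); applying the induction hypothesis (1) to $N$ over $K[x_2,y_1,\ldots,y_n]$ controls $N_{(r,s)}$ only for $(r,s)\geq(0,0)$ and says nothing about $N_{(-1,s)}$, which is exactly what the step across $r=-2$ needs. Nothing forces finiteness there: by Proposition \ref{Koszul Vanish for (m,1),(1,n)} multiplication by $x_2$ only identifies the components of $N$ of negative first degree with one another, not with the nonnegative ones, and a module such as $H^2_{(x_2,y_1)}\bigl(K[x_2,y_1,y_2]\bigr)$ has zero components in first degree $0$ but infinite-dimensional ones in first degree $-1$, so finiteness does not transfer across that line for a general $F$-finite module over the smaller ring. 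Unwinding the diagram chase, what is missing is precisely finiteness of the components of $H_2(x_1,x_2;M)$, an $F$-finite module over $K[y_1,\ldots,y_n]$ concentrated in $x$-degree $0$ by Theorem \ref{Koszul vanish for underline x and y}; this is not automatic for such modules and is not visibly controlled by the hypothesis, and your two fallback suggestions (iterating the $x$- and $y$-sequences in tandem, or the finite $\D$-length argument of Theorem \ref{length for (1,1)}) are not worked out --- the first runs into the same circularity, and the second is only available when the remaining ring has one variable of each kind (so it does rescue the case $n=1$, but not $n\geq 2$). For comparison, the paper writes out only part (1), where this issue cannot arise because the region $\{(r,s)\geq(0,0)\}$ is stable under the shifts $(a,b)\mapsto(a+1,b)$ and $(a,b)\mapsto(a,b+1)$ that locate the base points of $H_1(x_1,M)$ and $H_1(y_1,M)$, and it dismisses (2)--(4) with ``the proof of the other parts follows similarly''; your obstacle is exactly the point at which ``similarly'' is not automatic. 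So you have identified the right difficulty, but as written your proposal does not prove parts (2)--(4) in the thin cases (e.g.\ $m=2$ in (2), $n=2$ in (4), and either one in (3)).
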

\begin{proof}We prove only (1). The proof of other parts follows similarly.

 The result for $m=1$ and $n=1$ follows from \ref{length for (1,1)}.
     
    Now let $m\geq 2$ and $n=1$. Let us first prove that $l(M_{(i,b)})$ is finite for all $i\in \mathbb{Z}$. If $m=2$, then by Theorem \ref{F-finite}, $H_0(x_1,M)$ and $H_1(x_1,M)$ are $F$-finite, $F$-module over $K[x_2,y_1]$. Therefore by base case $l(H_0(x_1,M)_{(i,j)})$ is finite for all $(i,j)\in \mathbb{Z}^2$. Similarly $l(H_1(x_1,M)_{(i,j)})$ is finite for all $(i,j)\in\mathbb{Z}^2$. Consider the following exact sequence $$0\rightarrow H_1(x_1,M)_{(i,b)}\rightarrow M_{(i-1,b)}\rightarrow M_{(i,b)} \rightarrow H_0(x_1,M)_{(i,b)}\rightarrow 0.$$
    As $l(H_0(x_1,M)_{(i,b)})$ and $l(H_1(x_1,M)_{(i,b)})$ is finite  for all $(i,b)$ so $l(M_{(i,b)})$ is finite $\iff l(M_{(i-1,b)})$ is finite. Since $l(M_{(a,b)})$ is finite we conclude that $l(M_{(i,b)})$ is finite for all $i\in \mathbb{Z}$.  Now assume this result for $m-1$ with $m\geq 2$ and $n=1$ and prove it for $m$ and $n=1$. Then by Theorem \ref{F-finite}, $H_0(x_1,M)$ and $H_1(x_1,M)$ are $F$-finite, $F$-module over $K[x_2,\ldots,x_m,y_1]$. Since $l(H_0(x_1,M)_{(a,b)})\leq l(M_{(a,b)})< \infty$, so by induction hypothesis $l(H_0(x_1,M)_{(i,b)})$ is finite for all $(i,b)\in \mathbb{Z}^2$. Similarly $l(H_1(x_1,M)_{(i,b)})$ is finite for all $(i,b)\in \mathbb{Z}^2$. Now the remaining argument is same as $m=2$ case.

    Fix an $i_0$. Consider the following exact sequence $$0\rightarrow H_1(y_1,M)_{(i_0,j)}\rightarrow M_{(i_0,j-1)}\rightarrow M_{(i_0,j)} \rightarrow H_0(y_1,M)_{(i_0,j)}\rightarrow 0.$$
By Proposition \ref{Koszul Vanish for (m,1),(1,n)}, $H_1(y_1,M)_{(i_0,j)}=0$ and $H_0(y_1,M)_{(i_0,j)}=0$ for $j\neq 0$. Therefore $M_{(i_0,j-1)}\cong  M_{(i_0,j)}$ for $j\neq 0$.
Since $l(M_{(i_0,b)})<\infty$ we have that $l(M_{(i_0,j)})<\infty$ for all $j\geq 0$. Since $i_0$ is arbitrary, $l(M_{(r,s)})<\infty$ for all $r \in \mathbb{Z}$ and $s\geq 0$. In particular,  $l(M_{(r,s)})<\infty$ for all $(r,s)\geq (0,0)$.

   Let $m=1$ and $n\geq 2$. Similarly we can prove that $l(M_{(r,s)})<\infty$ for all $r\geq 0$ and  $s \in \mathbb{Z}$. In particular,  $l(M_{(r,s)})<\infty$ for all $(r,s)\geq (0,0)$.

    Now assume the result holds for $(m,n-1)$ and $(m-1,n)$. Let us denote by $\overline{R}=K[x_2,\ldots,x_m,y_1,\ldots,y_n]$  and  $\overline{S}=K[x_1,\ldots,x_n,y_2,\ldots,y_n]$. By \ref{F-finite},  $H_i{(x_1,M)}$ and  $H_i{(y_1,M)}$ are $F$-finite over $\overline{R}$ and $\overline{S}$ respectively for $i=0,1$. We note that $l(H_0(x_1,M)_{(a,b)})\leq l(M_{(a,b)})<\infty$. Therefore by induction $l(H_0(x_1,M)_{(r,s)})<\infty$ for all $(r,s)\geq (0,0)$. Also $l(H_1(x_1,M)_{(a+1,b)})\leq l(M_{(a,b)})<\infty$. Again by induction $l(H_1(x_1,M)_{(r,s)})<\infty$ for all $(r,s)\geq (0,0)$. Consider the following exact sequence $$0\rightarrow H_1(x_1,M)_{(i,b)}\rightarrow M_{(i-1,b)}\rightarrow M_{(i,b)} \rightarrow H_0(x_1,M)_{(i,b)}\rightarrow 0.$$
    As $l(H_0(x_1,M)_{(i,b)})$ and $l(H_1(x_1,M)_{(i,b)})$ is finite  for all $i\geq 0$ so $l(M_{(i,b)})$ is finite $\iff l(M_{(i-1,b)})$ is finite when $i\geq 0$. Since $l(M_{(a,b)})$ is finite we conclude that $l(M_{(i,b)})$ is finite for all $i\geq 0$.  
 Now for a fixed $i$ considering the following exact sequence 
 $$0\rightarrow H_1(y_1,M)_{(i,j)}\rightarrow M_{(i,j-1)}\rightarrow M_{(i,j)}\rightarrow H_0(y_1,M)_{(i,j)}\rightarrow 0$$
and similar argument proves our result.
\end{proof}
We need the following Lemma from \cite[1.4]{Lyu-Dmod}.
\begin{lemma}\label{Lyu result on bass number}
    Let $B$ be a Noetherian ring and let $N$ be a $B$-module. Let $P$ be a prime ideal in $B$. If $(H^j_P(N))_P$ is injective for all $j\geq 0$ then $\mu_j(P,N)=\mu_0(P,H^j_P(N))$ for $j\geq 0$.
\end{lemma}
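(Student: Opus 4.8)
The plan is to reduce at once to the local case and then to observe that the hypothesis forces a certain complex of injectives to be formal. First I would localize at $P$: writing $(B_P,\mathfrak{m})$ for the localized ring and $N_P = N\otimes_B B_P$, and recalling that local cohomology commutes with localization, we have $(H^j_P(N))_P\cong H^j_{\mathfrak{m}}(N_P)$ for all $j\geq 0$. By definition $\mu_j(P,N)=\dim_{\kappa(P)}\Ext^j_{B_P}(\kappa(P),N_P)$ and $\mu_0(P,H^j_P(N))=\dim_{\kappa(P)}\Hom_{B_P}(\kappa(P),H^j_{\mathfrak{m}}(N_P))$, so it is enough to construct a $\kappa(P)$-linear isomorphism $\Ext^j_{B_P}(\kappa(P),N_P)\cong\Hom_{B_P}(\kappa(P),H^j_{\mathfrak{m}}(N_P))$ for every $j\geq 0$.

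Next I would choose an injective resolution $0\to N_P\to I^0\to I^1\to\cdots$ over $B_P$ and apply $\Gamma_{\mathfrak{m}}$. Each $\Gamma_{\mathfrak{m}}(I^q)$ is again an injective $B_P$-module, being a direct sum of copies of the injective hull of $\kappa(P)$; hence $\Gamma_{\mathfrak{m}}(I^{\bullet})$ is a complex of injectives, concentrated in nonnegative degrees, whose $q$-th cohomology is $H^q_{\mathfrak{m}}(N_P)$. The crucial point is that, once we assume every $H^q_{\mathfrak{m}}(N_P)$ is injective, this complex is homotopy equivalent to the complex with $H^q_{\mathfrak{m}}(N_P)$ placed in degree $q$ and zero differential: proceeding by induction on $q$ (starting from $q=0$), the modules of cocycles $Z^q$ and of coboundaries $B^{q+1}$ are successively exhibited as direct summands of the injective terms $\Gamma_{\mathfrak{m}}(I^q)$, hence are themselves injective, so the short exact sequences $0\to B^q\to Z^q\to H^q_{\mathfrak{m}}(N_P)\to 0$ and $0\to Z^q\to\Gamma_{\mathfrak{m}}(I^q)\to B^{q+1}\to 0$ split; this splits off contractible subcomplexes and leaves the claimed homotopy equivalence.

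Finally I would apply $\Hom_{B_P}(\kappa(P),-)$. On the one hand, any homomorphism $\kappa(P)\to I^q$ has image contained in $\Gamma_{\mathfrak{m}}(I^q)$, so $\Hom_{B_P}(\kappa(P),\Gamma_{\mathfrak{m}}(I^{\bullet}))=\Hom_{B_P}(\kappa(P),I^{\bullet})$, whose cohomology computes $\Ext^{\bullet}_{B_P}(\kappa(P),N_P)$. On the other hand, by the homotopy equivalence just established, the cohomology of $\Hom_{B_P}(\kappa(P),\Gamma_{\mathfrak{m}}(I^{\bullet}))$ in degree $q$ is $\Hom_{B_P}(\kappa(P),H^q_{\mathfrak{m}}(N_P))$. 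Comparing the two computations and taking $\kappa(P)$-dimensions yields the lemma. Equivalently, one may run the Grothendieck spectral sequence $E_2^{p,q}=\Ext^p_{B_P}(\kappa(P),H^q_{\mathfrak{m}}(N_P))\Rightarrow\Ext^{p+q}_{B_P}(\kappa(P),N_P)$ associated to the composite functor $\Hom_{B_P}(\kappa(P),-)\circ\Gamma_{\mathfrak{m}}$ (using that $\Gamma_{\mathfrak{m}}$ sends injectives to injectives); the hypothesis kills $E_2^{p,q}$ for $p\geq 1$, so only the column $p=0$ survives, the sequence collapses, and $\Ext^j_{B_P}(\kappa(P),N_P)\cong E_2^{0,j}=\Hom_{B_P}(\kappa(P),H^j_{\mathfrak{m}}(N_P))$.

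There is essentially no deep obstacle here; the only points needing care are the two standard facts invoked in the middle step — that $\Gamma_{\mathfrak{m}}$ preserves injectivity over the Noetherian ring $B_P$, and that a complex of injectives bounded below whose cohomology modules are injective is homotopy equivalent to its cohomology — together with keeping the two cohomology computations of $\Hom_{B_P}(\kappa(P),\Gamma_{\mathfrak{m}}(I^{\bullet}))$ aligned. Everything else is formal.
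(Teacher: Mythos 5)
Your proof is correct. Note that the paper itself does not prove this lemma; it is quoted from Lyubeznik \cite[Lemma 1.4]{Lyu-Dmod}, and your argument is essentially the one in that source: after localizing at $P$, the collapsing Grothendieck spectral sequence for $\Hom_{B_P}(\kappa(P),-)\circ\Gamma_{\mathfrak{m}}$ (equivalently, your splitting of $\Gamma_{\mathfrak{m}}(I^{\bullet})$ into its injective cohomology plus contractible summands, valid since $B_P$ is Noetherian so $\Gamma_{\mathfrak{m}}$ preserves injectives) gives $\Ext^j_{B_P}(\kappa(P),N_P)\cong\Hom_{B_P}(\kappa(P),H^j_{\mathfrak{m}}(N_P))$, which is exactly the asserted equality of Bass numbers.
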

We now show that the hypothesis of the last stated lemma is satisfied in our case:
\begin{proposition}\label{Injective satisfy} (with hypothesis as in \ref{bass setup}) Let $P$ be prime ideal of $A$. Set $E=M_{(a,b)}$ for some $(a,b)\in \mathbb{Z}^2$. Then $H^j_P(E)_P$ is injective for all $j\geq 0$.
\end{proposition}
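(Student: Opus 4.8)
The plan is to reduce to a statement about an $F_D$-finite, $F_D$-module over a polynomial ring $D$ over an infinite field, where we can invoke local cohomology vanishing coming from the $\D$-module / Eulerian structure, and then transfer the injectivity back using flatness. First I would fix the prime $P$ of $A$ and set $E = M_{(a,b)}$. The key point is that $E$ is not an arbitrary $A$-module: it is a bigraded component of an $F_R$-finite, $F_R$-module, and by the theory recalled in the preliminaries (in particular \ref{stan}) its behaviour at $P$ is governed by the Cohen structure theorem. Concretely, localizing and completing at $P$, passing to $B=\widehat{A_P}=K[[t_1,\dots,t_g]]$ with $K=\kappa(P)$, and (if necessary) enlarging $K$ to an infinite field $K'$ gives a flat tower $A \to B \to C = K'[[t_1,\dots,t_g]]$, and correspondingly $R \to S \to T$. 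Along this tower the $F$-finite $F$-module structure is preserved by \ref{pi take f finite to f finite}, and localization/completion/flat base change commute with the formation of local cohomology $H^j_P(-)$. So it suffices to prove the injectivity statement after this base change, i.e. to show $H^j_{PC}(L_{(a,b)})$ is injective over the completion, where $L = M \otimes_A C$.

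Next I would use the structural input: over $C$, the localization $(L_{(a,b)})_P$ is supported (as we descend to residue fields) at the maximal ideal, and by Theorem \ref{ordinal} such a module is a (possibly infinite) direct sum of copies of the injective hull $E_C(K')$. An arbitrary direct sum of injective modules over a Noetherian ring is injective, hence $(L_{(a,b)})_P$ is itself injective over $\widehat{C_{PC}}$, equivalently $E_P$ is injective over $\widehat{A_P}$. Now for an $A$-module whose localization-completion at $P$ is injective, the local cohomology $H^j_P(E)$ vanishes for $j > 0$ (since $H^j_P$ can be computed after localizing at $P$, and it vanishes on injectives), while $H^0_P(E) = \Gamma_P(E)$ is a submodule of an injective module supported on $V(P)$; after localizing at $P$ it is $\Gamma_{PA_P}(E_P)$, which is the $P$-torsion submodule of an injective $A_P$-module and hence is again injective over $A_P$ (the $P$-torsion part of $E_A(A/P)^{\oplus}$ is itself $E_A(A/P)^{\oplus}$, which stays injective). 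Thus $H^j_P(E)_P$ is injective for every $j \geq 0$: for $j>0$ it is zero, and for $j = 0$ it is injective by the above.

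The main obstacle I expect is keeping the reductions honest: one must check carefully that $\Gamma_P(-)$ and the higher $H^j_P(-)$ commute with the localization at $P$ and with the flat, in fact faithfully flat, base changes $A \to \widehat{A_P}$ and $B \to C$ (the latter two are flat local homomorphisms of Noetherian local rings), and that ``supported at the maximal ideal $\Rightarrow$ direct sum of copies of $E$'' applies to the correct module in the correct ring. A secondary subtlety is the possibly infinite ordinal $\alpha$ in Theorem \ref{ordinal}: one needs that an arbitrary (set-indexed, even transfinite) direct sum of copies of an injective hull over a Noetherian ring remains injective, which is Bass--Papp and causes no trouble. Once these bookkeeping points are in place, the proof is short: injectivity of each $H^j_P(E)_P$ follows immediately, and this is exactly the hypothesis needed to feed into Lemma \ref{Lyu result on bass number} in the subsequent results on Bass numbers.
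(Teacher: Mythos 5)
There is a genuine gap at the central step. In Remark \ref{stan} the prime $P$ is chosen to be a \emph{minimal} prime of the support of $M_{(a,b)}$; that minimality is precisely what guarantees $(M_{(a,b)})_P$ is supported only at the maximal ideal, so that Theorem \ref{ordinal} applies. In the proposition at hand $P$ is an \emph{arbitrary} prime of $A$, and your claim that $E_P=(M_{(a,b)})_P$ is supported at the maximal ideal --- hence a direct sum of copies of the injective hull, hence injective --- fails in general. Take $M=R$ (a bigraded $F_R$-finite, $F_R$-module), so $E=M_{(0,0)}=A$, and let $P$ be any prime with $\height P\geq 1$: then $E_P=A_P$ is not injective, and moreover $H^{\height P}_P(A)_P\cong E_{A_P}(\kappa(P))\neq 0$, contradicting your conclusion that $H^j_P(E)_P=0$ for all $j>0$. (The proposition itself still holds in this example, since that nonvanishing module is injective; it is your route to it that breaks.)

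The correct argument --- this is how \cite[Proposition 9.3]{TP-koszul} runs, and how the paper exploits it in the proof of Theorem \ref{Bass th for 1st quad} --- applies the structure theorem not to $M$ but to its local cohomology: $H^j_{PR}(M)$ is again a bigraded $F_R$-finite, $F_R$-module, its $(a,b)$-component equals $H^j_P(M_{(a,b)})$ because $PR$ is generated in bidegree $(0,0)$, and this component is $P$-torsion, so after localizing at $P$, completing, and (if needed) enlarging the residue field as in \ref{stan}, it is supported only at the maximal ideal. Theorem \ref{ordinal} then gives $H^j_P(M_{(a,b)})_P\cong E^{\alpha}$ with $E$ the injective hull of the residue field, which is injective over $\widehat{A_P}$ and hence over $A_P$ (as an $A_P$-module $E_{\widehat{A_P}}(\kappa(P))\cong E_{A_P}(\kappa(P))$, and arbitrary direct sums of injectives over a Noetherian ring are injective). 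Your flat base-change bookkeeping and the Bass--Papp remark are fine; the missing idea is to pass to $H^j_{PR}(M)$ \emph{before} invoking \ref{ordinal}, rather than trying to prove $E_P$ itself injective and the higher local cohomology zero.
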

\begin{proof}
The proof proceeds along parallel lines to  \cite[Proposition 9.3]{TP-koszul}.   
\end{proof}
We now prove the main result of this section regarding the Bass numbers of bigraded components.
\begin{theorem}\label{Bass th for 1st quad}
    With hypothesis as in \ref{bass setup}, the following statements are true;
    \begin{enumerate}[\rm (1)]
        \item  If  $\mu_j(P,M_{(a,b)})$ is finite for some $(a,b)\in \mathbb{Z}^2$ such that $(a,b)\geq(0,0)$, then $ \mu_j(P,M_{(r,s)})$ is finite for all $(r,s)\geq (0,0)$. 
        \item  If $\mu_j(P,M_{(a,b)})$ is finite for some $(a,b)\in \mathbb{Z}^2$ such that $a<0$ and $b\geq 0$, then $ \mu_j(P,M_{(r,s)})$ is finite for all $(r,s)\in\mathbb{Z}^2$ with $r<0$ and $s\geq 0$.
        \item If  $\mu_j(P,M_{(a,b)})$ is finite for some $(a,b)\in \mathbb{Z}^2$ such that $(a,b)<(0,0)$, then $ \mu_j(P,M_{(r,s)})$ is finite for all $(r,s)<(0,0)$.
        \item  If  $\mu_j(P,M_{(a,b)})$ is finite for some  $(a,b)\in \mathbb{Z}^2$ such that $a\geq 0$ and $b<0$, then $ \mu_j(P,M_{(r,s)})$ is finite for all $(r,s)\in \mathbb{Z}^2$ with $r\geq 0$ and $s<0$.
        \end{enumerate}
\end{theorem}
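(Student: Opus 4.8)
The plan is to prove part (1); parts (2)--(4) then follow by the same argument, substituting the corresponding part of Theorem \ref{length as k vs} for part (1). The strategy is to convert the assertion about Bass numbers over the (possibly complicated) regular ring $A$ into a finite-dimensionality statement about the bigraded components of a bigraded $F$-finite, $F$-module over a polynomial ring over an \emph{infinite} field, where Theorem \ref{length as k vs} applies directly.

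First I would replace $M$ by the local cohomology module $N := H^j_{PR}(M)$. Since $P$ is generated by elements of $A$, the ideal $PR$ is bihomogeneous, $H^j_{PR}(-)$ is a bigraded Lyubeznik functor on $\Bmod(R)$, so $N$ is again a bigraded $F_R$-finite, $F_R$-module by Lyubeznik's theory; computing $H^j_{PR}(M)$ via the \v{C}ech complex on a generating set of $P$ (all of bidegree $(0,0)$) together with $(M_f)_{(a,b)} = (M_{(a,b)})_f$ for $f \in A$ shows $N_{(a,b)} = H^j_P(M_{(a,b)})$ for all $(a,b)$. Next, Proposition \ref{Injective satisfy} guarantees that $H^i_P(M_{(r,s)})_P$ is injective for all $i \geq 0$ and all $(r,s)$, so Lemma \ref{Lyu result on bass number} gives $\mu_j(P, M_{(r,s)}) = \mu_0(P, N_{(r,s)})$ for every $(r,s)$, and this number is the $\kappa(P)$-dimension of the socle of $(N_{(r,s)})_P$. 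I would then run the reduction of Remark \ref{stan} on $N$: pass to $B = \widehat{A_P} = K[[t_1,\dots,t_g]]$ with $K = \kappa(P)$ and $S = B[x_1,\dots,x_m,y_1,\dots,y_n]$, so that $N' := B \otimes_A N$ is a bigraded $F_S$-finite, $F_S$-module with $N'_{(r,s)} = \widehat{A_P} \otimes_{A_P} H^j_P(M_{(r,s)})_P$ supported only at the maximal ideal $\mathfrak{n}$ of $B$, and (since $\kappa(P)$ is finitely presented over $A_P$) $\mu_0(P,N_{(r,s)}) = \dim_K \mathrm{soc}(N'_{(r,s)})$. If necessary, enlarge $K$ to an infinite field $K'$ as in Remark \ref{stan}, put $C = K'[[t_1,\dots,t_g]]$, $T = C[x_1,\dots,x_m,y_1,\dots,y_n]$ and $L := C \otimes_B N'$, a bigraded $F_T$-finite, $F_T$-module; faithfully flat base change gives $\dim_{K'} \mathrm{soc}(L_{(r,s)}) = \dim_K \mathrm{soc}(N'_{(r,s)}) = \mu_j(P,M_{(r,s)})$ for all $(r,s)$, and since each $L_{(r,s)}$ is supported only at the maximal ideal of $C$, Theorem \ref{ordinal} yields $L_{(r,s)} \cong E_C(K')^{\alpha(r,s)}$ with $\alpha(r,s) = \mu_j(P,M_{(r,s)})$.

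Finally I would pass to the Koszul homology $V := H_g(t_1,\dots,t_g;L)$. Viewing $T = K'[[t_1,\dots,t_g]][x_1,\dots,x_m,y_1,\dots,y_n]$ as a polynomial ring over $K'[[t_1,\dots,t_g]]$ in the variables $x_i,y_j$, Theorem \ref{F-finite} shows that $V$ is a bigraded $F_D$-finite, $F_D$-module over $D = K'[x_1,\dots,x_m,y_1,\dots,y_n]$; and since $t_1,\dots,t_g$ is a regular sequence on $C$, Matlis duality (or a direct induction) gives $H_i(t_1,\dots,t_g;E_C(K')) = 0$ for $i<g$ and $H_g(t_1,\dots,t_g;E_C(K')) = K'$, so that $V_{(r,s)} = H_g(\underline{t};L_{(r,s)}) \cong (K')^{\alpha(r,s)}$, i.e. $\dim_{K'} V_{(r,s)} = \mu_j(P,M_{(r,s)})$ for every $(r,s) \in \mathbb{Z}^2$. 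With this identity in hand, if $\mu_j(P,M_{(a,b)})$ is finite for some $(a,b) \geq (0,0)$ then $\dim_{K'}V_{(a,b)}<\infty$, so by Theorem \ref{length as k vs}(1) we get $\dim_{K'}V_{(r,s)}<\infty$, hence $\mu_j(P,M_{(r,s)})<\infty$, for all $(r,s)\geq(0,0)$.

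The Koszul computation above and the \v{C}ech identification are routine; the real content—and what I expect to be the main obstacle to write out carefully—is the bookkeeping: verifying that each functor in the chain ($H^j_{PR}(-)$, completion, base change to $K'$, and $H_g(\underline{t};-)$) preserves the relevant $F$-finiteness and is compatible with the bigrading componentwise, and assembling these into the single identity $\dim_{K'}V_{(r,s)} = \mu_j(P,M_{(r,s)})$ valid simultaneously for all bidegrees. Once that reduction is in place, the theorem follows immediately from the field case established in Theorem \ref{length as k vs}.
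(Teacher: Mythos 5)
Your proposal is correct and follows essentially the same route as the paper's proof: reduce to $N=H^j_{PR}(M)$, use Proposition \ref{Injective satisfy} with Lemma \ref{Lyu result on bass number} to identify $\mu_j(P,M_{(r,s)})$ with $\mu_0(P,H^j_P(M_{(r,s)}))$, apply the construction of Remark \ref{stan} and Theorem \ref{ordinal} to realize each component as copies of the injective hull, pass to the Koszul homology $V=H_g(t_1,\ldots,t_g;-)$ to obtain an $F$-finite module over the polynomial ring over an infinite field with $\dim_{K'}V_{(r,s)}=\mu_j(P,M_{(r,s)})$, and conclude by Theorem \ref{length as k vs}. The extra details you supply (the \v{C}ech identification and the Koszul computation on $E_C(K')$) are exactly the steps the paper leaves implicit in its citation of Remark \ref{stan}.
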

\begin{proof}
    By Lemma \ref{Injective satisfy}, and Proposition \ref{Lyu result on bass number}, $\mu_j(P,M_{(a,b)})=\mu_0(P,H^j_P(M_{(a,b)}))$ for all $(a,b)\in \mathbb{Z}^2$.  Applying the technique of \ref{stan} to the bigraded $F$-finite, $F$-module $H^j_{PR}(M)$, we get that $N_{(a,b)}=H^j_P(M_{(a,b)})_P$ for all $(a,b)\in \mathbb{Z}^2$. Therefore we get that $N_{(a,b)}=E_B(K)^{\alpha_{(a,b)}}$ and may assume that $K$ is infinite. Note that $\alpha_{(a,b)}=\mu_j(P,M_{(a,b)})$ and $V=H_g(t_1,\ldots,t_g;N)$ is $F_D$-finite, $F_D$-module where $D=K[x_1,\ldots,x_m,y_1,\ldots,y_n]$. As $V_{(a,b)}=H_g(t_1,\ldots,t_g;N)_{(a,b)}=K^{\alpha_{(a,b)}}$, we get that $\dim_KV_{(a,b)}=\alpha_{(a,b)}=\mu_j(P,M_{(a,b)})$. The result now follows from \ref{length as k vs}.
\end{proof}
\section{Associated primes}
The setup for this section is the same as in the previous two sections, as stated below:

\s \textit{Setup:} \label{Associted section setup}Let $A$ be a regular ring containing a field of characteristic $p>0$. Let $R=A[x_1,\ldots,x_m,y_1,\ldots,y_n]$ be standard bigraded over $A$, i.e., $\bideg(A)=(0,0)$, $\bideg(x_i)=(1,0)$ and  $\bideg(y_j)=(0,1)$ for all $i$ and $j$ and  $M=\bigoplus_{i,j} M_{(i,j)}$, a bigraded $F_R$-finite, $F_R$-module.

The next theorem generalizes Theorem 1.12 from \cite{TP-koszul} to the bigrade setup. More precisely, we establish the following result:
 \begin{theorem}\label{Associated prime result}
	 With hypothesis as in \ref{Associted section setup}, the following statements are true; 
	
	\begin{enumerate}[\rm (1)]
		\item
		$\bigcup_{(a,b) \in \mathbb{Z}^2} \Ass_A M_{(a,b)}   $ is a finite set.
		\item
		$\Ass_A M_{(a,b)} = \Ass_A M_{(0,0)}$ for all $(a,b) \geq (0,0)$.
		\item
		$\Ass_A M_{(a,b)} = \Ass_A M_{(-m,0)}$ for all $(a,b) \in \mathbb{Z}^2$ with $a \leq -m$ and $b \geq 0$.
		\item
		$\Ass_A M_{(a,b)} = \Ass_A M_{(-m,-n)}$ for all $(a,b)\leq (-m,-n)$.
		\item
		$\Ass_A M_{(a,b)} = \Ass_A M_{(0,-n)}$ for all $(a,b) \in \mathbb{Z}^2$ with $a \geq 0$ and $b \leq -n$.
	\end{enumerate}
\end{theorem}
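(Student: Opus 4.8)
The plan is to deduce (1) from Lyubeznik's finiteness theorem together with a direct-sum argument, and to reduce (2)--(5) to the field-coefficient rigidity result, Theorem~\ref{Rigidity for K-1}, by the same device already used to prove the Bass-number theorem, Theorem~\ref{Bass th for 1st quad}, taken now in cohomological degree $j=0$. For (1), regard $M=\bigoplus_{(a,b)\in\mathbb{Z}^2}M_{(a,b)}$ merely as an $A$-module; since $A/P$ is finitely generated, $\Hom_A(A/P,-)$ commutes with this direct sum, so $\Ass_A M=\bigcup_{(a,b)}\Ass_A M_{(a,b)}$. By Proposition~\ref{ass-contraction} applied to $A\to R$ we have $\Ass_A M=A\cap\Ass_R M$, and $\Ass_R M$ is finite because $R$ is regular and $M$ is an $F_R$-finite $F_R$-module \cite{Lyu-Fmod}. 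Hence the union in (1) is finite. (The individual sets $\Ass_A M_{(a,b)}$ need not be controlled by finite generation over $A$, so this global route is the natural one.)

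For (2)--(5), fix $P$ in the finite set $\bigcup_{(a,b)}\Ass_A M_{(a,b)}$. The point is that $P\in\Ass_A M_{(a,b)}$ if and only if the zeroth Bass number $\mu_0(P,M_{(a,b)})$ is nonzero. I would therefore rerun the proof of Theorem~\ref{Bass th for 1st quad} with $j=0$: by Proposition~\ref{Injective satisfy} and Lemma~\ref{Lyu result on bass number}, $\mu_0(P,M_{(a,b)})=\mu_0(P,H^0_P(M_{(a,b)}))$; applying the technique of Remark~\ref{stan} to the bigraded $F_R$-finite $F_R$-module $H^0_{PR}(M)=\Gamma_{PR}(M)$ and passing to an infinite residue field produces a single bigraded $F_D$-finite $F_D$-module $V$ over $D=K[x_1,\dots,x_m,y_1,\dots,y_n]$, with $K$ an infinite field of characteristic $p$, such that $\dim_K V_{(a,b)}=\mu_0(P,M_{(a,b)})$ for every $(a,b)\in\mathbb{Z}^2$. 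In particular, $P\in\Ass_A M_{(a,b)}$ if and only if $V_{(a,b)}\neq 0$.

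Now feed $V$ into Theorem~\ref{Rigidity for K-1}. Its four parts assert that on each of the regions $\{(a,b)\geq(0,0)\}$, $\{a\leq-m,\ b\geq 0\}$, $\{(a,b)\leq(-m,-n)\}$, $\{a\geq 0,\ b\leq-n\}$ the module $V$ is either nonzero everywhere or zero everywhere on that region, and which alternative holds is decided by the value of $V$ at the base points $(0,0)$, $(-m,0)$, $(-m,-n)$, $(0,-n)$ respectively, each of which lies in the corresponding region. Translating via the equivalence $P\in\Ass_A M_{(a,b)}\iff V_{(a,b)}\neq 0$: for every such $P$, one has $P\in\Ass_A M_{(a,b)}$ iff $P\in\Ass_A M_{(0,0)}$ whenever $(a,b)\geq(0,0)$, and analogously for the other three regions. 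Since both sides of each asserted equality are subsets of the finite set of (1), set equality follows, which is precisely (2)--(5).

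All of the structural input -- Koszul vanishing, rigidity over a field, $F$-finiteness of local cohomology, and Theorem~\ref{ordinal} -- is already available, so I do not expect a genuinely hard step; the work is bookkeeping. The one point that must be checked carefully is that $H^0_{PR}(M)$ is again a bigraded $F_R$-finite $F_R$-module: it is $H^0_{V(PR)}$ applied to $M$, and $V(PR)$ is a bi-homogeneous closed subset of $\Spec(R)$ since $P$ is generated by elements of $R_{(0,0)}=A$. One must also verify that the socle computation implicit in the proof of Theorem~\ref{Bass th for 1st quad}, namely $H_g(t_1,\dots,t_g;E_B(K))\cong K$, correctly identifies $\dim_K V_{(a,b)}$ with $\mu_0(P,M_{(a,b)})$, so that nonvanishing of $V_{(a,b)}$ exactly detects membership of $P$ in $\Ass_A M_{(a,b)}$.
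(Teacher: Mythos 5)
Your proposal is correct and follows essentially the same route as the paper: part (1) via finiteness of $\Ass_R M$ from Lyubeznik plus Proposition \ref{ass-contraction}, and parts (2)--(5) by rerunning the proof of Theorem \ref{Bass th for 1st quad} with $j=0$ to get an $F$-finite $F$-module $V$ over $K[x_1,\dots,x_m,y_1,\dots,y_n]$ with $\dim_K V_{(a,b)}=\mu_0(P,M_{(a,b)})$, and then invoking the rigidity Theorem \ref{Rigidity for K-1} to transfer nonvanishing from the base points $(0,0)$, $(-m,0)$, $(-m,-n)$, $(0,-n)$ to their regions. The extra checks you flag (bi-homogeneity of $V(PR)$, the socle identification) are exactly the points the paper handles implicitly via Remark \ref{stan} and the proof of Theorem \ref{Bass th for 1st quad}.
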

\begin{center}
	\begin{tikzpicture}[scale=0.2]
	\draw[->, ultra thick] (-8.5,0)--(8.5,0) node[right]{$x$};
	\draw[->, ultra thick] (0,-8.5)--(0,8.5) node[above]{$y$};
	\draw[dotted, red] (-2,8.5)--(-2,-8.5);
	\draw[densely dotted, red] (-8.5,-3)--(8.5, -3);
	\draw[fill=brown,fill opacity=0.35,draw=none] (0,8.5)--(0,0)--(8.5,0)--(8.5,8.5)--(0,8.5);
	\draw[fill=brown,fill opacity=0.35,draw=none] (-2,8.5)--(-2,0)--(-8.5,0)--(-8.5,8.5)--(-2,8.5);
	\draw[fill=brown,fill opacity=0.35,draw=none] (-2,-8.5)--(-2,-3)--(-8.5,-3)--(-8.5,-8.5)--(-2,-8.5);
	\draw[fill=brown,fill opacity=0.35,draw=none] (0,-8.5)--(0,-3)--(8.5,-3)--(8.5,-8.5)--(0,-8.5);
	\node[blue,draw=none] at (3,-5) {\tiny $(0,-n)$};
	\node  at (0,-3){$\bullet$};
	\node[blue,draw=none] at (2.5,1.5) {\tiny $(0,0)$};
	\node  at (0,0){$\bullet$};
	\node[blue,draw=none] at (-5,1.5) {\tiny $(-m,0)$};
	\node  at (-2,0){$\bullet$};
	\node[blue,draw=none] at (-5.5,-5) {\tiny $(-m,-n)$};
	\node at (-2,-3){$\bullet$};
	\node  at (0,-11.5) {\textit{Figure $1$}};
	\end{tikzpicture}
\end{center}
\begin{proof}
From the proof of Theorem \ref{Bass th for 1st quad} assuming $j=0$, we get that $\dim_KV_{(a,b)}=\alpha_{(a,b)}=\mu_0(P,M_{(a,b)})$. Here $V$ is a $F$-finite, $F$-module over $K[x_1,\ldots,x_m,y_1,\ldots,y_n]$ and $K$ is infinite.
\begin{enumerate}[(1)]
\item From \cite[2.12]{Lyu-Fmod} $\Ass_R(M)$ is finite  and therefore the result follows from Proposition \ref{ass-contraction}.

For $(2), (3), (4), (5)$ let 
$$
\bigcup_{(a,b) \in \mathbb{Z}^2} \Ass_A M_{(a,b)}  = \{ P_1, \ldots, P_l \}.
$$

Let $P = P_i$ for some $i$. Then by Theorem \ref{Rigidity for K-1}  we get that

	\item $\mu_0(P, M_{(0,0)}) > 0$ if and only if $\dim_KV_{(0,0)}\neq 0$  if and only if $\dim_KV_{(a,b)}\neq 0$  for all $(a,b)\geq (0,0)$ if and only if $\mu_0(P, M_{(a,b)}) > 0$ for all $(a,b) \geq (0,0)$.
    \item $\mu_0(P, M_{(-m,0)}) > 0$ if and only if $\dim_KV_{(-m,0)}\neq 0$  if and only if $\dim_KV_{(a,b)}\neq 0$  for all $a \leq -m$ and $b \geq 0$ if and only if $\mu_0(P, M_{(a,b)}) > 0$ for all $a \leq -m$ and $b \geq 0$.  
    \item $\mu_0(P, M_{(-m,-n)}) > 0$ if and only if $\dim_KV_{(-m,-n)}\neq 0$  if and only if $\dim_KV_{(a,b)}$\ $\neq 0$  for all $(a,b) \leq (-m,-n)$ if and only if $\mu_0(P, M_{(a,b)}) > 0$ for all $(a,b) \leq (-m,-n)$.
    \item $\mu_0(P, M_{(0,-n)}) > 0$ if and only if $\dim_KV_{(0,-n)}\neq 0$  if and only if $\dim_KV_{(a,b)}\neq 0$  for all $a \geq 0$ and $b \leq -n$ if and only if $\mu_0(P, M_{(a,b)}) > 0$ for all $a \geq 0$ and $b \leq -n$. 
\end{enumerate}  The result follows.	
\end{proof}
\section{Injective dimension and support dimension}
Again we have same setup as before i,e.,
\s \textit{Setup:} \label{Inj dim setup}Let $A$ be a regular ring containing a field of characteristic $p>0$. Let $R=A[x_1,\ldots,x_m,y_1,\ldots,y_n]$ be standard bigraded over $A$, i.e., $\bideg(A)=(0,0)$, $\bideg(x_i)=(1,0)$ and  $\bideg(y_j)=(0,1)$ for all $i$ and $j$ and  $M=\bigoplus_{i,j} M_{(i,j)}$, a bigraded $F_R$-finite, $F_R$-module.

\begin{lemma}\label{injdim-dim}
	(with hypotheses as in \ref{Inj dim setup}). Then for all $(a,b) \in \mathbb{Z}^2$,
	\[
	\injdim M_{(a,b)} \leq \dim M_{(a,b)}.
	\]
\end{lemma}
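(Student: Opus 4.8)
The plan is to derive the inequality formally from the Bass-number characterization of injective dimension together with Grothendieck's vanishing theorem, once the injectivity statement is in hand. Fix $(a,b)\in\mathbb{Z}^2$ and set $E=M_{(a,b)}$, regarded as an $A$-module. Since $A$ is Noetherian we have $\injdim_A E=\sup\{\,j\ge 0:\mu_j(P,E)\neq 0\text{ for some }P\in\Spec A\,\}$, so it is enough to prove that $\mu_j(P,E)\neq 0$ forces $j\le\dim_A E$.

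First I would invoke Proposition~\ref{Injective satisfy}: for every prime $P$ of $A$ and every $j\ge 0$ the $A_P$-module $H^j_P(E)_P$ is injective. Hence the hypothesis of Lemma~\ref{Lyu result on bass number} is satisfied, and that lemma gives $\mu_j(P,E)=\mu_0(P,H^j_P(E))$. Because $H^j_P(E)_P$ is a $PA_P$-torsion injective $A_P$-module, it is a direct sum of copies of $E_{A_P}(\kappa(P))$; consequently $\mu_j(P,E)\neq 0$ if and only if $H^j_{PA_P}(E_P)=H^j_P(E)_P\neq 0$. This equivalence is the crux of the argument, and it is exactly where Proposition~\ref{Injective satisfy} and Lemma~\ref{Lyu result on bass number} enter.

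Next, by Grothendieck's vanishing theorem applied over the regular local ring $A_P$ to the (possibly non--finitely generated) module $E_P$, we have $H^j_{PA_P}(E_P)=0$ for all $j>\dim_{A_P}E_P$; hence $\mu_j(P,E)\neq 0$ implies $j\le\dim_{A_P}E_P$. Finally I would check $\dim_{A_P}E_P\le\dim_A E$: a chain $\mathfrak{q}_0\subsetneq\cdots\subsetneq\mathfrak{q}_j$ of primes of $A_P$ lying in $\Supp_{A_P}E_P$ corresponds to a chain $Q_0\subsetneq\cdots\subsetneq Q_j\subseteq P$ of primes of $A$ with each $(E_P)_{\mathfrak{q}_i}=E_{Q_i}\neq 0$, i.e.\ with each $Q_i\in\Supp_A E$; this is a chain of length $j$ inside $\Supp_A E$, so $\dim_A E\ge j$. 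Combining the two inequalities yields $j\le\dim_A E$ whenever $\mu_j(P,E)\neq 0$, and taking the supremum over $j$ and $P$ gives $\injdim_A E\le\dim_A E$.

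I do not anticipate a genuine obstacle here: modulo Proposition~\ref{Injective satisfy}, which already encapsulates the $F$-module input and the reduction machinery of Remark~\ref{stan}, the present lemma is a purely formal deduction. The only points that deserve a word of justification are that the Bass-number formula for injective dimension and Grothendieck's vanishing theorem both hold for arbitrary modules over a Noetherian ring, and that the passage between $\mu_j(P,E)$ and $H^j_P(E)_P$ is legitimate — which is precisely the content of Lemma~\ref{Lyu result on bass number} once Proposition~\ref{Injective satisfy} supplies its hypothesis.
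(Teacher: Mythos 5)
Your proposal is correct and follows essentially the same route as the paper: both hinge on Proposition~\ref{Injective satisfy} supplying the hypothesis of Lemma~\ref{Lyu result on bass number}, so that $\mu_j(P,M_{(a,b)})=\mu_0(P,H^j_P(M_{(a,b)}))$, and then on Grothendieck's vanishing theorem together with the Bass-number characterization of injective dimension. The only (harmless) difference is that you localize at $P$ and invoke vanishing over $A_P$, adding the check $\dim_{A_P}(M_{(a,b)})_P\le\dim_A M_{(a,b)}$, whereas the paper applies \cite[6.1.2]{BS} directly to $H^j_P(M_{(a,b)})$ over $A$ to get $\mu_j(P,M_{(a,b)})=0$ for $j>\dim M_{(a,b)}$.
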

\begin{proof}
	Let $P$ be a prime ideal in $A$. Then Lemma \ref{Lyu result on bass number} together with Proposition \ref{Injective satisfy} implies that
	$$
	\mu_j(P, M_{(a,b)}) = \mu_0(P, H^j_P(M_{(a,b)})).
	$$
	By Grothendieck vanishing theorem $H^j_P(M_{(a,b)}) = 0$ for all $j > \dim M_{(a,b)}$, see \cite[6.1.2]{BS}.  So $\mu_j(P, M_{(a,b)}) = 0$ for all $j > \dim M_{(a,b)}$. The result follows.
\end{proof}

\begin{theorem}\label{injdim-and-dim-gen}(with hypotheses as in \ref{Inj dim setup}). Then we have
	\begin{enumerate}[\rm (1)]
		\item
		$\injdim M_{(a,b)} \leq \dim M_{(a,b)}$ for all $(a,b) \in \mathbb{Z}^2$.
		\item
		$\injdim M_{(a,b)} = \injdim M_{(0,0)}$ for all $(a,b)\geq (0,0)$.
		\item $\injdim M_{(a,b)} = \injdim M_{(-m,0)}$ for all $(a,b) \in \mathbb{Z}^2$ with $a \leq -m$ and $b \geq 0$.
		\item $\injdim M_{(a,b)} = \injdim M_{(-m,-n)}$ for all $(a,b)\leq (-m,-n)$.
		\item $\injdim M_{(a,b)} = \injdim M_{(0,-n)}$ for all $(a,b) \in \mathbb{Z}^2$ with $a \geq 0$ and $b \leq -n$.
		\item
		$\dim M_{(a,b)} = \dim M_{(0,0)}$ for all $(a,b)\geq (0,0)$.
		\item
		$\dim M_{(a,b)} = M_{(-m,0)}$ for all $(a,b) \in \mathbb{Z}^2$ with $a \leq -m$ and $b \geq 0$.
		\item
	    $\dim M_{(a,b)} = M_{(-m,-n)}$ for all $(a,b)\leq (-m,-n)$.
	    \item
	    $\dim M_{(a,b)} = M_{(0,-n)}$ for all $(a,b) \in \mathbb{Z}^2$ with $a \geq 0$ and $b \leq -n$.
	\end{enumerate}
\end{theorem}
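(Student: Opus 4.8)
The plan is to deal with the injective-dimension assertions (2)--(5) and the support-dimension assertions (6)--(9) by separate arguments, each reducing to results already in hand; assertion (1) is precisely Lemma \ref{injdim-dim}.

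For the dimension assertions I would first isolate the elementary fact that, over a Noetherian ring $A$, every $A$-module $E$ satisfies $\Supp_A E = \bigcup_{P \in \Ass_A E} V(P)$. The inclusion $\supseteq$ is immediate, since $E$ contains a copy of $A/P$ for each $P \in \Ass_A E$ and localization is exact; for $\subseteq$, if $Q \in \Supp_A E$ then $E_Q \neq 0$, so $\Ass_{A_Q}(E_Q) \neq \emptyset$, and any member of it is $PA_Q$ for a prime $P \in \Ass_A E$ with $P \subseteq Q$. Applying this to $E = M_{(a,b)}$, the support of $M_{(a,b)}$ --- and hence its dimension --- depends only on $\Ass_A M_{(a,b)}$. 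Assertions (6)--(9) then follow at once from parts (2)--(5) of Theorem \ref{Associated prime result}.

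For the injective-dimension assertions I would use $\injdim_A E = \sup\{\, j \ge 0 : \mu_j(P,E) \neq 0 \text{ for some } P \in \Spec A \,\}$ and show that, for each fixed prime $P$ and each fixed $j$, the truth value of ``$\mu_j(P, M_{(a,b)}) \neq 0$'' is constant as $(a,b)$ ranges over any one of the four regions in the statement. This is exactly the information extracted inside the proof of Theorem \ref{Bass th for 1st quad}: combining Lemma \ref{Lyu result on bass number} with Proposition \ref{Injective satisfy} gives $\mu_j(P, M_{(a,b)}) = \mu_0(P, H^j_P(M_{(a,b)}))$, and running the reduction of Remark \ref{stan} on the bigraded $F_R$-finite, $F_R$-module $H^j_{PR}(M)$ (whose $(a,b)$-component is $H^j_P(M_{(a,b)})$, since $PR$ is generated in bidegree $(0,0)$) produces an infinite field $K' \supseteq \kappa(P)$ and an $F_D$-finite, $F_D$-module $V = V_{P,j}$ over $D = K'[x_1,\dots,x_m,y_1,\dots,y_n]$ with $\dim_{K'} (V_{P,j})_{(a,b)} = \mu_j(P, M_{(a,b)})$ for all $(a,b) \in \mathbb{Z}^2$. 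Thus ``$\mu_j(P, M_{(a,b)}) \neq 0$'' is equivalent to ``$(V_{P,j})_{(a,b)} \neq 0$''.

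Now I would invoke Theorem \ref{Rigidity for K-1} applied to $V_{P,j}$: its four parts say precisely that non-vanishing of a bigraded component is an all-or-nothing phenomenon on each of the regions $\{(a,b)\ge(0,0)\}$, $\{a\le -m,\ b\ge 0\}$, $\{(a,b)\le(-m,-n)\}$ and $\{a\ge 0,\ b\le -n\}$. Hence, for fixed $P$ and $j$, the set $\{\, j : \mu_j(P, M_{(a,b)}) \neq 0 \,\}$ is the same for all $(a,b)$ in any one such region; taking the supremum over $j$ and then over all $P \in \Spec A$ shows that $\injdim_A M_{(a,b)}$ is constant on each region, giving (2)--(5) (the case $\injdim = \infty$ is absorbed harmlessly into the supremum). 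The one point needing care --- and the main obstacle --- is ensuring that the reduction of Remark \ref{stan}, which is phrased through a minimal prime of a chosen nonzero component, genuinely delivers the identity $\dim_{K'}(V_{P,j})_{(a,b)} = \mu_j(P, M_{(a,b)})$ for an \emph{arbitrary} prime $P$ of $A$ and every $(a,b)$; but this is exactly the computation carried out inside the proof of Theorem \ref{Bass th for 1st quad}, so it may be cited directly, and the remainder of the argument is bookkeeping.
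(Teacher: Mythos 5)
Your proposal is correct. For the injective-dimension statements (2)--(5) you follow essentially the same route as the paper: both arguments rest on the identity $\mu_j(P,M_{(a,b)})=\dim_{K'}(V_{P,j})_{(a,b)}$ extracted from the proof of Theorem \ref{Bass th for 1st quad} (via Lemma \ref{Lyu result on bass number}, Proposition \ref{Injective satisfy}, and Remark \ref{stan} applied to $H^j_{PR}(M)$), followed by the rigidity theorem to see that nonvanishing of $\mu_j(P,M_{(a,b)})$ is constant on each of the four regions, and then a supremum over $j$ and over $P$. Where you genuinely diverge is in the support-dimension statements (6)--(9): the paper localizes, observing that $M_P$ is a bigraded $F$-finite, $F$-module over $A_P[x_1,\ldots,x_m,y_1,\ldots,y_n]$ and applying Theorem \ref{Rigidity for A-1} to get $(M_{(a,b)})_P\neq 0$ if and only if $(M_{(0,0)})_P\neq 0$ (and similarly on the other regions), so the supports, hence dimensions, agree; you instead invoke parts (2)--(5) of Theorem \ref{Associated prime result} together with the general fact that over a Noetherian ring $\Supp_A E=\bigcup_{P\in\Ass_A E}V(P)$ for an arbitrary (not necessarily finitely generated) module, which your sketch justifies correctly. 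Both are sound and neither is circular (Theorem \ref{Associated prime result} precedes this result in the paper); your route is a quick reduction to the associated-primes theorem but imports its full strength, while the paper's is a direct application of rigidity to the localized module and never mentions associated primes. The one step needing the care you already flag --- that the construction of Remark \ref{stan} applies to $H^j_{PR}(M)$ at an arbitrary prime $P$ and yields the Bass-number identity for every $(a,b)$ --- is used in exactly the same way by the paper, which likewise simply cites the proof of Theorem \ref{Bass th for 1st quad} for it.
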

\begin{proof} 
From the proof of Theorem \ref{Bass th for 1st quad}, we get $\dim_KV_{(a,b)}=\alpha_{(a,b)}=\mu_j(P,M_{(a,b)})$. Here $V$ is a $F$-finite, $F$-module over $K[x_1,\ldots,x_m,y_1,\ldots,y_n]$ and $K$ is infinite.
	
    (1) This follows from Lemma \ref{injdim-dim}.
	
	Let $P$ be a prime ideal in $A$.  Let $(a,b)\geq (0,0)$.
	
	(2) Fix $j \geq 0$. By \ref{Rigidity for A-1},  $\mu_j(P, M_{(0,0)}) > 0$ if and only if $\dim_KV_{(0,0)}\neq 0$  if and only if $\dim_KV_{(a,b)}\neq 0$  for all $(a,b)\geq (0,0)$ if and only if $\mu_j(P, M_{(a,b)}) > 0$ for all $(a,b) \geq (0,0)$.
	The result follows.

    (6) We note that $\M_P$ is $F$-finite, $F$-module over $S=A_P[x_1,\ldots, x_m, y_1, \ldots, y_n]$. By Theorem \ref{Rigidity for A-1} it follows that $(M_{(0,0)})_P \neq 0$ if and only if $(M_{(a,b)})_P \neq 0$. The result follows.
	
	(3), (4), (5), (7), (8), (9) follow with similar arguments as in (2) and (6).
\end{proof}
\begin{picto}
The numbering of the figures corresponds to the parts of the theorem stated. The values of $\dim M_{(a,b)}$ and $\injdim M_{(a,b)}$  in the shaded region can be determined by computing these dimensions for a corresponding red dot in the figure.

    \begin{center}
	\begin{tikzpicture}[scale=0.12]
	\draw[->] (-8.5,0)--(8.5,0) node[right]{$x$};
	\draw[->] (0,-8.5)--(0,8.5) node[above]{$y$};
	\draw[red, dashdotted] (8,-3)--(-7.5,-3)node[left]{$y=-n$};
	\draw[red, dashdotted] (-2,-7.5)--(-2,8) node[left]{$x=-m$};
	\draw[-,black](0,8.5)--(0,0)-- (8.5,0);
	\draw[fill=brown,fill opacity=0.35,draw=none] (0,8.5)--(0,0)--(8.5,0)--(8.5,8.5)--(0,8.5);
	\node at (4.5,3){$\bullet$};
	\node[red] at (0,0){\small{$\bullet$}};
	\node at (0,-11) {\footnotesize{\textit Figure $(2), (6)$}};
	\end{tikzpicture}
	\hspace{2cm}
	\begin{tikzpicture}[scale=0.12]
	\draw[->] (-8.5,0)--(8.5,0) node[right]{$x$};
	\draw[->] (0,-8.5)--(0,8.5) node[above]{$y$};
	\draw[red,dashdotted] (8,-3)--(-7.5,-3)node[left]{$y=-n$};
	\draw[red, dashdotted] (-2,-7.5)--(-2,8) node[left]{$x=-m$};
	\draw[-,black](-2,8.5)--(-2,0)-- (-8.5,0);
	\draw[fill=brown,fill opacity=0.35,draw=none] (-2,8.5)--(-2,0)--(-8.5,0)--(-8.5,8.5)--(-2,8.5);
	\node at (-4.5,3){$\bullet$};
	\node[red] at (-2,0){\small{$\bullet$}};
	\node at (0,-11) {\footnotesize{\textit Figure $(3), (7)$}};
	\end{tikzpicture}
	\hspace{0cm}
\end{center}
\vspace{0.5cm}
\begin{center}
\begin{tikzpicture}[scale=0.12]
	\draw[->] (-8.5,0)--(8.5,0) node[right]{$x$};
	\draw[->] (0,-8.5)--(0,8.5) node[above]{$y$};
	\draw[red,dashdotted] (8,-3)--(-7.5,-3)node[left]{$y=-n$};
	\draw[red, dashdotted] (-2,-7.5)--(-2,8) node[left]{$x=-m$};
	\draw[-,black](-2,-8.5)--(-2,-3)-- (-8.5,-3);
	\draw[fill=brown,fill opacity=0.35,draw=none] (-2,-8.5)--(-2,-3)--(-8.5,-3)--(-8.5,-8.5)--(-2,-8.5);
	\node at (-4.5,-6){$\bullet$};
	\node[red] at (-2,-3){\small{$\bullet$}};
	\node at (0,-11) {\footnotesize{\textit Figure $(4), (8)$}};
	\end{tikzpicture} 
    \hspace{2cm}
	\begin{tikzpicture}[scale=0.12]
	\draw[->] (-8.5,0)--(8.5,0) node[right]{$x$};
	\draw[->] (0,-8.5)--(0,8.5) node[above]{$y$};
	\draw[red,dashdotted] (8,-3)--(-7.5,-3)node[left]{$y=-n$};
	\draw[red, dashdotted] (-2,-7.5)--(-2,8) node[left]{$x=-m$};
	\draw[-,black](0,-8.5)--(0,-3)-- (8.5,-3);
	\draw[fill=brown,fill opacity=0.35,draw=none] (0,-8.5)--(0,-3)--(8.5,-3)--(8.5,-8.5)--(0,-8.5);
	\node at (4.5,-6){$\bullet$};
	\node[red] at (0,-3){\small{$\bullet$}};
	\node at (0,-11) {\footnotesize{\textit Figure $(5), (9)$}};
	\end{tikzpicture}				
\end{center}
\end{picto}
\section{An Application}
Let $A=K[[z_1,\ldots,z_d]]$ where $K$ is infinite field of characteristic $p>0$. Let $R=A[x_1,\ldots,x_m,y_1,\ldots,y_n]$ be standard bigraded. Consider $S=R/I$ where $I$ is a bi-homogeneous ideal of $R$. Let $R_{++}$ be the ideal of $R$ generated by the elements $x_iy_j$ for all $i=1,\ldots,m$ and $j=1,\ldots,n$. By $\Bproj(R)$, we denote the bigraded prime ideals of $P$ of $R$ such that $P \not\supseteq R_{++}$. Let $\mathfrak{n}$ be unique bigraded homogeneous ideal of $S$. 

The next result offers a convenient criterion for determining when $M_{(m,n)}=0$ for all $(m,n)\geq (0,0)$, where $M$ is a bigraded $F$-finite, $F$-module over $R$. Although our primary focus is the case where $R_0=k[[z_1,\ldots,z_d]]$, it is advantageous to prove a more general version of the result.
\begin{lemma}\label{app-lemma}
 Let $A$ be a regular domain of dimension $d$ containing an infinite field of characteristic $p>0$. Let $R=A[x_1,\ldots,x_m,y_1,\ldots,y_n]$ be standard bigraded over $A$, i.e., $\bideg(A)=(0,0)$, $\bideg(x_i)=(1,0)$ and  $\bideg(y_j)=(0,1)$ for all $i$ and $j$ and  $M=\bigoplus_{i,j} M_{(i,j)}$, a bigraded $F_R$-finite, $F_R$-module. The following statements are equivalent:
 \begin{enumerate} [\rm (1)]
     \item $\Gamma_{R_{++}}(M)=M$.
     \item $M_{(m,n)}=0$ for $(m,n)\geq(0,0)$.
     \item $M_{(m,n)}=0$ for $m\gg0$ and $n\gg0$.
     \item If $P$ is an associated prime of $M$ then $P\supseteq R_{++}$.
 \end{enumerate}
\end{lemma}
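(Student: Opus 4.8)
I would establish the equivalences via the cycle $(1)\Rightarrow(2)\Rightarrow(4)\Rightarrow(1)$ together with $(2)\Leftrightarrow(3)$. Most of this is soft. $(2)\Rightarrow(3)$ is trivial. For $(3)\Rightarrow(2)$: if $M_{(a,b)}\neq 0$ for some $(a,b)\geq (0,0)$, then Theorem \ref{Rigidity for A-1}(1) forces $M_{(r,s)}\neq 0$ for \emph{all} $(r,s)\geq (0,0)$, contradicting $(3)$; hence $M_{(a,b)}=0$ throughout the first quadrant. For $(4)\Rightarrow(1)$: by Lemma \ref{Ass of M/I}(2) one has $\Ass_R\bigl(M/\Gamma_{R_{++}}(M)\bigr)=\{P\in\Ass_R M:\ R_{++}\not\subseteq P\}$, which is empty by $(4)$, so $M/\Gamma_{R_{++}}(M)=0$. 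Finally $(1)\Rightarrow(4)$ is immediate and will be reused below: if $P=\ann_R(\xi)\in\Ass_R M$ then $R_{++}^k\xi=0$ for some $k$, so $R_{++}^k\subseteq P$ and, $P$ being prime, $R_{++}\subseteq P$.

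For $(2)\Rightarrow(4)$ I would use that associated primes of a bigraded module over $R$ are bi-homogeneous and arise as $\ann_R(\xi)$ for a bi-homogeneous $\xi\neq 0$; write $\bideg\xi=(a,b)$. Since $(2)$ forces $a<0$ or $b<0$, set $N_0=\max\{-a,-b\}\geq 1$; then for every $i,j$ the element $(x_iy_j)^{N_0}\xi$ lies in $M_{(a+N_0,\,b+N_0)}=0$, so $x_iy_j\in\sqrt{\ann_R\xi}=P$ and hence $R_{++}\subseteq P$.

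The substantive implication is $(1)\Rightarrow(2)$. The key observation is $R_{++}=(\underline x)\cap(\underline y)$, so by $(1)\Rightarrow(4)$ every associated prime of $M$ contains $(\underline x)$ or $(\underline y)$. Put $N=\Gamma_{(\underline x)}(M)=H^0_{(\underline x)}(M)$, which is again a bigraded $F_R$-finite $F_R$-module, and note that $M/N$ is $(\underline y)$-torsion, because each of its associated primes avoids $(\underline x)$ and therefore contains $(\underline y)$. From $0\to N\to M\to M/N\to 0$ it is enough to prove $N_{(a,b)}=0$ for $a\geq 0$ and $(M/N)_{(a,b)}=0$ for $b\geq 0$; I will do the first, the second being symmetric. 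Suppose $N_{(a_0,b_0)}\neq 0$ with $a_0\geq 0$. Applying the construction of Remark \ref{stan} to $N$, I reduce to an infinite coefficient field: there is a bigraded $F_D$-finite $F_D$-module $V$ over $D=K'[\underline x,\underline y]$ with $K'$ infinite, $V\subseteq L=N\otimes_A C$, and $V_{(a_0,b_0)}\neq 0$. Because $N$ is $(\underline x)$-torsion and local cohomology commutes with the flat base change $A\to C$, $L$ is $(\underline x)$-torsion and hence so is the submodule $V$. Now take $0\neq\xi\in V_{(a_0,b_0)}$, choose $k\geq 1$ minimal with $(\underline x)^k\xi=0$, and pick a monomial $x^\gamma$ with $|\gamma|=k-1$ and $x^\gamma\xi\neq 0$; then $x^\gamma\xi$ is a nonzero element of $V_{(a_0+k-1,\,b_0)}$ killed by every $x_i$, i.e.\ a nonzero class in the top Koszul homology $H_m(\underline x;V)$. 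By Theorem \ref{Koszul vanish for underline x and y} that homology vanishes outside $x$-degree $0$, forcing $a_0+k-1=-m$ and so $a_0\leq -m<0$, a contradiction. Thus $N_{(a,b)}=0$ for $a\geq 0$; combining with the symmetric statement for $M/N$ yields $M_{(a,b)}=0$ for all $(a,b)\geq (0,0)$, which is $(2)$.

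I expect the main obstacle to be the bookkeeping in this last reduction: one must verify that $(\underline x)$-torsion (resp.\ $(\underline y)$-torsion), and the existence of a nonzero component in the region $(a,b)\geq(0,0)$, survive each of the steps $N\rightsquigarrow N\otimes_A C\rightsquigarrow H_g(t_1,\dots,t_g;-)$ in Remark \ref{stan}, and that the output is genuinely a bigraded $F$-finite $F$-module over a standard bigraded polynomial ring over an infinite field, so that Theorem \ref{Koszul vanish for underline x and y} (and hence the Eulerian computations underpinning it) applies. The rigidity theorem, the finiteness of associated primes, and the passage between $\Ass_R$ and the component-wise picture are then used only as black boxes.
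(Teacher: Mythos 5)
Your argument is correct, and the easy implications line up with the paper's (your $(3)\Rightarrow(2)$ via Theorem \ref{Rigidity for A-1}, your $(4)\Rightarrow(1)$ via emptiness of $\Ass$ of the quotient, and your direct $(2)\Rightarrow(4)$ in place of the paper's $(3)\Rightarrow(4)$ are only cosmetic rearrangements), but your proof of the substantive implication $(1)\Rightarrow(2)$ takes a genuinely different route. The paper argues by induction on $\dim A$: in the base case (a field) it first passes to an uncountable field, uses countability of $\bigcup_b \Ass_T N_b$ and prime avoidance to manufacture elements $\xi$ of bidegree $(1,0)$ and $\eta$ of bidegree $(0,1)$ acting injectively, so that $\xi^u\eta^v\colon M_{(0,0)}\to M_{(u,v)}$ contradicts $R_{++}$-torsion; the inductive step localizes at a minimal prime, completes, and kills a power-series variable with Koszul homology to drop the dimension. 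You instead exploit $R_{++}=(\underline{x})(\underline{y})$ to split $0\to\Gamma_{(\underline{x})}(M)\to M\to M/\Gamma_{(\underline{x})}(M)\to 0$ with the submodule $(\underline{x})$-torsion and (by Lemma \ref{Ass of M/I} plus the easy $(1)\Rightarrow(4)$) the quotient $(\underline{y})$-torsion, then reduce each piece to an infinite coefficient field by the very construction of Remark \ref{stan} and kill a putative nonzero component with $a_0\geq 0$ by producing a socle element, i.e.\ a nonzero class of $H_m(\underline{x};V)$ in a bidegree where Theorem \ref{Koszul vanish for underline x and y} forces vanishing (your degree bookkeeping $a_0+k-1=-m\leq -m<0$ is consistent with the paper's shift conventions), and symmetrically for $\underline{y}$. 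What your route buys: no induction on $\dim A$, no uncountable-field/prime-avoidance step, reuse of machinery already in the paper, and in fact a sharper conclusion (the $(\underline{x})$-torsion part vanishes for all $x$-degrees $>-m$, not just on the first quadrant). What it costs: you must invoke, and should cite explicitly, Lyubeznik's closure properties guaranteeing that $\Gamma_{(\underline{x})}(M)$ and $M/\Gamma_{(\underline{x})}(M)$ are again bigraded $F_R$-finite $F_R$-modules (the paper uses the same kind of fact for $H^j_{PR}(M)$ in the Bass-number section), and the standard fact that an associated prime of a bigraded module is the annihilator of a bihomogeneous element (reduce to the finitely generated bigraded submodule generated by the components of a witness), which underlies both your $(2)\Rightarrow(4)$ and the torsion bookkeeping.
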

\begin{proof}
    $(1)\implies (2)$ We prove it by induction on $\dim A$. If $\dim A=0$, then A is field say $K$. First assume that $K$ is uncountable. If $M_{(0,0)}=0$, then we are done by \ref{Rigidity for K-1}. Hence assume that $M_{(0,0)}\neq 0$. For a fixed $b\geq 0$, consider $N_b=\oplus_{i\in \mathbb{Z}} M_{(i,b)}$. Clearly $N_b$ is Eulerian over $T=K[x_1,\ldots,x_m]$. We also note that $\Ass_T N_b$ is finite. Consider the following exact sequence $$0\rightarrow \Gamma_{(x_1,\ldots,x_m)}(N_b)\rightarrow N_b\rightarrow \overline{N_b}\rightarrow 0.$$
    Since $\Ass_T N_b$ is finite for all $b\geq 0$ so $\bigcup\limits_{b\geq 0}\Ass_T N_b$ is countable set. By \ref{Ass of M/I},  $\bigcup\limits_{b\geq 0}\Ass_T \overline{N_b}$ is also countable. Let $V=Kx_1\oplus\ldots\oplus Kx_m$. If $P\in \bigcup\limits_{b\geq 0}\Ass_T \overline{N_b}$, then $P\not\supseteq(x_1,\ldots,x_m)$. Therefore $P\cap V$ is proper subspace of $V$. As $K$ is uncountable there exist $$\xi\in V
    \ \backslash\bigcup\limits_{P\in \bigcup\limits_{b\geq 0}\Ass_T \overline{N_b}}(P\cap V).$$ Clearly $\xi$ is of bidegree $(1,0)$ such that it is $\overline{N_b}$ regular for all $b\geq 0$. Since $(N_b)_u=(\overline{N_b})_u$ for $u\geq 0$ so $\xi$ is regular on $\oplus_{i\geq 0}M_{(i,b)}$.
    Now for a fixed $a\geq 0$, consider $U_a=\oplus_{j\in \mathbb{Z}} M_{(a,j)}$. By a similar argument we get $\eta$ of bidegree $(0,1)$ such that it is regular on $\oplus_{j\geq 0}M_{(a,j)}$. But this implies that $\xi^u\eta^v: M_{(0,0)}\rightarrow M_{(u,v)}$ is injective for all $(u,v)\geq (0,0)$ which contradicts the fact that $M$ is $R_{++}$-torsion. 

    Now suppose that $K$ is countable and $M_{(0,0)}\neq 0$. Let $K'$ be an uncountable field containing $K$. Let  $S=K'[x_1,\ldots,x_m,y_1,\ldots,y_n]=R\otimes_KK'$. Then $N=M\otimes_R S=M\otimes_K K'$ is $F$-finite, $F$-module over $S$. Clearly $N$ is $S_{++}$-torsion and $N_{(0,0)}\neq 0$ which is a contradiction as before. Hence the proof.
    
    Now we assume the result for $\dim A<r$ and prove for $\dim A=r$. If possible let $M_{(0,0)}\neq 0$ and let $P$ be its minimal prime. Let $B=A_P$ and $R_B=B[x_1,\ldots,x_m,y_1,\ldots,y_n]$. Then $N=M\otimes_R R_B=M\otimes_AB$ is bigraded $F_{R_B}$-finite, $F_{R_B}$-module. Now $N$ is $(R_B)_{++}$- torsion and $N_{(0,0)}\neq 0$. If $\dim B<r$ then we have contradiction by induction hypothesis. Assume $\dim B=r$. Now $N_{(0,0)}$ has structure of $C=\widehat{A_P}$-module. Then $L=N\otimes_{R_{B}} R_C=N\otimes_B C$ is bigraded $F_{R_C}$-finite, $F_{R_C}$-module where $R_C=C[x_,\ldots,x_m,y_1,\ldots,y_n]$. Now $L$ is $(R_{C})_{++}$-torsion and $L_{(0,0)}\neq 0$. Let $C=K'[[z_1,\ldots,z_r]]$. Consider the exact sequence $$0\rightarrow H_1(z_r,L)\rightarrow L\xrightarrow{z_r}L\rightarrow H_0(z_r,L)\rightarrow0.$$
    Let $\overline{C}=C/z_rC$ and $R_{\overline{C}}=\overline{C}[x_1,\ldots,x_m,y_1,\ldots,y_n]$. Therefore, $T_i=H_i(z_r,L)$ are bigraded $F_{R_{\overline{C}}}$-finite, $F_{R_{\overline{C}}}$-module for $i=0,1$ and also $(R_{\overline{C}})_{++}$-torsion. Therefore, by the induction hypothesis $(T_i)_{(m,n)}=0$ for $(m,n)\geq (0,0)$. Hence multiplication map by $z_r$ on $L_{(0,0)}$ is isomorphism and therefore $L_{(0,0)}$ is $C_{z_r}$-module. Let $S=C_{z_r}[x_1,\ldots,x_m,y_1,\ldots,y_n]$. Now $U=L\otimes_{R_C}T=L\otimes_CC_{z_r}$ is $F_T$-finite, $F_T$-module and $T_{++}$-torsion. But $\dim C_{z_r}=r-1$ and $U_{(0,0)}\neq 0$ contradict the induction hypothesis. Therefore $M_{(0,0)}=0$ so the result follows by \ref{Rigidity for K-1}.

    $(2)\iff (3)$ follows from \ref{Rigidity for K-1}.

    $(3)\implies (4)$. Let $M_{(m,n)}=0$ for $m\geq m_0 $ and  $n\geq n_0$. We claim that $\Gamma_{R_{++}}(M)=M$. Let $a\in M$ such that $\bideg a =(r,s)$. Choose $l\gg 0$ such that $(r+l,s+l)>(m_0,n_0)$. Then, $(R_{++})^lm=0$ and this implies that  $\Gamma_{R_{++}}(M)=M$. Let $P\in \Ass M$. We have $P=(0:x)$ for some $x(\neq 0)\in M=\Gamma_{R_{++}}(M)$. Therefore, $(R_{++})^sx=0\implies (R_{++})^s\subseteq P\implies R_{++}\subseteq P$. 

$(4)\implies(1)$ We have an exact sequence $$0\rightarrow \Gamma_{R_{++}}(M)\rightarrow M\rightarrow N\rightarrow 0.$$
By assumption $\Ass N=\phi$ since \ref{Ass of M/I} implies $\Ass N$ are those of $M$ which does not contain $R_{++}$. So $N=0$. Therefore $M=\Gamma_{R_{++}}(M)$.
\end{proof}
The following lemma is useful;
\begin{lemma}\label{E_P=0}
    Let $E$ be a finitely generated bigraded $S$-module. Then the following statements are equivalent:
    \begin{enumerate}[\rm (1)]
        \item $H^0_{S_{++}}(E)=E$.
        \item $E_P=0$ for all $P\in \Bproj(S)$.
    \end{enumerate}
\end{lemma}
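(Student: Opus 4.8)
The plan is to translate both conditions into statements about the $S_{++}$-torsion submodule
$\Gamma_{S_{++}}(E)=\{e\in E:\ S_{++}^k e=0\ \text{for some}\ k\geq 0\}=H^0_{S_{++}}(E)$,
and to exploit that $S_{++}$ is generated by the finitely many bi-homogeneous elements $x_iy_j$, so that a prime $P$ of $S$ fails to contain $S_{++}$ precisely when $x_iy_j\notin P$ for some pair $(i,j)$. I will also use that $\Gamma_{S_{++}}(E)$ is a bi-homogeneous submodule of $E$ (since $S_{++}$ is bi-homogeneous), so that Lemma \ref{Ass of M/I} applies with the pair $(S,S_{++},E)$ in place of $(A,I,M)$.

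For $(1)\Rightarrow(2)$ I would argue as follows. Assuming $H^0_{S_{++}}(E)=E$ and using that $E$ is finitely generated, a common power of $S_{++}$ annihilates every generator, hence $S_{++}^N E=0$ for some $N\geq 1$, i.e. $S_{++}^N\subseteq\ann_S E$. Given $P\in\Bproj(S)$ we have $S_{++}\not\subseteq P$, so there is a pair $(i,j)$ with $g:=x_iy_j\notin P$; then $g^N\in S_{++}^N\subseteq\ann_S E$ while $g^N\notin P$ because $P$ is prime. Thus $\ann_S E\not\subseteq P$, i.e. $P\notin\Supp_S E$, so $E_P=0$.

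For $(2)\Rightarrow(1)$ I would set $N=E/H^0_{S_{++}}(E)$, a finitely generated bigraded $S$-module, and apply Lemma \ref{Ass of M/I}(2) to get $\Ass_S N=\{P\in\Ass_S E:\ P\not\supseteq S_{++}\}$. Suppose $N\neq 0$; then $\Ass_S N\neq\emptyset$, and since associated primes of finitely generated bigraded $S$-modules are bi-homogeneous, any $P\in\Ass_S N$ satisfies $P\not\supseteq S_{++}$ and is bi-homogeneous, hence $P\in\Bproj(S)$. On the other hand $P\in\Ass_S N\subseteq\Ass_S E\subseteq\Supp_S E$, so $E_P\neq 0$, contradicting $(2)$. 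Therefore $N=0$, that is, $H^0_{S_{++}}(E)=E$.

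I do not expect any genuine obstacle here: the proof is a short bookkeeping argument. The only points requiring a little care are the translation of ``$P\not\supseteq S_{++}$'' into ``$x_iy_j\notin P$ for some $i,j$'' (which relies on $S_{++}=(x_iy_j:\ i,j)$), the use of finite generation of $E$ to obtain a uniform annihilating power $S_{++}^N$, and the bi-homogeneity of $\Gamma_{S_{++}}(E)$ and of the relevant associated primes so that Lemma \ref{Ass of M/I} and the definition of $\Bproj(S)$ apply verbatim.
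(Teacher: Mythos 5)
Your proof is correct and essentially identical to the paper's: the forward direction is the same (the paper simply calls it trivial), and for $(2)\Rightarrow(1)$ both you and the paper pass to $E/\Gamma_{S_{++}}(E)$, apply Lemma \ref{Ass of M/I}(2), use that associated primes of a bigraded module are bi-homogeneous to land in $\Bproj(S)$, and derive the contradiction $E_P\neq 0$. The only cosmetic difference is that you conclude $E_P\neq 0$ from $P\in\Ass_S E\subseteq\Supp_S E$ while the paper localizes the quotient, which amounts to the same thing.
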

\begin{proof}
    $(1)\implies (2)$ is trivial. For $(2)\implies (1)$, if possible assume that $H^0_{S_{++}}(E)\neq E$. Consider the following exact sequence 
    $$0\rightarrow \Gamma_{S_{++}}(E)\rightarrow E\rightarrow E/{\Gamma_{S_{++}}(E)}\rightarrow 0.$$
    Let $U=E/{\Gamma_{S_{++}}(E)}$. By \ref{Ass of M/I}, $\Ass U=\{P\in \Ass E\  |\  P\not\supseteq S_{++}\}$. We note that if $P\in \Ass E$, then $P$ is bigraded. Let $P\in \Ass U$ , so this implies $P\in \Bproj(S)$. Therefore $U_P\neq 0$ and this implies $E_P\neq 0$, a contradiction.
\end{proof}
\begin{definition}
    We define $$\gdepth S=\sup\{k\in \mathbb{Z}\ |\ S_{++}\subseteq \sqrt{\ann H^i_{\mathfrak{n}}(S)}\ \text{for all}\  i<k\}$$
\end{definition}
We generalize a result from \cite[Proposition 2.1]{Marley} to the bigraded setup. We mostly follow their proof. For the convenience of the reader, we include it here.
\begin{theorem}\label{G depth formula}
    $\gdepth(S)=\min_{P\in \Bproj(S)}\{\depth S_P+\dim S/P\}$.
\end{theorem}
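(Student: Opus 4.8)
Set $N=\dim R=d+m+n$, let $\mathfrak m$ denote the $\ast$-maximal ideal of $R$ (so that $\mathfrak n=\mathfrak m S$ and $H^i_{\mathfrak n}(S)=H^i_{\mathfrak m}(S)$), and write $d(S)=\min\{\depth S_P+\dim S/P:P\in\Bproj(S)\}$. The plan is to reduce the statement to a computation of projective dimensions via graded local duality, broadly in the spirit of Marley's argument, and then to apply Auslander--Buchsbaum and the dimension formula.

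First I would rewrite $\gdepth(S)$ in terms of $\Ext$-modules. Since $A=K[[z_1,\dots,z_d]]$ is complete and each bihomogeneous component of $R$ is a finitely generated $A$-module, the $\ast$-local ring $R$ is $\ast$-complete, and $R$ is regular, hence $\ast$-Gorenstein; so graded local duality (cf. \cite{BS}) gives, for every $i$, a bigraded isomorphism ${}^{\ast}\!\Hom_R(H^i_{\mathfrak m}(S),{}^{\ast}E)\cong\Ext^{\,N-i}_R(S,\omega_R)$, where $\omega_R\cong R(\mathbf a)$ is the graded canonical module and ${}^{\ast}E$ the graded injective hull of the residue field. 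Graded Matlis duality preserves annihilators of finitely generated bigraded modules, and $\Ext^{\,N-i}_R(S,\omega_R)$ is a finitely generated $S$-module, so $\ann_S H^i_{\mathfrak n}(S)=\ann_S\Ext^{\,N-i}_R(S,\omega_R)$; hence $\gdepth(S)=\inf\{i:S_{++}\not\subseteq\sqrt{\ann_S\Ext^{\,N-i}_R(S,\omega_R)}\}$. The point of this move is that the modules $H^i_{\mathfrak n}(S)$ are rarely finitely generated, whereas the $\Ext$-modules are, so their radical-ideal condition can be tested pointwise: for a finitely generated bigraded $S$-module $E$, $S_{++}\subseteq\sqrt{\ann E}$ iff $\Supp E\subseteq V(S_{++})$ iff $E_P=0$ for all $P\in\Bproj(S)$ — the reduction to bihomogeneous primes being legitimate because passing from a prime $Q\not\supseteq S_{++}$ to its bihomogeneous core $Q^{\ast}$ (again prime, still avoiding the bihomogeneous ideal $S_{++}$) does not change whether a bigraded module localizes to $0$. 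As $\Ext$ commutes with localization and $(\omega_R)_P$ is free of rank one over the regular local ring $R_P$, this yields $\gdepth(S)=\inf\{\,i:\Ext^{\,N-i}_{R_P}(S_P,R_P)\neq0\text{ for some }P\in\Bproj(S)\,\}$.

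Next I would compute the threshold locally. For $P\in\Bproj(S)$ the ring $R_P$ is regular and $S_P$ has finite projective dimension over it; the top $\Ext$ is nonzero — the last differential in a minimal free resolution of $S_P$ has entries in $\mathfrak m_{R_P}$, so its transpose has nonzero cokernel — and $\Ext^{j}_{R_P}(S_P,R_P)=0$ for $j>\projdim_{R_P}S_P$. Thus the smallest $i$ for which $\Ext^{\,N-i}_{R_P}(S_P,R_P)\neq0$ is $i=N-\projdim_{R_P}S_P$, and by Auslander--Buchsbaum $\projdim_{R_P}S_P=\dim R_P-\depth S_P=\height_R P-\depth S_P$. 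Taking the minimum over $P\in\Bproj(S)$, $\gdepth(S)=\min_{P}\{N-\height_R P+\depth S_P\}$. Finally, $R$ is a regular domain finitely generated over the universally catenary complete local ring $A$, so the dimension formula $\height_R P+\dim R/P=\dim R=N$ holds for every prime $P$ of $R$, and $\dim R/P=\dim S/(P/I)=\dim S/P$; substituting gives $N-\height_R P+\depth S_P=\dim S/P+\depth S_P$, hence $\gdepth(S)=\min_{P\in\Bproj(S)}\{\depth S_P+\dim S/P\}=d(S)$.

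The step requiring the most care is the first one: one has to be scrupulous about invoking graded local duality in the correct setting — $\ast$-local, $\ast$-complete, and dualizing over the regular ambient ring $R$ rather than over $S$ (which need not be Cohen--Macaulay) — and about the fact that Matlis duality identifies $\ann_S H^i_{\mathfrak n}(S)$ with the annihilator of a genuinely finitely generated module, which is exactly what makes the radical-ideal condition in the definition of $\gdepth$ detectable at the primes of $\Bproj(S)$. Once that bridge is in place, the remaining input (the top-$\Ext$ nonvanishing over a regular local ring, Auslander--Buchsbaum, and the dimension formula) is entirely standard.
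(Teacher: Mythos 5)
Your argument is correct and follows essentially the same route as the paper's proof: graded local duality over the ambient regular ring $R$, preservation of annihilators under Matlis duality, and reduction of the condition $S_{++}\subseteq\sqrt{\ann(-)}$ on the finitely generated modules $\Ext^{\dim R-i}_R(S,R)$ to vanishing at all $P\in\Bproj(S)$ (the paper's Lemma \ref{E_P=0}). The only difference is in the final local computation: you detect the threshold via nonvanishing of the top $\Ext^{\projdim_{R_{P'}}S_P}_{R_{P'}}(S_P,R_{P'})$ together with Auslander--Buchsbaum, whereas the paper applies local duality a second time over $R_{P'}$ and reads off $\depth S_P$ from the vanishing pattern of $H^{j}_{PS_P}(S_P)$; these are interchangeable and of comparable length. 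One correction: your blanket claim that $\height_R P'+\dim R/P'=\dim R$ holds for \emph{every} prime of $R$ because $A$ is universally catenary is false --- for example $P'=(z_1x_1-1)$ in $K[[z_1]][x_1]$ has $\height P'+\dim R/P'=1<2$. It does hold for the primes you actually use, namely the bihomogeneous ones: every bigraded prime $P'$ is contained in the bigraded maximal ideal $\mathfrak{M}=\mathfrak{m}_A R+(x_1,\ldots,x_m,y_1,\ldots,y_n)$, one has $\dim R/P'=\dim (R/P')_{\mathfrak{M}}$ and $\height P'=\height P'R_{\mathfrak{M}}$, and $R_{\mathfrak{M}}$ is regular local, hence catenary and equidimensional, which gives the formula; with that restriction your proof is complete.
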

\begin{proof}
We are given that $S=R/I$. Let $d=\dim R$. By local duality theorem of graded modules,
$H^i_\mathfrak{n}(S)=\Ext^{d-i}_R(S,R)^\vee$ where $\vee$ is Matlis dual functor. Since $\ann E=\ann E^\vee$, so some power of $S_{++}$ annihilates $H^i_\mathfrak{n}(S)$ iff some power of $S_{++}$ annihilates $\Ext^{d-i}_R(S,R)$. If $P\in \Bproj(S)$, then there is $P'\in\Bproj(R) $ containing $I$ such that $P'/I=P$. Let $r=\dim R_{P'}$. Then $d=\dim R=\dim R_{P'}+\dim R/P'=r+\dim S/P$. Now \begin{align*}
\Ext^{d-i}_R(S,R)_P=0 &\iff \Ext^{d-i}_{R_{P'}}(S_P,R_{P'})=0\\ &\iff\Ext_{R_{P'}}^{r-(i-\dim S/P)}(S_P,R_{P'})^\vee=0\\ &\iff H^{i-\dim S/P}_{PS_P}(S_P)=0
\end{align*}
Let $s=\min_{P\in \Bproj(S)}\{\depth S_P+\dim S/P\}$. Assume $\gdepth(S)\geq k$. Then some power of $S_{++}$ annihilates $H^i_{\mathfrak{n}}(S)$ for $0\leq i\leq k-1$. Therefore $\Ext^{d-i}_R(S,R)_P=0\iff H^{i-\dim S/P}_{PS_P}(S_P)=0$ for $0\leq i\leq k-1$. Therefore, $\depth S_P\geq k-\depth S/P$ for all $P\in \Bproj(S)$ and this proves that $\gdepth(S)\leq s$.

Conversely assume $\depth S_P+\dim S/P\geq k$ for all $P\in \Bproj(S)$. Then $\Ext^{d-i}_R(S,R)_P=0 $ for all $0\leq i\leq k-1$ and for all $P\in \Bproj(S)$. So by Lemma \ref{E_P=0}, $S_{++}\subseteq \ann \Ext^{d-i}_R(S,R)$. Therefore $S_{++}\subseteq \ann H^i_\mathfrak{n}(S)$ for $0\leq i\leq k-1$. Hence $s\leq \gdepth(S)$.
\end{proof}
\s Let $\phi:S\rightarrow S$ be Frobenius map. Then it induces a Frobenius map $f$ on $H^i_\mathfrak{n}(S)$ for $i\geq 0$ and $f(H^i_\mathfrak{n}(S))_{(m,n)}\subseteq H^i_\mathfrak{n}(S)_{(mp,np)}$ for all $(m,n)\in \mathbb{Z}^2$. By $H^i_\mathfrak{n}(S)^{f^t}$ we denote the bigraded $S$-submodule of $H^i_\mathfrak{n}(S)$ generated by $f^t(H^i_\mathfrak{n}(S))$. So we have a descending chain $H^i_\mathfrak{n}(S)\supseteq H^i_\mathfrak{n}(S)^f\supseteq H^i_\mathfrak{n}(S)^{f^2}\supseteq\ldots$
Since $H^i_\mathfrak{n}(S)$ is bigraded $*$-Artinian this chain stabilizes and we denote the stable value by $H^i_\mathfrak{n}(S)^*$.
\begin{definition} Define,
\begin{enumerate}
    \item $\Bfg(S)=\max\{k\ |\ \text{there exist} \ (a,b)\leq (0,0) \ \text{such that }\ H^i_\mathfrak{n}(S)_{(m,n)}=0\ \text{for all}\ (m,n)\leq (a,b)\ \text{and for all}\ i<k\}$.
   \item$\Bfgt(S)=\max\{k\ |\ \text{there exist} \ (a,b)\leq (0,0) \ \text{such that }\ H^i_\mathfrak{n}(S)^*_{(m,n)}=0\ \text{for all}\  (m,n)\leq (a,b)\ \text{and for all}\ i<k\}$.
   \end{enumerate}
\end{definition}
Clearly $\Bfg(S)\leq \Bfgt(S)$. 
\begin{theorem}\label{Bfg=gdepth}
    $\Bfg(S)=\gdepth(S)$.
\end{theorem}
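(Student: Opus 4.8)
The plan is to prove the sharper statement that $\gdepth(S)\ge k$ holds if and only if $\Bfg(S)\ge k$, for every integer $k$; since the two invariants then agree threshold by threshold, equality follows (with the usual $\sup/\max$ conventions; if $\Bproj(S)=\emptyset$ both sides are $+\infty$ and there is nothing to check). Unravelling the definitions, $\gdepth(S)\ge k$ says that for each $i<k$ some power of $S_{++}$ kills $H^i_{\n}(S)$, while $\Bfg(S)\ge k$ says there is one pair $(a,b)\le(0,0)$ with $H^i_{\n}(S)_{(m,n)}=0$ for all $(m,n)\le(a,b)$ and all $i<k$. Since only the finitely many indices $i\in\{0,1,\dots,k-1\}$ occur, it suffices to prove, for each fixed $i$, the equivalence: \emph{$S_{++}\sub\sqrt{\ann H^i_{\n}(S)}$ if and only if there exists $(a,b)\le(0,0)$ with $H^i_{\n}(S)_{(m,n)}=0$ for all $(m,n)\le(a,b)$}; one then recovers a common corner for the $k$ values of $i$ by taking the componentwise minimum.

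The first step is to pass, exactly as in the proof of Theorem~\ref{G depth formula}, to a finitely generated module via graded local duality. Put $d=\dim R$ and $N_i:=\Ext^{d-i}_R(S,R)$, a finitely generated bigraded $S$-module, with $H^i_{\n}(S)\cong N_i^{\vee}$ where $\vee$ is the bigraded Matlis dual. Two properties of this duality are what I would use. First, $\ann_S H^i_{\n}(S)=\ann_S N_i$, so (since $N_i$ is finitely generated) $S_{++}\sub\sqrt{\ann H^i_{\n}(S)}$ is equivalent to $(S_{++})^tN_i=0$ for some $t$, i.e.\ to $N_i$ being $S_{++}$-torsion. Second, the dual reverses bidegrees, $(N_i^{\vee})_{(m,n)}\neq 0\iff (N_i)_{(-m,-n)}\neq 0$, so $H^i_{\n}(S)$ vanishing on a corner $(m,n)\le(a,b)$ with $(a,b)\le(0,0)$ is the same as $N_i$ vanishing on the opposite corner $(m,n)\ge(-a,-b)$ with $(-a,-b)\ge(0,0)$. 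A fixed shift coming from the canonical module only translates the corner by a constant vector and is irrelevant.

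What remains is a purely combinatorial cone lemma: a finitely generated bigraded $S$-module $E$ is $S_{++}$-torsion if and only if $E_{(m,n)}=0$ for all $(m,n)\ge(c,c')$ for some $(c,c')\ge(0,0)$. For the forward direction choose $t$ with $(S_{++})^tE=0$ and a componentwise upper bound $(c_0,c_0')$ for the bidegrees of a finite generating set; any monomial of $R$ of bidegree $(a,b)$ with $a\ge t$ and $b\ge t$ lies in $(R_{++})^t$ (pair $t$ of its $x$-factors with $t$ of its $y$-factors), hence every element of $E$ in bidegree $\ge(\max(c_0+t,0),\max(c_0'+t,0))$ vanishes. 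Conversely, if $E$ vanishes above $(c,c')$ then for homogeneous $e$ of bidegree $(p,q)$ the submodule $(S_{++})^te$ lies in bidegrees $\ge(p+t,q+t)$, which falls inside the vanishing corner once $t\ge\max(c-p,c'-q)$, so $e$ is torsion. This is also the mechanism behind Lemma~\ref{E_P=0}: $N_i$ being $S_{++}$-torsion is equivalent to $(N_i)_P=0$ for all $P\in\Bproj(S)$, and combining that with Theorem~\ref{G depth formula} gives an alternative, $\Bproj$-flavoured route to the same equivalence. Putting the pieces together, for each fixed $i$ one has $S_{++}\sub\sqrt{\ann H^i_{\n}(S)}\iff N_i\text{ is }S_{++}\text{-torsion}\iff N_i\text{ vanishes on a corner }\ge(c,c')\ge(0,0)\iff H^i_{\n}(S)\text{ vanishes on the corner }\le(-c,-c')\le(0,0)$; running this over $i=0,\dots,k-1$ and replacing the finitely many resulting corners by a common lower bound yields $\gdepth(S)\ge k\iff\Bfg(S)\ge k$, hence the theorem. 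I expect the only real care needed to be bookkeeping: pinning down the bidegree conventions of the bigraded Matlis dual and of the local-duality isomorphism (including the harmless canonical-module shift), and matching the quantifier ``a single corner works for all $i<k$'' against ``for each $i<k$ some corner works'' — which is legitimate precisely because the index set is finite.
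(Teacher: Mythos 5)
Your proposal is correct and follows essentially the same route as the paper: graded local duality $H^i_{\n}(S)\cong\Ext^{d-i}_R(S,R)^{\vee}$, preservation of annihilators under Matlis duality, and the equivalence (for a finitely generated bigraded module) between being $S_{++}$-torsion and vanishing on a corner of bidegrees. The only difference is that you spell out the cone lemma and the "common corner for finitely many $i$" bookkeeping, which the paper's proof leaves implicit.
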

\begin{proof}
    We have $\gdepth S=\sup\{k\in \mathbb{Z}\ |\ S_{++}\subseteq \sqrt{\ann H^i_{\mathfrak{n}}(S)}\ \text{for all}\  i<k\}$ and $H^i_\mathfrak{n}(S)=\Ext^{d-i}_R(S,R)^\vee$. Let $i< \gdepth(S)$. Therefore some power of $S_{++}$ annihilates $H^i_\mathfrak{n}(S)$. But since $\ann \Ext^{d-i}_R(S,R)^\vee=\ann \Ext^{d-i}_R(S,R)$ , some power of $S_{++}$ annihilates $\Ext^{d-i}_R(S,R)$. This implies that there exists $(a,b)\geq (0,0)$ such that $\Ext^{d-i}_R(S,R)_{(m,n)}=0$ for all $(m,n)\geq (a,b)$. Therefore $H^i_\mathfrak{n}(S)_{(m,n)}=0$ for all $(m,n)\leq (-a,-b)$. This implies $\Bfg(S)> i$. Therefore $\Bfg(S)\geq \gdepth(S)$.

    Now assume that $k=\Bfg(S)$. If $i<k$, then there is $(a,b)\leq (0,0)$ such that $H^i_\mathfrak{n}(S)_{(m,n)}=0$ for all $(m,n)\leq (a,b)$. This implies $H^i_\mathfrak{n}(S)_{(m,n)}^\vee=0$ for all $(m,n)\geq (-a,-b)$ and $(-a,-b)\geq (0,0)$. So some power of $S_{++}$ annihilates $\Ext^{d-i}_R(S,R)\implies S_{++}\subseteq  \sqrt{\ann H^i_{\mathfrak{n}}(S)}$ for all $0\leq i\leq k-1$.  Therefore $\gdepth(S)\geq k$.
\end{proof}
Having proved several preparatory results, we now state the following corollary, which is crucial for proving the main result of this section (See Theorem \ref{Bproj cohen}).
\begin{corollary}\label{bfg=dims}
    If $S$ is equidimensional, $\Bproj(S)\neq \phi$ and Cohen-Macaulay, the $\Bfg(S)=\dim S$.
\end{corollary}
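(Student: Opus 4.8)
The plan is to chain together the two structural identities already in hand. By Theorem~\ref{Bfg=gdepth} we have $\Bfg(S)=\gdepth(S)$, and by Theorem~\ref{G depth formula},
\[
\gdepth(S)=\min_{P\in\Bproj(S)}\{\depth S_P+\dim S/P\}.
\]
Since $\Bproj(S)\neq\phi$, this minimum is over a nonempty index set, so it suffices to prove that every individual term $\depth S_P+\dim S/P$ equals $\dim S$.

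First I would use the Cohen--Macaulay hypothesis: for $P\in\Bproj(S)$, the assertion that $\Bproj(S)$ is Cohen--Macaulay means precisely that the local ring $S_P$ is Cohen--Macaulay, whence $\depth S_P=\dim S_P=\height P$. So the task reduces to the dimension identity $\height P+\dim S/P=\dim S$ for every bi-homogeneous prime $P$ of $S$. For this I would invoke that $R=A[x_1,\dots,x_m,y_1,\dots,y_n]$ is a polynomial ring over the regular local ring $A$, hence $R$ is regular, in particular Cohen--Macaulay and (universally) catenary; therefore its homomorphic image $S=R/I$ is catenary. Choose a minimal prime $\q\subseteq P$ of $S$ realizing $\height(P/\q)=\height P$ — this is possible because $\height P=\dim S_P$ is the maximum of $\height(P/\q')$ over the finitely many minimal primes $\q'\subseteq P$. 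Passing to the $*$-local catenary domain $S/\q$ (equivalently, to the catenary local domain $(S/\q)_{\mathfrak n}$) yields $\dim S/\q=\height(P/\q)+\dim S/P=\height P+\dim S/P$, while equidimensionality of $S$ gives $\dim S/\q=\dim S$. Combining the displays, $\Bfg(S)=\gdepth(S)=\dim S$, which is the claim.

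I expect the only genuinely non-formal step to be the dimension identity $\height P+\dim S/P=\dim S$: it is the point at which both hypotheses on $S$ (equidimensionality, and Cohen--Macaulayness along $\Bproj(S)$) together with the catenarity inherited from the regular ring $R$ are actually used. Everything else is an immediate substitution using Theorems~\ref{Bfg=gdepth} and~\ref{G depth formula}, and the argument is uniform in $\dim S$ (the degenerate case $\dim S=0$, where $S$ is Artinian, is covered without change).
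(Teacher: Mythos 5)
Your proposal is correct and takes essentially the same route as the paper: the paper's proof of Corollary \ref{bfg=dims} is just the one-line combination of Theorems \ref{Bfg=gdepth} and \ref{G depth formula}. The only difference is that you spell out the implicit computation $\depth S_P+\dim S/P=\height P+\dim S/P=\dim S$ (Cohen--Macaulayness of $S_P$, plus equidimensionality and catenarity of $S$ as a quotient of the regular ring $R$), which the paper leaves to the reader.
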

\begin{proof}
    The proof follows from \ref{G depth formula} and \ref{Bfg=gdepth}.
\end{proof}
\begin{theorem}\label{app-2}
    Let $i<\Bfgt(S)$. Then, $H^{\dim R-i}_I(R)_{(m,n)}=0$ for all $(m,n)\geq (0,0)$.
\end{theorem}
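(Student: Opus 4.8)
The plan is to transport the hypothesis $i < \Bfgt(S)$ across graded local duality into a statement about the bigraded components of $N := H^{\dim R - i}_I(R)$, and then to invoke Lemma~\ref{app-lemma} to reach the asserted vanishing on the whole quadrant $(m,n) \ge (0,0)$. Write $d = \dim R$. Since $N = \mathcal{T}(R)$ for the bigraded Lyubeznik functor $\mathcal{T} = H^{d-i}_{V(I)}$, it is a bigraded $F_R$-finite, $F_R$-module, so Lemma~\ref{app-lemma} applies with $M = N$; in particular it is enough to prove that $N_{(m,n)} = 0$ for $m \gg 0$ and $n \gg 0$ (condition (3) there), which then forces $N_{(m,n)} = 0$ for all $(m,n) \ge (0,0)$ (condition (2)). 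The bidegree shift coming from the graded canonical module $\omega_R$ is therefore irrelevant throughout, which is why the argument can be run shift-blind, exactly as in the proof of Theorem~\ref{Bfg=gdepth}.

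For the core step I would write $N = \varinjlim_e \Ext^{d-i}_R(R/I^{[p^e]}, R)$, using Frobenius powers (cofinal among powers of $I$); the transition maps, induced by $R/I^{[p^{e+1}]} \twoheadrightarrow R/I^{[p^e]}$, are precisely the natural $F$-module structure of $N$, cf.~\cite{Lyu-Fmod}. Graded local duality over $R$ — the same duality used in Theorem~\ref{G depth formula} — gives, up to a fixed bidegree shift and for every $e$, an isomorphism $\Ext^{d-i}_R(R/I^{[p^e]}, R) \cong H^i_\mathfrak{n}(R/I^{[p^e]})^{\vee}$, where $(-)^\vee$ is the bigraded Matlis dual (each bigraded component of $H^i_\mathfrak{n}(-)$ is an Artinian $A$-module, so reflexive). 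By flatness of Frobenius (Kunz) and base change for local cohomology, $H^i_\mathfrak{n}(R/I^{[p^e]}) \cong F^{e*}\!\left(H^i_\mathfrak{n}(S)\right)$, and under this identification the map induced by $R/I^{[p^e]} \twoheadrightarrow S$ has image the $S$-submodule of $H^i_\mathfrak{n}(S)$ generated by $f^e\!\left(H^i_\mathfrak{n}(S)\right)$, that is, $H^i_\mathfrak{n}(S)^{f^e}$. Since $H^i_\mathfrak{n}(S)$ is bigraded $*$-Artinian, $H^i_\mathfrak{n}(S)^{f^e} = H^i_\mathfrak{n}(S)^{*}$ for $e \gg 0$ (the Hartshorne--Speiser--Lyubeznik stabilization recorded in the paragraph defining $H^i_\mathfrak{n}(S)^*$). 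Dualizing and passing to the limit, one identifies, in each bidegree $(a,b)$, the component $N_{(a,b)}$ with the Matlis dual of the corresponding (shifted) bidegree component of $H^i_\mathfrak{n}(S)^{*}$; in particular the vanishing $H^i_\mathfrak{n}(S)^{*}_{(c,d)} = 0$ forces $N_{(a,b)} = 0$ for the matching pair $(a,b)$.

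Granting this, the conclusion is short. By definition of $\Bfgt(S)$, since $i < \Bfgt(S)$ there is $(a_0,b_0) \le (0,0)$ with $H^i_\mathfrak{n}(S)^{*}_{(c,d)} = 0$ for all $(c,d) \le (a_0,b_0)$; by the previous paragraph $N_{(a,b)} = 0$ for all $(a,b)$ ranging over a translate of the first quadrant, hence in particular for $m \gg 0$ and $n \gg 0$. Lemma~\ref{app-lemma} then upgrades this to $N_{(m,n)} = H^{\dim R - i}_I(R)_{(m,n)} = 0$ for all $(m,n) \ge (0,0)$, as required. I expect the main obstacle to be the bookkeeping in the core step: matching Lyubeznik's $F$-module structure on $H^{d-i}_I(R)$ with the natural Frobenius action $f$ on $H^i_\mathfrak{n}(S)$ under graded Matlis duality, and checking that the direct limit defining $H^{d-i}_I(R)$ really only detects the stabilized submodule $H^i_\mathfrak{n}(S)^{*}$ — this is precisely why the statement must be phrased with $\Bfgt$ rather than $\Bfg$, and the one place where a careful commutation of Matlis duality with direct and inverse limits (and with Frobenius pullback) is needed.
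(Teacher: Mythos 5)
There is a genuine gap in your core step. Graded local duality plus the stabilization $H^i_\mathfrak{n}(S)^{f^e}=H^i_\mathfrak{n}(S)^{*}$ for $e\gg 0$ does \emph{not} identify the bigraded components of $N=H^{\dim R-i}_I(R)$ with (shifted) Matlis duals of the components of $H^i_\mathfrak{n}(S)^{*}$. What this argument identifies is a \emph{root} of $N$: the image $E_i$ of $\Ext^{\dim R-i}_R(R/I,R)$ in the limit satisfies $E_i^{\vee}\cong H^i_\mathfrak{n}(S)^{*}$ (up to the canonical-module shift), but $N$ itself is $\varinjlim_e F^e(E_i)$, and the Frobenius functor rescales bidegrees by $p^e$, so the components of $N$ in a fixed bidegree are not controlled by the matching component of $E_i$. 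Concretely, take $R=K[x,y]$, $I=(x,y)$, $i=0$: then $H^0_\mathfrak{n}(S)^{*}=K$ is concentrated in bidegree $(0,0)$, so its shifted dual lives in the single bidegree $(-1,-1)$, whereas $N=H^2_I(R)$ is nonzero in \emph{every} bidegree $\leq(-1,-1)$. Thus your sentence ``the vanishing $H^i_\mathfrak{n}(S)^{*}_{(c,d)}=0$ forces $N_{(a,b)}=0$ for the matching pair'' is false, and with it the derivation of condition (3) of Lemma \ref{app-lemma} for $N$ collapses; the ``careful commutation of Matlis duality with direct limits'' you flag as bookkeeping is exactly where the argument breaks, because duality turns the direct system into an inverse system whose limit is not $H^i_\mathfrak{n}(S)^{*}$.

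The repair, which is the paper's proof, passes through associated primes rather than components of $N$. From $i<\Bfgt(S)$ one gets $(E_i)_{(m,n)}=0$ for all $(m,n)$ in a translate of the positive quadrant (this is legitimate, since it concerns the root, and the shift is harmless). The implication $(3)\implies(4)$ of Lemma \ref{app-lemma} uses only the bigraded structure, so it applies to the finitely generated module $E_i$ and shows every associated prime of $E_i$ contains $R_{++}$. Lyubeznik's theory gives $\Ass E_i=\Ass H^{\dim R-i}_I(R)$, and then $(4)\implies(2)$ of Lemma \ref{app-lemma}, applied to the bigraded $F_R$-finite module $H^{\dim R-i}_I(R)$, yields $H^{\dim R-i}_I(R)_{(m,n)}=0$ for all $(m,n)\geq(0,0)$. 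Your surrounding material (cofinality of Frobenius powers, identification of the stable image $H^i_\mathfrak{n}(S)^{*}$ with the dual of the root, shift-blindness) is consistent with the paper; the missing ingredients are precisely the equality $\Ass(\text{root})=\Ass(\text{module})$ and the use of Lemma \ref{app-lemma} through item (4) instead of item (3) applied directly to $N$.
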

\begin{proof}
Let $i<\Bfgt(S)$. So there exists some $(a,b)\leq (0,0)$ such that $ H^i_\mathfrak{n}(S)^*_{(m,n)}=0$ for all $(m,n)\leq (a,b)$. Let $E_i$ be a bigraded root of $H^{\dim R-i}_I(R)$ such that $E_i^\vee= H^i_\mathfrak{n}(S)^*$. So $(E_i)_{(m,n)}=( H^i_\mathfrak{n}(S)^*)^\vee_{(m,n)}=0$ for all $(m,n)\geq (-a,-b)$. Therefore by \ref{app-lemma}, if $P$ is a associated prime of $E_i$, it contains $R_{++}$. But since $\Ass E_i=\Ass H^{\dim R-i}_I(R) $, the result follows from \ref{app-lemma}.
\end{proof}
Finally, we prove the main result of this section as follows:\begin{theorem}\label{Bproj cohen}
Let $(A,\mathfrak{m})$ be regular local containing a field of characteristic $p$. Let $R=A[x_1,\ldots,x_m,y_1,\ldots,y_n]$ be standard bigraded. Consider $S=R/I$ where $I$ is a bi-homogeneous ideal of $R$. Assume that $S$ is equidimensional, $\Bproj(S)\neq \phi$ and Cohen-Macaulay. Then, $H^j_I(R)_{(m,n)}=0$ for all $(m,n)\geq (0,0)$ and all $j>\height I$.
\end{theorem}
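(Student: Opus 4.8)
The plan is to reduce the statement to Theorem \ref{app-2} and Corollary \ref{bfg=dims}, after first reducing to the case where the residue field is infinite. First I would observe that the hypotheses are insensitive to a faithfully flat base change $A \to A'$ of regular local rings: if $A'$ is an infinite-residue-field regular local ring flat over $A$ (e.g. the extension obtained by adjoining an indeterminate and localizing, or passing to $\widehat{A}[t]_{\widehat{\mathfrak{m}}[t]}$), set $R' = A'[x_1,\ldots,x_m,y_1,\ldots,y_n]$ and $I' = IR'$. Then $S' = R'/I'$ remains equidimensional (flat base change preserves dimension of fibers and the relevant associated-prime behavior), $\Bproj(S')\neq\phi$, and $S'$ is still Cohen-Macaulay; moreover $H^j_I(R)\otimes_R R' = H^j_{I'}(R')$ and a component $H^j_I(R)_{(m,n)}$ vanishes iff $H^j_{I'}(R')_{(m,n)}$ does, by faithful flatness. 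Hence without loss of generality $K = A/\mathfrak{m}$ is infinite, which is exactly the standing hypothesis of Section 10 ($A = K[[z_1,\ldots,z_d]]$ after appealing to the Cohen structure theorem, or more directly the hypothesis of Lemma \ref{app-lemma} that $A$ contains an infinite field — note $A$ is a regular domain).

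Next I would translate the conclusion into a statement about local cohomology of $R$ via the index shift $j = \dim R - i$. Since $R = A[x_1,\ldots,x_m,y_1,\ldots,y_n]$ with $A$ regular local of dimension $d$, we have $\dim R = d + m + n$, and $\height I = \dim R - \dim S$ because $S = R/I$ is equidimensional and $R$ is a catenary (indeed Cohen-Macaulay) domain. So the inequality $j > \height I$ becomes $\dim R - i > \dim R - \dim S$, i.e. $i < \dim S$. Therefore it suffices to show: for every $i < \dim S$, one has $H^{\dim R - i}_I(R)_{(m,n)} = 0$ for all $(m,n)\geq(0,0)$.

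Now I would invoke Corollary \ref{bfg=dims}: the hypotheses "$S$ equidimensional, $\Bproj(S)\neq\phi$, $S$ Cohen-Macaulay" give $\Bfg(S) = \dim S$, and since $\Bfg(S)\leq \Bfgt(S)$ always holds, we get $\Bfgt(S)\geq \dim S$. Thus every $i < \dim S$ satisfies $i < \Bfgt(S)$, and Theorem \ref{app-2} applies to yield $H^{\dim R - i}_I(R)_{(m,n)} = 0$ for all $(m,n)\geq(0,0)$. Combining with the index translation of the previous paragraph gives $H^j_I(R)_{(m,n)} = 0$ for all $(m,n)\geq (0,0)$ and all $j > \height I$, completing the proof.

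The main obstacle I anticipate is the reduction to infinite residue field: one must check carefully that flat base change preserves equidimensionality of $S$ and the Cohen-Macaulay property (the latter is standard for faithfully flat extensions with Cohen-Macaulay, indeed regular, fibers), that $\Bproj$ stays nonempty, and — most delicately — that the bigraded $F$-module/local-cohomology formalism and the identity $H^j_{I'}(R') = H^j_I(R)\otimes_R R'$ behave well under this change of base, so that component-wise vanishing transfers back. Everything after that reduction is a matter of bookkeeping with the two indices and citing Corollary \ref{bfg=dims} and Theorem \ref{app-2}; alternatively, if one wants to avoid the base change entirely, one could instead verify that Corollary \ref{bfg=dims} and Theorem \ref{app-2} as stated already cover the local (possibly finite residue field) case, since their proofs route through local duality and the structure theory of $*$-Artinian modules, which do not require $K$ infinite — in that case the first paragraph can be omitted.
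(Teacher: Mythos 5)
Your proposal is correct and follows essentially the same route as the paper's proof: reduce to the case where $A$ is complete with infinite residue field, note $\height I = \dim R - \dim S$ by equidimensionality so that $j > \height I$ corresponds to $i = \dim R - j < \dim S$, and then combine Corollary \ref{bfg=dims} (giving $\Bfg(S) = \dim S \leq \Bfgt(S)$) with Theorem \ref{app-2}. Your treatment of the flat base change is more detailed than the paper's one-line "we may assume," but it is the same reduction.
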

\begin{proof}
We may assume that $A$ is complete and the field it contains is infinite. Clearly $\Bfg(S)=\dim S$ by \ref{bfg=dims}. Let $j>\height I$, then $j=\height I+t$ for some $t\in \mathbb{N}$. Since $\height I=\dim R-\dim S\implies \height I+t=\dim R-(\dim S-t)$. Now the result follows from \ref{app-2}.
\end{proof}
\section*{Acknowledgement}
The first author is grateful to the Government of India for the support through the Prime Minister's Research Fellowship (PMRF ID 1303161) during the course of this work. He also thanks Dr. Sudeshna Roy for helpful conversations and for assisting in drawing the diagrams for this paper.

\providecommand{\bysame}{\leavevmode\hbox to3em{\hrulefill}\thinspace}
\providecommand{\MR}{\relax\ifhmode\unskip\space\fi MR }
\providecommand{\MRhref}[2]{
  \href{http://www.ams.org/mathscinet-getitem?mr=#1}{#2}
}


\begin{thebibliography}{10}



\bibitem{TP-big}
	R. Bhattacharyya, Tony J. Puthenpurakal, S. Roy, Jyoti Singh, \emph{Asymptotic behaviour of bigraded pieces of local cohomology module
}, In prepapration.

\bibitem{BS}
	M. P. Brodmann and R. Y. Sharp, \emph{Local cohomology}, second ed., Cambridge Studies in Ad-
vanced Mathematics, vol. 136, Cambridge University Press, Cambridge, 2013, An algebraic
introduction with geometric applications. MR 3014449 

 \bibitem{Marley}
S. Huckaba and T. Marley, \emph{Depth formulas for certain graded rings associated to and
ideal}, Nagoya Math. J. 133 (1994), 57–69.

\bibitem{Lucas}
E. Lucas: \emph{Sur les congruences des nombres eul\'{e}riens et les coefficients diff\'{e}rentiels des functions trigonom\'{e}triques
suivant un module premier}, Bull. Soc. Math. France 6 (1878), 49–54.
 \bibitem{Lyu-Dmod}
G. Lyubeznik, \emph{Finiteness properties of local cohomology modules (an application of D-
modules to commutative algebra)}, Invent. Math. \textbf{113} (1993), no. 1, 41–55. MR 1223223

\bibitem{Lyu-Fmod}
\bysame, \emph{F-modules: applications to local cohomology and D-modules in characteristic p $>$ 0},
J. Reine Angew. Math. 491, (1997), 65—130.

    
\bibitem{Ma}
L. Ma and W. Zhang, \emph{Eulerian graded D-modules}, Math. Res. Lett. 21(1) (2014)
149–167. 

 \bibitem{TP-Nagoya}
	T. J. Puthenpurakal,
	\emph{de Rahm cohomology of local cohomology modules: The graded case}, Nagoya Mathematical Journal, Vol. 217 (2015), 1–21.
    
\bibitem{TP-comm}
	T. J. Puthenpurakal and S. Roy, 
	\emph{Graded components of local cohomology modules of invariant rings}, Communications in Algebra, DOI: 10.1080/00927872.2019.1662912.

    \bibitem{TP-collect}
	T. J. Puthenpurakal,
	\emph{Graded Components of Local Cohomology Modules}, 
	Collect. Math. (2022), 1--37.
    
\bibitem{TP-koszul}
T. J. Puthenpurakal. \emph{Koszul homology of F-finite module and applications.} arXiv preprint
arXiv:2307.04473, 2023.

    \bibitem{TP-viet}
	T. J. Puthenpurakal and S. Roy, \emph{Graded components of local cohomology modules II}, Vietnam J. Math., Vol.
52 (2024), No. 1, 1–24.

\end{thebibliography}
\end{document}